\documentclass[11pt]{amsart}  
\usepackage{amssymb, latexsym, amsmath, amsfonts}

\newtheorem{thm}{Theorem}[section]

\newtheorem{lem}[thm]{Lemma}
\newtheorem{prop}[thm]{Proposition}
\theoremstyle{definition}    

\theoremstyle{remark}
  
\numberwithin{equation}{section}

\setlength{\oddsidemargin}{0in} \setlength{\evensidemargin}{0in}
\setlength{\textwidth}{6.3in} \setlength{\topmargin}{-0.2in}
\setlength{\textheight}{9.0in}

\newcommand{\mbb}{\mathbb}
\newcommand{\ra}{\rightarrow}
\newcommand{\z}{\zeta}
\newcommand{\pa}{\partial}
\newcommand{\ov}{\overline}

\newcommand{\ep}{\epsilon}

\newcommand{\al}{\alpha}

\newcommand{\ti}{\tilde}

\newcommand{\norm}[1]{\left\Vert#1\right\Vert}

\newcommand{\abs}[1]{\left\vert#1\right\vert}

\begin{document}
\title{Bounds for invariant distances on pseudoconvex Levi corank one domains and applications}
\keywords{}
\thanks{The third named author was supported by the DST SwarnaJayanti Fellowship 2009--2010 and a UGC--CAS Grant}
\subjclass{Primary: 32F45  ; Secondary : 32Q45}
\author{G. P. Balakumar, Prachi Mahajan and Kaushal Verma}

\address{G.P. Balakumar: Department of Mathematics,
Indian Institute of Science, Bangalore 560 012, India}
\email{gpbalakumar@math.iisc.ernet.in}

\address{Prachi Mahajan: Department of Mathematics, Indian Institute of Technology -- Bombay, Mumbai 400076, India}
\email{prachi.mjn@iitb.ac.in}

\address{Kaushal Verma: Department of Mathematics, Indian Institute of Science, Bangalore 560 012, India}
\email{kverma@math.iisc.ernet.in}

\pagestyle{plain}

\begin{abstract}
Let $D \subset \mbb C^n$ be a smoothly bounded pseudoconvex Levi corank one domain with defining function $r$, i.e., the Levi form $\pa \ov \pa r$ of the boundary $\pa D$ 
has at least $(n - 2)$ positive eigenvalues everywhere on $\pa D$. The main goal of this article is to obtain  bounds for the Carath\'{e}odory, Kobayashi and the Bergman distance 
between a given pair of points $p, q \in D$ in terms of parameters that reflect the Levi geometry of $\pa D$ and the distance of these points to the boundary. Applications include an 
understanding of Fridman's invariant for the Kobayashi metric on Levi corank one domains, a description of the balls in the Kobayashi metric on such domains that are centered at points 
close to the boundary in terms of Euclidean data and the boundary behaviour of Kobayashi isometries from such domains.
\end{abstract}

\maketitle

\section{Introduction}

\noindent The efficacy and ubiquity of invariant metrics such as those of Bergman, Carath\'{e}odory and Kobayashi, whenever they are defined, is a fact that hardly needs to be 
justified. Each of these metrics has its own strengths and weaknesses. For example, while the Carath\'{e}odory and Kobayashi metrics are distance decreasing under holomorphic 
mappings, a property not enjoyed by the Bergman metric in general, it makes up by being Hermitian whereas the Carath\'{e}odory and Kobayashi metrics are only upper 
semicontinuous in general. For a bounded domain $D \subset \mathbb C^n$, a point $z \in D$ and a tangent vector $v \in \mathbb C^n$, let $B_D(z, v), C_D(z, v)$ and $K_D(z, v)$ be the 
infinitesimal Bergman, Carath\'{e}odory and Kobayashi metrics on $D$. To recall their definition, let $\Delta_r \subset \mathbb C$ be the open disc of radius $r > 0$ centered at the 
origin and this will be abbreviated as $\Delta$ when $r = 1$. Then

\begin{itemize}
\item $C_D(z, v) = \sup \big\{ \abs{df(z)v} : f \in \mathcal O(D, \Delta) \big\}$,
\item $K_D(z, v) = \inf \big\{ 1/r : \; \text{there exists} \; f \in \mathcal O(\Delta_r, D) \; \text{with} \; f(0) = z \;\text{and} \;df(0) = v \big\}$, and
\item $B_D(z, v) = b_D(z, v) / (K_D(z, \overline z))^{1/2}$, where
\[
K_D(z, \ov z) = \sup \big\{ \abs{f(z)}^2 : f \in \mathcal O(D), \norm{f}_{L^2(D)} \le 1 \big\} 
\]
is the Bergman kernel and 
\[
b_D(z, v)  = \sup \big\{ \abs{df(z)v} : f \in \mathcal O(D), f(z) = 0 \; \text{and} \; \norm{f}_{L^2(D)} \le 1 \big\}.
\]
\end{itemize}

\noindent While no exact formulae for these metrics are known in general, part of their utility derives from an understanding of how they behave when $z$ is close to the boundary $\pa 
D$. This is a much studied question and we may refer to \cite{Ab}, \cite{AT}, \cite{D1}, \cite{Ca}, \cite{Cho}, \cite{Cho1}, \cite{G}, \cite{Her1}, \cite{Her2} and \cite{Ma} 
which provide quantitative boundary estimates for these infinitesimal metrics on a wide variety of smoothly bounded pseudoconvex domains in $\mathbb C^n$. Now given a pair of distinct 
points $p, q \in D$, we may compute the lengths of all possible piecewise smooth curves in $D$ that join $p$ and $q$ using these infinitesimal metrics. The infimum of all such lengths 
gives a distance function on $D$ -- these integrated versions are the Bergman, the inner Carath\'{e}odory and the Kobayashi distances on $D$ and they will be denoted by $d^b_D(p, q), 
d^c_D(p, q)$ and $d^k_D(p, q)$ respectively for $p, q \in D$. Recall that the Carath\'{e}odory distance  $ d_D^{Cara} $ for $D$ is defined by setting
\[
d_D^{Cara}(p,q) = \sup \big\{ d_\Delta^p\big( f(p), f(q) \big): f \in \mathcal{O}(D, \Delta) \big\},
\]
where $d_\Delta^p$ denotes the Poincar\'e distance on the unit disc. It is well-known that $d_D^c$ is always at least as big as $d_D^{Cara}$. In general, $ d_D^{Cara} $ need not 
coincide with the inner Carath\'{e}odory distance $d_D^c$. Moreover, it is known from the work of Reiffen (\cite{Rei}) that the integrated distance of the infinitesimal metric $C_D(z,v)$, 
coincides with the inner distance associated to $d_D^{Cara}$ which we denote by $d_D^c$. Recent work on the comparison of $d_D^{Cara}$ and another invariant function, the Lempert function, may be found in \cite{N1}.  Much less is known about the boundary 
behaviour of the integrated distances and our interest here lies in obtaining estimates of them. Partial answers to this question may be found in \cite{Ab}, \cite{Al} and \cite{FR}, all of which deal with strongly pseudoconvex domains in 
$\mbb C^n$. Optimal estimates of the boundary behaviour of invariant distances for domains with $C^{1 + \epsilon}$-smooth boundary in dimension one, may be found in the recent work \cite{N2} where estimates for convexifiable domains are also dealt with. A more complete treatment for strongly pseudoconvex domains in $\mbb C^n$ was given by Balogh--Bonk in \cite{BB} using the Carnot--Carath\'{e}odory metric that exists 
on the boundary of these domains. An analogue of these estimates was later obtained by Herbort in \cite{Her} on smoothly bounded weakly pseudoconvex domains of finite type in 
$\mbb C^2$ using the bidiscs of Catlin (see \cite{Ca}).

\medskip

We are interested in supplementing the results of \cite{BB} and \cite{Her} by obtaining bounds for these distances on a Levi corank one domain in $\mbb C^n$. A smoothly 
bounded pseudoconvex domain $D \subset \mbb C^n$ of finite type with smooth defining function $r$ (here the sign of $r$ is chosen so that $D = \{r < 0\}$) is said to be a Levi corank 
one domain if the Levi form $\pa \ov \pa r$ of $\pa D$ has at least $n - 2$ positive eigenvalues everywhere on $\pa D$. An example of a Levi corank one domain is the egg domain
\begin{equation} \label{Egg}
E_{2m} = \big\{ z \in \mbb C^n : \abs{z_1}^{2m} + \abs{z_2}^2 + \ldots + \abs{z_{n-1}}^2 + \abs{z_n}^{2} <1  \big\}
\end{equation}
for some integer $m \ge 1$. Let $2m$ be the least upper bound on the $1$--type of the boundary points of $D$ and let $U$ be a tubular neighbourhood  of $\pa D$ such that for any 
$A \in U$ there is a unique orthogonal projection to $\pa D$ which will be denoted by $A^{\ast}$ 
such that $\abs{A - A^{\ast}} = {\rm dist}(A, \pa D) = \delta_D(A)$. Furthermore, we may assume that the normal vector field, given at any $\zeta \in U$ by
\[
\nu(\zeta) = \big(\pa r /\pa \ov z_1(\zeta), \pa r / \pa \ov z_2(\zeta), \ldots, \pa r / \pa \ov z_n(\zeta) \big)
\]
has no zeros in $U$ and is normal to the hypersurface $\Gamma = \{ r(z) = r(\zeta) \}$. Fix $\zeta \in U$. After a permutation of co-ordinates if necessary, we may assume 
that $\partial r /\partial \overline{z}_n(\zeta) \neq 0$. Then note that the affine transform 
\[
\phi^\zeta(z) = \big( z_1 - \zeta_1, \ldots, z_{n-1} - \zeta_{n-1}, \langle \nu(\zeta), z - \zeta \rangle \big)
\] 
translates $\zeta$ to the origin and is invertible (by virtue of the fact that $\partial r /\partial \overline{z}_n(\zeta) \neq 0$). Moreover, $ \phi^{\zeta} $
 reduces the linear part of the Taylor expansion of $r^\zeta = r \circ (\phi^\zeta)^{-1}$ about the origin, to 
\begin{equation}\label{nrmlfrm0}
r^\zeta(z) = r(\zeta) +  2\Re z_n + \text{ terms of higher order. }
\end{equation} 
\noindent  In particular, the origin lies on the hypersurface $\Gamma_\zeta^{r^\zeta}$, the zero set of $r^\zeta(z) - r(\zeta)$ and the normal to this hypersurface at the origin, is the unit vector along the $\Re z_n$-axis. In fact, by the continuity of $\partial r^\zeta/\partial z_n(\zeta)$, we 
get a small ball $ B(0,R_0)$ (where as usual $ B(p, r) $ is the ball around $ p $ with radius $ r $), with the property that the vector field $\nu(z)$ has a 
non-zero component along (the constant vector field) $L_n = \partial / \partial z_n$ for all $ z $ in $ B(0, R_0) $. Indeed, we may assume that $\vert \partial r^\zeta/\partial z_n (z) \vert $ 
is bounded below by any positive constant less than $1$. By shrinking $ B(0, R_0) $, if necessary, we can ensure that $\vert \partial r^\zeta/\partial z_n (z) \vert \geq 1/2 $. We will
perform such shrinking of neighbourhoods henceforth tacitly, taking care only that the number of times we have done this at the end of all, is finite. Furthermore, we may repeat the 
above procedure for any $\zeta \in U$. Since $r$ and $(\phi^\zeta)^{-1}$ are smooth (as functions of $\zeta$), the family $\{  \partial r^\zeta/\partial z_n(z) \}$ of functions 
(parametrized by $\zeta$) is equicontinuous. Moreover, the neighbourhood $U$ is precompact, and hence, we may choose the radius $R_0$ to be independent of $\zeta$.\\  

\noindent Continuing with some helpful background and terminology, we assign a weight of $1/2m$ to the variable $z_1$, $1/2$ to the variable $z_{\al}$ for $2 \le \al \le n - 1$ and $1$ to the variable $z_n$ and for multi-indices 
$J = (j_1, j_2, \ldots, j_n)$ and $K = (k_1, k_2, \ldots k_n)$, the weight of the monomial $z^J \ov z^K = z_1^{j_1} z_2^{j_2} \ldots z_n^{j_n} \ov z_1^{k_1} \ov z_2^{k_2} \ldots \ov 
z_n^{k_n}$, is  defined to be
\[
{\rm wt}(z^j \overline{z}^K) = (j_1 + k_1)/2m + (j_2 + k_2)/2 + \ldots + (j_{n-1} + k_{n-1})/2 + (j_n + k_n).
\]
Analogous to the definition of degree, we say that a polynomial in $\mathbb{C}[z,\overline{z}]$ is of weight $\lambda$ if the maximum of the 
weights of its monomials is $\lambda$; the weight of a polynomial mapping is the maximum of the weights of its components. Thus note that 
the weight of the map $\phi^\zeta$ is $1$ while its `multiweight' is $(1/2m, 1/2, \ldots,1/2,1)$. \\

\noindent We now summarize the special normal form for Levi corank one domains (cf. \cite{Cho2}), details of which are discussed in the appendix. For each  
$\z \in U$, there is a radius $R > 0$ and an injective holomorphic mapping $\Phi^{\z} : B(\z, R) \ra \mbb C^n$ such that the transformed defining 
function $\rho^{\z} = r^{\z} \circ (\Phi^{\z})^{-1}$ becomes
\begin{multline}\label{nrmlfrm}
\rho^{\z}(w) = r(\z) + 2 \Re w_n + \sum_{l = 2}^{2m}P_l(\z; w_1) + \abs{w_2}^2 + \ldots + \abs{w_{n-1}}^2 \\
+ \sum_{\al = 2}^{n - 1} \sum_{ \substack{j + k \le m\\
                                                          j, k > 0}} \Re \Big( \big(b_{jk}^{\al}(\z) w_1^j \ov w_1^k \big) w_{\al} \Big)+ R(\z;w)
\end{multline}
where 
\[
P_l(\z; w_1) = \sum_{j + k = l} a^l_{jk}(\z) w_1^j \ov w_1^k
\]
are real valued homogeneous polynomials of degree $l$ without harmonic terms and the error function $R(\zeta,w) \to 0$ as $w \to 0$ faster than at least one of the monomials of weight $1$. Further, the map $\Phi^{\z}$ is 
actually a holomorphic polynomial automorphism of weight one  of the form
\begin{alignat}{3} \label{E45}
\Phi^{\z}(z) = \Big(z_1 - \z_1, G_{\z}(\ti z - \ti \z) - Q_2(z_1 - \z_1), \langle \nu(\zeta), z- \zeta \rangle - Q_1({}'z - {}' \zeta) \Big)
\end{alignat}
where $G_{\z} \in GL_{n-2}(\mbb C), \ti z = (z_2, \ldots z_{n-1}), {}'z = (z_1, z_2, \ldots, z_{n-1})$ and $Q_2$ is a vector valued polynomial whose $\al$-th component is a polynomial 
of weight at most $1/2$ of the form
\[
Q^{\al}_2(t) = \sum_{k = 1}^m b_k^{\al}(\z) t^k
\]
for $t \in \mbb C$ and $2 \le \al \le n - 1$. Finally, $Q_1({}' z - {}'\zeta)$ is a polynomial of weight at most $1$ and is of the form $\hat{Q}_1 \big(z_1, -\zeta_1, G_\zeta (\tilde{z} - \tilde{\zeta}) \big)$ with $\hat{Q}_1$ of the form
\[
\hat{Q}_1(t_1, t_2, \ldots, t_{n-1}) = \sum_{k=2}^{2m} a_{k0}(\z)t_1^k - \sum_{\al = 2}^{n-1} \sum_{k=1}^m a_k^{\al}(\z) t_{\al}t_1^k - \sum_{\al = 2}^{n-1}c_{\al}(\z) t^2_{\al}.
\]
Since $G_\zeta$ is just a linear map, $Q_1({}'z - {}' \zeta)$ also has the same form when considered as an element of the ring of holomorphic polynomials 
$\mathbb{C}[{}'z - {} '\zeta]$, when $\zeta$ is held fixed. The coefficients of all the polynomials, mentioned above, are smooth functions of $\zeta$. By shrinking $U$,
 if needed, we can ensure that $R > 0$ is independent of $\z$ because these new coordinates depend smoothly on $\z$. Further $Q_1(0, \ldots, 0) = 0$ and that the lowest 
degree of its monomials is at least two. On the other hand, while $Q_2(0) = (0, \ldots, 0)$, the lowest degree of the terms 
in $Q^{\al}_2$ is at least (and can be) one. In case, the polynomials $Q_2^\al
$ and $Q_1$ are identically zero, it turns out that the arguments become even simpler and this will be evident from the sequel. Note that $\Phi^{\z}(\z) = 0$ and 
\[
\Phi^{\z}(\z_1, \ldots, \z_{n-1}, \z_n - \ep) = (0, \ldots, 0, -\ep \;\pa r/ \pa \ov z_n(\z)).
\]
Finally, the transforms that reduce degree two terms to the form described in (\ref{nrmlfrm}), force that $L_j$'s (associated to $\rho^\zeta$) {\it at the origin}
form an orthonormal basis of eigenvectors for the Levi form. In fact, $ L_j$'s are the unit vectors along the co-ordinate axes.\\ 

\noindent Two remarks are in order here. Firstly, observe that the defining function in (\ref{nrmlfrm0}) becomes simple, however, the change of variables that effects this is not simple
(in the sense of being weighted homogeneous in the variables $z - \zeta$). This is evident in (\ref{nrmlfrm0}) where only the linear part of the expansion is reduced to its simplest
form (up to a permutation) and later in the final transformation (\ref{E45}). Note that each $ \Phi^{\zeta} $ is a polynomial automorphism of $ \mathbb{C}^n $ and will be referred to
as the canonical change of variables for Levi corank one domain. Evidently, $ \Phi^{\zeta} $ is neither a `decoupled polynomial mapping' nor a weighted homogeneous map in $z - \zeta$. 
This poses difficulties in imitating Herbort's calculations directly and we shall discuss these difficulties as we encounter them, namely in Section \ref{sixth}.\\

\noindent Secondly, recall that, in general, the degree (or weight) of a holomorphic polynomial automorphism  is not equal to that of its 
inverse. However, the inverse of $\Phi^{\z}$ has the same form as $\Phi^{\zeta}$. It can be checked (done in the last section) that these maps belong to the algebraic 
group $\mathcal E_{L}$ of all weight preserving polynomial automorphisms whose first component has weight $1/2m$, the next $n-2$ components are of weight $1/2$ each and the last component 
has weight $1$; in particular, the weight of $\Phi^{\z}$ is one. Thus the collection $\{Q(\zeta, \delta_e), \Phi^{\z}\}$ -- where $\delta_e$ is defined below -- forms an atlas for a tubular neighbourhood of $\pa D$ giving it the structure of an $\mathcal E_{L}$-manifold, i.e., the 
transition maps associated to this atlas lie in $\mathcal E_{L}$.\\

\noindent To construct the distinguished polydiscs around $\z$ (more precisely, biholomorphic images of polydiscs), with notations as in \cite{Her}
define for each $\delta > 0$, the special-radius
\begin{alignat}{3} \label{E46}
\tau(\z, \delta) = \min \Big\{ \Big( \delta/ \vert P_l(\zeta, \cdot) \vert \Big)^{1/l}, \; \Big(\delta^{1/2}/B_{l'}(\zeta) \Big)^{1/l'}  \; : \; 2 \le l \le 2m, \; 2 \leq l' \leq m   \Big \}.
\end{alignat}
where
\[
B_{l'}(\zeta) = {\rm max} \{ \vert b_{jk}^\alpha (\zeta) \vert \; : \; j+k=l', \; 2 \leq \alpha \leq n-1 \}, \; 2 \leq l' \leq m.
\]
It was shown in \cite{Cho2} that the coefficients $b_{jk}^\alpha$'s in the above definition are insignificant and may be dropped out, so that 
\[
\tau(\z, \delta) = \min \Big\{ \Big( \delta/ \vert P_l(\zeta, \cdot) \vert \Big)^{1/l}  \;: \; 2 \le l \le 2m \Big\}.
\]
Set
\[
\tau_1(\z, \delta) = \tau(\z, \delta) = \tau, \tau_2(\z, \delta) = \ldots = \tau_{n-1}(\z, \delta) = \delta^{1/2}, \tau_n(\z, \delta) = \delta
\]
and define
\[
R(\z, \delta) = \{ z \in \mbb C^n : \vert z_k \vert < \tau_k(\z, \delta), \; 1 \le k\le n \}
\]
which is a polydisc around the origin in $\mbb C^n$ with polyradii $\tau_k(\z, \delta)$ along the $z_k$ direction for $1 \le k \le n$ and let
\[
Q(\z, \delta) = (\Phi^{\z})^{-1}\big(R(\z, \delta) \big)
\]
which is a distorted polydisc around $\z$. It was shown in \cite{Cho2} that for all sufficiently small positive $\delta$ -- say, for all $\delta$ in some interval $(0,\delta_e)$ -- there is a uniform constant $C_0 > 1$ such that 
\begin{enumerate}
\item[(i)] these `polydiscs' satisfy the engulfing property, i.e., for all $\z \in U$ if $\eta \in Q(\z, \delta)$, then $Q(\eta, \delta) \subset Q(\z, C_0 \delta)$ and 
\item[(ii)] if $\eta \in Q(\z, \delta)$ then $\tau(\eta, \delta) \le C_0 \tau(\z, \delta) \le C_0^2 \tau(\eta, \delta)$. 
\end{enumerate}

\medskip

\noindent For $A, B \in U$ let 
\[
M(A, B) = \{ \delta > 0 : A \in Q(B, \delta) \}
\]
which is a sub-interval of $[0, \infty)$ since the distorted polydiscs $Q(B, \delta)$ are monotone increasing as a function of $\delta$. Then define
\[
d'(A, B) = \inf M(A, B)
\]
if $M(A, B) \not= \emptyset$ which happens precisely when $\vert A - B \vert \le R$ (otherwise we simply set it equal to $+\infty$) and let
\begin{equation} \label{pseudistdefn}
d(A, B) = \min \{d'(A, B), \vert A - B \vert_{l^{\infty}} \}
\end{equation}
which is an auxiliary pseudo-distance function on $D$. Here we work with the $l^{\infty}$ norm of $A - B$ instead of the usual Euclidean distance for convenience. Now, let
\[
\eta(A,B) =  \log  \Bigg(1 + \frac{\abs{\hat{\Phi}^A(B)_n}}{\delta_D(A)} + \sum_{\al = 2}^{n-2} \frac{\abs{\hat{\Phi}^A(B)_{\al}}}{\sqrt{\delta_D(A)}} + 
\frac{\abs{\hat{\Phi}^A(B)_1}}{\tau \big(A, \delta_D(A)\big)}  \Bigg) 
\]
where $\hat{\Phi}^A(B)$ differs from $\Phi^A(B)$ by a permutation of co-ordinates to ensure $\nu(A)_n \neq 0$ i.e., $\hat{\Phi}^A(B) = \Phi^A \circ P_A(B)$ where $P_A(z)$ is any 
permutation of the variables $z_1, \ldots , z_n$ such that $\partial (r \circ P_A^{-1})/\partial z_n \neq 0$. The existence of $P_A$ follows from the assumption that $\nu(A) \neq 0$. 
If $A,B$ are close enough, then we may certainly choose $P_A=P_B$. Finally, let
\[
\varrho(A, B) = 1/2 \Big( \eta(A,B) + \eta(B,A) \Big)
\]
and as usual, for quantities $S, T$ we will write $S \lesssim T$ to mean that there is a constant $C > 0$ such that $S \le C T$, while $S \approx T$ means that $S \lesssim T$ and 
$T \lesssim S$ both hold.

\medskip

\noindent It is known (\cite{Cho1}) that these infinitesimal metrics are uniformly comparable on a pseudoconvex Levi 
corank one domain $D$, i.e., $B_D(z, v) \approx C_D(z, v) \approx K_D(z, v)$ uniformly for all $(z, v) \in D \times \mbb C^n$. Thus 
to get lower bounds for the integrated distances it suffices to understand $d^c_D$ alone. We use the inequality $ d_D^{Cara} \leq d^c_D  $ 
to get sharper lower bounds on $ d^c_D $ and this is the only motivation for introducing $ d_D^{Cara} $. 

\begin{thm} \label{thm1}
Let $D \subset \mbb C^n$ be a smoothly bounded Levi corank one domain and $U$ a tubular neighbourhood of $\pa D$ as above. Then for all $A, B \in U \cap 
D$ we have that
\[
  \varrho(A, B) - l \lesssim d^c_D(A, B) \leq d_D^k(A,B) \lesssim \varrho(A, B) + L
\]
where $l$ and $L$ are some positive constants.
In particular, the same bounds hold for $d^b_D(A, B)$ as well.
\end{thm}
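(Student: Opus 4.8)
The plan is to establish the two displayed inequalities separately, the middle one $d^c_D\le d^k_D$ being the trivial consequence of the infinitesimal inequality $C_D\le K_D$. The upper bound $d^k_D(A,B)\lesssim\varrho(A,B)+L$ will come from a chain‑of‑polydiscs argument based on the distinguished polydiscs $Q(\z,\delta)$ and their engulfing and comparability properties (i)--(ii); the lower bound $\varrho(A,B)-l\lesssim d^c_D(A,B)$ will be reached through the Carath\'eodory distance, exploiting $d_D^{Cara}\le d^c_D$. Since $\varrho=\tfrac12(\eta(A,B)+\eta(B,A))\le\max\{\eta(A,B),\eta(B,A)\}$ and $d^c_D,d^k_D,d_D^{Cara}$ are symmetric, in each case it is enough to compare with $\eta(A,B)$, the bound for $\eta(B,A)$ then following by interchanging $A$ and $B$. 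The standing tools are: the containment $Q(\z,c\,\delta_D(\z))\subset D$ for a uniform $c>0$ (a consequence of the normal form \eqref{nrmlfrm} and pseudoconvexity), the elementary inequality $\tau(\z,c\delta)\gtrsim c^{1/2m}\tau(\z,\delta)$ for $c\ge1$ (immediate from \eqref{E46}), and the product formula for the Kobayashi distance of a polydisc. The final assertion for $d^b_D$ is then purely formal: the uniform comparability $B_D\approx C_D\approx K_D$ of \cite{Cho1} integrates to $d^b_D\approx d^c_D$, and the displayed estimates, valid up to multiplicative and additive constants, transfer.

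For the upper bound, fix $A,B\in U\cap D$ and set $\delta^\ast=\max\{d(A,B),\delta_D(A),\delta_D(B)\}$, where $d$ is the pseudo‑distance \eqref{pseudistdefn}. I would pick a ``summit'' point $S$ on the inner normal through $A^\ast$ with $\delta_D(S)$ comparable to (a fixed multiple of) $\delta^\ast$; two applications of the engulfing property, using that the normal line is radial in the relevant polydiscs, give $A,B\in Q(S,C\delta^\ast)\subset D$ for a uniform $C$. Connect $A$ to $S$ by a chain $A=A_0,A_1,\dots,A_N=S$ along that normal with $\delta_D(A_{j+1})\approx2\,\delta_D(A_j)$ (if the chain leaves $U$ it enters the compact core of $D$, which is traversed at cost $O(1)$, and one descends again). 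Each link satisfies $A_{j+1}\in Q(A_j,c'\delta_D(A_j))\subset D$, so by the polydisc formula $d^k_D(A_j,A_{j+1})\le d^k_{Q(A_j,c'\delta_D(A_j))}(A_j,A_{j+1})=O(1)$, whence $d^k_D(A,S)\lesssim\log^+\!\big(\delta^\ast/\delta_D(A)\big)+O(1)$, and likewise $d^k_D(S,B)\lesssim\log^+\!\big(\delta^\ast/\delta_D(B)\big)+O(1)$. It remains to see that $\log^+\!\big(\delta^\ast/\delta_D(A)\big)\lesssim\eta(A,B)+O(1)$: when $\delta^\ast=d(A,B)$ the point $\hat\Phi^A(B)$ lies essentially on $\partial R(A,\delta^\ast)$, so one of $|\hat\Phi^A(B)_n|/\delta^\ast$, $|\hat\Phi^A(B)_\al|/(\delta^\ast)^{1/2}$, $|\hat\Phi^A(B)_1|/\tau(A,\delta^\ast)$ equals $1$; substituting into $\eta(A,B)$ and using $\tau(A,\delta^\ast)\lesssim(\delta^\ast/\delta_D(A))^{1/2m}\,\tau(A,\delta_D(A))$ gives $\eta(A,B)\ge\log\!\big(1+(\delta^\ast/\delta_D(A))^{1/2m}\big)\gtrsim\log^+\!\big(\delta^\ast/\delta_D(A)\big)$, while the cases $\delta^\ast\in\{\delta_D(A),\delta_D(B)\}$ are, respectively, a triviality and an elementary estimate on $|\hat\Phi^A(B)_n|$. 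Combining, $d^k_D(A,B)\lesssim\eta(A,B)+\eta(B,A)+O(1)\le2\varrho(A,B)+L$.

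For the lower bound it suffices to construct, for each $A,B\in U\cap D$, a map $f\in\mathcal{O}(D,\Delta)$ with $d_\Delta^p\big(f(A),f(B)\big)\gtrsim\eta(A,B)-O(1)$; then $d_D^{Cara}(A,B)\gtrsim\eta(A,B)-O(1)$, by symmetry also $\gtrsim\eta(B,A)-O(1)$, so $d^c_D(A,B)\ge d_D^{Cara}(A,B)\gtrsim\varrho(A,B)-l$. Writing $\eta(A,B)=\log(1+T_n+T_\al+T_1)$ for the three quotients $T_n,T_\al,T_1$ appearing in its definition and using $\log(1+x+y+z)\le\log 4+\max\{\log(1+x),\log(1+y),\log(1+z)\}$, it is enough to resolve the three directions separately. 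The normal direction $T_n$ is handled by a holomorphic peak (support) function at $A^\ast$: finite type together with the normal form \eqref{nrmlfrm} produces $h_{A^\ast}$ with $-\Re h_{A^\ast}\gtrsim\delta_D(\cdot)$ near $A^\ast$, a M\"obius image of which, cut off to a global function, separates $A$ from $B$ by Poincar\'e distance $\gtrsim\log(1+T_n)$; the directions $z_\al$ ($2\le\al\le n-1$) being strictly pseudoconvex, a shifted peak function at a nearby boundary point deals with $T_\al$ exactly as in \cite{BB}. The crux is the degenerate ($z_1$) direction at the anisotropic scale $\tau\big(A,\delta_D(A)\big)$: I would begin from the explicit polynomial model in the normal‑form coordinates --- a polynomial $p$ of degree $\le2m$ in $w_1=\hat\Phi^A(\,\cdot\,)_1$ with $p(0)=0$, built from the $P_l(A;\,\cdot\,)$ so that $p\circ\hat\Phi^A$ maps a fixed neighbourhood of $A^\ast$ in $\ov D$ into $\Delta$ while separating $A$ and $B$ at Poincar\'e distance $\gtrsim\log\big(1+|\hat\Phi^A(B)_1|/\tau(A,\delta_D(A))\big)$ --- and then globalize it: multiply by a cutoff supported near $A^\ast$ and solve a $\bar\partial$‑equation with a plurisubharmonic weight adapted to the distorted polydisc $Q(A,\delta_D(A))$ (H\"ormander's $L^2$‑estimates together with the finite‑type bidisc technology of Catlin, cf. \cite{Ca}), producing a genuinely holomorphic $f_1\in\mathcal{O}(D)$ with $\|f_1\|_\infty=O(1)$ that still separates $A$ and $B$ by the required amount; rescaling into $\Delta$ costs only $O(1)$. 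Taking $f$ to be whichever of $f_n,f_\al,f_1$ realizes the maximum yields $d_D^{Cara}(A,B)\gtrsim\eta(A,B)-O(1)$.

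The main obstacle --- and the reason the computation of \cite{Her} cannot be copied verbatim --- is that the canonical change of variables $\Phi^\z$ of \eqref{E45} is neither decoupled nor weighted‑homogeneous in $z-\z$: the polynomial corrections $Q_1,Q_2$ sit inside $\eta$, inside $\tau$, and inside the polydiscs, and every estimate entering the $f_1$‑construction (the size of the local model on $\ov D\cap B(A^\ast,\cdot)$, the choice of weight, the $L^\infty$‑bound after solving $\bar\partial$) must be carried out uniformly in $\z\in U$ in spite of this. Controlling the contributions of $Q_1$ and $Q_2$, and checking that the weighted $\bar\partial$‑estimates degenerate at a rate matched to $\tau(\z,\delta)$, is the heart of the matter; when $Q_1\equiv Q_2\equiv0$ the local model is weighted‑homogeneous and the construction reduces to a direct analogue of \cite{Her}, which is precisely the simplification already noted above.
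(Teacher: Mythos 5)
Your upper bound is a legitimate strategy, but it is not the paper's: the paper joins $A$ to $B$ by an explicit curve (the pullback under $(\Phi^A)^{-1}$ of the segment to $\Phi^A(B)$, projected to $\pa D$ and pushed in to constant height $\delta_D(A)$, followed by a normal segment handled by Lemma \ref{lem7.1}) and integrates the infinitesimal upper bound of Cho/Thai--Thu, whereas your dyadic ascent along the normals to a common summit is the Balogh--Bonk route. Both work; your reduction of $\log^+\big(\delta^*/\delta_D(A)\big)$ to $\eta(A,B)$ via the boundary incidence of Lemma \ref{lem 3.2} and the monotonicity $\tau(A,c\delta)\ge c^{1/2m}\tau(A,\delta)$ is sound (the case $\delta^*=\delta_D(B)$ does require the sign control on $\Re\,\hat{\Phi}^A(B)_n$ coming from the normal form, which you should spell out, but it is routine).

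The lower bound, however, has a genuine gap. You propose to prove $d_D^{Cara}(A,B)\gtrsim\eta(A,B)-O(1)$ for \emph{all} pairs by exhibiting one bounded function per direction, with $f(A)=p\big(\hat{\Phi}^A(A)_1\big)=p(0)=0$ in the degenerate direction. Since $d_\Delta^p(0,w)\le\tfrac12\log\tfrac{2}{1-|w|}$, forcing a separation of size $\log(1+T_1)$ with $T_1=|\hat{\Phi}^A(B)_1|/\tau\big(A,\delta_D(A)\big)$ large would require $1-|f(B)|\lesssim(1+T_1)^{-2}$, i.e., $f(B)$ exponentially close to $\pa\Delta$. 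A polynomial of degree $\le 2m$ in $w_1$ (or the coordinate function the paper actually uses), normalized to have modulus $\le1$ and then corrected by a $\ov\pa$-solution, has no such behaviour: once $B$ leaves the support of the cutoff, $f(B)$ is $0$ up to the correction and the Poincar\'e separation collapses to $O(1)$. This is precisely why the paper splits into the cases (\ref{near}) and (\ref{far}): the function-theoretic construction (Lemmas \ref{lem5.1}, \ref{lem5.2} and Lemma \ref{lemnear}) is used only when $d'(B,A)\le 4C_e\delta_D(A)$, where every $T_j$ is bounded and a quadratic Poincar\'e estimate suffices; in the far case the paper bounds the \emph{inner} distance $d^c_D$ -- not $d_D^{Cara}$ -- from below by counting how many polydiscs $Q\big(\gamma(t_{\nu-1})^*,2C_e\delta_D(\gamma(t_{\nu-1}))\big)$ any connecting curve must cross, each crossing costing the uniform constant $c_0$ of Lemma \ref{cardist}, together with the integrated infinitesimal lower bound when the curve exits $U$. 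Your proposal is missing this near/far dichotomy and the chain-counting mechanism; without it the claim $d_D^{Cara}(A,B)\gtrsim\eta(A,B)-O(1)$ in the far regime is unsupported (and is stronger than what the paper establishes, which is only the corresponding bound for $d^c_D$ there).
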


\noindent We may also express the bounds here by replacing the term $\Phi^A(\cdot)_n$ involved in the definition of $\eta(A,B)$ which involves monomials in $B_j$ for $1 \leq j \leq n$, i.e., involves all the components of $B$ in a slightly complicated way, by the more comprehensive term $d(\cdot,A)$ with $d$ being the special pseudo-distance defined as above, available for Levi co-rank one domains; for the precise statements see (\ref{dclowbd}) and (\ref{kobdistuppbd}).\\ 

\noindent Obtaining a lower bound for $d^c_D$ amounts to understanding the separation properties of bounded holomorphic functions on $D$. To do this, the boundary $\pa D$ will be bumped 
outwards near a given $\z \in \pa D$ as in \cite{Cho1} to get a larger domain that contains $D$ near $\z$. The pluricomplex Green function for these large domains will then be used to 
construct weights for an appropriate $\ov \pa$-problem as in \cite{Her} and the solution thus obtained will be modified to get a holomorphic function with a suitable $L^2$ bound near 
$\z$. Solving another $\ov \pa$-problem will extend this to a bounded holomorphic function on $D$ with control on its separation properties. This function is then used to bound 
$d^c_D$ from below. Theorem \ref{thm1} has several consequences and we elaborate them in the following paragraphs.

\medskip

In \cite{Fr}, Fridman defined an interesting non-negative continuous function on a given Kobayashi hyperbolic complex manifold of dimension $n$, say $X$ that essentially determines the 
largest Kobayashi ball at a given point on $X$ which is comparable to the unit ball $\mbb B^n$. To be more precise, let $B_X(p, r)$ denote the Kobayashi ball around $p \in X$ of radius 
$r > 0$. The hyperbolicity of $X$ ensures that the intrinsic topology on $X$ is equivalent to the one induced by the Kobayashi metric. Thus for small $r > 0$, the ball $B_X(p, r)$ is 
contained in a coordinate chart around $p$ and hence there is a biholomorphic imbedding $f : \mbb B^n \ra X$ with $B_X(p, r) \subset f(\mbb B^n)$. Let $\mathcal R$ 
be the family of all $r > 0$ for which there is a biholomorphic imbedding $f : \mbb B^n \ra X$ with $B_X(p, r) \subset f(\mbb B^n)$. Then $\mathcal R$ is evidently non-empty. Define
\[
h_X(p,\mbb B^n) = \inf_{r \in \mathcal R} \frac{1}{r}
\]
which is a non-negative real valued function on $X$. Since the Kobayashi metric is biholomorphically invariant, the same holds for $h_X(p, \mbb B^n)$ which shall henceforth be called 
Fridman's invariant. The same construction can be done using any invariant metric that induces the intrinsic topology on $X$ and we may also work with homogeneous domains other than the 
unit ball. However we shall work with the Kobayashi metric exclusively. A useful property identified in \cite{Fr} was that if $h_X(p_0, \mbb B^n)$ for some $p_0 \in X$, then $h_X(p, 
\mbb B^n) = 0$ for all $p \in X$ and that $X$ is biholomorphic to $\mbb B^n$. Moreover, $p \mapsto h_X(p, \mbb B^n)$ is continuous on $X$. The boundary behaviour of 
Fridman's invariant was studied in \cite{MV} for a variety of pseudoconvex domains and the following statement extends this to the class of Levi corank one domains. 

\begin{thm} \label{thm2}
Let $D \subset \mbb C^n$ be a smoothly bounded pseudoconvex domain of finite type. Let $\{p^j\} \subset D$ be a sequence that converges to $p^0 \in \pa D$. Assume that the Levi form of 
$\pa D$ has rank at least $n - 2$ at $p^0$. Then
\[
h_D(p^j, \mbb B^n) \ra h_{D_{\infty}}(('0, -1), \mbb B^n)
\]
as $j \ra \infty$ where $D_{\infty}$ is a model domain defined by
\[
D_{\infty} = \big\{ z \in \mbb C^n : 2 \Re z_n + P_{2m}(z_1, \ov z_1) + \abs{z_2}^2 + \ldots + \abs{z_{n-1}}^2 < 0  \big\}
\]
and $P_{2m}(z_1, \ov z_1)$ is a subharmonic polynomial of degree at most $2m$ ($m \ge 1$) without harmonic terms, $2m$ being the $1$-type of $\pa D$ at $p^0$.
\end{thm}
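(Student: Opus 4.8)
The plan is to deduce Theorem \ref{thm2} from the distance estimates of Theorem \ref{thm1} together with a scaling (stability) argument for the domains $D$ near $p^0$. The key observation is that Fridman's invariant $h_X(p,\mbb B^n)$ is determined by how large a Kobayashi ball $B_X(p,r)$ can be while still embeddable biholomorphically into $\mbb B^n$; equivalently, it is governed by the metric behaviour of $X$ in a neighbourhood of $p$ measured against that of $\mbb B^n$. Since Theorem \ref{thm1} gives two--sided bounds for $d_D^k$ in terms of the explicit quantity $\varrho(A,B)$ built from the Levi data of $\pa D$ and the boundary distance, the first step is to normalize: after applying the canonical polynomial automorphisms $\Phi^{p^j}$ (suitably permuted to $\hat\Phi^{p^j}$) and dilating by the anisotropic scaling associated to $\tau(p^j,\delta_D(p^j))$, the images of $D$ converge to the model domain $D_\infty$ in the local Hausdorff (or $C^\infty_{loc}$ on defining functions) sense. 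This is precisely the Levi corank one scaling of Cho, and the point $p^j$ gets sent to (a point converging to) $('0,-1) \in D_\infty$.

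Next I would establish the stability of Fridman's invariant under this scaling. Concretely, writing $\sigma_j$ for the scaling maps, one has $\sigma_j(D) \to D_\infty$ and $\sigma_j(p^j) \to ('0,-1)$; because $h$ is biholomorphically invariant, $h_D(p^j,\mbb B^n) = h_{\sigma_j(D)}(\sigma_j(p^j),\mbb B^n)$. So it suffices to show that if $\Omega_j \to \Omega_\infty$ with base points $q_j \to q_\infty$ (all convergence being of the type produced by the scaling, with appropriate normal families / equicontinuity of Kobayashi metrics available because these domains are uniformly of finite type), then $h_{\Omega_j}(q_j,\mbb B^n) \to h_{\Omega_\infty}(q_\infty,\mbb B^n)$. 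The lower semicontinuity direction uses that a biholomorphic imbedding $f:\mbb B^n \to \Omega_\infty$ with $B_{\Omega_\infty}(q_\infty,r) \subset f(\mbb B^n)$ can be perturbed, via Hurwitz and the convergence of domains, to imbeddings $f_j : \mbb B^n \to \Omega_j$ with $B_{\Omega_j}(q_j, r-\ep) \subset f_j(\mbb B^n)$ for large $j$; the upper semicontinuity direction uses a normal families argument on a sequence of near--extremal imbeddings $f_j : \mbb B^n \to \Omega_j$, extracting a limit imbedding into $\Omega_\infty$ and invoking the Kobayashi--ball convergence $B_{\Omega_j}(q_j,r) \to B_{\Omega_\infty}(q_\infty,r)$, which itself follows from local uniform convergence of the Kobayashi distances. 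The role of Theorem \ref{thm1} here is to guarantee that the Kobayashi balls $B_{\Omega_j}(q_j,r)$ are comparable to fixed Euclidean--type regions (the ``boxes'' $Q(\cdot,\delta)$), so that they do not degenerate or escape to the boundary, which is exactly what makes the normal family and ball--convergence arguments go through uniformly in $j$.

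The convergence $B_{\Omega_j}(q_j,r)\to B_{\Omega_\infty}(q_\infty,r)$ deserves to be isolated as the technical heart. Here one argues: for the inclusion of the limit of the $B_{\Omega_j}$ inside $B_{\Omega_\infty}$, one uses lower semicontinuity of $d^k$ under the domain convergence (points in $\Omega_j$ close to a point of $\partial\Omega_\infty$ must have $d^k_{\Omega_j}$--distance to $q_j$ blowing up, which follows from the lower bound $\varrho(A,B)-l \lesssim d^c_D \leq d^k_D$ in Theorem \ref{thm1} applied on each $\Omega_j$ uniformly); for the reverse inclusion one uses the upper bound $d^k_D(A,B)\lesssim \varrho(A,B)+L$, again uniform over $j$, to see that any compact subset of $B_{\Omega_\infty}(q_\infty,r)$ eventually lies in $B_{\Omega_j}(q_j,r')$ for $r'$ slightly larger than $r$. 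I expect \emph{this} uniformity --- that the constants $l,L$ and the comparison constants in Theorem \ref{thm1} can be taken independent of $j$, since the $\Omega_j$ are all (scaled pieces of) $D$ and then of the single limit model $D_\infty$ --- to be the main obstacle, and the bulk of the work will be in carefully tracking the scaling to confirm that the quantities $\delta_D$, $\tau(\cdot,\delta)$, and hence $\eta(A,B)$ and $\varrho(A,B)$, transform compatibly under $\sigma_j$ and pass to the corresponding quantities for $D_\infty$ in the limit. Once that is in place, combining the two semicontinuity estimates yields $h_D(p^j,\mbb B^n)\to h_{D_\infty}(('0,-1),\mbb B^n)$, completing the proof.
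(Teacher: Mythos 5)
Your overall architecture coincides with the paper's: scale along $p^j$, use biholomorphic invariance of $h$, establish convergence of the Kobayashi balls $B_{D^j}(z^j,R)$ to $B_{D_\infty}(z^0,R)$, and then run the two semicontinuity arguments (restriction of a near-optimal imbedding of $\mathbb{B}^n$ into $D_\infty$ for the $\limsup$ direction, normal families plus Hurwitz for the $\liminf$ direction). The gap lies precisely in the step you yourself flag as the main obstacle: you propose to get the uniform compact containment of $B_{\Omega_j}(q_j,r)$ in $\Omega_\infty$ by applying Theorem \ref{thm1} ``on each $\Omega_j$ uniformly.'' As stated this does not work. Theorem \ref{thm1} is proved for a fixed smoothly bounded Levi corank one domain; its constants come from the tubular neighbourhood, the bumping construction and the $\overline{\partial}$-estimates on that domain, while the scaled domains $D^j$ are images of $D$ under polynomial automorphisms composed with dilations that blow up, converging to an unbounded model. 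There is no a priori reason the constants $l,L$ and the comparison constants survive this passage, and ``carefully tracking the scaling'' of all the quantities in $\varrho$ is not how the paper proceeds.

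The paper closes this step (Lemma \ref{E28}) by splitting the containment in two. First, to rule out escape to the point at infinity of $\partial D_\infty$, the lower bound of Theorem \ref{thm1} (via Proposition \ref{lowbd}) is applied only on the \emph{original} domain $D$: for $q\in B_D(p^j,R)$ it gives $d(p^j,q)\lesssim \delta_D(p^j)$, and this Euclidean-type information is then pushed forward under the explicit maps $\Delta^{\epsilon_j}_{\zeta^j}\circ\Phi^{\zeta^j}$ and shown by direct computation (using $\tau(p^j,C\delta_j)\lesssim\tau(\zeta^j,\epsilon_j)$, $\epsilon_j\approx\delta_D(p^j)$, uniform bounds on $D\Phi^{\zeta^j}$) to land in a fixed bounded set; no estimate on $D^j$ itself is needed here. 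Second, to rule out clustering at finite boundary points of $D_\infty$, one does need a uniform-in-$j$ lower bound on the scaled domains, but only for the infinitesimal metric in the normal direction, and the paper imports exactly this stability statement ((\ref{E57}), Theorem 2.3 of \cite{BV}) and integrates it along curves, rather than rescaling the integrated estimates of Theorem \ref{thm1}. One further point you gloss over: your single convergence statement $h_{\Omega_j}(q_j)\to h_{\Omega_\infty}(q_\infty)$ hides the case $h_D(p^j,\mathbb{B}^n)\to 0$, in which the near-extremal radii $R_j\to\infty$ and the normal-families extraction of a limit imbedding with a finite limiting radius breaks down; the paper treats this case separately by showing $D_\infty$ is then biholomorphic to $\mathbb{B}^n$, so that both sides of the asserted limit vanish.
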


\noindent By scaling $D$ along $\{p^j\}$, we obtain a sequence of domains $D^j$, each containing the base point $('0, -1)$, that converge to $D_{\infty}$ as defined above. It should be 
noted that the polynomial $P_{2m}(z_1, \ov z_1)$ depends on how the sequence $\{p^j\}$ approaches $p^0$. This is simply restating the known fact that unlike the strongly pseudoconvex 
case, model domains near a weakly pseudoconvex point are not unique. Since Fridman's invariant is defined in terms of Kobayashi balls, the main technical step in this theorem is to 
show the convergence of the Kobayashi balls in $D^j$ around $('0, -1)$ with a fixed radius $R > 0$ to the corresponding Kobayashi ball on $D_{\infty}$ with the same radius. Theorem 1.1 
is used in this step.

\medskip

Another consequence of scaling near a Levi corank one point combined with Theorem 1.1 is a description of the Kobayashi balls near such points in terms of parameters that reflect the 
Levi geometry of the boundary. This is well known in the strongly pseudoconvex case -- indeed, the Kobayashi ball around a given point $p$ near a strongly pseudoconvex boundary point is 
essentially an ellipsoid whose major and minor axis are of the order of $(\delta_D(p))^{1/2}$ and $\delta_D(p)$ respectively. 

\begin{thm} \label{thm3}
Let $D \subset \mbb C^n$ be a smoothly bounded pseudoconvex Levi corank one domain. Then for all $R>0$, there are constants $C_1, C_2 > 0$ depending only on $R$ and $D$ such that 
\[
Q(q, C_1 \; \delta_D(q)) \subset B_D(q, R) \subset Q(q, C_2 \; \delta_D(q))
\]
for each $q \in D$ sufficiently close to $\pa D$.
\end{thm}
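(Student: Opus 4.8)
\noindent\emph{Plan of proof.} I would deduce both inclusions from Theorem~\ref{thm1} together with the explicit geometry of the distorted polydiscs $Q(\z,\delta)$. The one ingredient not already in hand that I would isolate first is a \emph{uniform interior estimate}: there should be $\lambda_0\in(0,1)$, depending only on $D$, with $Q(q,\lambda_0\,\delta_D(q))\subset D$ for every $q$ close enough to $\pa D$. I would prove this by substituting $w\in R(q,\lambda_0\delta_D(q))$ into the normal form $\rho^q$ of (\ref{nrmlfrm}): one has $r(q)\approx-\delta_D(q)$, each $P_l(q;w_1)$ is $O(\lambda_0\delta_D(q))$ because $\tau(q,\lambda_0\delta_D(q))^l\le\lambda_0\delta_D(q)/\abs{P_l(q,\cdot)}$, the terms $\abs{w_\al}^2$ and $\Re\!\big(b_{jk}^\al w_1^j\ov{w}_1^k w_\al\big)$ are $O(\lambda_0\delta_D(q))$ (here I would use the insignificance of the $b_{jk}^\al$ recorded after (\ref{E46})), and the error $R(q;w)$ is a small multiple of these since $\delta_D(q)$ is small; so $\rho^q(w)\le-\delta_D(q)(1-C\lambda_0)<0$ once $\lambda_0$ is small relative to $n$ and $m$. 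I would also record the scaling inequalities $\tau(q,t\delta)\ge t^{1/2}\tau(q,\delta)$ and $\tau(q,t\delta)\le t^{1/(2m)}\tau(q,\delta)$ for $0<t\le1$, and $\tau(q,t\delta)\ge t^{1/(2m)}\tau(q,\delta)$ for $t\ge1$, all immediate from the definition of $\tau$.

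\noindent\emph{The inclusion $Q(q,C_1\delta_D(q))\subset B_D(q,R)$.} For this direction Theorem~\ref{thm1} will not be needed. I would fix $C_1\le\lambda_0$. For $B\in Q(q,C_1\delta_D(q))$, monotonicity of $R(q,\cdot)$ in $\delta$ puts both $q$ and $B$ in $Q(q,\lambda_0\delta_D(q))\subset D$, so the distance-decreasing property and the biholomorphism $\Phi^q$ give
\[
d^k_D(q,B)\ \le\ d^k_{Q(q,\lambda_0\delta_D(q))}(q,B)\ =\ d^k_{R(q,\lambda_0\delta_D(q))}\big(0,\Phi^q(B)\big).
\]
The right-hand side is the Kobayashi distance on a polydisc, hence the maximum of the coordinatewise Poincar\'e distances; bounding each numerator by the polyradii of $R(q,C_1\delta_D(q))$ and applying the scaling inequalities for $\tau$ I would get
\[
d^k_D(q,B)\ \le\ \max\Big\{\,\tanh^{-1}\!\big(C_1^{1/(2m)}\lambda_0^{-1/2}\big),\ \tanh^{-1}\!\big((C_1/\lambda_0)^{1/2}\big),\ \tanh^{-1}\!\big(C_1/\lambda_0\big)\,\Big\}.
\]
This bound tends to $0$ as $C_1\to0$ with $\lambda_0$ fixed, so I can choose $C_1=C_1(R,D)>0$ making it $<R$.

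\noindent\emph{The inclusion $B_D(q,R)\subset Q(q,C_2\delta_D(q))$.} Here Theorem~\ref{thm1} enters. First I would note that once $q$ is close enough to $\pa D$ (depending on $R$), $B_D(q,R)\subset U\cap D$, because $D$ is complete hyperbolic and so $\inf_{w\in D\sm U}d^k_D(q,w)\to\infty$ as $q\to\pa D$. For $B\in B_D(q,R)$, the lower estimate $\varrho(q,B)-l\lesssim d^c_D(q,B)\le d^k_D(q,B)$ of Theorem~\ref{thm1} yields $\varrho(q,B)\lesssim R+l$, hence $\eta(q,B)\le2\varrho(q,B)\le\Lambda$ for some $\Lambda=\Lambda(R,D)$ (using $\eta(B,q)\ge0$). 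Exponentiating the definition of $\eta(q,B)$ forces each of its summands below $e^{\Lambda}-1$; with $\mu:=\max\{1,\,e^{\Lambda}-1\}$ this says
\[
\abs{\hat\Phi^q(B)_n}<\mu\,\delta_D(q),\qquad\abs{\hat\Phi^q(B)_\al}<\mu\,\delta_D(q)^{1/2},\qquad\abs{\hat\Phi^q(B)_1}<\mu\,\tau\big(q,\delta_D(q)\big).
\]
Setting $C_2:=\mu^{2m}$ and using $\tau(q,C_2\delta)\ge C_2^{1/(2m)}\tau(q,\delta)$, these inequalities are exactly the statement $\hat\Phi^q(B)\in R(q,C_2\delta_D(q))$, i.e. $B\in Q(q,C_2\delta_D(q))$ --- after identifying $\hat\Phi^q$ with $\Phi^q$ in coordinates near a fixed boundary point where $\pa r/\pa\ov{z}_n\ne0$. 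Since $C_2$ depends only on $R$ and $D$, the two inclusions together give the theorem.

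\noindent\emph{Main obstacle.} The substantive step is the uniform interior estimate $Q(q,\lambda_0\delta_D(q))\subset D$ with $\lambda_0$ independent of $q$: this is where the quantitative content of Cho's normal form (\ref{nrmlfrm}), the droppability of the $b^\al_{jk}$ from $\tau$, the comparability $\abs{r(q)}\approx\delta_D(q)$, and the engulfing and scaling properties (i), (ii) of the $Q(\z,\delta)$ all come in; I would also have to check that $R(q,\lambda_0\delta_D(q))$ stays inside the ball on which $\Phi^q$ is defined, which is automatic once $\delta_D(q)$ is small since all three polyradii of $R(q,\lambda_0\delta_D(q))$ shrink to $0$. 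The rest is bookkeeping with the explicit formula for $\eta$ and the monotone scaling of $\tau$. Finally, the additive constants $l,L$ in Theorem~\ref{thm1} make that theorem useless for the first inclusion when $R$ is small, which is precisely why I would obtain that direction by the direct localization above rather than by inverting the estimate for $d^k_D$.
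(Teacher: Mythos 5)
Your argument is correct, but it follows a genuinely different route from the paper. The paper proves both inclusions by contradiction via the scaling technique: if either constant could not be chosen uniformly, one scales $D$ along a sequence $p^j \to p^0$ and derives a contradiction with the stability of the Kobayashi distance under scaling (Propositions \ref{E30} and \ref{E31}), e.g.\ a point of $\pa B_{D^j}(z^j,R)$ would converge to the base point $('0,-1)$ itself, forcing $d^k_{D_\infty}(z^0,z^0)=R$, or a point of $B_{D^j}(z^j,R)$ would escape to infinity in the first coordinate because $\tau(\z^j, C_jC\delta_D(p^j))/\tau(\z^j,\epsilon_j)\gtrsim C_j\to+\infty$. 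Your proof is direct: the inclusion $Q(q,C_1\delta_D(q))\subset B_D(q,R)$ comes from the uniform interior estimate $Q(q,\lambda_0\delta_D(q))\subset D$ together with the distance-decreasing property and the product formula for $d^k$ on a polydisc, and the reverse inclusion comes from unwinding the lower bound of Theorem \ref{thm1}. What each buys: the paper's route reuses machinery already built for Theorem \ref{thm2} and needs no new computation, but is non-effective; yours yields (in principle) explicit constants and avoids scaling altogether, at the cost of the interior estimate, which is the one step you must actually carry out --- it does go through, by exactly the kind of computation done in Lemma \ref{Ppolydisc}(b) and in the proof of Lemma \ref{Greencandidate} (use $r(q)\le -c_1\delta_D(q)$, the definition of $\tau$ including the $B_{l'}$ terms to absorb the mixed monomials, and the explicit form of the error term from the appendix to see it is $o(\delta_D(q))$ on $R(q,\lambda_0\delta_D(q))$). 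It is also worth noting that your second inclusion is essentially the content of the first half of Lemma \ref{E28}, where the lower bound of Proposition \ref{lowbd} is used to place $B_D(p^j,R)$ inside $G^j_1\cup G^j_2$ with $G^j_2$ a $Q$-polydisc of radius comparable to $e^{R/C_*}\delta_D(p^j)$; so that direction is implicitly present in the paper, just not invoked in the proof of Theorem \ref{thm3}. Two small points to tidy: the sum in the definition of $\eta$ should run over $2\le\al\le n-1$ (the upper limit $n-2$ in the displayed definition is a typo, as the rest of the paper confirms), and you should state explicitly that $d^k$ on a polydisc is the maximum of the coordinatewise Poincar\'e distances.
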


\noindent Analogues of this for weakly pseudoconvex finite type domains in $\mbb C^2$ and convex finite type domains in $\mbb C^n$ were obtained in \cite{MV} by a direct scaling. 
These estimates are useful in establishing a generalized sub-mean value property for plurisubharmonic functions and defining suitable approach regions for boundary values of functions 
in $H^p$ spaces at least on strongly pseudoconvex domains (see \cite{Kr} for example). 

\medskip

The next and last class of applications of Theorem 1.1 deal with the problem of biholomorphic inequivalence of domains in $\mbb C^n$. The paradigm underlying many of the 
results in this direction (see for example \cite{Be} and \cite{Pi}) is that a pair of domains in $\mbb C^n$ cannot have boundaries with different Levi geometry and yet be biholomorphic. 
For proper holomorphic mappings, it is known (see for example \cite{DF} and \cite{CPS}) that the target domain cannot have a boundary with more complicated Levi degeneracies than the 
source domain. Fridman's invariant provides another approach to this problem with the advantage of quickly reducing it to the case of algebraic model domains. Here is an example to 
illustrate this point of view and we refer the reader to \cite{BV} for an alternative proof that works for proper holomorphic mappings as well.

\begin{thm} \label{thm4}
Let $D_1, D_2 \subset \mbb C^n$ be bounded domains with $p^0 \in \pa D_1$ and $q^0 \in \pa D_2$. Assume that $\pa D_1$ is $C^2$-smooth strongly pseudoconvex near $p^0$ and that $\pa 
D_2$ is $C^{\infty}$-smooth pseudoconvex and of finite type near $q^0$. Suppose further that the Levi form of $\pa D_2$ has rank exactly $n-2$ at $q^0$. Then there cannot exist a 
biholomorphism $f : D_1 \ra D_2$ with $q^0 \in cl_f(p^0)$, the cluster set of $p^0$.
\end{thm}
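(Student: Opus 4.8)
The plan is to run the whole argument through Fridman's invariant $h_{\,\cdot\,}(\cdot,\mbb B^n)$, exploiting its biholomorphic invariance together with Theorems \ref{thm1} and \ref{thm2}. Suppose, for a contradiction, that a biholomorphism $f\colon D_1\to D_2$ with $q^0\in cl_f(p^0)$ exists. By the definition of the cluster set, pick a sequence $p^j\to p^0$ in $D_1$ with $f(p^j)\to q^0$ in $D_2$. Since $D_1$ and $D_2$ are bounded, they are Kobayashi hyperbolic, so $h_{D_1}(\cdot,\mbb B^n)$ and $h_{D_2}(\cdot,\mbb B^n)$ are defined, and biholomorphic invariance of Fridman's invariant gives $h_{D_1}(p^j,\mbb B^n)=h_{D_2}(f(p^j),\mbb B^n)$ for every $j$.

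I would then evaluate the two sides in the limit. On the $D_1$ side, $\pa D_1$ is $C^2$-smooth strongly pseudoconvex near $p^0$, and the scaled models at a strongly pseudoconvex point are biholomorphic to $\mbb B^n$; hence, by the boundary behaviour of Fridman's invariant in the strongly pseudoconvex case (\cite{MV}), $h_{D_1}(p^j,\mbb B^n)\to 0$. On the $D_2$ side, $q^0$ is a $C^\infty$-smooth pseudoconvex boundary point of finite type at which the Levi form has rank exactly $n-2$; the proof of Theorem \ref{thm2} is local near $q^0$ --- it couples the scaling at $q^0$ with the distance estimates of Theorem \ref{thm1} (which take place in a tubular neighbourhood of $\pa D_2$) and the hyperbolicity of $D_2$ --- so it applies and gives $h_{D_2}(f(p^j),\mbb B^n)\to h_{D_\infty}(('0,-1),\mbb B^n)$, with $D_\infty=\{\,2\Re z_n+P_{2m}(z_1,\ov z_1)+\abs{z_2}^2+\cdots+\abs{z_{n-1}}^2<0\,\}$ the corank one model at $q^0$. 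Since the Levi form has rank \emph{exactly} $n-2$ (and not $n-1$), the $z_1$-direction is Levi-degenerate at $q^0$, so the $1$-type of $\pa D_2$ at $q^0$ is $2m\ge 4$; thus $m\ge 2$ and $P_{2m}$ is a non-harmonic subharmonic polynomial of degree $2m\ge 4$. Combining the three relations, $h_{D_\infty}(('0,-1),\mbb B^n)=\lim_j h_{D_2}(f(p^j),\mbb B^n)=\lim_j h_{D_1}(p^j,\mbb B^n)=0$.

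Now I would invoke Fridman's dichotomy recalled in the Introduction: if $h_X(x_0,\mbb B^n)=0$ at a single point $x_0$ of a hyperbolic manifold $X$, then $X$ is biholomorphic to $\mbb B^n$. Applied to $X=D_\infty$, this forces $D_\infty\cong\mbb B^n$, and the proof is finished once this is excluded. \emph{This is where the main difficulty lies}: one must show the model $D_\infty$ is not biholomorphic to $\mbb B^n$ when $m\ge 2$. Here is the argument I would give. The boundary $\pa D_\infty$ is a $C^\infty$-smooth graph over the $\Re z_n$-axis, $D_\infty$ is pseudoconvex of finite type near each finite boundary point, and the origin is a boundary point of $1$-type $2m\ge 4$, hence a genuinely weakly pseudoconvex point. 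If $G\colon D_\infty\to\mbb B^n$ were a biholomorphism then $G$ is proper, so the cluster set of $0$ is contained in $\pa\mbb B^n$; by the localized boundary regularity of biholomorphisms at a finite-type pseudoconvex boundary point (the local form of Condition R, cf.\ Bell--Catlin, Diederich--Fornaess and Forstneri\v{c}), $G$ extends smoothly to a neighbourhood of $0$ in $\pa D_\infty$, so $\pa D_\infty$ is spherical near $0$; but a spherical point has $1$-type $2$, contradicting $2m\ge 4$. (Alternatively one can use the known structure of $\mathrm{Aut}(D_\infty)$, which is not transitive for $m\ge 2$, whereas $\mathrm{Aut}(\mbb B^n)$ is.) This contradiction proves the theorem; see \cite{BV} for a different argument that also covers proper holomorphic maps. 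A subsidiary point to verify is that Theorems \ref{thm1}--\ref{thm2}, stated for globally smooth domains, localize appropriately, since $D_1$ and $D_2$ are only assumed regular near $p^0$ and $q^0$.
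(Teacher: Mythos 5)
Your overall strategy is exactly the one the paper follows: equate $h_{D_1}(p^j,\mbb B^n)$ with $h_{D_2}(f(p^j),\mbb B^n)$, send the first to $0$ via the strongly pseudoconvex result of \cite{MV}, send the second to $h_{D_\infty}(('0,-1),\mbb B^n)$ via Theorem \ref{thm2}, invoke Fridman's dichotomy, and then rule out $D_\infty\cong\mbb B^n$. But there is a genuine gap where you conclude that $P_{2m}$ has degree $2m\ge 4$. Theorem \ref{thm2} only produces a subharmonic polynomial of degree \emph{at most} $2m$, and, as the remark following its statement warns, this polynomial depends on \emph{how} the sequence approaches $q^0$. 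If you take an arbitrary sequence $p^j\to p^0$ with $f(p^j)\to q^0$ (which is all the cluster-set hypothesis hands you), the base points $\zeta^j=(f(p^j))^*$ may approach $q^0$ tangentially; then the quadratic part $P_2(\zeta^j;\cdot)$, which vanishes at $q^0$ but not at nearby boundary points, can dominate the scaling. For instance, for $2\Re z_n+|z_1|^4+|z_2|^2+\cdots$ and $f(p^j)\to 0$ with $\delta_{D_2}(f(p^j))\ll |f(p^j)_1|^4$, the limit polynomial is exactly $|z_1|^2$, so $D_\infty$ \emph{is} the ball and no contradiction arises. This is precisely why the paper first proves that $f$ and $f^{-1}$ extend continuously to $p^0$ and $q^0$ (both being plurisubharmonic barrier points), and only then chooses $q^j\to q^0$ along the \emph{inner normal} and sets $p^j=f^{-1}(q^j)$: for the normal approach at a point of Levi rank exactly $n-2$, the limit polynomial is the homogeneous part of degree equal to the type, hence of degree $\ge 4$. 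Your argument needs this normalization of the approach, and securing it costs exactly the continuous-extension step you omitted.

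The final exclusion of $D_\infty\cong\mbb B^n$, which you rightly identify as the crux, is also under-supported as written. Bell--Catlin/Condition~R regularity is formulated for bounded domains and presupposes some boundary continuity of the map, whereas $D_\infty$ is an unbounded rigid model; and even granting a smooth extension of $G$ at $0\in\pa D_\infty$, you must rule out vanishing of the boundary Jacobian there before transporting sphericity. The paper handles both issues at once: it applies Theorem 2.1 of \cite{CP} (holomorphic equivalence of weighted homogeneous rigid domains) to extend the model biholomorphism holomorphically past some point of the line $\{('0,\iota a)\}\subset\pa D_{2,\infty}$, observes that the Jacobian cannot vanish identically on that line (else it would vanish on the $w_n$-axis, which meets $D_{2,\infty}$, contradicting injectivity), and then uses invariance of the Levi form under local biholomorphisms to force $Q_{2m}=|w_1|^2$, contradicting the corank-one hypothesis at $q^0$. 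Your parenthetical alternative via non-transitivity of $\mathrm{Aut}(D_\infty)$ is essentially equivalent to the statement being proved and would need an independent reference.
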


\noindent When $p^0$ is also a Levi corank one point on $\pa D_1$, it is known (\cite{Su}) that a proper holomorphic mapping $f : D_1 \ra D_2$ extends continuously to $\pa D_1$ near 
$p^0$. In fact, a similar result can be proved for isometries of these metrics as well. To set things in perspective, let $D, G$ be bounded domains in $\mbb C^n$ equipped with one of 
these invariant metrics. An isometry $f : D \ra G$ is simply a distance preserving map. Note that no further assumptions such as smoothness or holomorphicity are being included as part 
of the definition of an isometry. Of course, biholomorphisms are examples of isometries, but whether all isometries are necessarily holomorphic or conjugate holomorphic seems 
interesting to ask. Let us say that an isometry is {\it rigid} if it is either holomorphic or conjugate holomorphic. If isometries of the Bergman metric between a pair of strongly 
pseudoconvex domains in $\mbb C^n$ are considered, a result in \cite{GK} shows that 
the isometry must be rigid. Recent work on the rigidity of local Bergman isometries may be found in \cite{Mok}. Isometries of the Kobayashi metric between a strongly pseudoconvex domain and the ball are also shown to be rigid in \cite{KK} while a more 
recent result in \cite{GS} proves the rigidity of an isometry between a pair of strongly convex domains even in the non equidimensional case; the choice of either the Kobayashi or the 
Carath\'{e}odory metric is irrelevant here since the two coincide. However, this seems to be unknown for isometries of the Kobayashi or the Carath\'{e}odory metric between a pair of 
strongly pseudoconvex domains. On the other hand, the results of \cite{M} and \cite{BB} show that isometries behave very much like holomorphic mappings. In particular, they exhibit 
essentially the same boundary behaviour as biholomorphisms. The following statements further justify this claim and extend some of the results in \cite{M}.

\begin{thm} \label{thm5}
Let $ f : D_1 \rightarrow D_2 $ be a Kobayashi isometry
between two bounded domains in $ \mathbb{C}^n $. Let $ p^0 $ and $
q^0 $ be points on $ \partial D_1 $ and $ \partial D_2 $
respectively. Assume that $ \partial D_1 $ is $
{C}^{\infty}$-smooth pseudoconvex of finite type near $ p^0
$ and that $ \partial D_2 $ is $ {C}^2$-smooth strongly
pseudoconvex near $ q^0 $. Suppose further that the Levi form of $ \partial D_1 $ 
has rank at least $ n - 2 $ near $ p^0 $ and that $ q^0 $ belongs to the cluster 
set of $p^0 $ under $f$. Then $f$ extends as a continuous mapping to a 
neighbourhood of $ p^0 $ in $ \overline{D}_1 $.
\end{thm}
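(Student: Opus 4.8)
\noindent The plan is to show that for every boundary point $p$ of $D_1$ sufficiently close to $p^0$ the cluster set $cl_f(p)$ reduces to a single point of $\pa D_2$, and that the resulting extension is continuous up to the boundary near $p^0$. Since $\pa D_1$ is pseudoconvex, of finite type and of Levi corank one on a neighbourhood of $p^0$, Theorem \ref{thm1} is available on $U\cap D_1$ for a suitable tubular neighbourhood $U$. Fix once and for all a base point $o_1\in U\cap D_1$ close to $p^0$ — say, on the inner normal at $p^0$ — chosen so that $\abs{\hat{\Phi}^{\zeta}(o_1)_n}$ stays bounded away from $0$ for all $\zeta$ near $p^0$; this is legitimate because the linear (normal) part of $\hat{\Phi}^{p^0}(o_1)_n$ is of size $\delta_{D_1}(o_1)\approx\abs{o_1-p^0}$, which dominates its polynomial correction. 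Put $o_2=f(o_1)$. If $z^j\to p$ with $p\in\pa D_1$ near $p^0$, then the lower bound of Theorem \ref{thm1} gives $d^k_{D_1}(z^j,o_1)\ge\varrho(z^j,o_1)-l$, and (as computed in the next paragraph) $\varrho(z^j,o_1)=\tfrac12\log\big(1/\delta_{D_1}(z^j)\big)+O(1)\to\infty$. Since $f$ preserves the Kobayashi distance, $d^k_{D_2}\big(f(z^j),o_2\big)\to\infty$; as $D_2$ is bounded and the Kobayashi distance to the centre of a small ball inside $D_2$ tends to $0$, no subsequence of $f(z^j)$ can converge to an interior point of $D_2$. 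Hence $cl_f(p)\subset\pa D_2$, and being a nested intersection of compact connected sets it is itself compact and connected.

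\medskip

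\noindent The heart of the matter is to prove $cl_f(p^0)=\{q^0\}$. Suppose not. Since $cl_f(p^0)$ is connected, contains $q^0$, and $q^0$ is an interior point of the strongly pseudoconvex part of $\pa D_2$, there is a point $q'\in cl_f(p^0)$ with $q'\ne q^0$ lying as close to $q^0$ as we please, hence inside the strongly pseudoconvex region. Choose sequences $z^j\to p^0$ and $w^j\to p^0$ in $D_1$ with $f(z^j)\to q'$ and $f(w^j)\to q^0$, and for points $x,y$ of a domain carrying the Kobayashi distance $d$ write $(x\,|\,y)_o=\tfrac12\big(d(x,o)+d(y,o)-d(x,y)\big)$ for the Gromov product at $o$. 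On the $D_2$ side, the Balogh--Bonk estimates (\cite{BB}; see also \cite{M}), localized to a neighbourhood of the strongly pseudoconvex point $q^0$, show that $\big(f(z^j)\,|\,f(w^j)\big)_{o_2'}$ stays bounded for a base point $o_2'$ taken near $q^0$, precisely because $f(z^j)$ and $f(w^j)$ converge to the \emph{distinct} boundary points $q'$ and $q^0$; the elementary bound $\big|(x\,|\,y)_o-(x\,|\,y)_{o'}\big|\le d(o,o')$ then shows $\big(f(z^j)\,|\,f(w^j)\big)_{o_2}$ is bounded as well. On the $D_1$ side, put $\varepsilon_j=\abs{z^j-w^j}$ and note $\delta_{D_1}(z^j),\delta_{D_1}(w^j),\varepsilon_j\to0$. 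A direct estimate of $\varrho$ — one checks $\varrho(z^j,o_1)=\tfrac12\log\big(1/\delta_{D_1}(z^j)\big)+O(1)$ since the normal ratio in $\eta(z^j,o_1)$ dominates the tangential ones (which carry $\sqrt{\delta_{D_1}(z^j)}$ or $\tau\big(z^j,\delta_{D_1}(z^j)\big)\gtrsim\sqrt{\delta_{D_1}(z^j)}$ in the denominator), while $\varrho(z^j,w^j)\lesssim\log\big(1+\varepsilon_j/\delta_{D_1}(z^j)\big)+\log\big(1+\varepsilon_j/\delta_{D_1}(w^j)\big)$ because every component of $\Phi^{z^j}(w^j)$ is $O(\varepsilon_j)$ — combined with Theorem \ref{thm1} gives
\[
\big(z^j\,|\,w^j\big)_{o_1}\ \gtrsim\ \log\frac{1}{\delta_{D_1}(z^j)+\varepsilon_j}+\log\frac{1}{\delta_{D_1}(w^j)+\varepsilon_j}+O(1),
\]
which tends to $+\infty$. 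Since $f$ is an isometry, $\big(z^j\,|\,w^j\big)_{o_1}=\big(f(z^j)\,|\,f(w^j)\big)_{o_2}$, contradicting the boundedness obtained on the $D_2$ side. Hence $cl_f(p^0)=\{q^0\}$, i.e. $f(z)\to q^0$ as $z\to p^0$.

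\medskip

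\noindent To conclude, we extend $f$ to nearby boundary points and check continuity. Given $\epsilon>0$, if $p\in\pa D_1$ is close enough to $p^0$ then $cl_f(p)\subset B(q^0,\epsilon)$; otherwise one produces $p^k\to p^0$ in $\pa D_1$ and $q^k\in cl_f(p^k)$ with $q^k\to\hat q\ne q^0$, and then points $z^k\in D_1$ with $\abs{z^k-p^k}<1/k$ and $\abs{f(z^k)-q^k}<1/k$, whence $z^k\to p^0$ and $f(z^k)\to\hat q$, contradicting the previous paragraph. Thus for $p$ near $p^0$ the compact connected set $cl_f(p)$ lies in the strongly pseudoconvex region of $\pa D_2$, so the argument of the second paragraph applies verbatim and forces $cl_f(p)$ to be a single point $\ti f(p)$. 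Extending by $\ti f=f$ on $D_1$ gives a map $\ti f:\ov{D}_1\cap V\to\ov{D}_2$ for some neighbourhood $V$ of $p^0$, whose continuity is a standard diagonal argument: if $x^k\to x$ in $\ov{D}_1\cap V$, pick $y^k\in D_1$ with $\abs{y^k-x^k}\le1/k$ and $\abs{\ti f(y^k)-\ti f(x^k)}\le1/k$; since $y^k\to x$ and $cl_f(x)=\{\ti f(x)\}$ we get $f(y^k)\to\ti f(x)$, hence $\ti f(x^k)\to\ti f(x)$.

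\medskip

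\noindent The principal difficulty is the $D_1$ side of the second paragraph: one must extract genuine \emph{visibility} of $\pa D_1$ at $p^0$, namely $\big(z^j\,|\,w^j\big)_{o_1}\to\infty$ whenever $z^j,w^j\to p^0$, purely from the two-sided estimate of Theorem \ref{thm1}, whose additive constants $l,L$ and whose parameter $\varrho$ — built from the anisotropic quantities $\tau(\cdot,\delta_D)$, $\delta_D$ and the canonical change of variables $\Phi^{\zeta}$ — must be handled with care, the dominant-term comparison sketched above being the crux. A secondary technical point is to record the Balogh--Bonk estimate in the strictly local form needed here, since $\pa D_2$ is strongly pseudoconvex only near $q^0$ and $f(o_1)$ need not be near $q^0$.
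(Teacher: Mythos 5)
Your overall architecture (reduce to showing $cl_f(p)$ is a single point for $p$ near $p^0$, then a diagonal argument for continuity) is sound, and your treatment of the $D_2$ side and of the final continuity step is fine. But the step you yourself flag as the crux — the ``visibility'' claim
\[
\big(z^j\,\vert\,w^j\big)_{o_1}\ \gtrsim\ \log\frac{1}{\delta_{D_1}(z^j)+\epsilon_j}+\log\frac{1}{\delta_{D_1}(w^j)+\epsilon_j}+O(1)
\]
— does not follow from Theorem \ref{thm1}, and this is a genuine gap. Theorem \ref{thm1} reads $\varrho-l\lesssim d^c_D\le d^k_D\lesssim\varrho+L$: both bounds hold only up to \emph{multiplicative} constants, and nothing in the paper (or in your argument) forces the constant $c_{\rm up}$ in the upper bound to be $\le$ the constant $c_{\rm low}$ in the lower bound. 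Your Gromov product estimate subtracts $c_{\rm up}\,\varrho(z^j,w^j)$ from $c_{\rm low}\big(\varrho(z^j,o_1)+\varrho(w^j,o_1)\big)$; writing $\log\big(1+\epsilon_j/\delta(z^j)\big)=\log\frac{1}{\delta(z^j)}-\log\frac{1}{\delta(z^j)+\epsilon_j}$, the resulting lower bound contains the term $(c_{\rm low}/2-c_{\rm up})\log\frac{1}{\delta_{D_1}(z^j)}$, which tends to $-\infty$ and swamps the remaining terms whenever $c_{\rm up}>c_{\rm low}/2$ and $\delta_{D_1}(z^j)$ is much smaller than $\epsilon_j$ (e.g.\ $\delta_{D_1}(z^j)=\epsilon_j^N$ with $N$ large). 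So the divergence of the Gromov product — equivalently, visibility of $\pa D_1$ at $p^0$ — is not a consequence of the two-sided estimate; it would require a Balogh--Bonk type estimate with \emph{matching} sharp constants, which is exactly what is not available for Levi corank one domains (Gromov hyperbolicity/visibility in this setting was an open problem at the time and is not established in this paper). The ``dominant-term comparison'' you sketch handles the additive constants but not this multiplicative mismatch.

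The paper's proof is arranged precisely so as never to need a lower bound for $d^k_{D_1}(z,w)$ with \emph{both} points tending to $p^0$. All two-boundary-point comparisons are pushed to the strongly pseudoconvex side: there the Forstneric--Rosay estimates give $d^k_{D_2}\big(f(p^j),f(t^j)\big)\ge\frac12\log\frac{1}{\delta_{D_2}(f(p^j))}+\frac12\log\frac{1}{\delta_{D_2}(f(t^j))}-C$ with the sharp constant $\frac12$, matching the sharp constant in the FR \emph{upper} bound for $d^k_{D_1}(p^j,t^j)$. On the $D_1$ side the only lower bound used is Proposition \ref{E20} — the distance from $p^j$ to a \emph{fixed interior} point is at least $\frac12\log\frac{1}{\delta_{D_1}(p^j)}-C$, again with the sharp constant $\frac12$, obtained by integrating the normal component of the Thai--Thu infinitesimal estimate rather than from Theorem \ref{thm1} — and this is combined with the trivial upper bound on the $D_2$ side to get $\delta_{D_2}(f(p^j))\lesssim\delta_{D_1}(p^j)$. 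If you want to keep your Gromov-product formulation, you must replace the appeal to Theorem \ref{thm1} at this point by Proposition \ref{E20} (for the terms $d(z^j,o_1)$, $d(w^j,o_1)$) together with a sharp-constant \emph{upper} bound for $d^k_{D_1}(z^j,w^j)$ à la Forstneric--Rosay, and transfer the lower bound for the mutual distance to $D_2$ via the isometry — which is, in substance, the paper's argument.
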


\noindent The following result provides an explicit computation of $K_{E_{2m}}$, the Kobayashi metric of the egg domain $E_{2m}$ introduced in (1.1) for $m \geq 1/2$ -- notice that 
when $m$ is not an integer, the boundary of $E_{2m}$ is not smooth. This extends 
the computation done for such egg domains in $\mbb C^2$ in \cite{Ma1} and \cite{BFKKMP}. It is straightforward to see that for any $ \theta \in \mathbb{R} $ and $ (p_1, \ldots, p_n) \in 
E_{2m} $,
\[
(z_1, \ldots, z_n) \mapsto \left( e^{\iota \theta} \frac{\left( 1 - \abs{\hat{p}}^2 \right)^{1/2m} } { \left( 1 - \langle \hat{z}, 
\hat{p} \rangle \right)^{1/m} }  z_1, \Psi( \hat{z} ) \right)
\]
is an automorphism of $ E_{2m} $. Here $ \langle \cdot, \cdot \rangle $ denotes the standard Hermitian inner product in $ \mathbb{C}^{n-1} $, $ z \in \mathbb{C}^n $ 
is written as $ z= (z_1, \hat{z}), \hat{z} = (z_2, \ldots, z_n) $ and $ \Psi $ is an automorphism of $ \mathbb{B}^{n-1} $ that takes $ \hat{p} $ to the origin.
More precisely, 
\[
\Psi(\hat{z}) = \frac{ \big( 1- |\hat{p}|^2 \big)^{1/2} \left( \hat{z} - \frac{\langle \hat{z}, \hat{p} \rangle} {|\hat{p}|^{2}} \hat{p} \right) - 
\left( 1 -\frac{\langle \hat{z}, \hat{p} \rangle} {|\hat{p}|^{2}} \right)  \hat{p} }{1 - \langle \hat{z}, \hat{p} \rangle }
\]
for $ \hat{p} \neq \hat{0} $. Since automorphisms are isometries for the Kobayashi metric, it is enough to compute the explicit formula for $ K_{E_{2m}} $ at the point $ (p, \hat{0})
\in E_{2m} $, for $ 0 < p < 1 $, from which the general formula follows by composition with an appropriate automorphism of $ E_{2m} $ as described above.

\begin{thm} \label{thm6}
The Kobayashi metric for $E_{2m}$ is given by
\begin{alignat*}{4}
K_{E_{2m}} \big( (p, 0, \ldots, 0), (v_1, \ldots, v_n) \big) = 
   \left\{ \begin{array}{lrl}
\left( \frac{m^2 p^{2m-2} |v_1|^2}{(1-p^{2m})^2}
+ \frac{|v_2|^2}{1- p^{2m}} + \cdots + \frac{|v_n|^2}{1- p^{2m}} \right)^{1/2} & \mbox{for} & u \leq p, \\
\\
\frac{m \alpha (1-t) |v_1|}{p(1- {\alpha}^2) \left(m(1-t) + t \right) }
& \mbox{for} & u > p,
\end{array}
\right.
\end{alignat*}
where
\begin{eqnarray}
u & = & \left( \frac{m^2 |v_1|^2}{(|v_2|^2 + \cdots + |v_n|^2)} \right)^{1/2}, \label{E4} \\
t & = & \frac{2m^2 p^2}{u^2 + 2m(m-1) p^2 + u \big( u^2 + 4m(m-1) p^2 \big)^{1/2}} \label{E5}
\end{eqnarray}
and $ \alpha $ is the unique positive solution of 
\begin{equation*}
 \alpha^{2m} - t \alpha^{2m-2} - (1-t) p^{2m} = 0.
\end{equation*}
Moreover, $ K_{E_{2m}} $ is $ C^1$-smooth on $ E_{2m} \times \left( \mathbb{C}^n \setminus \{0\} \right) $ for $ m > 1/2 $.
\end{thm}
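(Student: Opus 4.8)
The plan is to reduce the computation, in two steps, to the theory of complex geodesics on a two-dimensional complex ellipsoid, and then to propagate the resulting formula over $E_{2m}$ using the automorphisms described above. As already noted it suffices to work at the base point $q_0=(p,0,\ldots,0)$ with $0<p<1$. The first reduction lowers the dimension: the group $U(n-1)$ acting on $\hat z=(z_2,\ldots,z_n)$ consists of automorphisms of $E_{2m}$ fixing $q_0$, so choosing $U\in U(n-1)$ with $U\hat v=(v_2',0,\ldots,0)$, where $v_2':=\big(|v_2|^2+\cdots+|v_n|^2\big)^{1/2}\ge 0$, we may assume $v=(v_1,v_2',0,\ldots,0)$. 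Next, $\rho(z)=(z_1,z_2,0,\ldots,0)$ is a holomorphic retraction of $E_{2m}$ onto $E':=E_{2m}\cap\{z_3=\cdots=z_n=0\}$ (norms only decrease under $\rho$), so $E'$ is totally geodesic for the Kobayashi metric and
\[
K_{E_{2m}}(q_0,v)=K_{\Omega_m}\big((p,0),(v_1,v_2')\big),\qquad \Omega_m:=\{(z_1,z_2)\in\mathbb{C}^2:|z_1|^{2m}+|z_2|^2<1\}.
\]
With $v_2'\ge 0$, the quantities $u,t,\alpha$ of the statement are exactly those attached to $\big((p,0),(v_1,v_2')\big)$ by the classical computation of the Kobayashi metric of the complex ellipsoid in $\mathbb{C}^2$, which for integral $m$ is \cite{Ma1} and \cite{BFKKMP}.

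For general $m\ge 1/2$ I would carry out the two-dimensional computation directly, using convexity. Since $z_1\mapsto|z_1|^{2m}$ is a convex function on $\mathbb{C}$ precisely when $m\ge 1/2$, both $\Omega_m$ and $E_{2m}$ are bounded convex domains, so Lempert's theory applies: $K_{\Omega_m}=C_{\Omega_m}$, a complex geodesic through $(p,0)$ in the direction $(v_1,v_2')$ exists, and it is singled out (and proved extremal) by the stationarity/dual-map criterion. In the regime $u\le p$ one checks that the complex geodesic of $\mathbb{B}^2$ through $(p^m,0)$ in the direction $(mp^{m-1}v_1,v_2')$ has first component omitting the value $0$ -- the inequality $u\le p$ being exactly this condition -- so that its preimage under the local inverse of $z_1\mapsto z_1^m$ is a genuine holomorphic disc into $\Omega_m$ through $(p,0)$ with the right derivative; verifying stationarity shows it is the complex geodesic, and its reciprocal modulus equals $K_{\mathbb{B}^2}\big((p^m,0),(mp^{m-1}v_1,v_2')\big)$, which collapses to the first branch of the formula. (For integral $m$ the matching lower bound is also immediate from the distance-decreasing property of $z\mapsto(z_1^m,z_2):\Omega_m\to\mathbb{B}^2$.) In the regime $u>p$ the geodesic is genuinely different: one writes down the candidate $f_\alpha$, whose components are governed by the parameter $\alpha$ solving $\alpha^{2m}-t\alpha^{2m-2}-(1-t)p^{2m}=0$ and by $t$ (a prescribed function of $u$ and $p$), checks $f_\alpha(\Delta)\subset\Omega_m$ and verifies its stationarity, and reads off the Kobayashi length as the second branch.

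For the $C^1$ regularity when $m>1/2$: on each open regime $\{u<p\}$ and $\{u>p\}$ the implicit function theorem gives $t$ and $\alpha$ as real-analytic functions of $(p,v_1,v_2')$, and since $\hat v\ne 0$ whenever $u\le p$ and $v_1\ne 0$ whenever $u>p$, each branch is in fact real-analytic on $\{v\ne 0\}$ within its regime. One then checks that the two branches, together with their first derivatives, match along $\{u=p\}$: there $t=1$ and $\alpha=1$, so both branches are of the indeterminate form $0/0$ and the comparison is carried out by Taylor-expanding the second branch as $u\to p^{+}$; this is the point at which the hypothesis $m>1/2$ is used. Finally the formula is transported from the base points $(p,0,\ldots,0)$ to an arbitrary $q=(q_1,\hat q)\in E_{2m}$ by the automorphism of the statement, which carries $q$ to $\big(e^{\iota\theta}(1-|\hat q|^2)^{-1/2m}q_1,0,\ldots,0\big)$ for a suitable $\theta$; since $1-|\hat q|^2>0$ on $E_{2m}$, this automorphism and its inverse depend real-analytically on $q$, and biholomorphic invariance of $K$ yields that $K_{E_{2m}}$ is $C^1$ on $E_{2m}\times(\mathbb{C}^n\setminus\{0\})$.

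The step I expect to be the main obstacle is the regime $u>p$: guessing the disc $f_\alpha$ is easy once one trusts the stated answer, but proving it extremal -- that is, verifying Lempert's stationarity condition, or equivalently exhibiting a matching lower bound -- requires genuine work, all the more since for non-integral $m$ the boundary $\partial E_{2m}$ is merely Lipschitz, so the customary smooth-boundary machinery is unavailable and one must rely solely on convexity. A secondary technical nuisance is the $C^1$-gluing across $\{u=p\}$, where both branches degenerate and must be compared through a careful expansion.
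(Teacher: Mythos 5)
Your outline is correct, but it reaches the formula by a genuinely different route from the paper. You first reduce to $n=2$ by combining the $U(n-1)$-symmetry in $\hat z$ with the holomorphic retraction $z\mapsto(z_1,z_2,0,\ldots,0)$ onto the totally geodesic slice $E_{2m}\cap\{z_3=\cdots=z_n=0\}$, and then propose to redo the two-dimensional Lempert/stationarity analysis from scratch. The paper instead works in dimension $n$ throughout: it argues by induction on $n$ (geodesics with identically vanishing components being lower-dimensional geodesics, with base case Theorem 2 of \cite{BFKKMP}), and for geodesics with no vanishing component it invokes the Jarnicki--Pflug--Zeinstra classification (Theorem \ref{E3}) of complex geodesics of $E(2m_1,\ldots,2m_n)$ for $m_j\geq 1/2$, then solves the resulting algebraic system for $a_j,\alpha_j,\alpha_0$; the dichotomy $s=1$ versus $s=0$ there is exactly your dichotomy $u\le p$ versus $u>p$, and your computation that $u\le p$ is the condition for the first component to omit $0$ matches the constraint $|\alpha_1|\le 1$ in the paper. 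Your reduction is shorter and makes transparent why the answer depends only on $|v_1|$ and $(|v_2|^2+\cdots+|v_n|^2)^{1/2}$; the paper's approach avoids having to justify the retraction step and, more importantly, outsources precisely the step you flag as the main obstacle --- proving extremality of the candidate disc when $u>p$ on the merely Lipschitz boundary for non-integral $m$ --- to \cite{JPZ}, whose theorem is stated for all $m_j\ge 1/2$. If you cite \cite{JPZ} (or \cite{BFKKMP}, which already covers the two-dimensional ellipsoid for real $m\ge 1/2$) in place of re-deriving the stationarity criterion, your plan closes with no gap; the remaining algebra (eliminating $\tau$ to obtain $\alpha^{2m}-t\alpha^{2m-2}-(1-t)p^{2m}=0$ and the displayed value of $\tau$) and the $C^1$-matching across $\{u=p\}$ via the limits $t,\alpha\to 1$ are then identical in both treatments.
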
 

\noindent This is useful in proving the isometric inequivalence of strictly weakly spherical Levi
corank one domains (the notion of weak sphericity is recalled from \cite{BaBe} and defined in the last section) and strongly pseudoconvex domains -- see \cite{M} for a related result in $\mbb C^2$.

\begin{thm} \label{thm7}
Let $D_1, D_2 \subset \mbb C^n$ be bounded domains with $p^0 \in \pa D_1$ and $q^0 \in \pa D_2$. Assume that there are holomorphic coordinates in a neighbourhood $U_1$ around $p^0$ in 
which $U_1 \cap D_1$ is defined by
\[
\big\{ z\in \mbb C^n : 2 \Re z_n + \abs{z_1}^{2m} + \abs{z_2}^2 + \ldots + \abs{z_{n-1}}^2 + R(z,\overline{z}) < 0  \big\}
\]
where $m > 1$ is a positive integer and the error function $R(z,\overline{z}) \to 0$ faster than atleast one of the monomials of weights one. Suppose further that $\pa D_2$ is $C^2$-smooth strongly pseudoconvex near $q^0$. Then there cannot exist a 
Kobayashi isometry $f : D_1 \ra D_2$ with $q^0 \in cl_f(p^0)$.
\end{thm}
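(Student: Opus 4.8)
\noindent\textit{Proof idea.} The plan is to reduce, via scaling, to the model domains and then to exploit the explicit formula of Theorem \ref{thm6}.

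First, note that the hypotheses of Theorem \ref{thm5} are met: $\pa D_1$ is $C^\infty$-smooth pseudoconvex of finite type near $p^0$ and the Levi form there has rank $n-2$ (one eigenvalue degenerates in the $z_1$-direction while the remaining $n-2$ are positive, coming from $\abs{z_2}^2 + \ldots + \abs{z_{n-1}}^2$), $\pa D_2$ is $C^2$-smooth strongly pseudoconvex near $q^0$, and $q^0 \in cl_f(p^0)$. Hence $f$ extends continuously to a neighbourhood of $p^0$ in $\ov D_1$ with $f(p^0)=q^0$. Now choose $p^j \in D_1$ approaching $p^0$ along the inner normal, so $q^j := f(p^j) \ra q^0$. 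Apply the anisotropic scaling for Levi corank one domains used in establishing Theorem \ref{thm2} to $D_1$ along $p^j$: because the defining function of $D_1$ near $p^0$ is $2\Re z_n + \abs{z_1}^{2m} + \abs{z_2}^2 + \ldots + \abs{z_{n-1}}^2 + R$ with $R$ of higher weight, the rescaled domains $D_1^j$ converge (Hausdorff, with convergence of defining functions on compacta) to
\[
D_\infty = \big\{ z \in \mbb C^n : 2\Re z_n + \abs{z_1}^{2m} + \abs{z_2}^2 + \ldots + \abs{z_{n-1}}^2 < 0 \big\},
\]
with $p^j$ sent to $('0,-1)$; it is precisely the strict weak sphericity that makes the $z_1$-term of the limit exactly $\abs{z_1}^{2m}$. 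Simultaneously, scale $D_2$ near the strongly pseudoconvex point $q^0$ by the classical Pinchuk dilations along $q^j$, getting $D_2^j \ra \mathcal{H}_n := \{ z : 2\Re z_n + \abs{z_1}^2 + \ldots + \abs{z_{n-1}}^2 < 0 \}$, with $q^j$ sent to $('0,-1)$.

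Next I pass to the limit of the conjugated maps $g_j := \sigma_j \circ f \circ \tau_j^{-1} : D_1^j \ra D_2^j$, where $\tau_j, \sigma_j$ are the two scalings; each $g_j$ is again a Kobayashi isometry and $g_j(('0,-1)) = ('0,-1)$. Fixing a compact $K \subset D_\infty$, for large $j$ we have $K \subset D_1^j$ and $d^k_{D_1^j}$ is bounded on $K$ uniformly in $j$: this is where Theorem \ref{thm1} is used, since $d^k_{D_1^j} \approx \varrho_j$ uniformly, and $\varrho_j$ is controlled through the convergence $D_1^j \ra D_\infty$. Thus $g_j(K)$ lies in a Kobayashi ball of fixed radius about $('0,-1)$ in $D_2^j$, which by Theorem \ref{thm3} (equivalently, by the Balogh--Bonk estimates on the strongly pseudoconvex $D_2^j$) sits in a fixed Euclidean ball; so $\{g_j|_K\}$ is uniformly bounded. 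Equicontinuity follows because on compacta $d^k_{D_2^j}$ dominates, and $d^k_{D_1^j}$ is dominated by, the Euclidean distance, both uniformly in $j$. Arzel\`a--Ascoli and a diagonal argument extract a subsequential limit $g_\infty : D_\infty \ra \ov{\mathcal{H}}_n$; passing to the limit in $d^k_{D_2^j}(g_j x, g_j y) = d^k_{D_1^j}(x,y)$ and using stability of the Kobayashi distance ($d^k_{D_1^j} \ra d^k_{D_\infty}$, $d^k_{D_2^j} \ra d^k_{\mathcal{H}_n}$), one checks that $g_\infty(D_\infty) \subset \mathcal{H}_n$ (a boundary value would force an infinite Kobayashi distance against a finite one) and that $g_\infty$ is a distance-preserving injection of $(D_\infty, d^k)$ into $(\mathcal{H}_n, d^k)$. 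By invariance of domain its image is open in $\mathcal{H}_n$. This limiting step, guaranteeing that the rescaled isometries neither degenerate nor escape to the boundary and that the limit is still a genuine metric isometry between the model domains, is the main obstacle, and it is exactly what the uniform bounds from Theorem \ref{thm1} (together with the classical strongly pseudoconvex estimates) are needed for.

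Finally I derive a contradiction. Both $D_\infty$ and $\mathcal{H}_n$ are biholomorphic to bounded domains --- $\mathcal{H}_n$ to $\mbb B^n$, and $D_\infty$ to the egg $E_{2m}$ via a Cayley-type transformation --- so the Kobayashi distances transport and $g_\infty$ induces a distance-preserving injection $\ti g$ of $(E_{2m}, d^k)$ onto an open subset $V \subset \mbb B^n$. The distance $d^k_{E_{2m}}$ is, by definition, the inner distance of the Finsler metric $K_{E_{2m}}$, hence a length metric; therefore $(V, d^k_{\mbb B^n}|_V)$ is a length metric space, and on small balls in $V$ the restricted distance agrees with the complex hyperbolic (Bergman) distance of $\mbb B^n$. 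Thus $(E_{2m}, d^k)$ is, locally, isometric as a metric space to an open subset of a Riemannian manifold, which forces its infinitesimal metric $K_{E_{2m}}$ to be Riemannian at every point. But Theorem \ref{thm6} rules this out for $m > 1$: at $(p,0,\ldots,0)$ with $0<p<1$, the indicatrix $\{ v : K_{E_{2m}}((p,0,\ldots,0),v) < 1 \}$ is cut out by a positive definite Hermitian form only on the cone $u \le p$, and by a genuinely non-quadratic expression (through $t$ and $\al$) for $u > p$; these do not patch into a single ellipsoid when $m > 1$ (e.g.\ along the pure $v_1$-direction the metric equals $\abs{v_1}/(1-p^2)$, whereas the Hermitian form on the cone predicts $m p^{m-1}\abs{v_1}/(1-p^{2m})$, and these disagree). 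Hence $K_{E_{2m}}((p,0,\ldots,0),\cdot)$ is not the norm of a Hermitian form, contradicting the above. Therefore no Kobayashi isometry $f : D_1 \ra D_2$ with $q^0 \in cl_f(p^0)$ can exist.
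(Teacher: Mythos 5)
Your proposal is correct in outline and, up to the point where the rescaled isometries converge to a distance-preserving map $g_\infty$ between the model domains, it coincides with the paper's argument: continuous extension via Theorem \ref{thm5}, anisotropic scaling of $D_1$ and Pinchuk scaling of $D_2$, and a normal-families argument for the conjugated isometries resting on the stability of the Kobayashi distance under scaling (the paper organizes the uniform bounds through the Kim--Ma comparison Lemma \ref{E29} and Propositions \ref{E30}--\ref{E31} rather than quoting Theorems \ref{thm1} and \ref{thm3} directly, but the substance is the same). Where you genuinely diverge is the endgame. The paper upgrades the limit to a holomorphic or antiholomorphic map by first establishing its differentiability (quoting \cite{SV2}), then invoking Myers--Steenrod, and finally derives the contradiction from the linearity of an origin-preserving biholomorphism between circular domains, which forces $2m=2$. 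You instead stop at the intermediate assertion that $K_{E_{2m}}(z,\cdot)$ must be a quadratic norm for each $z$ and contradict this directly with the explicit formula of Theorem \ref{thm6}: on the nonempty open cone $u<p$ the square of the metric is the Hermitian form $\frac{m^2p^{2m-2}}{(1-p^{2m})^2}\abs{v_1}^2+\frac{1}{1-p^{2m}}\big(\abs{v_2}^2+\cdots+\abs{v_n}^2\big)$, so by polynomial identity this is the only quadratic candidate, yet in the direction $v=(v_1,0,\ldots,0)$ the metric equals $\abs{v_1}/(1-p^2)$, and $1-p^{2m}\neq mp^{m-1}(1-p^2)$ for $m>1$ and $p$ small. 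This computation is correct, the resulting route is shorter, and it even bypasses the $C^1$-smoothness half of Theorem \ref{thm6}, which the paper needs only to feed Myers--Steenrod.

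The one step you should not treat as free is the implication ``$(E_{2m},d^k)$ is locally isometric to an open subset of a Riemannian manifold, hence $K_{E_{2m}}(z,\cdot)$ is quadratic.'' A distance-preserving map need not a priori be differentiable, so you cannot simply pull back the Hessian of the hyperbolic metric along $g_\infty$. A clean justification goes through blow-ups: since $K_{E_{2m}}$ is continuous, the pointed Gromov--Hausdorff tangent cone of $(E_{2m},d^k)$ at $z$ is the normed space $\big(\mathbb{C}^n,K_{E_{2m}}(z,\cdot)\big)$, while the tangent cone of $\mathbb{B}^n$ with its Kobayashi (Bergman) distance at any point is Euclidean; your isometry with open image identifies the two tangent cones, and an origin-preserving isometry between a normed space and a Euclidean space is linear by Mazur--Ulam, so the norm satisfies the parallelogram law. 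Alternatively one can fall back on the differentiability result of \cite{SV2}, which is exactly how the paper reaches the same intermediate statement before continuing to Myers--Steenrod. Once this point is supplied, your argument closes the proof.
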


\noindent To outline the proof of this statement, we scale $D_1$ along a sequence of points that converges to $p^0$ along the inner normal to $D_1$. The limit domain is exactly 
$E_{2m}$. In trying to adapt the scaling method for isometries, note that the normality of the scaled isometries needs the stability of the Kobayashi distance function on Levi corank 
one domains, i.e., the arguments used in analysing Fridman's invariant. This ensures the existence of the limit of scaled isometries, the limit being an isometry between $E_{2m}$ and 
the unit ball. The final argument, which uses the explicit form of the Kobayashi metric on $E_{2m}$ as given above, involves showing that this continuous isometry is in fact rigid which 
then leads to a contradiction.

\medskip

\noindent {\it Acknowledgements:} Theorem 1.1 has also been independently proved by Gregor Herbort recently. We were informed of this after the completion of this manuscript and we 
would like to thank him for sharing a copy of his unpublished manuscript and for a remark on an earlier version of this article that has been incorporated. Thanks are also due to Peter 
Pflug for a timely comment that helped remove an ambiguity and to Nikolai Nikolov who pointed out the relevance of \cite{N1} and \cite{N2}.

%%%%%%%%%%%%%%%%%%%%%%%%%%%%%%%%%%%%%%%%%%%%%%%%%%%%%%%%%%%%%%%%%%%%%%%%%
\section{Proof of Theorem \ref{thm1}} \label{pfofthm1}
\noindent We begin with some useful notation and terminology. Let $ \rho $ be a smooth function 
defined on some open set $V $ in $ \mathbb{C}^n$. Consider the canonical Hermitian form associated to $ \rho $:
\[
\langle Y, W \rangle \to \Big( \sum\limits_{j=1}^{n} \overline{\frac{\partial \rho}{\partial z_j}(z) Y_j} \Big) \Big( \sum\limits_{k=1}^{n} \frac{\partial \rho}{\partial z_k}(z)W_k \Big) \\
= \sum\limits_{j,k=1}^{n} \overline{\frac{\partial \rho}{\partial z_j}(z)} \frac{\partial \rho}{\partial z_k}(z) \overline{Y}_jW_k 
\]
for $ Y, W $ tangent vectors at $ z $. The associated quadratic form is given by 
\begin{equation} \label{Canfrm}
C_\rho(z,Y) = \Big \vert \sum\limits_{j=1}^{n} \frac{\partial \rho}{\partial z_j}(z)Y_j \Big \vert^2
\end{equation}
which is evidently positive semi-definite. Denote by $M_\rho(z)$, the matrix associated to this form with respect to (unless otherwise mentioned) the standard co-ordinates. 
Now, let $\zeta \in U$ and for $1 \leq j \leq n-1$ define the vectors
\[
L_j(\zeta) = \Big(0, \ldots,0,1,0, \ldots,0, b_j^\zeta \Big)
\]
where the $j$th entry in the above tuple is $1$ and 
\[
b_j^\zeta=b_j(\zeta,\overline{\zeta}) =  - \Big( \frac{\partial r }{\partial z_n}(\zeta)\Big)^{-1} \Big( \frac{\partial r }{\partial z_j}(\zeta) \Big)
\]
This collection of $n-1$ vectors forms a basis for the complex tangent space at $\zeta$ to the hypersurface
\[
\Gamma_\zeta^r = \Big\{ z \in U : r(z) = r(\zeta) \Big\}
\]
and is called the canonical basis for the complex tangent space denoted $H \Gamma_\zeta^r$, being independent of the choice 
of the defining function (upto a permutation to ensure $\partial r/\partial z_n \neq 0$). Next, observe that each $ L_j $ is an  
eigenvector for $M_{\rho}(z)$ with eigenvalue $0$. Hence, $H \Gamma_\zeta^r$ is contained in the kernel of $M_r(\zeta)$ while the vector $\nu(\zeta)$ is an eigenvector of $M_\rho(\zeta)$ of eigenvalue $\vert \nu(\zeta) \vert^2$.
\medskip \\
\noindent There is another way to construct a Hermitian form out of $\rho$ whose associated quadratic form is
\begin{equation} \label{Hessnotn}
\mathcal{L}_{\rho}(z,Y) = \sum\limits_{j,k=1}^{n} \frac{\partial^2 \rho (z) }{\partial z_j \partial \overline{z}_k} Y_j\overline{Y}_k
\end{equation}
for any given $z \in V$ and $Y \in \mathbb{C}^n$. This is the complex Hessian and in general need not be positive or negative semi-definite. Restricted
to $H \Gamma_\zeta^\rho$, this is the standard Levi form. When $\Gamma_\zeta^\rho$ is pseudoconvex, the restriction of $\mathcal{L}_\rho$ to $H \Gamma_\zeta^\rho$ is positive semi-definite.\\

\noindent Henceforth, $C$ will denote a positive constant that may vary from line to line.
Positive constants will also be denoted by $K,L$ or $C_j$ for some integer $j$ (for 
instance as in the last part of Lemma \ref{dprop}) and these may also vary as we move from one part of the text to another.\\

\subsection{Basic properties of the pseudo-distance induced by the biholomorphically distorted polydiscs $Q$} \label{first}
\begin{lem} \label{dprop}
The function $d$ satisfies
\begin{itemize}
\item[(i)] For all $a,b \in U$ we have $d(a,b)=0$ if and only if $a=b$,
\item[(ii)] There exists $C >0$ such that for all $a,b \in U$
\[
d(a,b) \leq C d(b,a)
\]
\item[(iii)] There exists a constant $L>0$ such that for all $a,b \in U$ with $\abs{a-b} <R_0$ one has
\[
d(a,b) \geq 1/2L \; d'(a,b)
\]
\item[(iv)] There exists $C>0$ such that for all $a,b,c \in U$
\[
d(a,b) \leq C \big( d(a,c) + d(b,c) \big)
\]
\item[(v)] With a suitable constant $C>0$ we have $d(a,a^*) \leq C \delta_D(a)$ for all $a \in U$ and

\item[(vi)] There exist constants $C_1,C_2>0$ such  that for all $a,b \in U$ we have
\[
C_1 \abs{a-b}^{2m} \leq d(a,b) \leq C_2 \abs{a-b}
\]
\end{itemize}
\end{lem}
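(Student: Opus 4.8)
\textbf{Proof plan for Lemma \ref{dprop}.}

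The plan is to exploit the structure of the distorted polydiscs $Q(\zeta,\delta)$ and the two engulfing properties (i)--(ii) recorded after \eqref{pseudistdefn}, together with the explicit shape \eqref{E45} of the canonical change of variables $\Phi^\zeta$ and the weighted homogeneity of the polyradii $\tau_k(\zeta,\delta)$. Throughout, the two cases in the definition \eqref{pseudistdefn} of $d$ -- namely when $d'(a,b)$ is the effective term versus when $\abs{a-b}_{l^\infty}$ is -- will be handled separately; since $\abs{a-b}_{l^\infty}$ is a genuine metric, in the latter regime every claimed inequality is either trivial or follows from comparability of norms, so the content is always in the regime where $d(a,b)=d'(a,b)$, which forces $\abs{a-b}\le R$ and lets us use the polydisc machinery.

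First I would prove (i): $d(a,b)=0$ iff $a=b$. One direction is clear since $a\in Q(a,\delta)$ for every $\delta>0$, so $d'(a,a)=0$; conversely if $d'(a,b)=0$ then $a\in\bigcap_{\delta>0}Q(b,\delta)$, and since $\Phi^b(a)$ then has all coordinates of modulus smaller than $\tau_k(b,\delta)\to 0$, we get $\Phi^b(a)=0$, hence $a=b$ by injectivity of $\Phi^b$. For (ii), the symmetry-up-to-constant, I would argue as follows: if $d'(a,b)<\infty$ is small, pick $\delta$ slightly larger than $d'(a,b)$ so that $a\in Q(b,\delta)$; by the engulfing property (i) applied with $\zeta=b,\eta=a$ we get $Q(b,\delta)\subset Q(a,C_0\delta)$, but we need the reverse containment $b\in Q(a,C\delta)$. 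Here I would instead use property (ii) (comparability of the radii $\tau$) to see that $Q(a,\delta)$ and $Q(b,\delta)$ are comparable distorted polydiscs once $a\in Q(b,\delta)$; combined with the fact that $\Phi^a$ and $\Phi^b$ differ by a weight-one polynomial automorphism with uniformly controlled coefficients (all coefficients in \eqref{E45} being smooth, hence bounded, functions of the base point on the precompact $U$), one deduces $b\in Q(a,C\delta)$, giving $d'(b,a)\le C\,d'(a,b)$, and then (ii) for $d$ follows after reconciling with the $l^\infty$ term. Statement (iii) is essentially the observation that $d=\min\{d',\abs{\cdot}_{l^\infty}\}$ and, by part (vi) (which I would prove first, or in parallel), $d'(a,b)\gtrsim\abs{a-b}^{2m}$ while on $\abs{a-b}<R_0$ the quantity $\abs{a-b}_{l^\infty}$ is bounded below in terms of $d'(a,b)$ using the estimate $\tau(\zeta,\delta)\gtrsim\delta^{1/2m}$ (so the $z_1$-polyradius, the largest one, is still at most a fixed power of $\delta$); this forces the minimum defining $d$ to be comparable to $d'$.

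The substantive parts are (iv), (v) and (vi). For the quasi-triangle inequality (iv), I would reduce to the case where all three of $a,b,c$ are mutually close (so $\Phi$'s with a common permutation are available and the polydiscs are defined), the far case being covered by the genuine triangle inequality for $\abs{\cdot}_{l^\infty}$ and property (vi). In the near case: set $\delta_1=d'(a,c),\delta_2=d'(b,c)$ and $\delta=\max\{\delta_1,\delta_2\}$; then $a\in Q(c,C\delta)$ and $b\in Q(c,C\delta)$, so by the engulfing property $Q(c,C\delta)\subset Q(a,C_0C\delta)$ whence $b\in Q(a,C_0C\delta)$, giving $d'(a,b)\le C_0C\max\{d'(a,c),d'(b,c)\}\le C_0C(d(a,c)+d(b,c))$ after again switching between $d$ and $d'$ using (iii). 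For (v), I would use that $a^*$ is the orthogonal projection of $a$ to $\partial D$ with $\abs{a-a^*}=\delta_D(a)$, write $\Phi^{a^*}(a)$ explicitly via \eqref{E45}: since $a-a^*$ is (a multiple of) the normal direction at $a^*$, the first $n-1$ components of $\Phi^{a^*}(a)$ involve only $z_1-\zeta_1$ and $\tilde z-\tilde\zeta$ which vanish (as $a$ and $a^*$ differ only in the normal component in suitable coordinates, up to controlled error from $Q_1,Q_2$), and the last component is $\langle\nu(a^*),a-a^*\rangle$, of size $\approx\delta_D(a)$; checking that $a$ then lies in $Q(a^*,C\delta_D(a))$ amounts to verifying $\abs{\Phi^{a^*}(a)_n}\lesssim \tau_n(a^*,C\delta_D(a))=C\delta_D(a)$ and the lower-order components fit inside their (larger) polyradii, which is immediate. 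The main obstacle is (vi), the two-sided comparison with Euclidean distance: the upper bound $d(a,b)\le C_2\abs{a-b}$ follows since for $\delta=C\abs{a-b}$ one has $a\in Q(b,\delta)$ by the definition of $\tau$ (all polyradii $\tau_k(b,\delta)\gtrsim\delta^{?}$ dominate the corresponding coordinate of $\Phi^b(a)$, which is $O(\abs{a-b})$ because $\Phi^b$ has bounded derivative) -- here one uses that $\tau_n=\delta$ is the smallest polyradius and the $n$-th component $\langle\nu(b),a-b\rangle$ is genuinely $O(\abs{a-b})$; the delicate point is that $\Phi^b$ being only weight-one (not simple) means its non-linear part could in principle distort small displacements, so one must check that near the diagonal the map $\Phi^b$ is uniformly bi-Lipschitz onto its image with constants independent of $b\in U$, which follows from smoothness of the coefficients and precompactness of $U$. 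The lower bound $d(a,b)\ge C_1\abs{a-b}^{2m}$ is where the finite-type exponent enters: if $a\in Q(b,\delta)$ then each coordinate of $\Phi^b(a)$ is controlled by $\tau_k(b,\delta)$, the largest being $\tau_1(b,\delta)=\tau(b,\delta)$, and from \eqref{E46} one has $\tau(b,\delta)\lesssim\delta^{1/2m}$ uniformly (because $\abs{P_l(b,\cdot)}$ is bounded above on $U$, so $(\delta/\abs{P_l})^{1/l}\gtrsim\delta^{1/l}\ge\delta^{1/2m}$ for $\delta<1$, and the min is still $\gtrsim\delta^{1/2m}$ -- wait, this gives a lower bound on $\tau$, which is the wrong direction); the correct route is that $\abs{a-b}\lesssim\abs{\Phi^b(a)}\lesssim\tau(b,\delta)\lesssim\delta^{1/2m}$ requires an \emph{upper} bound $\tau(b,\delta)\lesssim\delta^{1/2m}$, which holds because some $\abs{P_l(b,\cdot)}$ is bounded \emph{below} away from zero uniformly on $U$ -- precisely the finite-type hypothesis, guaranteeing that for each $b$ there is an $l\le 2m$ with $\abs{P_l(b,\cdot)}\gtrsim 1$, hence $\tau(b,\delta)\le(\delta/\abs{P_l(b,\cdot)})^{1/l}\lesssim\delta^{1/l}\le\delta^{1/2m}$ for $\delta$ small -- so $\abs{a-b}\lesssim\delta^{1/2m}$, i.e.\ $\delta\gtrsim\abs{a-b}^{2m}$, and taking the infimum over admissible $\delta$ gives $d'(a,b)\gtrsim\abs{a-b}^{2m}$; finally $d(a,b)=\min\{d'(a,b),\abs{a-b}_{l^\infty}\}\gtrsim\min\{\abs{a-b}^{2m},\abs{a-b}\}=\abs{a-b}^{2m}$ on the bounded set $U$.
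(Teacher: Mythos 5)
Your treatment of part (vi) --- the only part for which the paper supplies a proof rather than deferring to Herbort --- follows essentially the paper's route: for the lower bound, unwind $\Phi^b(a)$ component by component inside $Q(b,\delta)$, use the finite-type fact that some $\abs{P_l(b,\cdot)}$ is uniformly bounded below on the precompact $U$ to get $\tau(b,\delta)\lesssim\delta^{1/2m}$, conclude $\abs{a-b}\lesssim\delta^{1/2m}$ and hence $d'(a,b)\gtrsim\abs{a-b}^{2m}$; for the upper bound, uniform boundedness of the coefficients of $\Phi^b$. (In fact the upper bound in (vi), like part (v), is immediate from the $\abs{\cdot}_{l^\infty}$ term in the definition of $d$, so your more elaborate arguments there are correct but not needed.) Your self-correction about which inequality for $\tau$ drives the lower bound lands in the right place.

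Two concrete slips in the sketched parts should be repaired. First, in (ii) and (iv) you apply the engulfing property backwards: with $\eta\in Q(\zeta,\delta)$ it gives $Q(\eta,\delta)\subset Q(\zeta,C_0\delta)$, so from $a\in Q(b,\delta)$ you obtain $Q(a,\delta)\subset Q(b,C_0\delta)$, not $Q(b,\delta)\subset Q(a,C_0\delta)$ (had the latter held, $b\in Q(b,\delta)$ would already finish (ii), contradicting your next sentence); likewise $a\in Q(c,C\delta)$ does not by itself yield $Q(c,C\delta)\subset Q(a,C_0C\delta)$. What both parts actually need as input is the quasi-symmetry $a\in Q(b,\delta)\Rightarrow b\in Q(a,C\delta)$ (Proposition 3.5 of \cite{TT}, proved from the comparability of the radii and the uniform control of the coefficients of $\Phi^a\circ(\Phi^b)^{-1}$, which is the alternative route you gesture at); once that is available, (iv) follows by symmetrizing first and then engulfing. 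Second, in (iii) the claimed bound $\tau(\zeta,\delta)\gtrsim\delta^{1/2m}$ is false in general --- at a point where $P_2(\zeta;\cdot)\neq 0$ one has $\tau(\zeta,\delta)\approx\delta^{1/2}\ll\delta^{1/2m}$ --- and reading it parenthetically as an upper bound compounds the confusion. The correct and sufficient estimate is $\tau(\zeta,\delta)\gtrsim\delta^{1/2}$, which makes the first coordinate a non-binding constraint and yields $d'(a,b)\lesssim\abs{a-b}_{l^\infty}$ for $\abs{a-b}<R_0$, whence the minimum defining $d$ is comparable to $d'$ there. With these repairs the plan is sound and, where it overlaps with the paper's actual proof, coincides with it.
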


\begin{proof} The proofs of parts (i)-(v) follow exactly as in \cite{Her}. To establish (vi),
observe that $a \in Q_\delta(b)$ if and only if $ \vert \Phi^b(a)_1 \vert < \tau(b,\delta), \vert \Phi^b(a)_\alpha \vert < \sqrt{\delta} $ 
for all $ 2 \leq \alpha \leq n-1 $ and $  \vert \Phi^b(a)_n \vert < \delta $.
As a consequence,
\[
\vert G_b(\tilde{a} - \tilde{b}) - Q_2(a_1 - b_1) \vert < \sqrt{\delta}
\]
which implies that
\[
\vert G_b(\tilde{a} - \tilde{b}) \vert < \sqrt{\delta} + Q_2(a_1 - b_1).
\]
But we already know that 
\[
\vert a_1 - b_1 \vert < \tau(b,\delta) \lesssim \delta^{1/2m}.
\]
Moreover, since $G_b^{-1}$ is uniformly bounded below in norm in a neighbourhood of $b$, we see that
\[
\vert \tilde{a} - \tilde{b} \vert \lesssim \sqrt{\delta} + \tau(b,\delta) \lesssim \delta^{1/2m}.
\] 
It follows that $\vert {}'a - {}'b \vert \lesssim \delta^{1/2m} $ and consequently that $\vert a_n - 
b_n \vert \lesssim \delta^{1/2m} $. To summarize, we conclude that 
$\vert a_1 - b_1 \vert$, $\vert \tilde{a} - \tilde{b} \vert$, 
$\vert {}'a - {}'b \vert$ and $\vert a_n - b_n\vert$ are all less than $\delta^{1/2m}$ times a constant. 
Hence, $d(a,b) \gtrsim \vert a - b \vert^{2m}$. Now to prove the upper inequality of (vi), 
write
\begin{align*}
\big\vert \Phi^b (a) \big\vert^2 &= \big\vert a_1 - b_1 \big\vert^2 + \big\vert G_\zeta(\tilde{a} -
 \tilde{b}) - Q_2(a_1 - b_1) \big\vert^2 + \big \vert(b_n^\zeta)^{-1} \big( a_n - b_n \big) - Q_1({}'a - {}'b) \big\vert^2. 
\end{align*}
Note that the right hand side above is at most $ C_b \vert a - b  \vert^2$, where $C_b$ is the maximum of the absolute 
values of $ G_\zeta $, $ ( b_n^\zeta)^{-1}  $ and the coefficients of the polynomials $Q_1 ({}'a - {}'b)$, $Q_2^\alpha(a_1 - b_1)$ 
for $2 \leq \alpha \leq n-1$, all of which are smooth functions of $b$. Hence, $C_b$ is bounded above by a positive 
constant (say, $C_2$) that depends only on the domain $D$. This proves the lemma.
\end{proof}

\noindent Observe that open balls in this pseudo-distance $ d$ are the analytic polydiscs $Q(\cdot,\delta)$ and hence,
the topology generated by $ d$ coincides with the Euclidean topology. However, part (vi) of Lemma \ref{dprop} shows
that the pseudodistance $ d $ is not bi-Lipschitz equivalent to the Euclidean distance. Furthermore, $ d $ captures certain key 
aspects of the CR-geometry of the boundary of the Levi corank one domain $D$. The goal now is to compare $ d $ with the distance 
between a pair of points of $D$ measured in the space of uniformly bounded holomorphic functions (bounded by $1$ for 
simplicity). 

\subsection{Local domains of comparison and plurisubharmonic weights} \label{second}
\noindent The primary objective of this section is to construct a bounded holomorphic function that separates any given pair of points 
in $D$ or equivalently, in $D_\zeta := \Phi^{\zeta} \big(D \cap B(0, R_0) \big)$ where $ \zeta  \in U $ is fixed. The idea is to 
construct a smooth function first, then modify it to get a holomorphic one via a suitable $\overline{\partial}$ problem. The standard techniques 
of solving a $\overline{\partial}$-problem will yield a smooth $L^2$-solution which can be modified to ensure holomorphicity 
and the desired separation properties. This function will satisfy an $ L^2 $ bound near $ \zeta$. However, we require this function to lie in 
$ H^{\infty}(D) $. Catlin observed that this inclusion can be difficult to prove (cf. \cite{Ca}). To overcome this difficulty,
the domain $D_\zeta$ is bumped near $\zeta$ in a manner that $\partial D_\zeta$ lies well within the bumped domain $D_t^\zeta\; (t \in \mathbb{R}_{+})$. 
The bumping is done carefully, so that the function in $H^2(D_t^\zeta)$ obtained as a solution of the $\overline{\partial}$-problem can be
modified to get an $H^\infty$-function on $D$. As in \cite{Ca} and \cite{Cho1}, the bumped domain $D_t^\zeta$ 
is characterized by the property that its boundary is pushed out as far as possible, subject to the following two constraints: first,  
${\rm dist}(\zeta, \partial D_t^\zeta)<t$ and second,  $\partial D_t^\zeta$ is pseudoconvex. Also, consider the trivial bumping $ D^{t,\zeta} $ of the 
domain $D_\zeta$, defined by $D^{t,\zeta} = \{w \in \mathbb{C}^n \; : \; \rho^\zeta(w) < t \}$. To relate the bumped domain
 $D_t^\zeta$ with $D^{t,\zeta} $, introduce the one parameter family (parametrized by $t \in \mathbb{R}_{+}$) of functions 
\begin{equation*}
\hat{J}_{\zeta,t}(w) = J_{\zeta,t}(w) - t,
\end{equation*}
where 
\begin{equation}
J_{\zeta,t}(w) = \Big( t^2 + \vert w_n \vert^2 + \sum\limits_{j=2}^{2m} \vert P_j(\zeta, \cdot) \vert^2 \vert w_1 \vert^{2j} + 
\vert w_2 \vert^4 + \ldots + \vert w_{n-1} \vert^4 \Big)^{1/2}.
\end{equation}
It turns out that $J_{\zeta,t}(w)$ is useful in estimating the $ L^2 $ norms of holomorphic functions on $D_t^\zeta$ 
(cf. Lemma \ref{lem5.1} of sub-section \ref{fourth}). Moreover, $\hat{J}_{\zeta,t}(w)$ can be regarded as a pseudonorm: 
$\hat{J}_{\zeta,t}(w-z)$ is symmetric and satisfies the triangle inequality up to a positive constant. Indeed, let 
$P(\vert w \vert)$ denote the expression
\[
 \vert w_n \vert^2 + \sum\limits_{j=1}^{2m} \vert P_j(\zeta,\cdot) \vert^2 \vert w_1 \vert^{2j} + \vert w_2 \vert^4 + \ldots + \vert w_{n-1} \vert^4.
\]
It is well known that
\[
\vert A + B \vert^{2j} \leq 2^{2j-1} ( \vert A \vert^{2j} + \vert B \vert^{2j})
\]
for $ A, B \in \mathbb{R} $ and $ j \in \mathbb{N} $. Applying the above inequality to each term in $ P(|w + z|) $, it follows that
, $ P (|w + z|) \lesssim P(w) + P(z) $. This estimate yields the triangle inequality for $\hat{J}_{\zeta,t}(w)$. \\

\noindent Note that the bumping technique can be applied to a small neighbourhood of $\zeta$. It is, therefore, natural to 
examine the region where $\vert \rho^\zeta( \cdot ) \vert$ differs from $J_{\zeta,t}(\cdot)$ 
by a small constant. More precisely, consider a tubular neighbourhood $ U_{\zeta,t} $ of $\partial D_\zeta$, defined by
\begin{equation}
U_{\zeta,t} = \{w: \vert \rho^\zeta(w) \vert < s \vert J_{\zeta,t}(w) \vert \} \cap B(0, R_1), \; 0 < R_1 < R_0
\end{equation}
where $s $ is a small positive constant.\\

\noindent Observe that $\rho^\zeta(w)<0$ whenever $w \in D_\zeta =\Phi^\zeta \big( D \cap B(0,R_0) \big)$. In this case,
$U_{\zeta,t}$ is described by $\rho^\zeta(w) > - s J_{\zeta,t}(w)$, which determines a one-sided neighbourhood $\mathcal{U}$ 
of $\partial D_{\zeta} $. Bumping the boundary $ \partial D_{\zeta} $ of the domain will yield a pseudoconvex 
hypersurface $\mathcal{B}^\zeta$. Denote by $\mathcal{B}_\zeta^{int}$, the piece of the hypersurface $\mathcal{B}^\zeta$ which lies
within $D_t^\zeta$ and bounds the one-sided neighbourhood $\mathcal{U}$. Note that
$\rho^\zeta(w)$ equals $-s J_{\zeta,t}(w)$ on $\mathcal{B}_\zeta^{int}$. We claim that the defining function $\rho^{\zeta,t}(w)$
of the bumped domain behaves analytically like the algebraic function $J_{\zeta,t}$ i.e., $\rho^{\zeta,t} \approx J_{\zeta,t}$. To this end,
the following result will be useful (see Proposition 2.2 and Theorem 2.4 of \cite{Cho1} for a proof).

\begin{lem} \label{maxhess}
For sufficiently small $R_1<R_0$, $ s> 0 $ and each $\zeta \in \partial D \cap B(0, R_1) $,
there exists on $\{ \rho^\zeta<t\} \cup U_{\zeta,t}$, a smooth negative real valued  plurisubharmonic function $E_{\zeta,t}$ with the following properties:
\begin{itemize}
\item[(i)] $-C_3 J_{\zeta,t} \leq E_{\zeta,t} \leq -1/C_3 J_{\zeta,t}$ \\
\item[(ii)] The complex Hessian of $E_{\zeta,t}$ satisfies 
\[
\mathcal{L}_{E_{\zeta,t}}(w,Y) \approx J_{\zeta,t}(w) \Big(\; \Big\vert \frac{  Y_1 }{ \tau(\zeta, J_{\zeta,t}(w)) } 
\Big\vert^2 +  \sum\limits_{k=2}^{n-1} \Big\vert \frac{ Y_k }{\sqrt{J_{\zeta,t}(w)}} \Big\vert^2 + \Big\vert \frac{ Y_n }{ J_{\zeta,t}(w) } \Big\vert^2 \; \Big)
\]
for all $w \in \{ \rho^\zeta <t \} \cup U_{\zeta,t}$ and $Y \in \mathbb{C}^n$.
\item[(iii)] The first directional derivative of $E_{\zeta,t}$ satisfies
\[
\vert \langle \partial E_{\zeta,t}(w), Y \rangle \vert^2   \leq C_4 J_{\zeta,t}(w) \mathcal{L}_{E_{\zeta,t}}(w,Y)
\]
\item[(iv)] Let $\rho^{\zeta,t} = \rho^\zeta + \epsilon_0 E_{\zeta,t}$. Then for small enough $\epsilon_0>0$ the domain
\[
D_t^\zeta = \Big\{ w \in \{ \rho^\zeta<t \} \cup U_{\zeta,t} \; : \;  \rho^{\zeta,t}(w)<0 \Big\} 
\]
is pseudoconvex.
\end{itemize} 
\end{lem}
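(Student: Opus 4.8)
The plan is to concentrate on the analytic heart of the statement -- the existence of a negative plurisubharmonic $E_{\z,t}$ realizing the anisotropic Hessian estimate (ii) -- since (iii) is then read off directly from the shape of the function constructed, and (iv) follows from (ii), (iii) and the pseudoconvexity of $D_\z$ by a standard absorption argument. I would build $E_{\z,t}$ along the lines of Catlin \cite{Ca} and Cho \cite{Cho1}, keyed throughout to the special radius $\tau$ of (\ref{E46}). The point to be overcome is that the naive candidate $E_{\z,t}=-J_{\z,t}$, although of the right size, is \emph{not} plurisubharmonic: its complex Hessian carries an uncompensated negative multiple of the Hessian of the algebraic expression $\sum_{j}\abs{P_j(\z,\cdot)}^2\abs{w_1}^{2j}+\abs{w_2}^4+\cdots+\abs{w_{n-1}}^4$, so one must inject plurisubharmonicity coming from the geometry of $\pa D$.

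\noindent First I would isolate the ``horizontal'' part
\[
p_\z({}'w)=\sum_{l=2}^{2m}P_l(\z;w_1)+\abs{w_2}^2+\cdots+\abs{w_{n-1}}^2+\sum_{\al=2}^{n-1}\sum_{\substack{j+k\le m\\ j,k>0}}\Re\Big(\big(b_{jk}^\al(\z)w_1^j\ov w_1^k\big)w_\al\Big)
\]
of the normal form (\ref{nrmlfrm}) and show, using the pseudoconvexity of $\pa D$ and the Levi corank one hypothesis, that $p_\z$ is essentially plurisubharmonic and that on each polydisc $R(\z,\delta)$, $\delta\in(0,\delta_e)$, one has
\[
\mathcal L_{p_\z}({}'w,{}'Y)\ \gtrsim\ \frac{\delta}{\tau(\z,\delta)^2}\abs{Y_1}^2+\sum_{k=2}^{n-1}\abs{Y_k}^2 .
\]
The $\abs{w_\al}^2$ terms supply the middle block for free; the cross terms, for which $j+k\le m$, are dominated on $R(\z,\delta)$ by $\tfrac12\abs{w_\al}^2$ plus a constant multiple of $\sum_{l\le m}B_l(\z)^2\abs{w_1}^{2l}$ by Cauchy--Schwarz, and so disturb neither the middle block nor the $w_1$ direction -- this is exactly where Cho's observation that the coefficients $b_{jk}^\al$ are insignificant for $\tau$ enters. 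The $w_1$ estimate is the crux: it reduces to $\sup_{\abs{w_1}\le\tau(\z,\delta)}\Delta_{w_1}\!\big(\sum_l P_l(\z;\cdot)\big)\gtrsim\delta/\tau(\z,\delta)^2$ followed by a spreading argument over the disc, and it depends on the very definition (\ref{E46}) of $\tau$, which calibrates the radius at which $\sum_l P_l(\z;\cdot)$ reaches size $\delta$, together with the absence of harmonic terms. This $\tau$--calibrated subharmonic mean--value estimate, in the spirit of Catlin, is the step I expect to be the hard part, along with the companion task of assembling from $p_\z$ (at the running scale), $\rho^\z$ and the constant $t^2$ a negative plurisubharmonic function whose modulus is comparable to $J_{\z,t}^2$.

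\noindent Granting this, I would insert the scale and recover the missing $w_n$ direction by a concave reparametrization. Since $\rho^\z=r(\z)+2\Re w_n+p_\z+R$ defines the pseudoconvex domain $D_\z$, one can combine $p_\z$ evaluated at the running scale $\delta=J_{\z,t}(w)$ -- legitimate because $J_{\z,t}\ge t$ stays away from the degenerate scale -- with $t^2$ and with $\rho^\z$ itself into a negative plurisubharmonic $\Psi_{\z,t}$ satisfying $(-\Psi_{\z,t})^{1/2}\approx J_{\z,t}$ and carrying a genuine $\Re w_n$ dependence, and then set $E_{\z,t}$ equal to a fixed multiple of $-(-\Psi_{\z,t})^{1/2}$. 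Then (i) is immediate, and the identity
\[
\mathcal L_{-(-\Psi)^{1/2}}(w,Y)=\tfrac12(-\Psi)^{-1/2}\mathcal L_\Psi(w,Y)+\tfrac14(-\Psi)^{-3/2}\abs{\langle\pa\Psi(w),Y\rangle}^2
\]
turns the horizontal estimate for $\mathcal L_\Psi$ into the $Y_1$ and $Y_k$ parts of (ii), while the rank one second term, once $\Psi_{\z,t}$ is arranged to see $\Re w_n$ with the correct size, produces the remaining $\abs{Y_n}^2/J_{\z,t}$. Property (iii) then drops out of the same identity and Cauchy--Schwarz, since $\abs{\langle\pa E_{\z,t},Y\rangle}^2\approx(-\Psi)^{-1}\abs{\langle\pa\Psi,Y\rangle}^2\lesssim(-\Psi)^{1/2}\mathcal L_{E_{\z,t}}(w,Y)\approx J_{\z,t}(w)\,\mathcal L_{E_{\z,t}}(w,Y)$; it merely records that $E_{\z,t}$ is a concave power of a plurisubharmonic function.

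\noindent Finally, for (iv) I would argue by absorption. With $\rho^{\z,t}=\rho^\z+\ep_0E_{\z,t}$ and $w\in\pa D_t^\z$, a complex tangent vector $Y$ obeys $\langle\pa\rho^\z(w),Y\rangle=-\ep_0\langle\pa E_{\z,t}(w),Y\rangle$ and $\mathcal L_{\rho^{\z,t}}(w,Y)=\mathcal L_{\rho^\z}(w,Y)+\ep_0\mathcal L_{E_{\z,t}}(w,Y)$. On the one--sided neighbourhood $U_{\z,t}$ one has $\abs{\rho^\z(w)}<sJ_{\z,t}(w)$, so the pseudoconvexity of $D_\z$ (via continuity of the Levi form) bounds $\mathcal L_{\rho^\z}(w,Y)$ below by $-C\big(sJ_{\z,t}(w)\abs{Y}^2+\abs{\langle\pa\rho^\z(w),Y\rangle}\,\abs{Y}\big)$; estimating $\abs{\langle\pa\rho^\z(w),Y\rangle}=\ep_0\abs{\langle\pa E_{\z,t}(w),Y\rangle}$ by $\ep_0\sqrt{C_4\,J_{\z,t}(w)\,\mathcal L_{E_{\z,t}}(w,Y)}$ through (iii) and taking $s$, $\ep_0$ (and $R_1$) small enough, the strongly positive term $\ep_0\mathcal L_{E_{\z,t}}(w,Y)$, which dominates $\ep_0\big(J_{\z,t}\tau^{-2}\abs{Y_1}^2+\sum_{k=2}^{n-1}\abs{Y_k}^2+J_{\z,t}^{-1}\abs{Y_n}^2\big)$, absorbs both error terms. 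Hence $\mathcal L_{\rho^{\z,t}}(w,Y)\ge0$, and $D_t^\z$ is pseudoconvex.
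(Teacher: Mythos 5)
First, a point of comparison: the paper does not prove this lemma at all --- it is imported wholesale from Cho (Proposition 2.2 and Theorem 2.4 of \cite{Cho1}), and the only hint the paper gives about the construction is the later remark that $E_{\zeta,t}$ is a suitable weighted sum of a family $\{\lambda_\delta\}$ of uniformly bounded plurisubharmonic functions whose Hessians satisfy maximal lower bounds. Your overall architecture (a Catlin--Cho bumping function calibrated to the special radius $\tau$, plus an absorption argument for (iv)) is the correct one, and your treatment of (iv) is essentially the standard argument the paper itself runs elsewhere (compare (\ref{pscvxity}) and the estimates following (\ref{A})).

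As a proof, however, the proposal has a genuine gap exactly at the step you yourself flag as ``the hard part'', and the shortcut you propose around it does not work as stated. (a) The function $-J_{\zeta,t}^2$ contains $-\abs{w_n}^2$ and $-\abs{w_\alpha}^4$, which are plurisuperharmonic; so ``assembling a negative psh $\Psi$ with $-\Psi\approx J_{\zeta,t}^2$'' already requires injecting, in \emph{every} coordinate direction and at every dyadic scale, bounded plurisubharmonic functions whose Hessians dominate these negative contributions. That is precisely the content of the Catlin--Cho construction (the family $\{\lambda_\delta\}$, the $\tau$-calibrated ``spreading'' of the Laplacian of $\sum_l P_l(\zeta;\cdot)$ over the disc of radius $\tau(\zeta,\delta)$, and the weighted dyadic sum that glues the scales), and none of it is supplied by your single-scale horizontal estimate for $p_\zeta$. (b) The concave reparametrization $E=-(-\Psi)^{1/2}$ has a scaling problem in the normal direction: with $-\Psi\approx J^2$ the rank-one term is $\approx J^{-3}\abs{\langle\partial\Psi,Y\rangle}^2$, so to land on $\abs{Y_n}^2/J$ one needs $\abs{\partial\Psi/\partial w_n}\approx J$ uniformly. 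A linear $\Re w_n$ dependence gives $J^{-3}\abs{Y_n}^2$, destroying the upper bound in (ii); a quadratic one gives $\abs{\partial\Psi/\partial w_n}\approx\abs{w_n}$, which degenerates when $\abs{w_n}\ll J$ (e.g.\ $w_n=0$, $J\approx t$), so the lower bound in the $Y_n$ direction is lost and must again come from a $\lambda_\delta$-type ingredient in the $w_n$ variable. (c) Property (ii) is a two-sided estimate, and the upper bound --- which the paper actually uses, e.g.\ in (\ref{Lest}) --- is nowhere addressed in the proposal. The honest options are the one the paper takes (cite Cho's construction) or a full reproduction of it; the proposal as written is a plan for the latter with its central lemma missing.
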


\noindent The constants $C_3, C_4$ and $ s $ above are positive and independent of the parameters $t$ and $\zeta$.         
The part (iii) above provides a comparison between the complex Hessian of $E_{\zeta,t}(w)$ and the positive 
semi-definite Hermitian form $\vert \langle \partial E_{\zeta,t} (w),Y \rangle \vert^2$ obtained from the smooth function
$E_{\zeta,t}$. In part (iv) above, the bumped domain $ D_t^\zeta $ contains $U_{\zeta,t}$ and a portion of the trivially bumped domain $D^{t, \zeta}$.
This ensures that the boundary $\partial D_t^\zeta$ is pseudoconvex, and also that the ratio $\vert \rho^{\zeta,t}(w) / J_{\zeta,t}(w) \vert$ is small.
Moreover, it follows that
\begin{equation} \label{norest}
\langle \partial \rho^{\zeta,t}(w), Y \rangle = \langle \partial \rho^\zeta(w),Y \rangle + \epsilon_0 \langle \partial E_{\zeta,t}(w),Y \rangle
\end{equation}
for $ Y \in \mathbb{C}^n $. Setting $w=0$ and restricting $ Y $ to the complex tangent space to $D^\zeta$ at the origin, i.e., $ 
\{z \in \mathbb{C}^n: z_n = 0 \}$, we see that
\[
\langle \partial \rho^{\zeta,t}(0),Y \rangle = \epsilon_0 \langle \partial E_{\zeta,t}(0),Y \rangle.
\]
But we already know from parts (iii) and (ii) that
\begin{align} 
\vert \langle \partial \rho^{\zeta,t}(0),Y \rangle \vert^2  \leq C_4 \epsilon_0^2 \big( J_{\zeta,t}(0) \big)^2  \left(  \left \vert \frac{Y_1}
{\tau \big(\zeta, J_{\zeta,t}(0) \big)} \right \vert^2 + \sum\limits_{\alpha=2}^{n-1} \left \vert \frac{Y_\alpha}{\sqrt{J_{\zeta,t}(0)}} \right\vert^2 \right) 
\lesssim \epsilon_0^2 \vert {}' Y \vert^2 \label{Oest1}.
\end{align}
since $\tau \big(\zeta, J_{\zeta,t}(0) \big) \gtrsim \sqrt{J_{\zeta,t}(0)} $ and $J_{\zeta,t}(0)=t$. Now, consider
\begin{align*} 
\mathcal{L}_{\rho^{\zeta,t}}(0,Y) = \mathcal{L}_{\rho^\zeta}(0,Y) + \epsilon_0 \mathcal{L}_{E_{\zeta,t}}(0,Y).
\end{align*}
Recall that $  \mathcal{L}_{\rho^\zeta}(0,Y) $ is positive semi-definite due to pseudoconvexity of $ D_{\zeta} $ at the origin. Hence,
\begin{align} 
\mathcal{L}_{\rho^{\zeta,t}}(0,Y) & \gtrsim \epsilon_0 J_{\zeta,t}(0) \left( \left \vert \frac{Y_1}{\tau \big(\zeta, J_{\zeta,t}(0) \big)} \right \vert^2 + 
\sum\limits_{\alpha=2}^{n-1} \left \vert \frac{Y_\alpha}{\sqrt{J_{\zeta,t}(0)}} \right \vert^2 \right) \nonumber  \\ \label{Oest2}
& = \epsilon_0 t \Big( \big \vert \frac{Y_1}{\tau(\zeta,t)} \big \vert^2 + \frac{1}{t} \sum\limits_{\alpha=2}^{n-1} \big \vert Y_\alpha \big \vert^2 \Big). 
\end{align}
Using the fact that $ \tau (\zeta,t) \lesssim t^{1/2m}$, we get that 
\[
\mathcal{L}_{\rho^{\zeta,t}}(0,Y) \gtrsim \epsilon_0  t^{1-1/m} \vert {}'Y \vert^2.
\]
Observe that (\ref{Oest1}) and (\ref{Oest2}) together imply that the bumped domains touch the domain $D_\zeta$ 
minimally in the complex tangential directions and approximately to the first order along the normal direction at the origin. Furthermore, 
rewriting (\ref{norest}) together with Lemma \ref{maxhess}(iii) yields the following estimates about the variation of the normal vector 
fields with respect to the bumping procedure:
\begin{align*}
\big \vert \langle \partial \rho^{\zeta,t}(w), Y \rangle  - \langle \partial \rho^{\zeta}(w), Y \rangle \big \vert^2 
&= \epsilon_0^2 \big \vert \langle \partial E_{\zeta,t}(w),Y \rangle \big \vert^2\\
& \leq C_4 \epsilon_0^2 \big( J_{\zeta,t}(w)\big)^2 \left( \left \vert \frac{Y_1}{\tau \big(\zeta, J_{\zeta,t}(w) \big)} \right \vert^2 + 
\sum\limits_{\alpha=2}^{n-1} \left \vert \frac{Y_\alpha}{\sqrt{J_{\zeta,t}(w)}} \right\vert^2 + \left\vert \frac{Y_n}{J_{\zeta,t}(w)}  \right\vert^2 \right) \\
& = C_4 \epsilon_0^2  \left( \left \vert \frac{J_{\zeta,t}(w)}{\tau (\zeta, J_{\zeta,t}(w))}\right \vert^2 \vert Y_1 \vert^2 + 
\sum\limits_{\alpha=2}^{n-1} \big \vert \sqrt{J_{\zeta,t}(w)} Y_\alpha \big\vert^2 + \big\vert Y_n \big \vert^2 \right) \\
&\lesssim C_4 \epsilon_0^2 \Big( J_{\zeta,t}(w) \big\vert {}'Y \big\vert^2 + \vert Y_n \vert^2 \Big).
\end{align*} 
Here we use the fact that $ \tau \big(\zeta, J_{\zeta,t}(w) \big) \gtrsim J_{\zeta,t}(w)^{1/2} $ and that $J_{\zeta,t}(w)\ll 1$ for $w$ near the origin. 
Then, the above analysis allows us to conclude that the bumping process pushes the boundary $\partial D^\zeta_t$ in the `normal' direction 
$Y_n$  relatively more than in the complex tangential directions. \\

\noindent As in \cite{Cho1} and \cite{Her}, the domains $D_t^\zeta$, serve as local pseudoconvex domains of comparison containing 
certain special polydiscs $ P(w,t,\theta) $ of optimal size: 
\[
P(w,t,\theta) = \Delta \Big( w_1, \tau\big(\zeta,\theta J_{\zeta,t}(w) \big) \Big) \times \Delta \big(w_2,  \sqrt{ \theta J_{\zeta,t}(w)} \big) \times \ldots \times \Delta \big(w_{n-1}, \sqrt{ \theta J_{\zeta,t}(w)} \big) \times \Delta \big(w_n, \theta J_{\zeta,t}(w) \big) 
\]
Evidently, these polydiscs are the pull backs of the unit polydisc around $ w $, under the affine scalings given by
\begin{alignat*}{3}
\Delta_{\zeta}^{\theta J_{\zeta,t}(w)}(z_1, \ldots, z_n) = \left( \frac{z_1}{\tau \big(\zeta, \theta J_{\zeta,t}(w) \big)}, \frac{z_2} {\sqrt{\theta J_{\zeta,t}(w)}}, \ldots,
\frac{z_{n-1}}{ \sqrt{\theta J_{\zeta,t}(w)}}, \frac{z_n} {\theta J_{\zeta,t}(w) } \right).
\end{alignat*}
The scaling factors of this linear map or equivalently, the polyradii of $ P(w,t,\theta) $ are so chosen that Lemma \ref{Ppolydisc} is true.
We use the Cauchy integral formula for holomorphic functions on the polydisc -- Let $ \Omega $ be a domain in $ \mathbb{C}^n $ and $ h \in \mathcal{O}(\Omega) $. 
Then for each $ z \in \Omega $ and each polydisc $ P = \Delta^1 \times \ldots \times \Delta^n $ around $ z $ such that $ P $ is compactly contained in $ \Omega$,
it follows from the Cauchy integral formula that
\begin{equation} \label{valest}
\vert h(z) \vert  \leq C \frac{\parallel h \parallel_{L^2}}{ \delta_{\Delta^1}(z_1) \times \ldots \times \delta_{\Delta^n}(z_n)}
\end{equation}
for some universal constant $ C $ that depends only on the dimension $n$. Here, each $\Delta^j$ denotes a disc in the $\mathbb{C}_{z_j}$-plane around at $z_j$. 
Recall that the $L^2$-norm of a function is bounded above by its $L^\infty$-norm (up to a constant) on any compact measure space. Observe that (\ref{valest})
enables an estimate with the inequality reversed. This estimate albeit well-known and elementary, will play a key role (for instance, (\ref{elest})) in rendering a local solution to 
our problem and another primary result (i.e., Lemma \ref{cardist}) towards our end goal. The following lemma from \cite{Cho1} will be useful for our purposes.

\begin{lem} \label{Ppolydisc}
There exist numbers $M_0, r_0, \theta>0$ such that 
\begin{itemize}
\item[(a)] For all $\zeta \in \partial D$, any $t>0$ and $w \in D^\zeta \cap B(o, {r_0})$ one has 
\[
D_t^\zeta \supset P(w,t,\theta). 
\]
\item[(b)] Next, the variation of the defining function $\rho^\zeta$ on the polydisc $P(w,t,\theta)$ is described by the statement 
\[
 \vert \rho^\zeta(x) \vert \leq M_0 \theta J_{\zeta,t}(w). 
\]
for all $x \in P(w, t, \theta)$.
\end{itemize} 
\end{lem}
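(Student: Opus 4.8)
The plan is to follow the scheme of \cite{Cho1}: reduce both assertions to weighted bookkeeping with the polyradii of $P(w,t,\theta)$ and with the algebraic function $J_{\zeta,t}$, and then feed the outcome into the size estimates for the bumping function $E_{\zeta,t}$ from Lemma \ref{maxhess}. Throughout, $\theta>0$ is a small constant to be fixed only at the end and $r_0>0$ is small; all implied constants should be tracked so as to stay uniform in $\zeta\in\partial D$, in $t$, and in $w$, which is possible because the coefficients of the normal form \eqref{nrmlfrm} are smooth over the precompact set $U$.

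The first step I would carry out is the comparability $J_{\zeta,t}(x)\approx J_{\zeta,t}(w)$ for $x\in P(w,t,\theta)$. Writing $x=w+y$ and inserting the polyradii of $P(w,t,\theta)$ into the definition of $J_{\zeta,t}$ -- using the inequality $|P_j(\zeta,\cdot)|\,\tau(\zeta,\delta)^j\le\delta$ built into \eqref{E46}, with $\delta=\theta J_{\zeta,t}(w)$ -- one finds $J_{\zeta,t}(y)^2-t^2\lesssim\theta^2 J_{\zeta,t}(w)^2$, hence $\hat{J}_{\zeta,t}(y)=J_{\zeta,t}(y)-t\le(J_{\zeta,t}(y)^2-t^2)^{1/2}\lesssim\theta J_{\zeta,t}(w)$. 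Since $\hat{J}_{\zeta,t}$ is symmetric and subadditive up to a constant (the pseudonorm property proved just before Lemma \ref{maxhess}) and $J_{\zeta,t}=t+\hat{J}_{\zeta,t}\ge t$, this yields $J_{\zeta,t}(x)\lesssim J_{\zeta,t}(w)$ always, and the reverse inequality $J_{\zeta,t}(w)\lesssim J_{\zeta,t}(x)$ once $\theta$ is small.

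Next I would establish (b). With $r(\zeta)=0$, \eqref{nrmlfrm} expresses $\rho^\zeta(x)$ as $2\Re x_n$ plus the terms $P_l(\zeta;x_1)$, plus the terms $|x_\alpha|^2$, plus the mixed terms $\Re(b_{jk}^\alpha(\zeta)x_1^j\bar{x}_1^k x_\alpha)$, plus the error $R(\zeta;x)$. Straight from the definition of $J_{\zeta,t}$ one reads off $|x_n|\le J_{\zeta,t}(x)$, $|P_l(\zeta;x_1)|\le|P_l(\zeta,\cdot)|\,|x_1|^l\le J_{\zeta,t}(x)$, and $|x_\alpha|^2\le J_{\zeta,t}(x)$, while $R(\zeta;x)$ contributes $\lesssim J_{\zeta,t}(x)$ with a small implied constant near $0$, its monomials having weight exceeding one; running the same computation with $x-w$ in place of $x$ and the polyradii in place of $|x_j|$ gives the sharper variation bound $|\rho^\zeta(x)-\rho^\zeta(w)|\lesssim\theta J_{\zeta,t}(w)$. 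The single nontrivial point is the mixed terms, and this is exactly where pseudoconvexity of $\partial D$ is used: the normal-form analysis of \cite{Cho2} -- its assertion that the $b_{jk}^\alpha$ are ``insignificant'' -- shows $|b_{jk}^\alpha(\zeta)|^2|x_1|^{2(j+k)}\lesssim\sum_l|P_l(\zeta,\cdot)|\,|x_1|^l\lesssim J_{\zeta,t}(x)$, so that $2|b_{jk}^\alpha(\zeta)x_1^j\bar{x}_1^k x_\alpha|\le\varepsilon^{-1}|b_{jk}^\alpha(\zeta)|^2|x_1|^{2(j+k)}+\varepsilon|x_\alpha|^2\lesssim J_{\zeta,t}(x)$ as well. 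Summing and applying Step 1 gives $|\rho^\zeta(x)|\lesssim J_{\zeta,t}(w)$, which is (b) once the fixed $\theta$ is absorbed into $M_0$.

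For (a), fix $x\in P(w,t,\theta)$, which lies in $B(0,R_1)$ for $r_0$ small. Since $\rho^\zeta(w)<0$ (as $w\in D^\zeta$) and $|\rho^\zeta(x)-\rho^\zeta(w)|\lesssim\theta J_{\zeta,t}(w)$, a two-case split according to whether $\rho^\zeta(w)$ is comfortably negative relative to $\theta J_{\zeta,t}(w)$ or close to $0$ places $x$ in $\{\rho^\zeta<t\}$ in the first case and, using Step 1 and $\theta$ small, in $U_{\zeta,t}$ in the second; either way $x$ lies in $\{\rho^\zeta<t\}\cup U_{\zeta,t}$, where $\rho^{\zeta,t}$ is defined. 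There $\rho^{\zeta,t}(x)=\rho^\zeta(w)+\big(\rho^\zeta(x)-\rho^\zeta(w)\big)+\epsilon_0 E_{\zeta,t}(x)$, with the first summand negative, the second $\lesssim\theta J_{\zeta,t}(w)$, and, by Lemma \ref{maxhess}(i) and Step 1, the third $\le-\frac{\epsilon_0}{C_3}J_{\zeta,t}(x)\lesssim-\epsilon_0 J_{\zeta,t}(w)$; choosing $\theta$ small enough that the $\theta J_{\zeta,t}(w)$ contribution is swamped by the $\epsilon_0 J_{\zeta,t}(w)$ one gives $\rho^{\zeta,t}(x)<0$, i.e.\ $x\in D_t^\zeta$, proving $P(w,t,\theta)\subset D_t^\zeta$. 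I expect the genuinely delicate point to be the mixed-term estimate above: it is the only place pseudoconvexity enters and where one must lean on \cite{Cho2}, whereas the remainder is careful but essentially mechanical accounting with the weighted polyradii, arranged so that no constant depends on $\zeta$ or $t$.
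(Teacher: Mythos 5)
The paper offers no proof of this lemma to compare against: it is imported verbatim from Cho \cite{Cho1} (ultimately Catlin \cite{Ca}) with the single sentence ``The following lemma from \cite{Cho1} will be useful for our purposes.'' That said, your scheme is the natural one, and your Step 1 --- the comparability $J_{\zeta,t}(x)\approx J_{\zeta,t}(w)$ on $P(w,t,\theta)$ --- is precisely the computation the authors do carry out later, inside the proof of Lemma \ref{Greencandidate}, culminating in (\ref{Jest2}); your route through the approximate subadditivity of $\hat J_{\zeta,t}$ is a clean repackaging of that estimate. Your part (a) is also in order: the two-case split placing $x$ in $\{\rho^\zeta<t\}\cup U_{\zeta,t}$ and the sign computation for $\rho^{\zeta,t}(x)$ via Lemma \ref{maxhess}(i) work, \emph{granted} part (b) in its strong form.

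The gap is in part (b), at the phrase ``once the fixed $\theta$ is absorbed into $M_0$.'' Here $M_0$ must be independent of $\theta$: the lemma is used in the proof of Lemma \ref{Greencandidate} in the chain leading to (\ref{roJapprox}), where one needs $\epsilon_0 C_3^{-1}(1-C_6\theta)-M_0\theta\geq \epsilon_0/(2C_3)$, so $M_0\theta$ must become small as $\theta$ is shrunk (note that Lemma \ref{Greencandidate} opens with ``after shrinking $\theta$''). Your argument only yields $\vert\rho^\zeta(x)\vert\leq\vert\rho^\zeta(w)\vert+C\theta J_{\zeta,t}(w)$ together with $\vert\rho^\zeta(w)\vert\lesssim J_{\zeta,t}(w)$, and the latter carries no factor of $\theta$; indeed for $w=({}'0,-s)$ with $t\ll s<r_0$ one has $\vert\rho^\zeta(w)\vert\approx 2s\approx 2J_{\zeta,t}(w)$, so no bound $M_0\theta J_{\zeta,t}(w)$ with $M_0$ fixed and $\theta$ small can hold for \emph{all} $w\in D_\zeta\cap B(0,r_0)$. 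What your computation genuinely proves --- and what the word ``variation'' in the statement, and the corresponding lemma in \cite{Her}, indicate is the intended content --- is $\vert\rho^\zeta(x)-\rho^\zeta(w)\vert\leq M_0\theta J_{\zeta,t}(w)$; the absolute bound with a true $\theta$-factor additionally requires $\vert\rho^\zeta(w)\vert\lesssim\theta J_{\zeta,t}(w)$, i.e.\ $w$ confined to a thin shell of the type $U_{\zeta,t}$, which is the situation in which the estimate is actually deployed. You should either prove the variation form and carry the extra term $\vert\rho^\zeta(w)\vert$ explicitly into the applications, or restrict the admissible $w$; writing $\theta$ into $M_0$ silently destroys exactly the smallness the downstream argument depends on.
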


\noindent Note that the bumping function $E_{\zeta,t}$, is obtained by taking a suitable weighted sum of a family of uniformly 
bounded plurisubharmonic functions $\{\lambda_\delta\}$ (parametrized by $\delta$), whose Hessians satisfy 
maximal lower bounds near the point $\zeta \in D_\zeta $. Furthermore, the derivatives of $\{\lambda_\delta\}$ satisfy 
uniform upper bounds in terms of the weights with respect to the inverses of the 
multitype (for the precise statements, see \cite{Cho1}). Applying Diederich -- Fornaess modification (cf.\cite{DF1}) to $ \rho^{\zeta, t} $ yields a 
family of smooth functions $\psi_{\zeta,t}^w$ which are strongly plurisubharmonic on $D_t^\zeta$. It turns out that
their Hessians satisfy sharp lower bounds on the polydisc $ P(w, t, \theta) \subset D^t_{\zeta} $. These lower bounds, in Lemma
\ref{Greencandidate}(ii), are in terms of the polyradii of $ P(w, t, \theta) $. This is owing to a finer control on the derivatives 
of $\lambda_\delta$ for Levi corank one domains, as opposed to a less sharp control known 
for general smooth pseudoconvex domains of finite type. The reader is referred to \cite{Cho} for general but less refined bumping constructs 
which can be used to derive a lower bound on the infinitesimal Kobayashi metric. It turns out that $\psi_{\zeta,t}^w$ is a plurisubharmonic barrier function 
for $\zeta \in D_\zeta$ of algebraic growth (see \cite{Cho}, for a proof). In particular,  $\partial D_\zeta$ near $\zeta$ is regular 
in the sense Sukhov (cf. \cite{Su}) and B-regular in the sense of Sibony (see \cite{S2} and \cite{S}). This, in turn, implies 
that $\overline{D}_\zeta$ has a Stein neighbourhood basis. This fact will be used in the sequel. But, before going further, let us put down 
the construction of $ \psi_{\zeta,t}^{w} $. This will be useful for constructing the weight functions.

\begin{lem} \label{Greencandidate}
After shrinking $\theta$, given $w \in D_t^\zeta \cap \{ \rho^\zeta <0 \}$ there exists on $D_t^\zeta$ a plurisubharmonic function $\psi_{\zeta,t}^w <0$ such that 
\begin{itemize}
\item[(i)] $\psi_{\zeta,t}^{w} \geq -1$ on $P(w,t, \theta)$
\item[(ii)] For any $Y \in \mathbb{C}^n$ and $x \in P(w,t, \theta)$, we have  that 
\[
\mathcal{L}_{\psi_{\zeta,t}^w}(x,Y) \geq C_5 \Big( \frac{\vert Y_1 \vert^2}{\big( \tau(\zeta, J_{\zeta,t}(w) \big)^2} + \sum\limits_{k=2}^{n-1} \frac{\vert Y_k \vert^2}{J_{\zeta,t}(w)} +  \frac{\vert Y_n \vert^2}{\big( J_{\zeta,t}(w) \big)^2}\Big)
\]
where $C_5$ is a positive constant. 
\end{itemize}
\end{lem}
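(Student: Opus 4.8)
The plan is to produce $\psi_{\z,t}^{w}$ by applying the Diederich--Fornaess modification (cf.\ \cite{DF1}) to the defining function $\rho^{\z,t} = \rho^{\z} + \epsilon_0 E_{\z,t}$ of the pseudoconvex domain $D_t^{\z}$ of Lemma \ref{maxhess}(iv), and then normalising by the value $J_{\z,t}(w)$. Concretely, fix an exponent $\eta_0 \in (0,1)$ (uniform in $\z,t$, which is legitimate since the boundary geometry of $D_t^{\z}$ is uniformly controlled) and a small absolute constant $c_0 > 0$, and set
\[
\psi_{\z,t}^{w}(x) \;=\; -\, c_0 \left( \frac{-\rho^{\z,t}(x)}{J_{\z,t}(w)} \right)^{\eta_0}, \qquad x \in D_t^{\z}.
\]
Because $\rho^{\z,t} < 0$ on $D_t^{\z}$, the function $-(-\rho^{\z,t})^{\eta_0}$ is plurisubharmonic there by the Diederich--Fornaess construction (its complex Hessian is $\eta_0(-\rho^{\z,t})^{\eta_0-1}\mathcal{L}_{\rho^{\z,t}} + \eta_0(1-\eta_0)(-\rho^{\z,t})^{\eta_0-2}\abs{\langle\pa\rho^{\z,t},\cdot\rangle}^2$, the extra gradient term absorbing the normal direction); if strong plurisubharmonicity is wanted one may add $\epsilon\abs{z}^2$, which affects neither (i) nor (ii). Since $\rho^{\z,t}$ is bounded on the bounded set $D_t^{\z}$ and $J_{\z,t}(w)\geq t>0$, the function $\psi_{\z,t}^{w}$ is bounded, and it is negative; so it remains to verify (i) and (ii) on $P(w,t,\theta)$.

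The key preliminary estimate — and the place where $\theta$ is shrunk — is that
\[
-\rho^{\z,t}(x) \;\approx\; J_{\z,t}(w) \qquad \text{for every } x \in P(w,t,\theta),
\]
with constants independent of $\z,t,w$. For the upper bound, Lemma \ref{Ppolydisc}(b) gives $\abs{\rho^{\z}(x)} \leq M_0\theta J_{\z,t}(w)$ and Lemma \ref{maxhess}(i) gives $\epsilon_0 E_{\z,t}(x) \leq -(\epsilon_0/C_3)J_{\z,t}(x)$; combined with $J_{\z,t}(x) \approx J_{\z,t}(w)$ on $P(w,t,\theta)$ (a direct consequence of the choice of the polyradii of $P(w,t,\theta)$, cf.\ \cite{Cho1}) this yields $-\rho^{\z,t}(x) \leq CJ_{\z,t}(w)$; and once $\theta$ is small enough that the term $M_0\theta$ is dominated by $\epsilon_0/C_3$, the same computation gives the matching lower bound $-\rho^{\z,t}(x) \geq C^{-1}J_{\z,t}(w)$. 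Taking $c_0 = C^{-\eta_0}$ then forces $\abs{\psi_{\z,t}^{w}(x)} \leq 1$ on $P(w,t,\theta)$, which is part (i).

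For (ii), I would differentiate $\psi_{\z,t}^{w}$ twice, discard the nonnegative gradient term, and invoke the preliminary estimate (note $\eta_0-1<0$, so the two-sided comparison is essential), obtaining for $x\in P(w,t,\theta)$
\[
\mathcal{L}_{\psi_{\z,t}^{w}}(x,Y) \;\geq\; \frac{c_0\eta_0}{J_{\z,t}(w)^{\eta_0}}\,\big(-\rho^{\z,t}(x)\big)^{\eta_0-1}\,\mathcal{L}_{\rho^{\z,t}}(x,Y) \;\approx\; \frac{\mathcal{L}_{\rho^{\z,t}}(x,Y)}{J_{\z,t}(w)}.
\]
Since $\mathcal{L}_{\rho^{\z,t}} = \mathcal{L}_{\rho^{\z}} + \epsilon_0\mathcal{L}_{E_{\z,t}} \gtrsim \epsilon_0\mathcal{L}_{E_{\z,t}}$ — the maximal Hessian lower bound in Lemma \ref{maxhess}(ii) dominating the negative part of $\mathcal{L}_{\rho^{\z}}$ near the boundary — and since on $P(w,t,\theta)$ one has $J_{\z,t}(x) \approx J_{\z,t}(w)$ as well as $\tau(\z,J_{\z,t}(x)) \approx \tau(\z,J_{\z,t}(w))$ (the latter because the minimum defining $\tau(\z,\cdot)$ changes by at most a bounded factor under a bounded change of its argument), Lemma \ref{maxhess}(ii) turns the last display into
\[
\mathcal{L}_{\psi_{\z,t}^{w}}(x,Y) \;\gtrsim\; \epsilon_0\left( \frac{\abs{Y_1}^2}{\tau\big(\z,J_{\z,t}(w)\big)^2} + \sum_{k=2}^{n-1}\frac{\abs{Y_k}^2}{J_{\z,t}(w)} + \frac{\abs{Y_n}^2}{J_{\z,t}(w)^2} \right),
\]
which is (ii) with $C_5 \approx \epsilon_0$.

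The hard part is not the manipulations above but keeping every implicit constant uniform in $\z,t,w$, which forces a careful accounting of how $P(w,t,\theta)$ sits relative to $\rho^{\z,t}$, $J_{\z,t}$ and $\tau$; and, more delicately, the step $\mathcal{L}_{\rho^{\z,t}} \gtrsim \epsilon_0\mathcal{L}_{E_{\z,t}}$. The latter is precisely where the Levi corank one refinement of the bumping construction enters: $E_{\z,t}$ must be chosen with a Hessian lower bound sharp enough to swallow $\mathcal{L}_{\rho^{\z}}$ even though $\epsilon_0$ has to remain small, and it is exactly this sharpness (using, among other things, that the coefficients $b_{jk}^{\al}$ are insignificant) that propagates into the polyradius-scaled bound (ii). All of this is imported from \cite{Cho1}; the present lemma is in effect a bookkeeping of those estimates along the Diederich--Fornaess exponent.
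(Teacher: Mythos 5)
Your overall strategy is the paper's: apply the Diederich--Fornaess modification to the bumped defining function $\rho^{\zeta,t}$, normalise by $J_{\zeta,t}(w)^{\eta_0}$, prove the two-sided comparison $\vert\rho^{\zeta,t}(x)\vert \approx J_{\zeta,t}(w)$ on $P(w,t,\theta)$ (which the paper carries out via the estimate $\vert J_{\zeta,t}(x)-J_{\zeta,t}(w)\vert \le C_6\theta J_{\zeta,t}(w)$), and then feed Lemma \ref{maxhess}(ii) into the Hessian of the modified function. But there is one genuine gap: you drop the exponential factor and assert that $-(-\rho^{\zeta,t})^{\eta_0}$ is plurisubharmonic on all of $D_t^{\zeta}$ ``by the Diederich--Fornaess construction,'' with the gradient term absorbing the bad directions. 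This is false in general and is precisely what the factor $e^{-K\vert x\vert^2}$ in the Diederich--Fornaess theorem is there to repair. The Hessian of $-(-\rho)^{\eta_0}$ is $\eta_0(-\rho)^{\eta_0-1}\big[\mathcal{L}_{\rho} + (1-\eta_0)(-\rho)^{-1}\vert\langle\partial\rho,Y\rangle\vert^2\big]$, and the gradient term vanishes identically on the complex tangential directions $\langle\partial\rho^{\zeta,t}(x),Y\rangle=0$; there the only lower bound available from pseudoconvexity is $\mathcal{L}_{\rho^{\zeta,t}}(x,Y)\gtrsim -\vert\rho^{\zeta,t}(x)\vert\,\vert Y\vert^2-\ldots$, which at points well inside $D_t^{\zeta}$ (where $\vert\rho^{\zeta,t}\vert$ is of order one, not of order $\theta J$) is a negative term of size $\vert Y\vert^2$ with nothing left to absorb it. Plurisubharmonicity on the \emph{whole} of $D_t^{\zeta}$ is not a cosmetic requirement here: $\psi_{\zeta,t}^{w}$ is used globally as an ingredient of the candidate for the pluricomplex Green function in Lemma \ref{Psep} and as a weight in the $\overline{\partial}$-estimate of Lemma \ref{lem5.1}. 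The paper therefore works with $\psi(x)=-\big(-\rho^{\zeta,t}(x)e^{-K\vert x\vert^2}\big)^{\eta_0}$, whose Hessian carries the extra term $\eta_0\vert\psi\vert\,K\big(\vert Y\vert^2-O(\eta_0)\vert\langle x,Y\rangle\vert^2\big)$; choosing $K$ large and $\eta_0$ small yields the uniform positive contribution $\tfrac{1}{2}\eta_0 K\vert\psi\vert\,\vert Y\vert^2$ in (\ref{psiest}) that swallows the $-K_2\vert Y\vert^2$ defect everywhere on $D_t^{\zeta}$. Adding $\epsilon\vert z\vert^2$, as you suggest in passing, does not rescue your version: to dominate the tangential defect one would need $\epsilon$ of fixed size, which destroys negativity of $\psi_{\zeta,t}^{w}$ near $\partial D_t^{\zeta}$.

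A secondary, smaller imprecision: the step $\mathcal{L}_{\rho^{\zeta,t}}\gtrsim\epsilon_0\mathcal{L}_{E_{\zeta,t}}$ on $P(w,t,\theta)$ is not obtained in the paper by letting the maximal Hessian of $E_{\zeta,t}$ ``dominate the negative part of $\mathcal{L}_{\rho^{\zeta}}$'' outright. The cross term $-K_1\vert Y\vert\,\vert\langle\partial\rho^{\zeta}(x),Y\rangle\vert$ coming from the pseudoconvexity inequality (\ref{pscvxity}) is split into a piece controlled by Lemma \ref{maxhess}(iii) (Cauchy--Schwarz against $\mathcal{L}_{E_{\zeta,t}}$, giving (\ref{IIIFest})) and a piece involving $\langle\partial\rho^{\zeta,t},Y\rangle$ that must be thrown onto the Diederich--Fornaess gradient term via (\ref{IInd}) --- i.e.\ the gradient term you discard is actually used before being discarded. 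If you retain the exponential factor and route the cross terms as in (\ref{A})--(\ref{Lpsiest}), your argument becomes the paper's.
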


\begin{proof} To construct $ \psi_{\zeta,t}^{w} $, we use Diederich -- Fornaess technique (\cite{DF1}) of constructing a bounded 
strongly plurisubharmonic exhaustion function from any given defining function for a smoothly bounded pseudoconvex domain. Applying
their constructs to $\rho^{\zeta,t}$, the defining function for the bumped domain $D_t^{\zeta}$ , we get 
\[
\psi(x) = - \Big( - \rho^{\zeta,t}(x) e^{-K \vert x \vert^2} \Big)^{\eta_0}
\]
which is strongly plurisubharmonic on $D_t^\zeta$. Following \cite{Her}, we define 
\[
\psi_{\zeta,t}^w(x) = \psi(x) \big/ J_{\zeta,t}(w)^{\eta_0}.
\]
First note that the pseudoconvexity of $ \Phi^\zeta \big(\partial D \cap {B}(\zeta,R_0) \big)$ gives
\begin{equation} \label{pscvxity}
\mathcal{L}_{\rho^\zeta}(x,Y) \geq -K_1 \vert \rho^\zeta(x) \vert \vert Y \vert^2 - K_1 \vert Y \vert \vert \langle \partial \rho^\zeta(x),Y \rangle \vert
\end{equation}
whenever $x \in \Phi^\zeta \big(B(\zeta,R_0) \big)$, for some constant $K_1>0$. Furthermore, a standard calculation of 
the complex Hessian of $\psi$ (see \cite{R} for details)  yields
\begin{align*}
\mathcal{L}_\psi (x,Y) &= \eta_0 \vert \psi(x) \vert \left( \frac{1-\eta_0}{2} \frac{\vert\langle \partial \rho^{\zeta,t}(x),Y \rangle \vert^2 }{ \rho^{\zeta,t}(x)^2}  
+ \Big\lvert \sqrt{\frac{1-\eta_0}{2}} \frac{\vert\langle \partial \rho^{\zeta,t}(x),Y \rangle \vert }{ \rho^{\zeta,t}(x)} + 
\frac{\sqrt{2}K\eta_0}{\sqrt{1-\eta_0}} \overline{\langle x,Y \rangle} \Big \vert^2 \right. \\
&\; \left. \hspace{0.8in} + K \Big( \vert Y \vert^2 - K \eta_0 \big( 1 + \frac{2\eta_0}{1-\eta_0} \big) \vert \langle x, Y \rangle \vert^2 \Big) + 
\frac{ \mathcal{L}_{\rho^{\zeta,t}} (x,Y) }{\rho^{\zeta,t}(x)} \right) \\
&\geq \eta_0 \vert \psi(x) \vert \left( \frac{1-\eta_0}{2} \frac{\vert\langle \partial \rho^{\zeta,t}(x),Y \rangle \vert^2 }{ \rho^{\zeta,t}(x)^2} + 
K \Big( \vert Y \vert^2 - K \eta_0 \big( 1 + \frac{2\eta_0}{1-\eta_0} \big) R_*^2 \vert Y \vert^2 \Big) \right.\\
&\; \left. \hspace{0.8in} + \frac{ \mathcal{L}_{\rho^{\zeta,t}} (x,Y) }{\rho^{\zeta,t}(x)} \right).
\end{align*}
Here we used the estimate $\vert \langle x, Y \rangle \vert^2 \leq R_*^2 \vert Y \vert^2$ for some positive constant $R_*$. Choosing 
$K$ and $\eta_0$ suitably, we can ensure that the following inequality holds throughout $D_t^\zeta$:
\begin{equation} \label{psiest}
\mathcal{L}_\psi (x,Y) \geq \eta_0 \vert \psi(x) \vert \Big( \frac{1-\eta_0}{2} \frac{\vert\langle \partial 
\rho^{\zeta,t}(x),Y \rangle \vert^2 }{ \rho^{\zeta,t}(x)^2} +\frac{1}{2} K \vert Y \vert^2 + \frac{ \mathcal{L}_{\rho^{\zeta,t}} (x,Y) }{\rho^{\zeta,t}(x)} \Big).
\end{equation}
To verify (i), let $x \in P(w,t,\theta)$. Writing this analytically, translates to the following string of inequalities:
\begin{align} 
\vert x_1 - w_1 \vert &<  \tau \big( \zeta, \theta J_{\zeta,t}(w) \big), \nonumber \\ \label{Pdefn}
\vert x_\alpha - w_\alpha \vert &< \sqrt{\theta J_{\zeta,t}(w)} \; \; \text{ for all } 2 \leq \alpha \leq n-1, \text{ and } \\
\vert x_n - w_n \vert &< \theta J_{\zeta,t}(w). \nonumber
\end{align}

\noindent Recall that $J_{\zeta,t}(w) = \sqrt{t^2 + P(\vert w \vert)} $, where $P(\vert w \vert)$ is the pseudo-norm
\[
P(\vert w \vert) = \vert w_n \vert^2 + \sum\limits_{j=2}^{2m} \vert P_j(\zeta)\vert^2 \vert w_1 \vert^{2j} + \vert w_2 \vert^4 + \ldots +\vert w_{n-1} \vert^4
\]
and write the difference $J_{\zeta,t}(x) - J _{\zeta,t}(w)$ as
\begin{equation}\label{Jest}
J_{\zeta,t}(x) - J _{\zeta,t}(w) = \frac{\big( J_{\zeta,t}(x) \big)^2 - \big( J_{\zeta,t}(w) \big)^2} { J_{\zeta,t}(x) + J _{\zeta,t}(w)}.
\end{equation} 
\noindent The aim now is to obtain an analogue of the estimate (4.3) in \cite{Her}. This estimate is essentially an assertion 
about the uniform comparability $J_{\zeta,t}(x) \approx J_{\zeta,t}(y)$. Furthermore, such an estimate will lead to the 
engulfing property for the polydiscs $P(w,t,\theta)$ analogous to that for the Catlin -- Cho polydiscs $Q( \cdot,\cdot)$. 
To this end, we begin by applying the triangle inequality to the numerator $(=\big\vert P(\vert w \vert) - P( \vert x \vert)\big\vert)$ 
on the right side of (\ref{Jest}) which gives
\begin{multline} \label{Nr}
\Big \vert \vert x_n \vert^2 - \vert w_n \vert^2 + \sum \limits_{\alpha=2}^{n-1} \big( \vert x_\alpha \vert^4 - \vert w_\alpha \vert^4 \big) + \sum\limits_{j=2}^{2m} \vert P_j(\zeta) \vert^2 \big( \vert x_1 \vert^{2j} - \vert w_1 \vert^{2j} \big) \Big \vert \\
\leq \big \vert \vert x_n \vert^2 - \vert w_n \vert^2 \big \vert + \sum \limits_{\alpha=2}^{n-1} \big \vert \vert x_\alpha \vert^4 - \vert w_\alpha \vert^4 \big \vert + \sum\limits_{j=2}^{2m} \vert P_j(\zeta) \vert^2 \big \vert \vert x_1 \vert^{2j} - \vert w_1 \vert^{2j}  \big \vert.
\end{multline}
\noindent Note that $\big \vert \vert x_n \vert^2 - \vert w_n \vert^2 \big \vert \lesssim \vert  x_n -  w_n\vert \lesssim \theta J_{\zeta,t}(w)$ 
by the last inequality in (\ref{Pdefn}) and similarly,
\[
\sum \limits_{\alpha=2}^{n-1} \big\vert  \vert x_\alpha \vert^4 -\vert w_\alpha \vert^4 \big \vert \leq 
2^3 \sum \limits_{\alpha=2}^{n-1} \big(\sqrt{\theta J_{\zeta,t}(w)}\big)^4 \lesssim \theta^2 \big( J_{\zeta,t}(w) \big)^2 \lesssim \theta \big( J_{\zeta,t}(w) \big)
\]
since $\theta$ and $J_{\zeta,t}(w)$ are very small. To estimate the last summand of (\ref{Nr}), we use the following well-known inequality:
\[
\vert x_1 \vert^{2j} \leq 2^{2j-1}( \vert x_1 - w_1 \vert^{2j} + \vert w_1 \vert^{2j} ),
\]
which implies that 
\[
\Big \vert \vert x_1 \vert^{2j} - \vert w_1 \vert^{2j} \Big \vert \leq 2^{2j-1} \vert x_1 - w_1 \vert^{2j} + (2^{2j-1} -1) \vert w_1 \vert^{2j}
\]
for each $ j \leq 2m $. Hence, 
\begin{equation}\label{Pest}
\vert P_j(\zeta) \vert^2 \big \vert \vert x_1 \vert^{2j} - \vert w_1 \vert^{2j} \big \vert \leq 2^{2j-1} \vert P_j(\zeta;) \vert^2 \vert x_1 - w_1 \vert^{2j} 
+ (2^{2j-1} -1) \vert P_j(\zeta;) \vert^2\vert w_1 \vert^{2j}.
\end{equation}
It follows from the definition of $\tau \big(\zeta, \theta J_{\zeta,t}(w) \big)$ that 
\begin{equation*} 
\vert P_j(\zeta;) \vert^2 \leq \frac{\theta^2 (J_{\zeta,t}(w))^2}{\Big( \tau \big( \zeta, \theta J_{\zeta,t}(w) \big) \Big)^{2j}}.
\end{equation*}
But we know from (\ref{Pdefn}) that  $\vert x_1 - w_1 \vert^{2j} \leq  \Big( \tau \big( \zeta, \theta J_{\zeta,t}(w) \big) \Big)^{2j}$. Therefore, 
\begin{equation} \label{f1}
2^{2j-1} \vert P_j(\zeta;) \vert^2 \vert x_1 - w_1 \vert^{2j} \leq 2^{2j-1} \theta^2 \big( J_{\zeta,t}(w) \big)^2 \\
 \lesssim \theta J_{\zeta,t}(w).
\end{equation}
Finally, since $\tau \big(\zeta,  \theta J_{\zeta,t}(w) \big)  \gtrsim  \big( \theta J_{\zeta,t}(w) \big)^{1/2}$,  
\[
\vert P_j(\zeta;) \vert^2 \lesssim \frac{\theta^2(J_{\zeta,t}(w))^2}{\theta^j(J_{\zeta,t}(w))^j}.
\]
Moreover, $\vert w_1 \vert^{2j} \lesssim \theta^j J_{\zeta,t}(w)^j$ and this shows that the second summand in (\ref{Pest}) is also 
bounded above by $\theta J_{\zeta,t}(w)$. Also note that the denominator in (\ref{Jest}) is bounded and $ |w| $ and $ |x| $ are small. 
The above analysis yields an estimate that is analogous to (4.3) of \cite{Her} in our setting, i.e.,
\begin{equation}\label{Jest2}
\vert J_{\zeta,t}(x) - J_{\zeta,t}(w) \vert \leq C_6 \theta J_{\zeta,t}(w)
\end{equation}
or equivalently that
\begin{align*}
\vert J_{\zeta,t}(x) \vert \leq (1+C_6 \theta) J_{\zeta,t}(w) \; \; \text{and} \; \;
\vert J_{\zeta,t}(x) \vert  \geq (1 -C_6 \theta) J_{\zeta,t}(w) 
\end{align*}
for some constant $C_6>0$. Here, $\theta$ is chosen small enough so as to ensure $1-C_6 \theta >0$. \\

\noindent  Then, it follows from Lemma \ref{maxhess}(i) and Lemma \ref{Ppolydisc}(b) that
\begin{align*}
\vert \rho^{\zeta,t}(x) \vert &\geq \epsilon_0 \vert E_{\zeta,t}(x) \vert - \vert \rho^\zeta(x) \vert \\
& \geq \epsilon/C_3 \; J_{\zeta,t} (x) - M_0 \theta J_{\zeta,t}(w) \\
&\geq \epsilon_0/C_3  (1- C_6 \theta)  J_{\zeta,t}(w) - M_0 \theta J_{\zeta,t}(w)\\
&\geq \epsilon_0/2 C_3 \; J_{\zeta,t}(w)
\end{align*}
and
\begin{align*}
\vert \rho^{\zeta,t} (x)  \vert &\leq \epsilon_0 \vert E_{\zeta,t}(x) \vert + \vert \rho^\zeta(x) \vert \\
& \leq \epsilon_0 C_3 \; J_{\zeta,t}(x) + M_0 \theta J_{\zeta,t}(w) \\
&\leq C_3 \epsilon_0 (1+ C_6\theta) J_{\zeta,w}(w) + M_0 \theta J_{\zeta,w}(w)\\
& \lesssim J_{\zeta,t}(w) 
\end{align*}
provided $\theta$ and $\epsilon_0$ are sufficiently small. Putting the above two observations together, we see that
\begin{equation} \label{roJapprox}
(\epsilon_0/2C_3) J_{\zeta,t}(w) \leq \vert \rho^{\zeta,t}(x) \vert \leq J_{\zeta,t}(w).
\end{equation}
Furthermore,
\begin{equation} \label{Eest}
\vert E_{\zeta,t}(x) \vert \leq C_3 J_{\zeta,t}(x) \leq C_3(1+C_6 \theta)J_{\zeta,t}(w) \leq C_7 \vert \rho^{\zeta,t}(x) \vert,
\end{equation}
which implies that
\[
\vert \rho^{\zeta,t}(x) \vert \approx \vert E_{\zeta,t}(x) \vert \approx J_{\zeta,t}(w).
\]
for all $x \in P(w, t, \theta)$. Since $\rho^{\zeta,t}(x) \approx {\rm dist}(x, \partial D_t^\zeta)$, it follows that 
\[
\vert {\rm dist}(x, \partial D_t^\zeta) - {\rm dist}(x,\partial D^{t,\zeta}) \vert \approx \hat{J}_{\zeta,t}(x)
\]
where $ \hat{J}_{\zeta,t}(x) = J_{\zeta,t}(x) -t$, and  $D^{t,\zeta} = \big\{ w \in \mathbb{C}^n: 
\rho_{t,\zeta} := \rho^\zeta (w) -t <0 \big \}$. Thus 
the pseudo-norm $\hat{J}_{\zeta,t}$ measures the difference between the trivial bumpings 
given by $\{\rho_{t,\zeta}\}$ and the bumpings $\{ \rho^{\zeta,t} \}$ engineered taking into account 
the CR geometry of $\partial D_\zeta$. It follows from (\ref{roJapprox}) that 
$\psi(x) \geq - \vert \rho^{\zeta,t}(x) \vert^{\eta_0} \geq - \big( J_{\zeta,t}(w) \big)^{\eta_0}$ for all $x \in P(w,t,\theta)$ 
and consequently that
\[
\psi^w_{\zeta,t}(x) = \frac{\psi(x)}{\big( J_{\zeta,t}(w) \big)^{\eta_0}} \geq -1.
\]

\noindent To establish (ii), let us examine the complex Hessian of $\rho^{\zeta,t}$:
\begin{align} \label{A}
\mathcal{L}_{\rho^{\zeta,t}}(x,Y) &= \mathcal{L}_{\rho^\zeta}(x,Y) + \epsilon_0 \mathcal{L}_{E_{\zeta,t}}(x,Y)  \nonumber \\
&\geq -K_1 \big( \vert \rho^\zeta (x) \vert \vert Y_1 \vert^2 + \vert \langle \partial \rho^\zeta(x),Y \rangle \vert \vert Y \vert \big) + 
\epsilon_0 \mathcal{L}_{E_{\zeta,t}}(x,Y) \;\; \text{ by } (\ref{pscvxity}) \nonumber \\
&\geq -K_1 \big( \vert \rho^{\zeta,t}(x) \vert + \epsilon_0 \vert E_{\zeta,t}(x) \vert \big) \vert Y \vert^2 - 
K_1 \vert \langle \partial \rho^{\zeta,t}(x), Y \rangle \vert \vert Y \vert \nonumber\\
&\;\;\; - K_1 \epsilon_0 \vert \langle \partial E_{\zeta,t}(x),Y \rangle \vert \vert Y \vert + \epsilon_0 \mathcal{L}_{E_{\zeta,t}}(x,Y)
\end{align}
for $ x \in P(w, t, \theta) $. Note that, by (\ref{Eest}), the first term 
$ -K_1 \big( \vert \rho^{\zeta,t}(x) \vert + \epsilon_0 \vert E_{\zeta,t}(x) \vert \big) \vert Y \vert^2 $ in (\ref{A}) is 
at least
\begin{equation} \label{Iest}
-K_1 \big( \vert \rho^{\zeta,t}(x) \vert + \epsilon_0 C_7 \vert \rho^{\zeta,t} (x) \vert \big) \vert Y \vert^2.
\end{equation}
Now, consider the third term in (\ref{A}):
\begin{equation}\label{IIIest}
- \epsilon_0 K_1 \vert \langle \partial E_{\zeta,t}(x),Y \rangle \vert \vert Y \vert \geq 
- \epsilon_0 K_1 \sqrt{C_3} \sqrt{J_{\zeta,t}(x)} \sqrt{\mathcal{L}_{E_{\zeta,t}}(x,Y)}\vert Y \vert
\end{equation}
by Lemma \ref{maxhess}(iii). The part (ii) of the same lemma gives
\begin{align} \label{Lest}
\sqrt{\mathcal{L}_{E_{\zeta,t}}(x,Y)} &\lesssim \frac{\sqrt{J_{\zeta,t}(x)}}{\sqrt{C_3}} \left( 
\frac{\vert Y_1 \vert}{ \big( \tau( \zeta, J_{\zeta,t}(x) \big)^2} + \sum \limits_{\alpha=2}^{n-1}
\frac{\vert Y_\alpha \vert}{J_{\zeta,t}(x)} + \frac{\vert Y_n \vert^2}{\big( J_{\zeta,t}(x) \big)^2} \right)^{1/2} \nonumber \\
&=\frac{1}{\sqrt{C_3}} \left( \frac{J_{\zeta,t}(x)}{ \tau \big( \zeta, J_{\zeta,t}(x) \big)^2}\vert Y_1 \vert^2 \; 
+ \sum \limits_{\alpha=2}^{n-1}\vert Y_\alpha \vert^2 + \frac{\vert Y_n \vert^2}{J_{\zeta,t}(x)} \right)^{1/2}.
\end{align}
Using the fact that $  \tau \big(\zeta,J_{\zeta,t}(x) \big) \gtrsim \big( J_{\zeta,t}(x) \big)^{1/2} $ in (\ref{Lest}), we get 
\[
 \sqrt{J_{\zeta,t}(x)}\sqrt{C_3} \sqrt{\mathcal{L}_{E_{\zeta,t}}(x,Y)} \lesssim \Big( \vert Y_1 \vert^2 + 
J_{\zeta,t}(x) \sum \limits_{\alpha=2}^{n-1}\vert Y_\alpha \vert^2 + \vert Y_n \vert^2 \Big)^{1/2}.
\]
Since $J_{\zeta,t}(x) $ is small, it follows that
\begin{align} \label{IIIFest}
- \epsilon_0 K_1 \vert \langle \partial E_{\zeta,t}(x),Y \rangle \vert \vert Y \vert \gtrsim 
- \epsilon_0 K_1  \vert Y \vert^2  \gtrsim - \epsilon_0 K_1 \vert \rho^{\zeta,t}(x)\vert  \vert Y \vert^2.
\end{align}
Here we used that $\vert \rho^{\zeta,t}(x) \vert$ is bounded away from zero on the polydiscs $P(w,t,\theta) \subset D_t^\zeta$. To 
estimate the second term in (\ref{A}), we use the following version of the Cauchy-Schwarz inequality: 
\[
xy \leq \epsilon/2 \; x^2 + 1/2\epsilon \; y^2
\]
where $x,y$ and $\epsilon$ are positive numbers. Applying this inequality for $x = \vert Y \vert$, 
$y= \vert \langle \partial \rho^{\zeta,t}(x),Y \rangle \vert$ and  $\epsilon=\vert \rho^{\zeta,t}(x) \vert$, we see that 
\begin{equation} \label{IInd}
- K_1 \vert \langle \partial \rho^{\zeta,t}(x), Y \rangle \vert \vert Y \vert \geq
 -K_1/2 \; \vert \rho^{\zeta,t}(x) \vert \vert Y \vert^2 - K_1/2 \; \vert \rho^{\zeta,t}(x) \vert \; \vert \langle \partial \rho^{\zeta,t}(x),Y \rangle \vert^2.
\end{equation} 
The inequalities (\ref{Iest}), (\ref{IInd}) and (\ref{IIIFest}) together with (\ref{A}) imply that
\[
\frac{\mathcal{L}_{\rho^{\zeta,t}}(x,Y) }{\vert \rho^{\zeta,t}(x) \vert} \geq -K_2 \vert Y \vert^2 - 
\frac{1}{K_2} \frac{\vert \langle \partial \rho^{\zeta,t}(x),Y \rangle \vert^2}{ \vert \rho^{\zeta,t}(x) \vert^2 } + 
\frac{\epsilon_0}{2} \frac{\mathcal{L}_{E_{\zeta,t}}(x,y)}{\vert \rho^{\zeta,t}(x) \vert}.
\]
\noindent Using the above inequality and (\ref{psiest}), and adjusting constants as in \cite{Her}, we finally get
\begin{equation}\label{Lpsiest}
\mathcal{L}_{\psi}(x,Y) \geq \frac{\eta_0 \epsilon_0}{2}\vert\psi(x)\vert  \frac{\mathcal{L}_{E_{\zeta,t}}(x,Y)}{\vert \rho^{\zeta,t}(x) \vert} \geq C_8  \vert \psi(x) \vert \frac{\mathcal{L}_{E_{\zeta,t}}(x,Y)}{J_{\zeta,t}(w)} .
\end{equation}
\noindent This estimate in conjunction with Lemma \ref{maxhess}(ii) yields the required estimate 
for the complex Hessian of $\psi$ and consequently, for $\psi_{\zeta,t}^{w}$ as well. Indeed, 
\[
\vert \psi(x) \vert = \vert \rho^{\zeta,t}(x) \vert^{\eta_0} e^{-K \eta_0 \vert x \vert^2}.
\]
So that, $\vert \psi(x) \vert \gtrsim \vert J_{\zeta,t}(w) \vert^{\eta_0}$ by using (\ref{roJapprox}). Therefore, we get from (\ref{Lpsiest}) that
\begin{align*} \label{psizetest}
\mathcal{L}_{\psi_{\zeta,t}^w}(x,Y) = \frac{\mathcal{L}_\psi(x,Y)}{\big( J_{\zeta,t}(w) \big)^{\eta_0}} 
&\geq C_8 \frac{\vert \psi(x) \vert}{\big( J_{\zeta,t}(w) \big)^{\eta_0}} \frac{\mathcal{L}_{E_{\zeta,t}}(x,Y)}{J_{\zeta,t}(w)} \\
&\geq C_8 \frac{\mathcal{L}_{E_{\zeta,t}}(x,Y)}{\big(J_{\zeta,t}(w)\big)^{\eta_0}} \\
&\geq C_{10} \frac{J_{\zeta,t}(x)}{J_{\zeta,t}(w)} \left( \frac{\vert Y_1 \vert^2}{\big( \tau(\zeta, J_{\zeta,t}(w) \big)^2} + 
\sum\limits_{k=2}^{n-1} \frac{\vert Y_k \vert^2}{J_{\zeta,t}(w)} +  \frac{\vert Y_n \vert^2}{\big( J_{\zeta,t}(w) \big)^2} \right)\\
&\geq C_{10}(1-C_6 \theta) \left( \frac{\vert Y_1 \vert^2}{\big( \tau(\zeta, J_{\zeta,t}(w) \big)^2} + 
\sum\limits_{k=2}^{n-1} \frac{\vert Y_k \vert^2}{J_{\zeta,t}(w)} +  \frac{\vert Y_n \vert^2}{\big( J_{\zeta,t}(w) \big)^2} \right).
\end{align*}
This proves the lemma.
\end{proof}

\subsection{Separation properties of the pluri-complex Green's function.} \label{third}
\noindent  Another key ingredient for constructing the weight functions for setting up the appropriate $\overline{\partial}$-problem 
 is the pluri-complex Green's function introduced by Klimek \cite{K}. For any domain $\Omega$, these are given by
\[
\mathcal{G}_{\Omega}(z,w) = \sup \big\{ u(z) \; : \; u \in PSH_w(\Omega) \big \}.
\]
Here for $w \in \Omega$, $PSH_w(\Omega)$ denotes the family of all plurisubharmonic functions that are 
negative on $\Omega$ and which have the property that the function $u(z) - \log \vert z - w \vert$ is 
bounded from above near $w$. The Green's function itself is again a member of $PSH_w(\Omega)$. The following lemma 
provides a link between the separation properties of the sub-level sets of the pluri-complex Green's function 
associated with the bumped domain $D_t^\zeta$ and the polydiscs $P(w,t, \theta)$. Before going further, note 
that the polydiscs $P(w,t,\theta)$ are balls in the metric defined by 
\[
\hat{V}_w(z) = \Big \vert \Delta_{\zeta}^{\theta J_{\zeta,t}(w)}(z - w) \Big \vert =  
 \left( \frac{\vert z_1 - w_1 \vert^2 }{ \tau(\zeta, \theta J_{\zeta,t}(w))^2} + 
\sum\limits_{\alpha=2}^{n-1} \frac{\vert z_\alpha - w_\alpha \vert^2}{ \theta J_{\zeta,t}(w)}  
+  \frac{\vert z_n - w_n \vert^2}{\theta J_{\zeta,t}(w)^2} \right)^{1/2}.
\]

\begin{lem} \label{Psep} \noindent
\begin{itemize}
\item[(a)] There is a bound $M_1 \gg 1$ (depending on $\theta$) such that given $\sigma \in (0,1)$ one has for all $w \in D_t^\zeta \cap D_\zeta$ that
\[
P(w,t,\sigma\theta) \supset \big \{ \mathcal{G}_{D_t^\zeta}(\cdot,w) < \log \sigma - M_1 \big\}.
\]
\item[(b)] Further, one can find $\sigma_0$ in such a way that for any two points $w, w' \in D_t^\zeta$ but with $w' \not \in P(w,t,\theta)$ one has
\[
P(w',t,\sigma_0 \theta) \cap P(w,t,\sigma_0 \theta) = \phi.
\]
In particular, we have
\[
\big \{ \mathcal{G}_{D_t^\zeta} (\cdot,w') < \log \sigma_0 - M_1 \big\} \cap \{ \mathcal{G}_{D_t^\zeta} (\cdot,w) < \log \sigma_0 - M_1 \big\} = \phi
\]
for all $w,w' \in D_\zeta$.
\end{itemize}
\end{lem}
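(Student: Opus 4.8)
\medskip
\noindent\emph{Proof strategy.}

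For (a) the plan is to reduce the statement to a uniform-in-$w$ lower bound for $\mathcal{G}_{D_t^\zeta}(\cdot,w)$ away from the small polydisc, and then to extract that bound from the barrier of Lemma~\ref{Greencandidate}. The starting point is two elementary consequences of the formula $\tau(\zeta,\delta)=\min_{2\le l\le 2m}\bigl(\delta/|P_l(\zeta,\cdot)|\bigr)^{1/l}$: for $0<\sigma\le 1$ one has $\sigma^{1/2}\,\tau(\zeta,\delta)\le\tau(\zeta,\sigma\delta)\le\sigma^{1/2m}\,\tau(\zeta,\delta)$, and $\tau(\zeta,c\delta)\approx\tau(\zeta,\delta)$ when $c$ ranges over a fixed compact subinterval of $(0,\infty)$. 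The first of these gives $z\notin P(w,t,\sigma\theta)\Rightarrow\hat{V}_w(z)\ge\sigma$, where $\hat{V}_w$ is the pseudonorm displayed just before the lemma. Hence (a) follows once one produces, for each $w\in D_t^\zeta\cap D_\zeta$, a competitor $u\in PSH_w(D_t^\zeta)$ with $u(z)\ge\log\hat{V}_w(z)-M_1$ for $z\in P(w,t,\theta)$ and $u(z)\ge-M_1$ for $z\in D_t^\zeta\setminus P(w,t,\theta)$, with $M_1$ depending only on $\theta$: then $\mathcal{G}_{D_t^\zeta}(z,w)\ge u(z)\ge\log\sigma-M_1$ whenever $z\notin P(w,t,\sigma\theta)$ (split into the cases $z\in P(w,t,\theta)$ and $z\notin P(w,t,\theta)$, using $\log\sigma\le 0$ in the latter), which is the contrapositive of (a).

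Constructing this competitor is the crux. The naive candidate $\log\hat{V}_w(\cdot)-\log\bigl(\sup_{D_t^\zeta}\hat{V}_w\bigr)$ is plurisubharmonic, negative on $D_t^\zeta$ and carries a logarithmic pole at $w$, but $\sup_{D_t^\zeta}\hat{V}_w$ grows like $1/J_{\zeta,t}(w)$ as $w\to\partial D$, so it only yields a $w$-dependent $M_1$. Removing this non-uniformity is exactly the role of the barrier $\psi:=\psi_{\zeta,t}^w$: Lemma~\ref{Greencandidate}(i) together with \eqref{roJapprox} gives $-1\le\psi\le-c_0<0$ on $P(w,t,\theta)$ with $c_0=c_0(\theta)$ uniform, while Lemma~\ref{Greencandidate}(ii), combined with $\tau(\zeta,\theta\delta)\ge\theta^{1/2}\tau(\zeta,\delta)$, shows that the complex Hessian of $\psi$ dominates a fixed positive multiple $c_\theta$ of the Hessian of $\hat{V}_w^2$ on $P(w,t,\theta)$, so that $\psi-c_\theta\hat{V}_w^2$ is plurisubharmonic there. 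Following \cite{Her} and \cite{Cho1}, one glues a logarithmic-pole function modelled on $P(w,t,\theta)$ to a sufficiently large multiple of $\psi$; the uniform two-sided bound on $\psi$ and the uniform lower bound on its Hessian force $\sup_{D_t^\zeta}\bigl(u-\log\hat{V}_w\bigr)$ to be controlled by a constant depending on $\theta$ alone. I expect the main obstacle to be precisely this gluing — keeping the resulting function plurisubharmonic and negative on all of $D_t^\zeta$ while retaining the full logarithmic pole at $w$ and not reintroducing a non-uniform constant; this is the one step where the sharp Hessian estimates peculiar to Levi corank one domains are genuinely used, as opposed to soft potential theory.

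For (b) the plan is to first prove the geometric disjointness and then read off the statement about sub-level sets from (a). Suppose $w'\notin P(w,t,\theta)$ but some point $x$ lies in $P(w,t,\sigma_0\theta)\cap P(w',t,\sigma_0\theta)$. As $P(w,t,\sigma_0\theta)\subset P(w,t,\theta)$ and $P(w',t,\sigma_0\theta)\subset P(w',t,\theta)$, the comparability \eqref{Jest2} yields $J_{\zeta,t}(w)\approx J_{\zeta,t}(x)\approx J_{\zeta,t}(w')$ with constants independent of $\zeta,t,w,w',x$, so the polyradii of $P(w,t,\cdot)$ and of $P(w',t,\cdot)$ are mutually comparable. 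Writing $|w_j-w'_j|\le|w_j-x_j|+|x_j-w'_j|$ for each coordinate $j$ and substituting the defining inequalities of the two small polydiscs, together with $J_{\zeta,t}(w)\approx J_{\zeta,t}(w')$, the homogeneity bound $\tau(\zeta,\sigma_0\delta)\le\sigma_0^{1/2m}\tau(\zeta,\delta)$ from the first paragraph and $\tau(\zeta,c\delta)\approx\tau(\zeta,\delta)$, one gets $|w_1-w'_1|\lesssim\sigma_0^{1/2m}\tau(\zeta,\theta J_{\zeta,t}(w))$, $|w_\alpha-w'_\alpha|\lesssim\sigma_0^{1/2}\sqrt{\theta J_{\zeta,t}(w)}$ for $2\le\alpha\le n-1$, and $|w_n-w'_n|\lesssim\sigma_0\,\theta J_{\zeta,t}(w)$. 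For $\sigma_0=\sigma_0(\theta)$ small enough these three read $|w_1-w'_1|<\tau(\zeta,\theta J_{\zeta,t}(w))$, $|w_\alpha-w'_\alpha|<\sqrt{\theta J_{\zeta,t}(w)}$, $|w_n-w'_n|<\theta J_{\zeta,t}(w)$, i.e.\ $w'\in P(w,t,\theta)$, contradicting the hypothesis; hence $P(w,t,\sigma_0\theta)\cap P(w',t,\sigma_0\theta)=\emptyset$. The concluding identity is then immediate: by (a) applied with $\sigma=\sigma_0$ to $w$ and to $w'$, the sets $\{\mathcal{G}_{D_t^\zeta}(\cdot,w)<\log\sigma_0-M_1\}$ and $\{\mathcal{G}_{D_t^\zeta}(\cdot,w')<\log\sigma_0-M_1\}$ lie in $P(w,t,\sigma_0\theta)$ and $P(w',t,\sigma_0\theta)$, which are disjoint once $w'\notin P(w,t,\theta)$ — the standing hypothesis of part (b), which the last displayed line should also carry.
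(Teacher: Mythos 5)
Your proposal is correct and follows essentially the same route as the paper: for (a) the paper's competitor is precisely the glued function $\phi_w = \tfrac12\log\xi\circ V_w + M'\,\chi\circ\psi^w_{\zeta,t}$ (with $\xi,\chi$ suitable truncations and $M'=M'(\theta)$ large), whose plurisubharmonicity rests on exactly the Hessian domination from Lemma \ref{Greencandidate}(ii) that you identified, and whose uniform lower bound $-\tfrac74 M'$ supplies $M_1$; part (b) is proved in the paper by the same triangle-inequality plus $J$- and $\tau$-comparability argument with $\sigma_0$ chosen small. Your observation that the final display in (b) should carry the standing hypothesis $w'\notin P(w,t,\theta)$ is also well taken.
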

\begin{proof}
Let $\xi: \mathbb{R} \to \mathbb{R}$ be a smooth increasing function with $\xi(s)=s$ for all $s \leq 1/4$ and with $\xi(s)=3/4$ if $s \geq 7/8$. Thereafter, let $V_w(z) = (\hat{V}_w(z))^2$ and choose a convex increasing function $\chi$ on $\mathbb{R}$ satisfying $\chi(s)=-7/4$ for $s 
\leq -2$ and $\chi(s) = s$ if $s \geq -3/2$. Then, for a large enough $M'$ (depending on $\theta$),  the function
\[
\phi_w = 1/2\; \log \xi \circ V_w + M' \chi \circ \psi_{\zeta,t}^w
\]
becomes plurisubharmonic on $D_t^\zeta$ and hence a candidate for the supremum that defines $\mathcal{G}_{D_t^\zeta}(\cdot,w)$. We have 
\[
\phi_w \geq 1/2 \; \log \xi \circ V_w - 7/4 \; M'.
\]
\noindent If now $z \in D_t^\zeta$ is a point for which $\mathcal{G}_{D_t^\zeta}(z,w) < \log \sigma - 7/4 \; M'$, then 
\[
\log \xi \circ V_w(z) \leq 2 \log \sigma,
\]
which implies $V_w(z) \leq \sigma^2$, provided $\sigma < 1/2$. But then this precisely means that $z \in P(w,t,\sigma \theta)$ for $V_w$ is the square of the metric that defines the polydisc $P$. So we may let $M_1=7/4 \; M'$, to obtain the assertion of part (a) of the lemma.\\

\noindent For part (b) of the lemma, we argue by contradiction. So, assume that (b) fails to hold for any choice of $\sigma_0$ and pick $w' \not \in P(w,t, \sigma_0 \theta)$ where $\sigma_0$ will be chosen appropriately in a moment, to contradict our assumption. Now $w' \not \in P(w,t, \sigma_0 \theta)$ means that atleast one of the following holds:
\begin{itemize}
\item[(1)] $\vert w_1 -w'_1 \vert > \sigma_0  \tau \big( \zeta,\theta J_{\zeta,t}(w) \big)$ or, \\
\item[($\alpha$)] $\vert w_\alpha - w'_\alpha \vert > \sigma_0 \sqrt{\theta  J_{\zeta,t}(w)} $  for $2 \leq \alpha \leq n-1$ or, \\
\item[(n)]  $\vert w_n - w'_n \vert > \sigma_0 \theta J_{\zeta,t}(w)$.
\end{itemize}

\noindent Suppose that the first holds and assume to obtain a contradiction that $x \in P(w', t, \sigma_0 \theta) \cap P(w,t, \sigma_0 \theta)$. Then a combined application of the facts that $J_{\zeta,t}(x) \approx J_{\zeta,t}(w) \approx J_{\zeta,t}(w')$ and that $\tau(w,\delta) \approx \tau(w', \delta)$ gives
\[
\vert w_1 - w'_1 \vert \leq \vert w_1 - x_1 \vert + \vert w'_1 - x_1 \vert \leq  \sigma_0 C_{11}  \tau(\zeta, \theta J_{\zeta,t}(w))
\]
with some constant $C_{11}$ that does not depend on $\sigma_0, w,w'$ and $\theta$. Taking $\sigma_0$ small enough furnishes a contradiction. Similar arguments take care of the remaining cases as well.
\end{proof}

\subsection{Localization lemmas} \label{fourth}
\begin{lem} \label{lem5.1}
There exists $L>0$ with the following property
\begin{itemize} 
\item[(a)] If $t$ is small and $w' \in P(w,t, \theta/2) \cap D_\zeta $ and $f \in H^\infty \big( P(w,t,\theta) \big)$. Then there exists $\tilde{f} \in H^2(D_t^\zeta)$ such that $\tilde{f}(w) = f(w)$ and $\tilde{f}(w') = f(w')$. Moreover,
\[
\parallel \tilde{f} \parallel_{L^2(D_t^\zeta)} \leq L \; J_{\zeta,t}(w) \Big( \sqrt{J_{\zeta,t}(w)} \Big)^{n-2} \tau \big(\zeta,J_{\zeta,t}(w) \big) \parallel f \parallel_{L^\infty}.
\]
\item[(b)] Suppose $w' \not \in P(w,t,\theta/2) \cap D_\zeta$ and $f \in H^\infty \big( P(w,t,\theta) \big)$. Then there exists $\tilde{f} \in H^2(D_t^\zeta)$ such that  $\tilde{f}(w) = f(w)$ and $\tilde{f}(w')=0$ and 
\[
\parallel \tilde{f} \parallel_{L^2(D_t^\zeta)} \leq L \; J_{\zeta,t}(w) \Big( \sqrt{J_{\zeta,t}(w)} \Big)^{n-2} \tau \big(\zeta,J_{\zeta,t}(w) \big) \parallel f \parallel_{L^\infty}.
\]
\end{itemize}
\end{lem}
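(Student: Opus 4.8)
The plan is to run the standard localization scheme: produce a smooth near-extension of $f$ by a cut-off, correct it to a holomorphic function by solving a $\overline{\partial}$-problem on the pseudoconvex domain $D_t^\zeta$ (pseudoconvexity from Lemma \ref{maxhess}(iv)) against a weight assembled from the pluri-complex Green's functions of $D_t^\zeta$ with poles at $w$ and $w'$ together with the plurisubharmonic function $\psi_{\zeta,t}^w$ of Lemma \ref{Greencandidate}, and then read off both the interpolation property and the $L^2$-bound. Throughout we regard $w,w'$ as points of $D_\zeta\subset D_t^\zeta$. First I would fix a smooth $\chi$ adapted to the polydisc $P(w,t,\theta)$, built as a smooth function of the metric $\hat V_w$ that defines it: in case (a) take ${\rm supp}\,\chi\subset P(w,t,\theta)$ and $\chi\equiv 1$ on $P(w,t,3\theta/4)$; in case (b) take ${\rm supp}\,\chi\subset P(w,t,\theta/2)$ and $\chi\equiv 1$ on $P(w,t,\theta/4)$, so that there $w'$ lies outside ${\rm supp}\,\chi$. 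Then $|\partial\chi/\partial\overline z_j|$ is bounded by a constant (depending only on $\theta$) times the reciprocal of the $j$-th polyradius of $P(w,t,\theta)$, and $\chi f$ is smooth, equals $f(w)$ at $w$, and equals $f(w')$ at $w'$ in case (a) and $0$ at $w'$ in case (b).

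Next I would solve $\overline{\partial}u=f\,\overline{\partial}\chi=:g$ on $D_t^\zeta$ with respect to the weight
\[
\varphi=2n\,\mathcal{G}_{D_t^\zeta}(\cdot,w)+2n\,\mathcal{G}_{D_t^\zeta}(\cdot,w')+\psi_{\zeta,t}^w ,
\]
which is negative and plurisubharmonic on $D_t^\zeta$ (the Green's functions lie in $PSH_w$ and $PSH_{w'}$, and $\psi_{\zeta,t}^w<0$ is plurisubharmonic by Lemma \ref{Greencandidate}) and satisfies $i\partial\overline{\partial}\varphi\ge i\partial\overline{\partial}\psi_{\zeta,t}^w>0$. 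H\"ormander's estimate on the pseudoconvex $D_t^\zeta$ yields $u$ with
\[
\int_{D_t^\zeta}|u|^2 e^{-\varphi}\le\int_{D_t^\zeta}|g|^2_{i\partial\overline{\partial}\varphi}\,e^{-\varphi}.
\]
Since $g$ vanishes near $w$ (and near $w'$ in both cases, by the choice of cut-off), $u$ is holomorphic there; because $\mathcal{G}_{D_t^\zeta}(\cdot,w)$ has a logarithmic pole, $e^{-2n\mathcal{G}_{D_t^\zeta}(\cdot,w)}$ is not locally integrable at $w$, so finiteness of the left-hand integral forces $u(w)=0$, and likewise $u(w')=0$. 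Setting $\tilde f=\chi f-u$, we get $\overline{\partial}\tilde f=g-g=0$ (as $f$ is holomorphic on ${\rm supp}\,\chi$), $\tilde f(w)=f(w)$, and $\tilde f(w')$ equal to $f(w')$ in case (a) and $0$ in case (b).

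For the norm estimate I would split $\|\tilde f\|_{L^2(D_t^\zeta)}\le\|\chi f\|_{L^2}+\|u\|_{L^2}$. The volume of $P(w,t,\theta)$ is $\approx\tau\big(\zeta,\theta J_{\zeta,t}(w)\big)^2\big(\theta J_{\zeta,t}(w)\big)^{n-2}\big(\theta J_{\zeta,t}(w)\big)^2$ and $\tau(\zeta,\theta J_{\zeta,t}(w))\lesssim\tau(\zeta,J_{\zeta,t}(w))$, so $\|\chi f\|_{L^2}\lesssim\|f\|_{L^\infty}\,J_{\zeta,t}(w)\big(\sqrt{J_{\zeta,t}(w)}\big)^{n-2}\tau\big(\zeta,J_{\zeta,t}(w)\big)$. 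For $u$, the bound $e^{-\varphi}\ge 1$ gives $\|u\|_{L^2}^2\le\int|g|^2_{i\partial\overline{\partial}\varphi}e^{-\varphi}$; on ${\rm supp}\,g$ Lemma \ref{Psep}(a), applied at $w$ and at $w'$ (the buffer built into the cut-off radii keeps ${\rm supp}\,\overline{\partial}\chi$ at a definite $\hat V_w$-distance from both points), bounds $\mathcal{G}_{D_t^\zeta}(\cdot,w)$ and $\mathcal{G}_{D_t^\zeta}(\cdot,w')$ from below, so $e^{-\varphi}$ is bounded there; and since $i\partial\overline{\partial}\varphi$ is, on $P(w,t,\theta)$, bounded below by a positive constant times the Hermitian form defining $\hat V_w$ by Lemma \ref{Greencandidate}(ii), while the $\overline z_j$-component of $\overline{\partial}\chi$ is $\lesssim$ (reciprocal of the $j$-th polyradius), the dual-metric estimate gives $|g|^2_{i\partial\overline{\partial}\varphi}\lesssim\|f\|_{L^\infty}^2$ on ${\rm supp}\,g$. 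Integrating over $P(w,t,\theta)$ bounds $\|u\|_{L^2}$ by the same expression as $\|\chi f\|_{L^2}$, and the resulting $L$ depends only on $\theta$, $C_5$, $n$ and the structure constants of $D$, hence is uniform in $\zeta$ and $t$.

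The step I expect to demand the most care is the coordination with Lemma \ref{Psep} in case (a): the concentric cut-off radii and the parameter $\sigma$ must be chosen so that ${\rm supp}\,\overline{\partial}\chi$ stays a uniform $\hat V_w$-distance away from both $w$ and $w'$, which is exactly what makes both Green's functions bounded below on that support. This rests on the comparabilities $J_{\zeta,t}(x)\approx J_{\zeta,t}(w)\approx J_{\zeta,t}(w')$ and the stability of $\tau(\zeta,\cdot)$ under moving the base point, both already in hand; when the polynomials $Q_1,Q_2$ vanish identically the $P(w,t,\theta)$ are honest polydiscs and this bookkeeping becomes routine.
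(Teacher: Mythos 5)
Your proposal is correct and follows essentially the same route as the paper: a cut-off adapted to $P(w,t,\theta)$ (shrunk to $P(w,t,\theta/2)$ in case (b) so that it vanishes at $w'$), a H\"ormander-type $\overline{\partial}$-estimate on the pseudoconvex bumped domain $D_t^\zeta$ against the weight $\psi_{\zeta,t}^w$ plus Green's functions with poles at $w$ and $w'$, the dual-metric bound $|g|^2_{i\partial\overline\pa\psi_{\zeta,t}^w}\lesssim\|f\|^2_{L^\infty}$ from Lemma \ref{Greencandidate}(ii), the lower bound of the weight on ${\rm supp}\,\overline\pa\chi$ from Lemma \ref{Psep}, and the volume count of $P(w,t,\theta)$. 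The only deviations are cosmetic choices of the concentric cut-off radii and your coefficient $2n$ on the Green's functions in place of the paper's $4$ (your choice is in fact the one that forces $u(w)=u(w')=0$ in all dimensions).
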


\begin{proof}
Let $f$ be a function from $H^\infty \big( P(w,t, \theta) \big)$. For part (a), we choose a non-negative cut-off function $\xi \in C^\infty(\mathbb{R})$ such that $\xi(s)=1$ for $s \leq 1/3$ and $\xi(s)=0$, if $s \geq 7/8$. Now, define 
\begin{equation}
v = \overline{\partial} \Big[ \xi \big( \frac{ \vert z_1 - w_1 \vert^2 }{ \tau(\zeta, \theta J_{\zeta,t}(w))^2} \big) \xi \big( \frac{\vert z_2 - w_2 \vert^2 }{ \theta^2 J_{\zeta,t}(w) }  \big) \ldots \xi \big( \frac{\vert z_{n-1} - w_{n-1} \vert^2 }{ \theta^2 J_{\zeta,t}(w) }  \big) 
\xi \big( \frac{ \vert z_{n} - w_{n} \vert^2 }{ \theta^2 \big( J_{\zeta,t}(w) \big)^2 } \big)  \; f \Big]
\end{equation}
which is a smooth $\overline{\partial}$-closed $(0,1)$-form on $D_t^\zeta$ with support
\[
\text{supp}(v) \subset P(w,t,\theta) \setminus P(w,t,\theta/\sqrt{3}).
\]
By Lemma \ref{Psep} we have
\[
\mathcal{G}_{D_t^\zeta}(\cdot,w) \geq \log(1/\sqrt{3}) - M_1
\]
on ${\rm supp}(v)$ and then since  $w'$ lies in $P(w,t, \theta/2)$ we similarly have
\[
\mathcal{G}_{D_t^\zeta}(\cdot,w') \geq \log(1/\sqrt{3} - 1/2) \; - \; \log(1+2C_{12}) - M_1
\]
on ${\rm supp}(v)$, because $P\big( ( w' , t , \frac{1/\sqrt{3}-1/2}{1+2C_{12}} \theta) \big) \subset P(w,t, \theta/\sqrt{3} )$.
\noindent The plurisubharmonic function 
\[
\Phi = \psi_{\zeta,t}^w + 4 \mathcal{G}_{D_t^\zeta}(\cdot,w) + 4 \mathcal{G}_{D_t^\zeta}(\cdot,w')
\]
is bounded from below on ${\rm supp}(v)$ by some constant $-T<0$. From theorem 5 of \cite{Her} we obtain a solution $u \in C^\infty(D_t^\zeta)$ to the equation $\overline{\partial} u =v$ such that 
\begin{equation} \label{dbarest}
\int_{D_t^\zeta} \vert u \vert^2 e^{-\Phi} d^{2n} z \leq 2 \int_{D_t^\zeta} \vert v \vert^2_{\partial \overline{\partial} \psi_{\zeta,t}^w} e^{-\Phi} d^{2n}z
\end{equation}
where $d^{2n}z$ denotes the standard Lesbegue measure on $\mathbb{C}^n$ identified with $\mathbb{R}^{2n}$.
Now, by Lemma \ref{Greencandidate}, it follows that $\vert v \vert^2_{\partial \overline{\partial} \psi_{\zeta,t}^w} \leq L_1\vert f \vert_{L^\infty} $ for some unimportant constant $L_1$. Indeed, note first by the holomorphicity of $f$ that we have
\[
v(z_1, \ldots, z_n) = \overline{\partial} \big( W_1 \ldots W_n \big) f
\]
where 
\[
W_j =  \xi \Big( \big\vert \frac{ z_j - w_j  }{ \tau_j( \zeta,\theta J_{\zeta,t}(w) )^2} \big\vert^2 \Big).
\]
Let \[
\hat{W}_j = \Pi_{k=1, k \neq j}^{n} W_k /W_j.
\]
Then observe that $\vert \hat{W_j} \vert \leq 1$. Choose some constant $C$ with 
\[
\big\vert \frac{d \xi}{ds}(s) \big\vert^2 \leq C
\]
for $s \in [1/3,7/8]$. Note next that the $(0,1)$-form $v$ can be written as 
\[
v= f \sum\limits_{j=1}^{n} \hat{W}_j \frac{\partial W_j}{\partial \overline{z}_j} d \overline{z}_j
\]
where 
\[
\frac{\partial W_n}{\partial \overline{z}_n}= \frac{\partial}{\partial \overline{z}_n} \big( W_n (z) \big)^2 = \frac{d \xi}{ds}\big( W_n(z) \big) \frac{z_n - w_n}{ \big( \theta J_{\zeta,t}(w) \big)^2}
\]
So
\[
\Big \vert \frac{ \partial W_n }{ \partial \overline{z}_n } \Big \vert^2 
\leq C \Big \vert  \frac{ z_n - w_n }{ \big( \theta J_{\zeta,t}(w) \big)^2} \Big \vert^2 \lesssim \frac{1}{\vert \theta J_{\zeta,t}(w) \vert^2}  
\]
where the second inequality follows from $\vert z_n - w_n \vert \leq \big( \theta J_{\zeta,t}(w) \big)^2$ since $z \in P(w,t,\theta)$ which in turn comes from the fact ${\rm supp}(v) \subset P(w,t,\theta)$. Similarly, we have for each $2 \leq \alpha \leq n-1$ that
\[
\Big \vert  \frac{ \partial W_\alpha }{ \partial \overline{z}_\alpha } \Big \vert^2 \lesssim \Big \vert \frac{ z_ \alpha - v_\alpha}{ \big( \sqrt{\theta J_{\zeta,t}(w)} \big)^2} \Big \vert^2 \lesssim \frac{1}{\theta J_{\zeta,t}(w)}
\]
and finally 
\[
\Big \vert \frac{ \partial W_1 }{ \partial \overline{z}_1 } \Big \vert^2 \leq C \Big \vert  \frac{ z_n - w_n }
{ \big( \tau \big(w, \theta J_{\zeta,t}(w)\big)  \big)^2 } \Big \vert^2 \lesssim \frac{ \tau \big(w, \theta J_{\zeta,t}(w)  \big)^2  }{ \tau \big(w, \theta J_{\zeta,t}(w)  \big)^4} = \frac{1}{\tau \big(w, \theta J_{\zeta,t}(w)  \big)^2}
\]
as well, so that an application of part (ii) of Lemma \ref{Greencandidate} to the form $\vert v \vert^2_{\partial \overline{\partial} \psi_{\zeta,t}^w}$ (which will read as an upper bound) evaluated at the pair 
\[
(x,Y)= \Big(  z,  f(z) \big(\hat{W}_1(z), \ldots, \hat{W}_n(z)\big)  \Big)
\]
actually cancels the scaling factors in the definition of the form $\vert v \vert^2_{\partial \overline{\partial} \psi_{\zeta,t}^w}$ and yields our afore-mentioned claim that   
$ \vert v \vert^2_{\partial \overline{\partial} \psi_{\zeta,t}^w} \lesssim \vert f \vert_{L^\infty}$.
This subsequently extends the estimate (\ref{dbarest}) as
\begin{align*}
\int_{D_t^\zeta} \vert u \vert^2 d^{2n}z &\leq \int_{D_t^\zeta} \vert u \vert^2 e^{-\Phi} d^{2n}z\\
&\leq 2 L_1 e^T \text{Vol(supp $v$) } \vert f \vert_{L^\infty}^2 \\
&\leq 2 L_1 e^T J_{\zeta,t}(w)^2 \Big( \sqrt{J_{\zeta,t}(w)} \Big)^{2(n-2)} \tau \big( \zeta, J_{\zeta,t}(w) \big)^2 \vert f \vert_{L^\infty}^2.
\end{align*}
The function
\[
\tilde{f}(z) = \Big[ \xi \big( \frac{\vert z_1 - w_1 \vert^2 }{ \tau(\zeta,\theta J_{\zeta,t}(w))^2} \big) \xi \big( \frac{\vert z_2 - w_2 \vert^2 }{\theta^2 J_{\zeta,t}(w)}  \big) \ldots \xi \big( \frac{\vert z_{n-1} - w_{n-1} \vert^2 }{\theta^2 J_{\zeta,t}(w)}  \big) 
\xi \big( \frac{\vert z_{n} - w_{n} \vert^2 }{\theta^2 \big(J_{\zeta,t}(w) \big)^2 }\big) \Big] \cdot f \; - \; u(z)
\]
now becomes holomorphic and has the desired properties, as  $u(w)=u(w')=0$ while the $L^2$-estimate for $\tilde{f}$ follows immediately from that for the function $u$.\\

\noindent Next we do part (b) in a manner similar (a): Firstly, if $w' \not \in P(w,t, \theta/2)$ then with a number $\sigma_0 >0$ (independent of $w,w',t$) we have
\[
P(w,t,\sigma_0 \theta/2) \cap P(w',t, \sigma_0 \theta/2) = \phi.
\]
We put
\[
f_1(z) = \Xi^{\theta,t}_\zeta (z,w) \cdot f - u(z)
\]
where $\Xi^{\theta,t}_\zeta (z,w) $ denotes the product
\[
\xi \big( \frac{\vert z_1 - w_1 \vert^2 }{(\sigma_0 \theta/2)^2 \tau(\zeta, J_{\zeta,t}(w))^2} \big) \xi \big( \frac{\vert z_2 - w_2 \vert^2 }{(\sigma_0 \theta/2)^2 J_{\zeta,t}(w)}  \big) \ldots \xi \big( \frac{\vert z_{n-1} - w_{n-1} \vert^2 }{(\sigma_0 \theta/2)^2 J_{\zeta,t}(w)}  \big) 
\xi \big( \frac{\vert z_{n} - w_{n} \vert^2 }{(\sigma_0 \theta/2)^2 \big( J_{\zeta,t}(w)\big)^2 } \big) 
\]
and solve  $\overline{\partial} u =v$ with $v= \overline{\partial}(f_1f)$. Now, 
\[
{\rm supp}(v) \subset P(w,t, \sigma_0 \theta/2) \setminus P \big(w,t, (\sigma_0 \theta/2 \sqrt{3}) \big).
\]
Then, just as in part (a), we see that 
\[
\mathcal{G}_{D_t^\zeta}(\cdot,w) \geq \log \big( \sigma_0/(2 \sqrt{3}) \big)  - M_1
\]
on ${\rm supp}(v)$. Furthermore, ${\rm supp}(v)$ is disjoint from $P(w',t, \sigma_0 \theta/2)$ and so 
\[
\mathcal{G}_{D_t^\zeta}(\cdot,w') \geq \log \big( \sigma_0/(2 \sqrt{3}) \big) - M_1 -\log(1+2C_{12}).
\]
Proceeding exactly as in part (a) from here on, we arrive at the statement in (b) of our Lemma.
\end{proof}

\begin{lem} \label{lem5.2}
Let $x \in D$ and denote by $\zeta$ the point closest to $x \in \partial D$. Let $y$ be a point such that $w_y= \Phi^\zeta(y) \in D_t^\zeta $. Let $w_x= \Phi^\zeta(x)$. Then, given  $f \in H^\infty\big( P(w_x,t, \theta) \big)$ it is possible to find a function $\hat{f} \in H^\infty(D)$ such that $\parallel \hat{f} \parallel_{L^\infty} \leq L^* \parallel f \parallel_{L^\infty}$ and $\hat{f}(x) = f(w_x)$. Further, $\hat{f}(y) = f(w_y)$ in case $w_y \in P(w_x,t, \theta/2)$ and $\hat{f}(y) =0$ if $w_y \not \in P(w_x,t, \theta/2)$.
\end{lem}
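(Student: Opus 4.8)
The plan is to build $\hat f$ in two stages: first extract from the $L^2$-solution of Lemma \ref{lem5.1} a \emph{bounded} holomorphic function on the local piece $D\cap B(\zeta,r_1)$, and then globalize it to all of $D$ by a cut-off together with one further $\overline{\partial}$-problem on $D$ whose weights carry the two interpolation conditions.

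First I would take $t$ comparable to $\delta_D(x)$. Since $\zeta$ is the foot of the normal through $x$, the point $w_x=\Phi^\zeta(x)$ sits essentially on the negative $\Re w_n$-axis at height $\approx\delta_D(x)$, so $J_{\zeta,t}(w_x)\approx t\approx\delta_D(x)$, and for $\theta$ small the polydisc $P(w_x,t,\theta/2)$ is contained in $D_\zeta$ (its $w_n$-extent $\approx\theta\delta_D(x)$ keeps it below $\{\rho^\zeta=0\}$, since on it $\rho^\zeta = 2\Re w_n + O(\theta\delta_D(x))$). Applying Lemma \ref{lem5.1} with $w=w_x$ — part (a) with $w'=w_y$ when $w_y\in P(w_x,t,\theta/2)$, part (b) with $w'=w_y$ otherwise — yields $\tilde f\in H^2(D_t^\zeta)$ with $\tilde f(w_x)=f(w_x)$, with $\tilde f(w_y)$ equal to $f(w_y)$ or to $0$ according to the case, and with $\|\tilde f\|_{L^2(D_t^\zeta)}\lesssim J_{\zeta,t}(w_x)\big(\sqrt{J_{\zeta,t}(w_x)}\big)^{n-2}\tau\big(\zeta,J_{\zeta,t}(w_x)\big)\|f\|_{L^\infty}$. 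To pass to $H^\infty$ near $\zeta$: by Lemma \ref{Ppolydisc}(a) every $w\in D_\zeta\cap B(0,r_0)$ is the centre of a polydisc $P(w,t,\theta)\subset D_t^\zeta$ of Euclidean volume $\approx\theta^n J_{\zeta,t}(w)^n\,\tau\big(\zeta,\theta J_{\zeta,t}(w)\big)^2$, and feeding this into the Cauchy estimate (\ref{valest}), while using $J_{\zeta,t}(w)\ge t$ and the monotonicity of $\tau(\zeta,\cdot)$, shows that this volume dominates the right side of the $L^2$-bound; hence $\sup_{D_\zeta\cap B(0,r_0)}|\tilde f|\lesssim\|f\|_{L^\infty}$, with constant independent of $\zeta$ and $x$. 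Transporting back by $\Phi^\zeta$ produces a holomorphic $g:=\tilde f\circ\Phi^\zeta$ on $D\cap B(\zeta,r_1)$ — with $r_1$ fixed small so that $\Phi^\zeta(D\cap B(\zeta,r_1))\subset D_\zeta\cap B(0,r_0)$ — bounded by $\lesssim\|f\|_{L^\infty}$, with $g(x)=f(w_x)$ and $g(y)=f(w_y)$ or $0$.

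Next I would globalize. Pick $\chi\in C_c^\infty\big(B(\zeta,r_1)\big)$ with $\chi\equiv1$ on $B(\zeta,r_1/2)$, so that $v:=\overline{\partial}(\chi g)=(\overline{\partial}\chi)g$ is a smooth $\overline{\partial}$-closed $(0,1)$-form on $D$, supported in the shell $S:=\{r_1/2\le|z-\zeta|\le r_1\}\cap D$, with $|v|\lesssim\|f\|_{L^\infty}$. For $x$ close to $\partial D$ (the only case requiring argument) one has $x\in B(\zeta,r_1/2)$, hence $x\notin S$; likewise $y\notin S$ when $w_y\in P(w_x,t,\theta/2)$ ($y$ is then near $\zeta$), while in the complementary case $g$, and hence $v$, vanishes at $y$. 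Using the pseudoconvexity of $D$ (and that $\overline D$ has a Stein neighbourhood basis), I would solve $\overline{\partial}u=v$ on $D$ with weight $\varphi=\varphi_0+2n\,\mathcal{G}_D(\cdot,x)+2n\,\mathcal{G}_D(\cdot,y)$, where $\varphi_0$ is a fixed bounded strictly plurisubharmonic function (say $\varphi_0(z)=|z|^2$) and $\mathcal{G}_D$ is the pluricomplex Green function; Hörmander's estimate gives $u$ with $\int_D|u|^2 e^{-\varphi}\lesssim\int_D|v|^2 e^{-\varphi}\lesssim\|f\|_{L^\infty}^2$ (the last integral being finite because $v$ vanishes, with room to spare, at $x$ and $y$), and the logarithmic poles of $\mathcal{G}_D(\cdot,x)$ and $\mathcal{G}_D(\cdot,y)$ force $u(x)=u(y)=0$. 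Setting $\hat f:=\chi g-u$ one gets $\hat f\in\mathcal{O}(D)$, $\hat f(x)=\chi(x)g(x)-u(x)=f(w_x)$, and $\hat f(y)=\chi(y)g(y)-u(y)$ equals $f(w_y)$ or $0$ as required.

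The step I expect to be the main obstacle is the uniform sup-norm bound $\|\hat f\|_{L^\infty}\le L^*\|f\|_{L^\infty}$ with $L^*$ independent of $\zeta,x,y$. Hörmander's theory places the correction $u$ only in $L^2(D)$, and an $L^2$ holomorphic correction need not be bounded near $\partial D$; upgrading to a sup-norm estimate forces one to exploit that $v$ is supported in the fixed shell $S$, away from $x$ and $y$, together with the $\overline{\partial}$ sup-norm estimates available on Levi corank one (hence finite type) domains — precisely the point where the refined geometry re-enters, through the bumped domains $D_t^\zeta$, the polydiscs $Q(\cdot,\cdot)$ and the constructions of \cite{Cho1}, rather than through soft $L^2$-theory. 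Keeping every implicit constant uniform in $\zeta$, $x$ and $y$ throughout all of the above is what makes the bookkeeping heavy.
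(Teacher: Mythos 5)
Your two-stage skeleton (local $L^2$ interpolation via Lemma \ref{lem5.1} on the bumped domain $D_t^\zeta$, sup-norm control near $\zeta$ via the Cauchy estimate (\ref{valest}) over the polydiscs $P(w,t,\theta)$, then a cut-off and a second $\overline{\partial}$-problem to globalize) matches the paper's proof up to the globalization step. There, however, you diverge in a way that leaves a genuine gap — one you yourself flag as ``the main obstacle'' without closing it. You solve $\overline{\partial}u=v$ on $D$ itself, with weights built from the pluricomplex Green function of $D$. H\"ormander's theorem then gives $u\in L^2(D)$ with $u(x)=u(y)=0$, but no $L^\infty$ bound: off the shell $S$ the correction $u$ is holomorphic, yet the interior Cauchy estimate at a point $z$ near $\partial D\setminus B(\zeta,r_1/2)$ only yields $|u(z)|\lesssim \delta_D(z)^{-n}\|u\|_{L^2(D)}$, which blows up at the boundary. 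This is exactly the difficulty Catlin identified (quoted in Section \ref{second}: the passage from an $L^2$ bound to membership in $H^\infty(D)$ ``can be difficult to prove''), and your appeal to unspecified ``$\overline{\partial}$ sup-norm estimates on finite type domains'' does not resolve it.

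The paper's device is different and is the whole point of the lemma: the second $\overline{\partial}$-problem is solved not on $D$ but on a Stein neighbourhood $\Omega$ of $\overline{D}$ with $\overline{D}\subset\{r<\delta_2\}\subset\Omega\subset\{r<\delta_0\}$ — such a neighbourhood basis exists because $\partial D$ is B-regular, which was established earlier via the plurisubharmonic barriers of algebraic growth (the functions $\psi^w_{\zeta,t}$ and the discussion preceding Lemma \ref{Greencandidate}). The form $\hat v$ extends by zero to $\{r<\delta_0\}$, and H\"ormander's theorem on $\Omega$ with the elementary weight $4\log|\cdot-x|+4\log|\cdot-y|$ produces $\hat u$ with $\hat u(x)=\hat u(y)=0$ and $\|\hat u\|_{L^2(\Omega)}\lesssim\|\tilde f\|_{L^2(D_t^\zeta)}$. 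Since $\hat u$ is then holomorphic on $\{r<\delta_2\}\cap\{|\Phi^\zeta|\geq 0.9\,r_0\}$, which is a full two-sided neighbourhood of the part of $\partial D$ where $\hat f=-\hat u$, the mean value inequality over balls of the \emph{fixed} radius $\delta_2$ (independent of $z$, $\zeta$, $x$, $y$) gives $|\hat u(z)|\leq\delta_2^{-n}\|\hat u\|_{L^2}\lesssim\|f\|_{L^\infty}$ uniformly on $D$. To repair your argument you must replace the $\overline{\partial}$-problem on $D$ by one on such a Stein neighbourhood of $\overline{D}$ (or otherwise invoke a uniform sup-norm $\overline{\partial}$-solver); the Green-function weights on $D$ alone cannot deliver the claimed bound $\|\hat f\|_{L^\infty}\leq L^*\|f\|_{L^\infty}$.
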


\begin{proof}
We apply Lemma \ref{lem5.1} to the pair $w=w_x$ and $w'=w_y$ and the function $f$. This yields a function $\tilde{f} \in H^2(D_t^\zeta)$ with $\tilde{f}(w_x) = f(w_x)$ and $\tilde{f}(y) = f(w_y)$ in case $w_y \in P(w_x,t,\theta/2)$ and $\tilde{f}(y) =0$, if $w_y \not \in P(w_x,t,\theta/2)$. It satisfies the $L^2$-estimate 
\begin{equation} \label{eq5.1}
\vert \tilde{f} \vert_{L^2 \big({D_t^\zeta}\big)} \leq L J_{\zeta,t}(w_x) \tau(\zeta, J_{\zeta,t}(w_x)) \leq L' t \tau( \zeta,t) \vert f \vert_{L^\infty}.
\end{equation}
Let $\lambda \geq 0$ be a smooth function on $\mathbb{R}$ such that $\lambda(x) = 1$ for $x \leq (3/4)^2$ and $\lambda(x) =0 $ for $x \geq (7/8)^2$. Then there exists $\delta_0$ (independent of $\zeta,t,x,y$ and $f$) such that
\[
\hat{v} =
\Bigg \{
	\begin{array}{l l}
		\overline{\partial} \Big( \lambda(\frac{\vert \Phi^\zeta(z) \vert^2 }{r_0^2} \cdot \tilde{f} \circ \Phi^\zeta \Big) & \mbox{ ; on } (\Phi^\zeta)^{-1}(D_t^\zeta) \cap \{ r< \delta_0 \} \\
		0  & \mbox{ ; on } \{ r< \delta_0 \} \cap \big( \{ \vert \Phi^\zeta \vert \geq 7 r_0/8 \} \cup \{ \vert \Phi^\zeta \vert \leq 3r_0/4 \} \big)
	\end{array}
\]
defines a smooth $\overline{\partial}$-closed $(0,1)$-form on $\{r < \delta_0 \}$. This follows from the fact that 
\[
(\Phi^\zeta)^{-1}(D_t^\zeta) \cap \Big( \mathbb{B}(\zeta, 7/8 r_0) \setminus \mathbb{B}(\zeta,3/4 r_0) \Big) \supset  \{r < \delta_0 \} \cap \Big( \mathbb{B}(\zeta,7/8 r_0) \setminus \mathbb{B}(\zeta,3/4 r_0) \Big)
\]
where $r_0$ is the radius from Lemma \ref{Ppolydisc}.\\

\noindent Consulting now the discussion on the existence of a Stein-neighbourhood basis in  \cite{S} and as noted prior to Lemma \ref{Greencandidate}, we ascertain for ourselves the possibility of being able to choose a Stein neighbourhood $\Omega$ of $\overline{D}$ such that 
\[
\overline{D} \subset \{ r< \delta_2 \} \subset \Omega \subset \{r < \delta_0 \}.
\]
On $\Omega$, using results from \cite{H} -- where we work with the elementary weight $4\log \vert \cdot -x \vert +4 \log\vert \cdot -y \vert$ -- we can solve the equation $\overline{\partial} \hat{u} = \hat{v}$ with a smooth function $\hat{u}$ such that $\hat{u}(x) = \hat{u}(y)=0$ and
\begin{equation} \label{eq5.2}
\vert u \vert_{L^2(\Omega)} \leq C_{13} \vert \tilde{f} \circ \Phi^\zeta \vert_{L^2(\{r < \delta_0\}} \leq C_{14} \vert \tilde{f} \vert_{L^2(D_t^\zeta)}
\end{equation}
for  some positive constants $C_{13},C_{14}$. Then certainly the function
\[
\hat{f} = \lambda( \frac{\vert \Phi^\zeta(z) \vert^2}{r_0^2} ) \cdot \tilde{f} \circ \Phi^\zeta - \hat{u}
\]
is holomorphic on $D$. We need to estimate the $L^\infty$-norm of $\hat{f}$.\\

\noindent Let $z \in D$ and suppose first that $w_z = \Phi^\zeta(z) \in \mathbb{B}(0,r_0) \cap \{ \rho^\zeta <0 \}$. Then we know by part $(a)$ Lemma \ref{Ppolydisc} that 
\[
P(w_z,t,\theta) \subset D_t^\zeta.
\]
This gives 
\begin{align}
\vert f \vert &= \vert \hat{f} \circ (\Phi^\zeta)^{-1}(w_z) \vert \nonumber\\
& \leq \frac{\vert  \tilde{f} \circ \Phi^\zeta \vert_{L^2(D_t^\zeta)} }{\pi \tau(\zeta, J_{\zeta,t}(w_z)) \big(\sqrt{J_{\zeta,t}(w_z)}\big)^{n-2} J_{\zeta,t}(w_z) }\label{elest} \\
& \leq \frac{\vert \tilde{f} \vert_{L^2(D_t^\zeta)} +  \vert  u \circ \Phi^\zeta \vert_{L^2(D_t^\zeta)} }{\pi \tau(\zeta, J_{\zeta,t}(w_z)) \big(\sqrt{J_{\zeta,t}(w_z)}\big)^{n-2} J_{\zeta,t}(w_z) } \nonumber \\
& \leq C_{15} \frac{\vert \tilde{f} \vert_{L^2(D_t^\zeta)}}{\pi \tau(\zeta, J_{\zeta,t}(w_z)) \big(\sqrt{J_{\zeta,t}(w_z)}\big)^{n-2} J_{\zeta,t}(w_z)}\nonumber\\
& \leq C_{15} L \vert f \vert_{L^\infty}. \nonumber
\end{align}
The last estimate comes from (\ref{eq5.1}) in conjunction with $t \leq J_{\zeta,t}(w_z), \tau(\zeta,t) \leq \tau \big(\zeta, J_{\zeta,t}(w_z) \big)$.
Now assume that $w_z \in \{ \rho^\zeta<0 \} \cap \{ \vert \Phi^\zeta \vert \geq 0.9 r_0 \}$. Then we see that $\vert \hat{f}(z) \vert = \vert \hat{u}(z) \vert$. But $\hat{u}$ is defined on $\{ r< \delta_2\}$ and holomorphic on $\{r < \delta_2 \} \cap \{ \Phi^\zeta \vert \geq 0.9 r_0 \}$. After possibly shrinking $\delta_2 >0$ we find using the mean value inequality that 
\[
\vert \hat{u}(z) \vert \leq \delta_2^{-n} \vert  \hat{u} \vert_{L^2 (\{ r<\delta_2 \})} \leq L^* \vert f \vert_{L^\infty}
\]
because of (\ref{eq5.1}) and (\ref{eq5.2}), proving the desired $L^\infty$-estimate for $\hat{f}$.
\end{proof}
\noindent Next, as in \cite{Her} again we have the following separation of points Lemma and given the foregoing lemmas, the proof is verbatim as in \cite{Her} and this time we shall not repeat it.
\begin{lem} \label{cardist}
There is a uniform constant $c_0>0$ such that for any $a,b \in D \cap U$ such that if $b \not\in Q_{2 \delta_D(a)}(\zeta)$ where $\zeta=a^*$, one has
\[
d_D^c(a,b) \geq c_0.
\]
\end{lem}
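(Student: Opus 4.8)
The plan is to manufacture, for the given pair $a,b$, a holomorphic map $h\in\mathcal{O}(D,\Delta)$ for which $\abs{h(a)-h(b)}$ is bounded below by a constant depending only on $D$; since $d_D^{Cara}(a,b)\ge d_\Delta^p\big(h(a),h(b)\big)$ and $d_D^c\ge d_D^{Cara}$, this will give the lemma. I would split the argument according to the size of $\abs{a-b}$.

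Suppose first that $\abs{a-b}\ge\ep_1$ for a threshold $\ep_1>0$ to be fixed below. Fixing a point $c\in D$ and the constant $C_D=2\,\mathrm{diam}(D)$, each of the affine functions $h_j(z)=(z_j-c_j)/C_D$ maps $D$ into the disc $\Delta_{1/2}$. Choosing $j$ with $\abs{a_j-b_j}\ge\abs{a-b}/\sqrt n$ we get $\abs{h_j(a)-h_j(b)}\ge\ep_1/(C_D\sqrt n)$, and since $h_j(a),h_j(b)\in\ov{\Delta_{1/2}}$ the Poincar\'{e} distance dominates the Euclidean one there, so $d_\Delta^p\big(h_j(a),h_j(b)\big)\gtrsim\ep_1$. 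Hence $d_D^c(a,b)$ is bounded below whenever $\abs{a-b}\ge\ep_1$, and I may henceforth assume $\abs{a-b}<\ep_1$. Choosing $\ep_1$ small (possible since $R>0$ is uniform and $\delta_D$ is small on $U$) guarantees that $b\in B(\zeta,R)$, so that $w_b:=\Phi^\zeta(b)$ is defined, and that $w_b$ lies in the bumped domain $D_t^\zeta$ for the scale $t$ fixed below; here $\zeta=a^*$.

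For this second regime I would take $t=\delta_D(a)$ and put $w_a=\Phi^\zeta(a)$. Because $\Phi^\zeta(\zeta)=0$ with uniformly bounded coefficients, $\abs{w_a}\lesssim\abs{a-\zeta}=\delta_D(a)$; because $a-\zeta$ points along the inner normal and $\abs{\pa r/\pa\ov{z}_n(\zeta)}\ge 1/2$, the last coordinate of $w_a$ has size $\approx\delta_D(a)$; together these force $J_{\zeta,t}(w_a)\approx\delta_D(a)$ with constants independent of $a$. Consequently the polydisc $P(w_a,t,\theta/2)$ is centred within distance $\lesssim\delta_D(a)$ of the origin and has polyradii comparable, respectively, to $\tau\big(\zeta,\delta_D(a)\big)$, $\sqrt{\delta_D(a)}$ and $\delta_D(a)$. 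Using $\delta_D(a)\ll\sqrt{\delta_D(a)}\le\tau\big(\zeta,\delta_D(a)\big)$ for small $\delta_D(a)$, the monotonicity and homogeneity of $\tau(\zeta,\cdot)$, and the engulfing property of the Catlin--Cho polydiscs to absorb numerical constants, I would check the inclusion
\[
P(w_a,t,\theta/2)\ \subset\ R\big(\zeta,2\delta_D(a)\big)\ =\ \Phi^\zeta\big(Q(\zeta,2\delta_D(a))\big).
\]
Thus the hypothesis $b\notin Q_{2\delta_D(a)}(\zeta)$ forces $w_b\notin P(w_a,t,\theta/2)$. Lemma \ref{lem5.2}, applied with $x=a$, $y=b$ and $f\equiv 1\in H^\infty\big(P(w_a,t,\theta)\big)$, now yields $\hat f\in H^\infty(D)$ with $\norm{\hat f}_{L^\infty}\le L^*$, $\hat f(a)=1$, and -- since $w_b\notin P(w_a,t,\theta/2)$ -- $\hat f(b)=0$. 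The function $h=\hat f/(2L^*)$ lies in $\mathcal{O}(D,\Delta)$, has $\abs{h(a)-h(b)}=1/(2L^*)$, and $h(a),h(b)\in\ov{\Delta_{1/2}}$, so $d_\Delta^p\big(h(a),h(b)\big)\ge c_0'$ with $c_0'$ depending only on $L^*$, hence only on $D$. Combining the two regimes gives the lemma with $c_0$ the minimum of $c_0'$ and the constant from the first regime.

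The step I expect to be the main obstacle is the polydisc inclusion displayed above: this is where one must track carefully, at the scale $t\approx\delta_D(a)$, that the algebraic pseudonorm $J_{\zeta,t}$ at $w_a$ is comparable to $\delta_D(a)$ and that the off-centre shift of $w_a$ is dominated by the polyradii of $R(\zeta,2\delta_D(a))$ -- here one leans on $\delta_D(a)\ll\tau(\zeta,\delta_D(a))$ for small $\delta_D(a)$ and on the engulfing/homogeneity properties (i)--(ii) of the polydiscs $Q(\cdot,\cdot)$. Checking that $w_b\in D_t^\zeta$, so that Lemma \ref{lem5.2} is applicable, and that $t=\delta_D(a)$ is small enough for the bumping constructions of Section \ref{second}, is then routine since $b$ lies close to $\zeta$. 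With these points in hand the argument proceeds exactly as in \cite{Her}, which is why the authors do not repeat it.
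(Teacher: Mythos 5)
Your argument is correct and follows the same route the paper (deferring to Herbort) intends: dispose of the case $\abs{a-b}$ bounded below with affine functions, then for $\abs{a-b}$ small take $t=\delta_D(a)$, note $J_{\zeta,t}(w_a)\approx\delta_D(a)$ so that the hypothesis $b\notin Q_{2\delta_D(a)}(\zeta)$ forces $w_b\notin P(w_a,t,\theta/2)$, and apply Lemma \ref{lem5.2} with $f\equiv 1$ to get a uniformly bounded holomorphic function separating $a$ from $b$. The polydisc inclusion you single out is indeed the only delicate point, and it does go through after shrinking $\theta$ and using that $\abs{\Phi^\zeta(a)}\lesssim\delta_D(a)$ while $\abs{\Phi^\zeta(a)_n}\approx\delta_D(a)$, exactly as you indicate.
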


\subsection{Estimation of the inner Caratheodory distance from below} \label{fifth}
Let $U$, a tubular neighbourhood of $\partial D$ be as before, $U'$ a relatively compact neighbourhood of $\partial D$ inside $U$. We intend to estimate $d_D^c(A,B)$ for two points $A,B \in U'$. We split the procedure into two cases
\begin{align}
d'(B,A) &> 4 C_e \delta_D(A) \label{far} \\
d'(B,A) &\leq 4 C_e \delta_D(A) \label{near}
\end{align}

Before we begin, we put down $2$ elementary inequalities which will be of recurrent use in the sequel. The first is the following simple 
\begin{equation} \label{CS}
\vert z_1 + \ldots + z_N \vert^2 \leq N \big( \vert z_1 \vert^2 + \ldots \vert z_N \vert^2 \big)
\end{equation} 
where $N \in \mathbb{N}$ for any set of complex numbers $\{ z_j \}_{j=1}^{N}$. Second, is the following logarithmic inequality
\begin{equation} \label{log}
\log(1 +r x) \geq r \log(1+x)
\end{equation}
where $x$ and $r$ are positive reals with $r<1$.\\

\noindent We now begin with case (\ref{far}).

\begin{lem}\label{lemfar}
Assume that (\ref{far}) holds. Then with some universal constant $C_* >0$ we have
\[
d_D^c(A,B) \geq C_* \log \Big( 1 +  \frac{d(B,A)}{\delta_D(A)}  \Big).
\]
\end{lem}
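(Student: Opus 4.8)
The plan is to exploit the lower bound $d_D^c \geq d_D^{Cara}$ together with Lemma \ref{cardist} and the construction of bounded holomorphic functions from Lemma \ref{lem5.2}. Fix $A, B \in U'$ with $d'(B,A) > 4C_e\,\delta_D(A)$, write $\zeta = A^{\ast}$, and set $\delta = \delta_D(A)$. The geometric meaning of the hypothesis is that $B$ lies \emph{outside} the distorted polydisc $Q(\zeta, 4C_e\delta)$, so one can interpolate a chain of polydiscs $Q(\zeta, 2^j\delta)$, $j = 0, 1, \dots, N$, between $Q(\zeta, \delta)$ and a polydisc that first engulfs $B$, where the number $N$ of doublings is comparable to $\log_2\big(d(B,A)/\delta\big)$ (using that $d(B,A) \approx d'(B,A)$ when $d'(B,A) \geq \delta$ by Lemma \ref{dprop}(iii), and the engulfing property (i) from the Catlin--Cho estimates). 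The strategy is then to produce, for each step of the chain, a holomorphic disc or a bounded holomorphic function that contributes a definite amount $\approx c_0$ to the Carath\'eodory distance, so that the total is $\gtrsim N \approx \log(1 + d(B,A)/\delta)$.

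First I would reduce matters to Lemma \ref{cardist}: that lemma says that if a point lies outside $Q(\eta, 2\delta_D(\eta))$ with $\eta$ its foot on $\partial D$, then its inner Carath\'eodory distance to the center is bounded below by a uniform $c_0 > 0$. Concretely, choose points $A = A_0, A_1, \dots, A_N = B'$ along the ``ray'' from $A$ toward $B$ (staying at comparable distance to $\partial D$, or using the normal projection suitably) so that consecutive points satisfy $A_{k+1} \notin Q(A_k^{\ast}, 2\delta_D(A_k))$ while remaining in the controlled neighbourhood $U$; here the scale of $\delta_D(A_k)$ roughly doubles at each step, which is exactly why only $N \approx \log(d(B,A)/\delta)$ steps are needed to reach $B$. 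Applying Lemma \ref{cardist} to each consecutive pair gives $d_D^c(A_k, A_{k+1}) \geq c_0$, and the triangle inequality for $d_D^c$ (it is a genuine distance, being an integrated infinitesimal metric) yields
\[
d_D^c(A, B) \;\geq\; \sum_{k=0}^{N-1} d_D^c(A_k, A_{k+1}) \;-\; (\text{bounded correction}) \;\gtrsim\; N.
\]
Finally, translating $N \approx \log_2\big(d(B,A)/\delta\big)$ and absorbing the additive bounded term by enlarging the logarithm to $\log(1 + d(B,A)/\delta_D(A))$ — legitimate since the statement only claims a one-sided bound with a universal constant, and since $\log(1+x) \approx \log x$ for large $x$ — gives the assertion, with $C_\ast$ depending only on $c_0$, $C_e$, $C_0$ and the dimension.

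The main obstacle is the \emph{geometric bookkeeping of the chain}: one must verify that a chain of length $\approx \log(d(B,A)/\delta)$ actually reaches $B$ and that each consecutive pair genuinely satisfies the ``outside the doubled polydisc'' hypothesis of Lemma \ref{cardist}, all while keeping every $A_k$ inside the neighbourhood $U$ where the polydisc machinery is valid. This requires careful use of the engulfing property (i) and the doubling comparability (ii) of the polydiscs $Q(\cdot, \cdot)$, together with the comparison $d \approx d'$ on the relevant scale from Lemma \ref{dprop}(iii), and a Harnack-type control ensuring $\delta_D(A_k)$ grows geometrically rather than erratically along the chain. A cleaner alternative, closer in spirit to \cite{Her} and \cite{BB}, would be to bypass the explicit chain and instead build a \emph{single} bounded holomorphic function $F$ on $D$ (via Lemma \ref{lem5.2}, with the pluricomplex Green function weights of Lemma \ref{Psep}) for which the hyperbolic distance $d_\Delta^p(F(A), F(B))$ in the unit disc is already $\gtrsim \log(1 + d(B,A)/\delta_D(A))$; the estimate $d_D^c \geq d_D^{Cara} \geq d_\Delta^p(F(A), F(B))$ would then finish the proof directly. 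I expect the paper to follow one of these two routes; I would attempt the single-function route first, falling back on the chaining argument if the separation estimate for $F$ proves unwieldy.
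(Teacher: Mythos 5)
Your overall strategy (chain of scales, Lemma \ref{cardist} giving a definite contribution $c_0$ per step, number of steps $\approx \log(1+d(B,A)/\delta_D(A))$) is the right one and matches the paper's case (ii), but the step that assembles the chain into a lower bound is wrong as written. You claim that the triangle inequality for $d_D^c$ yields $d_D^c(A,B) \geq \sum_k d_D^c(A_k,A_{k+1}) - (\text{bounded correction})$. The triangle inequality gives exactly the opposite: $d_D^c(A,B) \leq \sum_k d_D^c(A_k,A_{k+1})$. Summing distances over an arbitrary chain of intermediate points on a ray from $A$ to $B$ can never produce a lower bound on $d_D^c(A,B)$. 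The paper avoids this by first fixing a near-optimal curve $\gamma:[0,1]\to D$ with $L^c(\gamma) \leq 2\,d_D^c(A,B)$ and then placing the chain points \emph{on the trace of $\gamma$}: one takes the maximal $m$ for which there exist times $0=t_0<t_1<\cdots<t_m<1$ with $\gamma(t_\nu)\in \partial Q_{2C_e\delta_D(\gamma(t_{\nu-1}))}(\gamma(t_{\nu-1})^*)$. Since the length of $\gamma$ decomposes additively over the subintervals $[t_{\nu-1},t_\nu]$ and each piece has length at least $d_D^c(\gamma(t_{\nu-1}),\gamma(t_\nu)) \geq c_0$ by Lemma \ref{cardist}, one gets $2\,d_D^c(A,B)\geq L^c(\gamma)\geq c_0 m$; the quasi-triangle inequality for $d$ then gives $d(B,A)\leq C_*^{m+2}\delta_D(A)$, hence $m\gtrsim \log(1+d(B,A)/\delta_D(A))$. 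This is the missing idea: the chain must live on a curve that nearly realizes the distance, not between the endpoints themselves.

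Once you chain along a near-optimal curve, a second case appears that your plan does not address: the curve $\gamma$ may leave the tubular neighbourhood $U$ where the polydisc machinery and Lemma \ref{cardist} are valid. The paper treats this separately (case (i)) by using Cho's lower bound $C_D(z,X)\gtrsim |X_n|/\delta_D(z)$ in the normal direction: integrating along the portion of $\gamma$ up to its exit time from $U$ gives $L^c(\gamma)\geq C_1\log\big(\delta_D(\partial U)/\delta_D(A)\big)$, which dominates $\log(1+d(B,A)/\delta_D(A))$ because $d(B,A)\leq \operatorname{diam}(D)$. Finally, your proposed ``cleaner alternative'' of a single separating function $F$ with $d^p_\Delta(F(A),F(B))\gtrsim \log(1+d(B,A)/\delta_D(A))$ is not available from the tools developed in the paper: the constructions of Lemmas \ref{lem5.1} and \ref{lem5.2} are localized to a polydisc $P(w_A,t,\theta)$ of scale $\delta_D(A)$, and when $B$ lies outside that polydisc the extended function satisfies $\hat f(B)=0$, which yields only the fixed separation $c_0$ of Lemma \ref{cardist}, not the full logarithm. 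The single-function route is what the paper uses in the complementary \emph{near} case (Lemma \ref{lemnear}).
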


\begin{proof}
Let $A,B \in D \cap U'$ be points such that $d'(B,A) \geq 4 C_e \delta_D(A)$. Choose a smooth path $\gamma: [0,1] \to D$ from $A$ to $B$ satisfying 
\[
2 d_D^c(A,B) \geq L^{c}(\gamma)
\]
where $L^c(\gamma)$ refers to the length of $\gamma$ in the inner Caratheodory metric.\\

\noindent We shall split again into two cases:
\item[(i)] If $\gamma([0,1]) \not \subset U \cap D$, then we can find an exit time, i.e., 
a number $t_1' \in (0,1)$ with $\gamma([0,t'_1)) \subset D \cap U$ and $\gamma(t'_1) \in \partial U \cap D$. Now we apply Cho's lower bound on the differential metric as in \cite{Cho1} and find
\begin{align}
L^{c}(\gamma) &\geq L^{c}(\gamma_{\vert_{[0,t'_1]}}) \nonumber \\
& \geq C_1 \log \frac{\delta_D(\partial U)}{\delta_D(A)}. \label{gamlb} 
\end{align}
Indeed, the lower bound from \cite{Cho1} reads
\[
C_D(z,X) \gtrsim \frac{\vert \langle L_1(z), X \rangle \vert}{\tau \big( z, \delta_D(z) \big)} + \sum\limits_{\alpha=2}^{n-1} \frac{\vert \langle L_\alpha(z), X \rangle \vert}{ \sqrt{\delta_D(z)} } + \frac{\vert X_n \vert}{ \delta_D(z)} 
\]
-- recall that $L_n \equiv 1$; here we shall let $z$ vary in a small neighbourhood of $A^*= \pi(A)$, assumed to be the origin after a translation, on which such an estimate is guaranteed by \cite{Cho1}. Further we may also assume after a rotation that $\nu \big( A^* \big)= L_n = ('0,1)$; at this point we may also want to note that the hypothesis on $d'(A,B)$, of the case under consideration remains intact, since these transformations preserve $d'$ in the sense that they transform the $d'$ associated with the initial domain $D$ into the $d'$ of the transformed domain. In particular, we have for $z$ in a small ball $\mathbb{B}_\delta$ and $X \in \mathbb{C}^n$ the estimate
\[
C_D(z,X) \gtrsim \frac{\vert X_n \vert}{ \delta_D(z)}.
\] 
which contains in it the rate of blow-up of the Caratheodory metric along the normal direction, since $\nu (\pi(z))$ must have a non-zero component along $L_n$. To unravel this information precisely from the above inequality and in a more useful form for our purposes, we need to restrict ourselves to the cone
\[
\mathcal{C}^z_\alpha = \big\{ X \in T_{\pi(z)}(\mathbb{C}^n) \; : \; \big\vert \langle \nu \big(\pi(z)\big), X \rangle \big\vert > \alpha  \big\vert \nu \big(\pi(z)\big) \big\vert  \big\}
\]
where $\alpha \in (0,1]$. Let 
\begin{equation*}
\mathcal{C} = \bigcup_{ z \in D } \{ z \} \times \mathcal{C}^z_\alpha  
\end{equation*}
and consider the function defined on $\mathcal{C}$ by
\[
R(z,X) = \Big \vert \frac{\vert X_n \vert}{ \vert \langle \nu\big( \pi(z) \big), X \rangle \vert}   - 1 \Big \vert
\]
which is zero for all those $z$ whose $\pi(z)$ is the origin, i.e., the $z_n$-axis. Now note that $R(z,X)$ is continuous on $\mathcal{C}$ as $\vert\langle \nu\big( \pi(z) \big), X \rangle\vert$ is bounded away from $0$ and also that $R(A^*,X) = 0$. Therefore, given any $\epsilon$ ($1/2$, say) there is a $\delta_0$ (which we may take to be $< \delta$) such that 
\[
\vert R(z,X) - R(A^*,X) \vert < \epsilon
\]
for all $z \in \mathbb{B}_{\delta_0}$ which is to say, we have
\[
\Big \vert \frac{X_n}{\langle \nu\big( \pi(z) \big), X \rangle} \Big \vert > 1- \epsilon =1/2.
\]
or equivalently that 
\begin{equation} \label{X_neqn}
\vert X_n \vert > 1/2 \; \big \vert \langle \nu\big( \pi(z) \big), X \rangle \big\vert.
\end{equation}
Now getting to our setting, since the curve $\gamma(t)$ moves away from the boundary during the interval $I=[0,t_1')$ meaning, ${\rm dist} \big((\gamma(t_1'), \partial D)\big) = {\rm dist}(\partial U, \partial D) = \delta_D(U)$ is greater than $\gamma(0)=A \in U$, we must have that the `normal component' of the curve viz., $\langle \dot{\gamma}(t), \nu \big( \pi( \gamma(t)) \big) \rangle$, must be non-zero (bigger than some $\alpha >0$) for some non-trivial stretch of time, i.e., for a sub-interval of $I$ of non-zero length -- call this sub-interval $I$ again -- so that we may apply the fore-going considerations, in particular (\ref{X_neqn}) to pass to a further sub-interval of $I$ of non-zero length if necessary where 
\[
\vert \dot{\gamma}_n(t) \vert \geq 1/2 \; \vert \langle \dot{\gamma}(t), \nu \big( \pi( \gamma(t)) \big) \rangle \vert.
\]
Indeed as mentioned above, the existence of such an interval follows just from continuity and the fact that $\nu \big( \pi( \gamma(t)) \big)$ at $t=0$ is just $({}'0,1)$ so that 
\[
\frac{\vert \dot{\gamma}_n (0) \vert}{\vert \langle \dot{\gamma}(0), \nu \big( \pi( \gamma(0)) \big) \rangle \vert} =1
\]
so that we may take this sub-interval, which we shall denote again by $I$, to be of the form $[0,t_2')$. We then have on this sub-interval that 
\begin{align*}
C_D(\gamma(t), \dot{\gamma}(t)) &\gtrsim \frac{\vert \dot{\gamma}_n(t) \vert}{ \delta_D(\gamma(t))} \\
& \gtrsim \frac{\vert \langle \dot{\gamma}(t), \nu \big( \pi( \gamma(t)) \big) \rangle \vert}{\delta_D(\gamma(t)) } \\
& \gtrsim \frac{\vert \langle \dot{\gamma}(t), \nu \big( \pi( \gamma(t)) \big) \rangle \vert}{\vert r(\gamma(t)) \vert } \\
\end{align*}
We have elaborately presented the steps that lead to this lower bound because of its re-occurrence later in a more complicated setting. Integrating the final estimate in the above with respect to $t$, leads to (\ref{gamlb}) which subsequently yields, 
\begin{align*}
L^{c}(\gamma) & \geq C_1 \log \frac{\delta_D(\partial U)}{\delta_D(A)} \\
& \geq \frac{1}{2} C_1 \log \Big( 1+ \frac{\delta_D(\partial U)}{\delta_D(A)} \Big)\\
&\hspace{0.2in} \\
&\geq \frac{1}{2} \log \Big( 1 + \frac{\delta_D(\partial U)}{\text{diam}(D)} \Big) \frac{d(B,A)}{\delta_D(A)}  \\
&\geq C_2 \Big( 1 + \frac{d(B,A)}{\delta_D(A)} \Big) 
\end{align*}
where
\[
C_2 = \frac{1}{2}C_1 \frac{\delta_D(\partial U)}{\text{diam}(D)}.
\]
The second inequality in the foregoing string of inequalities, can be ensured by choosing $U_0$ such that  $\delta_D(A) \leq 1/2 \; \delta_D(\partial U)$, while the third one follows  because $d(B,A) \leq {\rm diam}(D)$. This finishes case (i).\\

\noindent The other sub-case is (ii): $\gamma([0,1]) \subset U \cap D$.\\
Since $d'(B,A) > 4 C_e \delta_D(A)$ we have 
\[
4C_e \delta_D(A) < \text{ inf } M(B,A)
\]
and therefore 
\[
B \not \in Q_{4C_e \delta_D(A)}(A) \supset Q_{2C_e \delta_D(A)}(A^*).
\]
Thus we can choose a number $t_1 \in (0,1)$ such that 
\[
\gamma(t_1) \in \partial Q_{2C_e \delta_D(A)}(A^*).
\]
This shows that the following set is not empty
\begin{align*}
S^{A,B} := \big\{ j \in \mathbb{Z}^{+} \; : \; &\exists \; 0=t_0<t_1< \ldots < t_j <1 \\
&\text{ such that } \gamma(t_\nu) \in \partial Q_{ 2 C_e \delta_D( \gamma(t_{\nu-1}) ) }\big( \gamma(t_{\nu-1})^* \big), \; 1\leq \nu \leq j \big\}.
\end{align*}
Since $\gamma([0,1]) \subset D \cap U$, the boundary point $\gamma(t)^*$ is well-defined for any $t \in [0,1]$. As in \cite{Her} again, $S^{A,B}$ can be ascertained to be a finite set and consequently we may define the number
\[
m : = {\rm max} \;S^{A,B}
\]
and choose numbers $0=t_0<t_1 \ldots <t_m<1$ such that 
\[
\gamma(t_\nu) \in \partial Q_{2 C_e \delta_D(\gamma(t_{\nu-1})}\big( \gamma(t_{\nu-1})^* \big), \; 1\leq \nu \leq m.
\]
Further following \cite{Her}, we get $2 d_D^{c}(A,B) \geq c_0 m$ and subsequently follow the steps therein to estimate $m$ from below, which uses the pseudo-distance property of $d$ and leads eventually to the estimate
\[
d(B,A) \leq C_*^{m+2} \delta_D(A)
\]
where $C_*$ is a constant bigger than $1$ (in fact bigger than $6$). This gives
\[
C_*^{2m} \geq \frac{d(B,A)}{\delta_D(A)}.
\]
From this it follows that
\[
(1 + C_*)^{2m}> 1+C_*^{2m} \geq 1+ \frac{d(B,A)}{\delta_D(A)}
\]
which subsequently leads to
\[
2m \log(1 + C_*) > \log \Big(  1+ \frac{d(B,A)}{\delta_D(A)} \Big)
\]
which gives
\[
m \gtrsim \log \Big(  1+ \frac{d(B,A)}{\delta_D(A)} \Big)
\]
But then recalling that 
\[
c_0 \cdot m \leq 2 d_D^{c}(A,B)
\]
we finally see that 
\[
d_D^{c}(A,B) \gtrsim \log \Big(  1+ \frac{d(B,A)}{\delta_D(A)} \Big)
\]
\end{proof}
\noindent Now we turn to the other possibility:
\begin{lem}\label{lemnear}
Assume that (\ref{near}) holds. Then with some universal constant $C_{21}>0$ we have
\begin{multline*}
d_D^{Cara}(A,B) \geq C_{21} \; \log \Big(1 \;+\; \vert \Phi^A(B)_n \vert^2/ \delta_D(A)^2 \\
+\; \sum\limits_{\alpha=2}^{n-1} \vert \Phi^A(B)_\alpha \vert^2/ \delta_D(A) \;+\; \vert \Phi^A(B)_1 \vert^2/ \tau(A^*, \delta_D(A))^2 \Big)
\end{multline*}
\end{lem}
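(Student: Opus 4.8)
The plan is to read off the lower bound from the localization package assembled in Subsections~\ref{second}--\ref{fourth}, whose point is precisely Lemma~\ref{lem5.2}: it manufactures, out of a bounded holomorphic function on a single distinguished polydisc $P(w,t,\theta)$, a bounded holomorphic function on all of $D$ with prescribed values at two points. Since $d_D^{Cara}(A,B)=\sup\{d_\Delta^p(f(A),f(B)):f\in\mathcal O(D,\Delta)\}$, a lower bound on $d_D^{Cara}$ only requires exhibiting one good competitor $f$, so the heavy analytic work (the $\ov\pa$-estimates with plurisubharmonic weights built from the Green function, and the passage to $H^\infty(D)$ via a Stein neighbourhood of $\ov D$) is already done; what remains is to feed Lemma~\ref{lem5.2} the right test functions. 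This is exactly how Herbort proceeds in \cite{Her}, and the argument below is its adaptation.

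Fix $\z=A^*$, write $\delta=\delta_D(A)$, and take $t=\delta$. Then $w_A:=\Phi^\z(A)$ and $w_B:=\Phi^\z(B)$ both lie in $D_t^\z$, since $\rho^{\z,t}=\rho^\z+\ep_0E_{\z,t}<\rho^\z<0$ at interior points; moreover $w_A=(0,\ldots,0,-c\delta)$ with $\abs{c}\approx1$, so $J:=J_{\z,t}(w_A)\approx\delta$ and $\tau(\z,\theta J)\approx\tau(A^\ast,\delta)$, while $w_A\in D^\z\cap B(0,r_0)$ and $P(w_A,t,\theta)\subset D_t^\z$ by Lemma~\ref{Ppolydisc}(a) (shrink $U'$ if needed so that $\abs{w_B}<r_0$ as well, which is legitimate since $A,B\in U'$ and $(\ref{near})$ keeps $B$ close to $\z$). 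Using Lemma~\ref{dprop}, the engulfing property and property~(ii) of the polydiscs $Q$ to compare $\tau(A,\cdot)$ with $\tau(\z,\cdot)$, and using the $\mathcal E_L$-structure of the transition map $\Psi:=\Phi^\z\circ(\Phi^A)^{-1}$ (weight preserving, but not diagonal) to compare the weighted components of $\Phi^A(B)$ with those of $w_B-w_A=\Phi^\z(B)-\Phi^\z(A)$, the quantity
\[
S:=\frac{\abs{\Phi^A(B)_n}^2}{\delta^2}+\sum_{\al=2}^{n-1}\frac{\abs{\Phi^A(B)_\al}^2}{\delta}+\frac{\abs{\Phi^A(B)_1}^2}{\tau(A^\ast,\delta)^2}
\]
sitting inside the logarithm is comparable to the square $\hat V_{w_A}(w_B)^2$ of the pseudonorm defining $P(w_A,t,\cdot)$. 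Finally, hypothesis~(\ref{near}) forces $B\in Q(A,5C_e\delta)$, whence the defining inequalities of $Q(A,5C_e\delta)$ give $S\lesssim1$ with an implied constant depending only on $D$; on the range $S\in[0,C]$ one has $\log(1+S)\approx S\approx\sqrt S$.

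Now split on whether $w_B\in P(w_A,t,\theta/2)$. If $w_B\in P(w_A,t,\theta/2)$, apply Lemma~\ref{lem5.2} with $x=A$, $y=B$ to the $n$ normalized linear test functions on $P(w_A,t,\theta)$, namely $f_1=c_0(z_1-w_{A,1})/\tau(\z,\theta J)$, $f_\al=c_0(z_\al-w_{A,\al})/\sqrt{\theta J}$ for $2\le\al\le n-1$, and $f_n=c_0(z_n-w_{A,n})/(\theta J)$, where $c_0=1/(2L^\ast)$; each has sup-norm $\le c_0$ on $P(w_A,t,\theta)$ and vanishes at $w_A$. Lemma~\ref{lem5.2} returns $\hat f_j\in H^\infty(D)$ with $\norm{\hat f_j}_{L^\infty(D)}\le1/2$, $\hat f_j(A)=0$, and $\abs{\hat f_j(B)}$ equal to the corresponding normalized component of $w_B-w_A$; since $d_\Delta^p(0,s)\ge s$ for $0\le s<1$, the competitor $2\hat f_j\in\mathcal O(D,\Delta)$ yields $d_D^{Cara}(A,B)\gtrsim\abs{(w_B-w_A)_j}/(\text{normalizer}_j)$, and maximizing over $j$ gives $d_D^{Cara}(A,B)\gtrsim\hat V_{w_A}(w_B)\approx\sqrt S\gtrsim\log(1+S)$. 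If instead $w_B\notin P(w_A,t,\theta/2)$, then the scaling of the polyradii of $P(w_A,t,\cdot)$ forces $\hat V_{w_A}(w_B)$, hence $S$, to be bounded below by a positive universal constant; here feed Lemma~\ref{lem5.2} the constant function $f\equiv c_0$, getting $\hat f\in H^\infty(D)$ with $\hat f(A)=c_0$, $\hat f(B)=0$, $\norm{\hat f}_{L^\infty(D)}\le1/2$, so that $2\hat f\in\mathcal O(D,\Delta)$ gives $d_D^{Cara}(A,B)\ge d_\Delta^p(1/L^\ast,0)>0$, a positive universal constant, which dominates $\log(1+S)$ because $S$ is also bounded above. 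Combining the two cases and taking $C_{21}$ small enough proves the lemma.

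The routine-but-delicate step — the one I expect to be the main obstacle — is the coordinate translation in the second paragraph: the statement is phrased through $\Phi^A$, centered at the interior point $A$, whereas the entire apparatus (polydiscs, Green functions, weights, Lemma~\ref{lem5.2}) is phrased through $\Phi^{A^\ast}$, so one must verify in both directions that the weighted components of $\Phi^A(B)$ and of $\Phi^{A^\ast}(B)-\Phi^{A^\ast}(A)$ are comparable, controlling the cross-terms in the $z_\al$- and $z_n$-components that arise because $\Psi$ is not diagonal. This uses the very definition of $\tau(\z,\delta)$, the bounds $\abs{\Phi^A(B)_1}^l\,\abs{P_l(\z,\cdot)}\lesssim\delta$ that come out of (\ref{near}), and the fact (from \cite{Cho2}) that the $b^\al_{jk}$-type coefficients appearing in the normal form are controlled by the $P_l$'s and may essentially be discarded. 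One must also check uniformity of all constants in $\z\in\pa D$, which follows, as recorded at the start of Section~\ref{pfofthm1}, from the precompactness of $U$ and the smooth dependence of the normal forms on the base point.
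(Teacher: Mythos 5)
Your overall architecture is the paper's: reduce everything to Lemma \ref{lem5.2}, feed it test functions of sup-norm $\le 1$ on $P(w_A,t,\theta)$ that vanish at $w_A$, and convert the resulting global functions into a lower bound on $d_D^{Cara}$ via the Poincar\'e distance. Where you genuinely diverge is in the choice of test functions, and this divergence is exactly what creates the one substantive gap you yourself flag. You take the normalized \emph{linear coordinate functions in the $\Phi^{A^*}$-frame}, $f_j = c_0(z_j-w_{A,j})/\tau_j$, so Lemma \ref{lem5.2} hands you back $d_D^{Cara}(A,B)\gtrsim \abs{(w_B-w_A)_j}/\tau_j$ with $w_B-w_A=\Phi^{A^*}(B)-\Phi^{A^*}(A)$, and you must then prove the two-sided comparability of the weighted components of $\Phi^{A^*}(B)-\Phi^{A^*}(A)$ with those of $\Phi^A(B)$. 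That comparison is not routine: the transition map is in $\mathcal E_L$ but is not diagonal, so the $z_\al$- and $z_n$-components pick up cross-terms ($Q_2$- and $Q_1$-type polynomials in the lower-weight variables), and controlling them requires the coefficient estimates of Lemma 3.4 of \cite{TT} together with the comparability $\tau(A,\delta)\approx\tau(A^*,\delta)$ — an argument of the same length as the lemma itself, which you have only sketched. The paper sidesteps this entirely by choosing as test functions the normalized components of the transition map itself, $f_j(v)=\big(\Phi^A\circ(\Phi^{A^*})^{-1}\big)(v)_j/(C'\tau_j)$: these still vanish at $w_A$ (since $\Phi^A(A)=0$), but now $f_j(w_B)=\Phi^A(B)_j/(C'\tau_j)$ \emph{exactly}, so the only thing left to check is the sup-norm bound of $f_j$ on $P(w_A,t,\theta)$, which follows from writing $\Phi^A\circ(\Phi^{A^*})^{-1}(v)=(\Phi^A-\Phi^{A^*})\circ(\Phi^{A^*})^{-1}(v)+v$ and the polyradii of $P(w_A,t,\theta)$. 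I would recommend you either adopt that choice or write out the frame-comparison in full (it is essentially a variant of Lemma \ref{lem 3.2} and the engulfing property); as written, your proof asserts the key comparability rather than proving it. Two smaller points: your case split on $w_B\in P(w_A,t,\theta/2)$ is harmless but unnecessary, since hypothesis (\ref{near}) (with $t=4C_e\delta_D(A)$ rather than your $t=\delta$) already places $w_B$ in the good polydisc; and your fallback in the second case via the constant test function and Lemma \ref{lem5.1}(b) is fine.
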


\begin{proof}
Suppose that $A,B$ are points in $D \cap U'$ which satisfy (\ref{near}). Then clearly
\[
B \in Q_{4C_e \delta_D(A) }(A)
\]
and 
\begin{align*}
\Phi^A(B) &\in \Delta\big( 0, \tau(A, 4C_e\delta_D(A)) \big) \times \Delta\big( 0, \sqrt{4C_e\delta_D(A)} \big) \times \ldots \times \Delta\big( 0, \sqrt{4C_e\delta_D(A)} \big) \times \Delta \big( 0,4C_e\delta_D(A) \big) \\
& \; \; \;\subset P(w_A,t, \theta)
\end{align*}
 with a number $\theta>0$ independent of $A,B$ and $t:= 4C_e\delta_D(A)$. Here we put $w_A = \Phi^{A^*}(A)$. According to Lemma \ref{lem5.2} applied to the point $x=A$ and $y=B$, there exists for any function $f \in H^\infty\big( P(w,t,\theta) \big)$ having norm equal to one, a function $\hat{f} \in H^\infty(D)$ with $\vert \hat{f} \vert_{L^\infty} \leq L^*$ for some constant $L^*$ such that 
\begin{align*}
 \hat{f}(A) &= f(w_A)\\
 \hat{f}(B) &= f(w_B)
\end{align*}
where $w_B = \Phi^{A^*}(B)$. This implies
\[
d_D^{Cara}(A,B) \geq d^P \Big( \frac{1}{L^*} f(w_A), \frac{1}{L^*} f(w_B) \Big).
\]
We now make our choice of the function $f$ namely, put
\begin{equation}
f(v) = f_n(v_1,v_2, \ldots,v_n) = \frac{1}{C' \; \delta_D(A)} \Big( \big( \Phi^A \circ (\Phi^{A^*})^{-1} \big) (v_1, v_2,\ldots,v_n)\Big)_n
\end{equation}
where $C'$ is a constant chosen so that $\vert f \vert_{L^\infty} \leq 1$ and is independent of $A,B$ -- to see that this can indeed be done, notice that 
\[
\Phi^A \circ (\Phi^{A^*})^{-1}(v) = \big( \Phi^A - \Phi^{A^*} \big) \circ (\Phi^{A^*})^{-1}(v) + v
\]
and hence
\begin{align*}
\vert f(v) \vert &\leq \frac{ \left\vert \big( \Phi^A - \Phi^{A^*} \big) \circ (\Phi^{A^*})^{-1}(v) \right\vert  + \vert v_n \vert }{C' \delta_D(A)}\\
& \leq C + \frac{\vert v_n \vert}{\delta_D(A)}.
\end{align*}
But then on $P(w_A,t, \theta)$ we have
\[
\vert v_n \vert \leq \vert v_n - (w_A)_n \vert + \vert (w_A)_n \vert \leq \theta J_{A^*,t}(w_A) + t \leq C'' t \leq C''' \delta_D(A).
\]
Certainly $f(w_A)=0$. Together with a basic estimate concerning the Poincar\'{e} distance $d_\Delta^p$ on the unit disc -- estimate (6.6) in \cite{Her} -- we get
\begin{align*}
d_D^{Cara}(A,B) \geq d_\Delta^p \Big(0, \frac{1}{L^*} f(w_B) \Big)
\geq \frac{1}{2} \log \Bigg( 1 + \frac{\vert \Phi^A(B)_n \vert^2  }{(L^*C')^2 (\delta(A))^2 } \Bigg) 
\end{align*}
with some suitable constant $C'>0$. Similarly choosing next the function $f$ to be 
\[
f(v) = f_\alpha(v) = \frac{1}{C' \sqrt{\delta(A)}} \big( \Phi^A \circ (\Phi^{A^*})^{-1} \big)(v)_\alpha,
\]
for all $2 \leq \alpha \leq n-1$ -- which also has $L^\infty$-norm not bigger than $1$, viewed as a function on $P(w_A,t, \theta)$ --  we also obtain (since again $f(w_A)=0$) that 
\[
d_D^{Cara}(A,B) \geq d_\Delta^p \big(0, \frac{1}{L^*}f(w_B)
 \big) \geq \frac{1}{2} \log \Bigg( 1+ \frac{\vert \Phi^A(B)_\alpha \vert^2}{(L^*C')^2 ( \sqrt{\delta(A)})^2} \Bigg)
\]
and similarly again, choosing 
\[
f(v) = f_1(v) = \frac{1}{C' \tau \big( A, \delta_D(A) \big)} \big( \Phi^A \circ (\Phi^{A^*})^{-1} \big)(v)_1
\]
with $C'$ a suitable constant adjusted so that $f_1 \in L^\infty \big( P(w_A,t, \theta) \big)$, we get 
\[
d_D^{Cara}(A,B) \geq d_\Delta^p \big(0, \frac{1}{L^*}f(w_B)
 \big) \geq \frac{1}{2} \log \Bigg( 1+ \frac{\vert \Phi^A(B)_1 \vert^2}{(L^*C')^2 \tau \big(A, \delta_D(A) \big)^2} \Bigg)
\]
To summarize then, we have for each $1 \leq j \leq n$, that
\[
e^{d_D^{Cara}(A,B)} \gtrsim 1 + c\frac{\vert \Phi^A(B)_j \vert }{\tau_j(A)}
\]
for some constant $c<1$. Adding together these inequalities over the index $j$ gives the estimate asserted in the Lemma. Let us note here for later purposes that the foregoing inequalities may also be rewritten as
\begin{equation} \label{maxbd}
d_D^{Cara}(A,B) \gtrsim \max_{1 \leq j \leq n} \log \Bigg( 1 + \frac{\vert \Phi^A(B)_j \vert }{\tau_j(A)}  \Bigg)
\end{equation}
as $c<1$.
\end{proof}

\noindent We now proceed to demonstrate that the estimates of the last two lemmas fit together well to yield Theorem \ref{thm1}.
For reasons of symmetry it is enough to verify that
\[
d_D^c(A,B) \gtrsim \eta(A,B).
\]

\noindent Before we begin, let us just record one useful fact which is the analogue of lemma 3.2 of \cite{Her} and follows by the very same line of proof therein. 

\begin{lem} \label{lem 3.2}
If $a,b \in D \cap U$ are points with $\vert a - b \vert<R_0$, we have
\begin{multline*}
\max \Big \{ \vert \big(\Phi^a(b)\big)_n\vert ,  \vert \big( \Phi^a(b) \big)_2 \vert^2 , \ldots, \vert \big( \Phi^a(b) \big)_{n-1} \vert^2 , \max\limits_{2 \leq l \leq 2m} \vert P_l(a) \vert \vert \big( \Phi_a(b) \big)_1 \vert^l \Big\} \leq d'(b,a)\\
\leq 2 \; \max \Big \{ \vert \big(\Phi^a(b)\big)_n\vert ,  \vert \big( \Phi^a(b) \big)_2 \vert^2  , \ldots, \vert \big( \Phi^a(b) \big)_{n-1}\vert^2 , \max\limits_{2 \leq l \leq 2m} \vert P_l(a) \vert \vert \big( \Phi_a(b) \big)_1 \vert^l \Big\}
\end{multline*}
\end{lem}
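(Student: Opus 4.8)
\textbf{Plan for proving Lemma \ref{lem 3.2}.}
The statement claims a two-sided comparison between the quantity $d'(b,a)$ and the explicit expression
\[
\mathcal{M}(a,b) := \max \Big\{ \abs{\big(\Phi^a(b)\big)_n}, \abs{\big(\Phi^a(b)\big)_2}^2, \ldots, \abs{\big(\Phi^a(b)\big)_{n-1}}^2, \max_{2 \le l \le 2m} \abs{P_l(a)} \abs{\big(\Phi^a(b)\big)_1}^l \Big\}.
\]
The natural approach is to unwind the definition of $d'(b,a) = \inf M(b,a)$, where $M(b,a) = \{\delta > 0 : b \in Q(a,\delta)\}$, together with the characterization of membership in the distorted polydisc $Q(a,\delta) = (\Phi^a)^{-1}\big(R(a,\delta)\big)$. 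Concretely, $b \in Q(a,\delta)$ exactly when $\abs{\Phi^a(b)_1} < \tau(a,\delta)$, $\abs{\Phi^a(b)_\alpha} < \sqrt{\delta}$ for $2 \le \alpha \le n-1$, and $\abs{\Phi^a(b)_n} < \delta$. The plan is to show that the infimum over $\delta$ for which all these inequalities hold is comparable to $\mathcal{M}(a,b)$, precisely as in the proof of Lemma 3.2 in \cite{Her}; since $\Phi^a$ here plays the role of Herbort's canonical map and the polyradii $\tau_k(a,\delta)$ are defined by the same recipe, the argument transfers without essential change.

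First I would handle the components $\alpha = 2, \ldots, n-1$ and the component $n$: the constraint $\abs{\Phi^a(b)_\alpha} < \sqrt{\delta}$ forces $\delta > \abs{\Phi^a(b)_\alpha}^2$, and $\abs{\Phi^a(b)_n} < \delta$ forces $\delta > \abs{\Phi^a(b)_n}$, so the infimum $d'(b,a)$ is at least $\max\{\abs{\Phi^a(b)_n}, \abs{\Phi^a(b)_2}^2, \ldots, \abs{\Phi^a(b)_{n-1}}^2\}$. For the first component, recall from the reduced definition of $\tau$ that $\tau(a,\delta) = \min_{2 \le l \le 2m}\big(\delta/\abs{P_l(a,\cdot)}\big)^{1/l}$, so the constraint $\abs{\Phi^a(b)_1} < \tau(a,\delta)$ is equivalent to demanding $\abs{\Phi^a(b)_1}^l < \delta/\abs{P_l(a)}$, i.e., $\delta > \abs{P_l(a)}\,\abs{\Phi^a(b)_1}^l$, for every $l$ with $2 \le l \le 2m$. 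Hence $d'(b,a) \ge \max_{2 \le l \le 2m}\abs{P_l(a)}\,\abs{\Phi^a(b)_1}^l$ as well. Taking the maximum of these lower bounds yields $d'(b,a) \ge \mathcal{M}(a,b)$, which is the left inequality.

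For the reverse direction, set $\delta_0 := 2\,\mathcal{M}(a,b)$ and verify that $b \in Q(a,\delta_0)$, which will give $d'(b,a) \le \delta_0 = 2\,\mathcal{M}(a,b)$. From $\delta_0 \ge 2\abs{\Phi^a(b)_n} > \abs{\Phi^a(b)_n}$ and $\delta_0 \ge 2\abs{\Phi^a(b)_\alpha}^2 > \abs{\Phi^a(b)_\alpha}^2$ we get the last $n-1$ polydisc inequalities immediately (using that the factor $2$ is strict). For the first coordinate, I need $\abs{\Phi^a(b)_1} < \tau(a,\delta_0)$, i.e., $\abs{P_l(a)}\,\abs{\Phi^a(b)_1}^l < \delta_0$ for each $l \in \{2,\ldots,2m\}$; but $\delta_0 = 2\,\mathcal{M}(a,b) \ge 2\abs{P_l(a)}\,\abs{\Phi^a(b)_1}^l$ by definition of $\mathcal{M}$, so this holds. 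Therefore $b \in Q(a,\delta_0)$ and $d'(b,a) \le 2\,\mathcal{M}(a,b)$, completing the two-sided estimate.

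The only genuine subtlety — and the one place where I would be careful — is the requirement $\abs{a-b} < R_0$: this is exactly what guarantees $M(b,a) \ne \emptyset$ (equivalently $\abs{a-b} \le R$, after possibly shrinking as in the discussion preceding the lemma), so that $d'(b,a)$ is finite and the infimum is attained in the expected sense, and it also keeps $b$ inside the chart $B(a,R)$ on which $\Phi^a$ is defined; the argument above is otherwise a direct translation of the corresponding passage in \cite{Her}, so I would simply cite that reference for the routine verification that $\delta \mapsto Q(a,\delta)$ is monotone and that the infimum commutes with the finite maximum over coordinates. No genuinely new obstacle arises here beyond bookkeeping the reduced form of $\tau$.
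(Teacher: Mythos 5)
Your proposal is correct and is exactly the argument the paper has in mind: the paper simply defers to Lemma 3.2 of \cite{Her}, whose proof is precisely this unwinding of $d'(b,a)=\inf\{\delta>0: \Phi^a(b)\in R(a,\delta)\}$ together with the observation that $\abs{\Phi^a(b)_1}<\tau(a,\delta)$ is equivalent to $\abs{P_l(a)}\abs{\Phi^a(b)_1}^l<\delta$ for all $2\le l\le 2m$ (using the reduced form of $\tau$, as you note). The only cosmetic point is the degenerate case where the maximum vanishes (so $\delta_0=0$), which is handled trivially since then $d'(b,a)=0$ as well.
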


\noindent Now suppose (\ref{far}) holds. Then we claim that for some $c_1>0$ we have
\begin{multline} \label{eqn6.7}
d(B,A)/\delta_D(A) \geq C_{12} \Big( \vert \Phi^A (B)_n \vert / \delta_D(A)  \\
+ \; \sum\limits_{\alpha=2}^{n-1} \vert \Phi^A(B)_\alpha \vert / \big( \delta_D(A) \big)^{1/2} \;+ \; \vert \Phi^A(B)_1 \vert/ \tau(A, \delta_D(A)) \Big)
\end{multline} 

\noindent For the proof of this, let $0 < \epsilon < 2 d'(B,A)$ be a number such  that $B \in Q_\epsilon(A)$. Then
\[
\Phi^A(B) \in  \Delta \big( 0, \tau(A,\epsilon) \big) \times \Delta(0, \sqrt{\epsilon}) \times \ldots \times \Delta(0,\sqrt{\epsilon}) \times  \Delta(0, \epsilon).
\]
In particular, $\vert \Phi^A(B)_n \vert \leq \epsilon \leq 2 d'(B,A)$. But then we also know $\vert \Phi^A(B)\vert \leq c'_1  \vert A -B \vert$, which implies
\[
\vert \Phi^A(B)_n \vert \leq \text{min} \big\{ 2d'(B,A), c'_1 \vert A -B \vert \big\} \leq c'_2 d(B,A).
\]
with some constant $c'_2>1$. In particular,
\[
\frac{\vert \Phi^A(B)_n \vert}{ \delta_D(A)} \leq c'_2 \frac{d(B,A)}{\delta_D(A)}
\]
Next we estimate $\vert \Phi^A(B)_1 \vert / \tau \big(A, \delta_D(A) \big) $. Since the function $t \to t/\tau(A,t)$ is increasing and $\epsilon \geq d'(B,A) > 4 C_e \delta_D(A)$, we get 
\[
\frac{\vert \Phi^A(B)_1 \vert}{\tau \big(A, \delta_D(A) \big)} \leq \frac{\tau(A,\epsilon)}{\tau \big(A, \delta_D(A) \big)} \leq c'_3 \frac{\epsilon}{\delta_D(A)} \leq 2 c'_3 \frac{d'(B,A)}{\delta_D(A)}.
\]
\noindent Moreover since $\vert \Phi^A(B)_1\vert \leq c'_1  \vert A -B \vert$ and $\tau \big(A, \delta_D(A) \big) \geq c'_4\delta_D(A)$ we get
\[
\frac{\vert \Phi^A(B)_1\vert}{\tau \big(A, \delta_D(A) \big)} \leq c'_5 \frac{\vert A -B \vert}{\delta_D(A)}
\]
and subsequently that 
\[
\frac{\vert \Phi^A(B)_1\vert}{\tau \big(A, \delta_D(A) \big)} \leq c'_6 \frac{d(B,A)}{\delta_D(A)}.
\]
Also
\[
\frac{\vert \Phi^A(B)_\alpha \vert}{\sqrt{ \delta_D(A)}} \leq \frac{\sqrt{\epsilon}}{\sqrt{\delta_D(A)}} \lesssim \frac{\epsilon}{\delta_D(A)}
\leq \frac{2 d'(B,A)}{\delta_D(A)} < \frac{2L d(B,A)}{\delta_D(A)}.
\]
This completes the verification of the claim (\ref{eqn6.7}) and then Lemma \ref{lemfar} proves
\[
d_D^c(A,B) \geq C_* \rho(A,B)
\]
for those points $A,B$ that satisfy (\ref{far}).\\

\noindent Next we move on to the case when $A,B$ satisfy (\ref{near}). In this case we claim that for some positive constant $c_2>0$ we have
\begin{equation} \label{nearest}
\frac{d(B,A)}{\delta_D(A)} \leq c_2 \Bigg( \frac{\vert \Phi^A(B)_n \vert}{\delta_D(A)} + \sum\limits_{\alpha=2}^{n-1} \frac{\vert \Phi^A(B)_\alpha \vert}{\sqrt{\delta_D(A)}} + \frac{\vert \Phi^A(B)_1 \vert}{\tau \big( A, \delta_D(A)\big)} \Bigg)
\end{equation}

\noindent First, we note that we have $d'(B,A) \leq \epsilon$ where we now let
\[
\epsilon = 2 \; \max \Big \{ \vert \big(\Phi^A(B)\big)_n\vert ,  \vert \big( \Phi^A(B) \big)_2 \vert^2  , \ldots, \vert \big( \Phi^A(B) \big)_{n-1}\vert^2 , \max\limits_{2 \leq l \leq 2m} \vert P_l(A) \vert \vert \big( \Phi_A(B) \big)_1 \vert^l \Big\}
\]
We split into the various possible cases for the value of $\epsilon$ and deal with them one by one. First, suppose that $\epsilon = 2 \vert \Phi^A(B)_n \vert$. Then we get
\[
\frac{d(B,A)}{\delta_D(A)} \leq \frac{d'(B,A)}{\delta_D(A)} \leq \frac{\epsilon}{\delta_D(A)} = 2 \frac{\vert \Phi^A(B)_n \vert}{\delta_D(A)}.
\]
Next we look at what happens when $\epsilon$ happens to be $ 2 \text{ max } \{ \vert P_l(A) \vert \vert \Phi^A(B)_1 \vert^l \} $. In this case note that 
\begin{align*}
\frac{d(B,A)}{\delta_D(A)} &\leq  \frac{\epsilon}{\delta_D(A)} \\
&= 2 \frac{ \text{ max } \{ \vert P_l(A) \vert \vert \Phi^A(B)_1 \vert^l \} }{\delta_D(A)} \\
&\lesssim \text{ max } \Big( \frac{\vert \Phi^A(B)_1 \vert}{\tau \big( A, \delta_D(A)\big) } \Big)^l \\
&\lesssim \frac{\vert \Phi^A(B)_1 \vert}{\tau \big( A, \delta_D(A)\big) } 
\end{align*}
provided we assure ourselves that $\vert \Phi^A(B)_1 \vert/\tau \big( A, \delta_D(A)\big)   \leq C$ for some constant $C$. To see this, choose any sequence $\eta_j \to d'(B,A)$ from above. Then by definition of $d'(B,A)$ we have $B \in Q_{\eta_j}(A)$ for all $j$. This gives $\vert \Phi^A(B)_1 \vert \leq \tau(A, \eta_j)$. Letting $j \to \infty$ we find $\vert \Phi^A(B)_1 \vert \leq \tau \big( A, d'(B,A)\big) $. Then since we are in the case (\ref{near}) i.e., $d'(B,A) \leq 4 C_e \delta_D(A)$, we obtain
\[
\tau \big( A, d'(B,A)\big) \leq C' \tau \big( A, \delta_D(A)\big)
\]
Therefore, $\vert \Phi^A(B)_1 \vert \leq  C' \tau \big( A, \delta_D(A)\big)$, finishing this case.\\
If it happens that $\epsilon = 2 \vert \Phi^A(B)_\alpha \vert^2$   for some $2 \leq \alpha \leq n-1$, then similar arguments with $\tau_1(A) = \tau_1 \big( A, \delta(A) \big)$ replaced by $\tau_\alpha(A) = \sqrt{\delta}$ gives 
\[
\frac{d(B,A)}{\delta_D(A)} \leq 2 \frac{\vert \Phi^A(B)_\alpha \vert^2}{\delta_D(A)} \lesssim \frac{\vert \Phi^A(B)_\alpha \vert}{\sqrt{\delta_D(A)}}
\]
\noindent Summarizing the results of the various cases depending on the value of $\epsilon$, we thus get that for some $1 \leq j \leq n$ the inequality
\begin{equation} \label{ddeluppbd}
\frac{d(B,A)}{\delta(A)} \lesssim \frac{\vert \Phi^A(B)_j \vert}{\tau_j(A)}
\end{equation}
must hold. This will be used in the sequel.\\

\noindent Putting together what we inferred for each of the possible values that $\epsilon$ may take, we may now also assert that (\ref{nearest}) holds, from which it in-turn follows from Lemma \ref{lemnear} that 
\begin{equation}
d_D^{c}(A,B) \geq d_D^{Cara}(A,B) \geq C \log \Big( 1 + \big(d(B,A)/\delta_D(A) \big)^2 \Big).
\end{equation}

\noindent Now we get to the end result, the lower bound as stated in Theorem \ref{thm1}; but we wish to first summarise for convenience, a couple of results from our variety of estimates encountered in course of our dealings of the two cases (\ref{far}) and (\ref{near}) which will be useful in the sequel -- to be precise, parts (i) and (ii) of part (a) of the following proposition, come from the discussion of Lemmas \ref{lemfar} and \ref{lemnear} respectively. We then conclude by showing how the desired end, re-written in part (b) of this proposition, follows from its previous parts.

\begin{prop} \label{lowbd}
\noindent
\begin{itemize} 
\item[(a)] For points $A,B \in D \cap U'$, depending on whether they are far or near, as measured by the pseudo-distance $d'$, we have two cases and correspondingly various estimates as in the first two statements below:
\begin{itemize} 
\item[(i)] $d'(B,A) > 4 C_e \delta_D(A)$. Then there for some constants $K_{11}, K_{12}>0$ we have
\begin{equation*}
d_D^{c}(A,B) \geq K_{11} \; \log \Big( 1 \;+\; \big( d(B,A) / \delta_D(A) \big) \Big)
\end{equation*}
\noindent and
\begin{multline*}
d(B,A)/\delta_D(A) \geq K_{12} \Big( \vert \Phi^A (B)_n \vert / \delta_D(A)  \\
+ \; \sum\limits_{\alpha=2}^{n-1} \vert \Phi^A(B)_\alpha \vert / \big( \delta_D(A) \big)^{1/2} \;+ \; \vert \Phi^A(B)_1 \vert/ \tau(A, \delta_D(A)) \Big)
\end{multline*} 
\item[(ii)] $d'(B,A) \leq 4 C_e \delta_D(A)$. Then for some constants $K_{21}, K_{22}, K_{23}>0$ we have
\begin{multline*}
d_D^{Cara}(A,B) \geq K_{21} \; \log \Big(1 \;+\; \vert \Phi^A(B)_n \vert^2/ \delta_D(A)^2 \\
+\; \sum\limits_{\alpha=2}^{n-1} \vert \Phi^A(B)_\alpha \vert^2/ \delta_D(A) \;+\; \vert \Phi^A(B)_1 \vert^2/ \tau(A, \delta_D(A))^2 \Big)
\end{multline*}
\noindent and
\begin{multline*}
d(B,A)/\delta_D(A) \leq K_{22} \Big( \vert \Phi^A(B)_n \vert/ \delta_D(A) \\
 +\; \sum\limits_{\alpha=2}^{n-1} \vert \Phi^A(B)_\alpha \vert/ \big( \delta_D(A) \big)^{1/2} \;+\; \vert \Phi^A(B)_1 \vert/ \tau \big(A, \delta_D(A) \big) \Big)
\end{multline*}
\noindent from which it was seen to follow in the foregoing lemmas, that
\[
d_D^{c}(A,B) \geq d_D^{Cara}(A,B) \geq K_{23} \log \Big( 1 \;+\; \big( d(B,A)/\delta_D(A) \big)^2 \Big)
\]
\end{itemize}
\item[(b)] Finally, we have the lower bound valid for all $A,B \in D \cap U$ and some constant $K>0$:
\begin{equation*}
d_D^{c}(A,B) \geq K \; \log \Big(1 \;+\; \frac{\vert \Phi^A(B)_n \vert}{\delta_D(A)} \\ 
 +  \sum\limits_{\alpha=2}^{n-2} \frac{ \vert \Phi^A(B)_\alpha \vert}{\big( \delta_D(A) \big)^{1/2}} + \frac{\vert \Phi^A(B)_1 \vert}{\tau\big(A, \delta_D(A) \big)} \Big)   - \frac{K}{n} \; \log 2
\end{equation*}
obtained by combining the first two inequalities in (ii) and (i) of part(a).
\end{itemize}
\end{prop}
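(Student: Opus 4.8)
\noindent The plan is to deduce part (b) by treating the two regimes (\ref{far}) and (\ref{near}) separately and checking that in each of them $\eta(A,B)$ is dominated, up to the stated additive constant, by $d_D^c(A,B)$; since every pair $A,B\in D\cap U'$ satisfies exactly one of (\ref{far}), (\ref{near}) (and $U$ may be replaced by the slightly smaller $U'$ with no loss), this yields the claim. Throughout write
\[
\Sigma(A,B)=\frac{\vert \Phi^A(B)_n\vert}{\delta_D(A)}+\sum_{\alpha=2}^{n-2}\frac{\vert \Phi^A(B)_\alpha\vert}{\delta_D(A)^{1/2}}+\frac{\vert \Phi^A(B)_1\vert}{\tau\big(A,\delta_D(A)\big)}
\]
and put $a_j=\vert\Phi^A(B)_j\vert/\tau_j(A)$ for $1\le j\le n$, so that the goal is $d_D^c(A,B)\ge K\log\big(1+\Sigma(A,B)\big)-\tfrac{K}{n}\log 2$.

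\medskip

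\noindent \textbf{Far case.} First I would dispose of (\ref{far}). Here part (a)(i) of Proposition \ref{lowbd} supplies both $d_D^c(A,B)\ge K_{11}\log\big(1+d(B,A)/\delta_D(A)\big)$ and $d(B,A)/\delta_D(A)\ge K_{12}\,\Sigma(A,B)$ (the second after discarding the $\alpha=n-1$ term, which only weakens it). Chaining these via monotonicity of $\log$ gives $d_D^c(A,B)\ge K_{11}\log\big(1+K_{12}\Sigma(A,B)\big)$; shrinking $K_{12}$ so that $K_{12}<1$ and invoking the logarithmic inequality (\ref{log}), namely $\log(1+K_{12}\Sigma)\ge K_{12}\log(1+\Sigma)$, yields $d_D^c(A,B)\ge K_{11}K_{12}\log\big(1+\Sigma(A,B)\big)$, which is already stronger than required and carries no additive correction.

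\medskip

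\noindent \textbf{Near case.} For (\ref{near}) I would use only the first inequality of part (a)(ii) together with $d_D^c\ge d_D^{Cara}$, i.e.
\[
d_D^c(A,B)\ge K_{21}\,\log\Big(1+a_n^2+\sum_{\alpha=2}^{n-1}a_\alpha^2+a_1^2\Big),
\]
and then pass from this sum of squares to $1+\Sigma$. The Cauchy--Schwarz inequality (\ref{CS}), applied to the at most $n+1$ relevant numbers $1,a_1,\dots,a_n$, gives $\big(1+a_n+\sum_\alpha a_\alpha+a_1\big)^2\le (n+1)\big(1+a_n^2+\sum_\alpha a_\alpha^2+a_1^2\big)$, whence $\log\big(1+a_n^2+\cdots+a_1^2\big)\ge 2\log(1+\Sigma)-\log(n+1)$ and so $d_D^c(A,B)\ge 2K_{21}\log\big(1+\Sigma(A,B)\big)-K_{21}\log(n+1)$, the truncation of the sum in $\Sigma$ to $n-2$ being harmless since it only decreases $\Sigma$. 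Setting $K=\min\{K_{11}K_{12},\,2K_{21}\}$, in both cases we obtain $d_D^c(A,B)\ge K\log\big(1+\Sigma(A,B)\big)-C_0$ for a constant $C_0\ge 0$ depending only on $n$ and $D$, and a routine renormalization of the constants (using that $n$ is fixed and that $\log(1+\Sigma)\ge 0$) puts the additive term into the normalized shape $\tfrac{K}{n}\log 2$.

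\medskip

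\noindent The step I expect to require the most care is the near case: one must faithfully convert the quadratic quantities furnished by Lemma \ref{lemnear} into the linear expression $\eta(A,B)$ while simultaneously keeping track of both the multiplicative factor (the $2$, which is absorbed into $K$) and the additive logarithmic defect, and one must verify that truncating the index range to $n-2$ is compatible with the definition of $\eta$. Everything else reduces to elementary bookkeeping with (\ref{CS}) and (\ref{log}), and the far case is immediate.
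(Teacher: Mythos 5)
Your argument is correct and follows the paper's own proof of part (b) essentially verbatim: the far case is handled exactly as in the paper by chaining the two inequalities of (a)(i) through the logarithmic inequality (\ref{log}) after arranging $K_{12}<1$, and the near case by converting the sum of squares furnished by (a)(ii) into the square of the linear sum via (\ref{CS}). The only divergence is bookkeeping of the additive constant: the paper applies (\ref{CS}) to the $n$ terms of $Q$ alone, pulls out the factor $1/n$ with (\ref{log}), and then uses $1+Q^2\ge\tfrac{1}{2}(1+Q)^2$ to land exactly on $-\tfrac{K_{21}}{n}\log 2$, whereas your one-shot Cauchy--Schwarz on the $n+1$ numbers $1,a_1,\dots,a_n$ yields $-K_{21}\log(n+1)$, and the concluding ``routine renormalization'' to the precise shape $-\tfrac{K}{n}\log 2$ is not literally valid as stated (shrinking $K$ shrinks the allowed additive defect proportionally, so one cannot trade a larger constant $C_0$ for a smaller $K$); this is immaterial, however, since the statement and all of its downstream uses only require a lower bound of the form $K\log(1+\cdot)-l$ for some positive constant $l$.
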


\begin{proof} As noted earlier, it remains only to combine the two cases to get the final inequality as stated in (b). We may assume that the constant $K_{12}<1$ in the second inequality in (i). Then an application of the inequality (\ref{log}) namely, $\log(1+rx) \geq r \log(1+x)$ valid for $0<r<1$ here, gives
\[
K_{12} \log \big(1 + Q \big) \leq \log(1+K_{12}Q) \leq \log \big( 1+ d(B,A)/\delta_D(A) \big)
\]
where $Q$ is the quantity
\[
\vert \Phi^A (B)_n \vert / \delta_D(A)  \\
+ \; \sum\limits_{\alpha=2}^{n-1} \vert \Phi^A(B)_\alpha \vert / \big( \delta_D(A) \big)^{1/2} \;+ \; \vert \Phi^A(B)_1 \vert/ \tau(A, \delta_D(A)).
\]
We therefore have 
\[
d_D^{c}(A,B) \geq  K_{11}\log \big( 1+ d(B,A)/\delta_D(A) \big) \geq 
K_{11}K_{12} \log (1 + Q),
\]
when in the first case of (a) of the proposition. In particular
\begin{equation} \label{Case1}
d_D^{c}(A,B) \gtrsim \log (1+Q) - \frac{1}{n}\log 2
\end{equation}
To deal with case (ii), let us denote by $E$ the expression
\[
\vert \Phi^A(B)_n \vert^2/ \delta_D(A)^2 \\
+\; \sum\limits_{\alpha=2}^{n-1} \vert \Phi^A(B)_\alpha \vert^2/ \delta_D(A) \;+\; \vert \Phi^A(B)_1 \vert^2/ \tau(A, \delta_D(A))^2 .
\]
\noindent Next we use the inequality (\ref{CS}) namely,
\[
\vert z_1 + \ldots + z_N \vert^2 \leq N \big( \vert z_1 \vert^2 + \ldots \vert z_N \vert^2 \big)
\]
with $N=n$, to convert the inequality in (ii) of the proposition and express it in terms of $E$ to get
\begin{align} \label{Case2}
d_D^{Cara}(A,B) \geq K_{21} \log (1 + E) &\geq K_{21} \log  (1 + Q^2/n) \nonumber \\
& \geq \frac{K_{21}}{n} \log  (1 + Q^2) \nonumber \\
& \geq \frac{K_{21}}{n} \log \frac{1}{2} (1 + Q)^2 \nonumber \\
&=  \frac{2K_{21}}{n} \log (1 + Q) - \frac{K_{21}}{n}\log 2
\end{align}
Combining (\ref{Case1}) and (\ref{Case2}) gives the final inequality of the proposition, which also gives the lower bound stated in theorem \ref{thm1}, once we take into account the permutation that we had made at the start at (\ref{nrmlfrm0}) in the introductory section.
\end{proof}

\noindent Finally, we can also get from part (b) of the last Proposition that
\begin{align}
d_D^{c}(A,B) &\gtrsim \log  \Bigg(1 + \frac{\abs{ d(B,A)}}{\delta_D(A)} + \sum_{\al = 2}^{n-2} \frac{\abs{\Phi^A(B)_{\al}}}{\sqrt{\delta_D(A)}} + \frac{\abs{\Phi^A(B)_1}}{\tau \big((A, \delta_D(A)\big)}  \Bigg) - l. \label{dclowbd}
\end{align}
for some positive constant $l$.

\noindent To see this, first let us finish the easy case namely, when we are in the case (a)(i) of the last Proposition. Then, recall from our arguments for the inequality (\ref{eqn6.7}) that we had 
\[
\frac{d(B,A)}{\delta_D(A)} \geq c \frac{\vert \Phi^A(B)_j \vert }{\tau_j(A)}
\]
for all $1 \leq j \leq n$ and for some constant $c <1$.  Now from the first inequality in (a)(i) of Proposition \ref{lowbd} we have
\begin{align*}
d_D^c(A,B) &\geq \log \Bigg( 1 + \frac{1}{2} \frac{d(B,A)}{\delta_D(A)} + \frac{1}{2} \frac{d(B,A)}{\delta_D(A)}  \Bigg)\\
&\geq \log \Bigg( 1 + \frac{1}{2} \frac{d(B,A)}{\delta_D(A)} + \frac{c}{2} \sum\limits_{j=1}^{n-1} \frac{\vert \Phi^A(B)_j \vert }{\tau_j(A)} \Bigg) \\
&\geq \log \Bigg( 1 +  \frac{d(B,A)}{\delta_D(A)} +  \sum\limits_{j=1}^{n-1} \frac{\vert \Phi^A(B)_j \vert }{\tau_j(A)} \Bigg)
\end{align*}
as required.\\

\noindent The other case to deal with is when we are in the situation of (a)(ii) Proposition \ref{lowbd}. Again to finish off the easy sub-case first, suppose that the maximum on the right hand inequality in Lemma \ref{lem 3.2} happens to be $\vert \Phi^A(B)_n \vert$; we then have the following chain of inequalities giving the claim:
\begin{align}
d_D^{c}(A,B) &\geq K \log  \Bigg(1 + \frac{\abs{\Phi^A(B)_n}}{\delta_D(A)} + \sum_{\al = 2}^{n-2} \frac{\abs{\Phi^A(B)_{\al}}}{\sqrt{\delta_D(A)}} + \frac{\abs{\Phi^A(B)_1}}{\tau \big((A, \delta_D(A)\big)}  \Bigg) - K \log n/2 \nonumber   \\
&\geq K \log  \Bigg(1 + \frac{1}{2}\frac{\abs{ d'(B,A)}}{\delta_D(A)} + \sum_{\al = 2}^{n-2} \frac{\abs{\Phi^A(B)_{\al}}}{\sqrt{\delta_D(A)}} + \frac{\abs{\Phi^A(B)_1}}{\tau \big((A, \delta_D(A)\big)}  \Bigg) - K \log n/2 \nonumber \\
&\geq  \frac{K}{2} \log  \Bigg(1 + \frac{\abs{ d'(B,A)}}{\delta_D(A)} + \sum_{\al = 2}^{n-2} \frac{\abs{\Phi^A(B)_{\al}}}{\sqrt{\delta_D(A)}} + \frac{\abs{\Phi^A(B)_1}}{\tau \big((A, \delta_D(A)\big)}  \Bigg) - K \log n/2   \nonumber \\
&\geq  \frac{K}{2} \log  \Bigg(1 + \frac{\abs{ d(B,A)}}{\delta_D(A)} + \sum_{\al = 2}^{n-2} \frac{\abs{\Phi^A(B)_{\al}}}{\sqrt{\delta_D(A)}} + \frac{\abs{\Phi^A(B)_1}}{\tau \big((A, \delta_D(A)\big)}  \Bigg) - K \log n/2. \label{dclowbd1}
\end{align}
The remaining possibilities are when the maximum on the right inequality in the Lemma \ref{lem 3.2} is attained by $\vert \Phi^A(B)_k \vert ^2$ for some $2 \leq k \leq n-1$ or by $\max_{2 \leq l \leq 2m} \vert p_l(A) \vert \Phi^A(B)_k \vert^l$ with $k=1$. In this case, we first appeal to the inequality (\ref{ddeluppbd}) and put it in the from
\begin{equation} \label{I}
\frac{1}{2}\frac{\vert \Phi^A(B)\vert }{\tau_k(A)} \gtrsim  \frac{1}{2} \frac{d(B,A)}{\delta_D(A)}
\end{equation}
Next recall from (\ref{maxbd}) that for all $1 \leq j \leq n$ we have that 
\[
e^{d_D^{Cara}(A,B)} \gtrsim 1 + \Big \vert \frac{\Phi^A(B)_j}{\tau_j(A)} \Big \vert^2
\]
Summing over the first $n-1$ indices gives
\begin{align*}
e^{d_D^{Cara}(A,B)} &\gtrsim (n-1) + \sum \limits_{j=1}^{n-1} \Big \vert \frac{\Phi^A(B)_j}{\tau_j(A)} \Big \vert^2 \\
&\geq (n-1) + \frac{1}{2} \sum \limits_{j=1}^{n-1} \Big \vert \frac{\Phi^A(B)_j \vert}{\tau_j(A)} \Big \vert^2 + \frac{1}{2} \frac{\vert \Phi^A(B) \vert }{\tau_k(A)} 
\end{align*}
which can be re-written using (\ref{I}) as
\begin{align}
d_D^{Cara}(A,B) &\gtrsim \log \Bigg( 1 + \frac{1}{2(n-1)} \sum \limits_{j=1}^{n-1} \Big \vert \frac{\Phi^A(B)_j}{\tau_j(A)} \Big \vert^2 + \frac{1}{2(n-1)} \frac{1}{2} \frac{\vert \Phi^A(B)}{\tau_k(A)} \Bigg) \nonumber \\
&\gtrsim \log \Bigg( 1 + \frac{1}{2(n-1)} \sum \limits_{j=1}^{n-1} \Big \vert \frac{\Phi^A(B)_j}{\tau_j(A)} \Big \vert^2 + \frac{c}{(n-1)} 
 \frac{d(B,A)}{\delta_D(A)} \Bigg) \nonumber \\
&\gtrsim \log \Bigg( 1 + \frac{c}{(n-1)} \frac{d(B,A)}{\delta_D(A)} + \frac{1}{2(n-1)^2} \Big \vert \sum \limits_{j=1}^{n-1}   \frac{\Phi^A(B)_j}{\tau_j(A)} \Big \vert^2 \Bigg) \nonumber\\
&\gtrsim \log \Bigg( 1  +    \sum \limits_{j=1}^{n-1} \Big \vert \frac{\Phi^A(B)_j}{\tau_j(A)}\Big \vert^2    +   \frac{d(B,A)}{\delta_D(A)} \Bigg) \nonumber \\
&\gtrsim \log \Bigg( \frac{1}{n} \big( 1 + \sum \limits_{j=1}^{n-1} \Big \vert\frac{\Phi^A(B)_j}{\tau_j(A)} \Big \vert \big)^2 +  \frac{d(B,A)}{\delta_D(A)} \Bigg) \nonumber \\
&\geq \log \Bigg(  \frac{1}{n} \big( 1 + \sum \limits_{j=1}^{n-1} \Big \vert\frac{\Phi^A(B)_j}{\tau_j(A)} \Big \vert \big) + \frac{1}{n}\frac{d(B,A)}{\delta_D(A)} \Bigg) \nonumber \\
&= \log \Bigg(  1 + \sum \limits_{j=1}^{n-1} \Big \vert\frac{\Phi^A(B)_j}{\tau_j(A)} \Big \vert  + \frac{d(B,A)}{\delta_D(A)} \Bigg) - \log n \label{dclowbd2}.
\end{align}
finishing the proof of the inequality claimed at (\ref{dclowbd}).

\subsection{Upper bound on the invariant distances} \label{sixth}
In this section we shall establish the upper bound on the Kobayashi distance $d_D^k(A,B)$ between two points $A,B \in D \cap U$. Upper bounds for the {\it infinitesimal} Kobayashi metric were also obtained by Cho in \cite{Cho1} for Levi corank one domains following Catlin's methods \cite{Ca} where the upper bound was established in dimension $n=2$. Berteloot showed an elementary way of obtaining the same, which was again followed in \cite{TT} to obtain the upper bound of \cite{Cho1} for Levi corank one domains; but as in \cite{Ber} this time (in \cite{TT}) the estimate is expressed in terms of the canonical automorphisms associated with a Levi corank one domain $\Omega$, denoted therein by $M_\Omega(\zeta, X)$ and defined as
\begin{equation} \label{Mmetric}
M_{\Omega}(\zeta,X) = \sum\limits_{k=1}^{n} \vert \big(D\Phi^\zeta(\zeta) X \big)_k \vert / \tau_k(\zeta, \epsilon(\zeta)) 
=\big \vert D( B_\zeta  \circ \Phi^\zeta )(\zeta)(X) \big\vert_{l^1}
\end{equation}
with $B_\zeta = B_\zeta^{\epsilon(\zeta)}$ where $\epsilon(\zeta)>0$ is such that $\tilde{\zeta} = \zeta+(0, \ldots, 0,\epsilon(\zeta))$ lies on $\partial \Omega$ and
\[
B_\zeta^\delta (z_1, \ldots ,z_n) = \big( (\tau_1)^{-1} z_1, \ldots, (\tau_n)^{-1} z_n \big)
\]
where $\tau_1 = \tau(\zeta,\delta)$, $\tau_j = \delta^{1/2}$ for $2 \leq j \leq n-1$ and $\tau_n= \delta$. Let us also note here that $\Phi^{\tilde{\zeta}}(\zeta) = \big( {}'0, -\epsilon(\zeta)/b_n(\zeta) \big)$ where $b_n(\zeta) = \big( \partial r/\partial z_n (\zeta) \big)^{-1}$ so that $b_n(\zeta) \to 1$ and $B_{\tilde{\zeta}} \circ \Phi^{\tilde{\zeta}} \to ({}'0,-1)$ as $\zeta \to 0$, when we are in the normalization (\ref{nrmlfrm}) mentioned in the introduction. It was shown in \cite{TT} that near the boundary of $\Omega$, we have $K_{\Omega}(\zeta,X) \approx M_\Omega(\zeta, X)$. \\

\noindent We now begin the proof of the upper bound in theorem \ref{thm1}. We first get the following from \cite{Her} with almost no changes. 

\begin{lem} \label{lem7.1}
Suppose that $\tilde{A}, \tilde{B}$ are points in $D \cap U$ such that $\tilde{A}^* = \tilde{B}^*$, then 
\[
d_D^k(\tilde{A}, \tilde{B}) \leq \frac{1}{2} \log \Big( 1 + \tilde{C}\frac{\vert \delta_D(\tilde{A}) - \delta_D(\tilde{B})\vert}{{\min} \{ \delta_D(\tilde{A}), \delta_D(\tilde{B}) \} } \Big)
\]
for some positive constant $\tilde{C}$.
\end{lem}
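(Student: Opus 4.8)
\emph{Proof proposal.} The plan is to bypass the normal form altogether and use only the interior ball condition, which $D$ enjoys because it is smoothly — in particular $C^2$ — bounded. Since $\tilde A^\ast=\tilde B^\ast=\zeta$, both points lie on the inner normal segment to $\pa D$ at $\zeta$; write $\tilde A=\zeta-\delta_D(\tilde A)\,\mathbf n(\zeta)$ and $\tilde B=\zeta-\delta_D(\tilde B)\,\mathbf n(\zeta)$, where $\mathbf n(\zeta)$ is the outward unit normal. First I would invoke the uniform interior ball property: as $\pa D$ is compact and $C^2$, there is $r_0>0$ such that for every $\zeta\in\pa D$ the ball $B_\zeta:=B\big(\zeta-r_0\,\mathbf n(\zeta),r_0\big)$ lies in $D$ and is tangent to $\pa D$ at $\zeta$. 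Shrinking $U$ once more, we may assume $\delta_D<r_0$ on $U$.

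The point is that $\tilde A$ and $\tilde B$ now lie on the diameter of $B_\zeta$ joining $\zeta$ to the centre of $B_\zeta$: indeed $\vert\tilde A-(\zeta-r_0\mathbf n(\zeta))\vert=r_0-\delta_D(\tilde A)<r_0$, and likewise for $\tilde B$. By the monotonicity of the Kobayashi distance under the inclusion $B_\zeta\hookrightarrow D$,
\[
d_D^k(\tilde A,\tilde B)\ \le\ d_{B_\zeta}^k(\tilde A,\tilde B).
\]
Next, transport $B_\zeta$ onto the unit ball $\mbb B^n$ by the affine map $z\mapsto\big(z-\zeta+r_0\mathbf n(\zeta)\big)/r_0$; then $\tilde A,\tilde B$ go to $s_A\,\mathbf n(\zeta)$ and $s_B\,\mathbf n(\zeta)$ with $s_A=1-\delta_D(\tilde A)/r_0$ and $s_B=1-\delta_D(\tilde B)/r_0$, both in $(0,1)$, lying on the totally geodesic disc $\mbb B^n\cap\mbb C\,\mathbf n(\zeta)$. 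Hence $d_{B_\zeta}^k(\tilde A,\tilde B)$ equals the Poincar\'e distance on $\Delta$ between the real numbers $s_A$ and $s_B$.

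It then remains to feed in the formula $d_\Delta^p(s,t)=\tfrac12\log\frac{(1+s)(1-t)}{(1-s)(1+t)}$, valid for $1>s\ge t>-1$. Assuming (without loss of generality) $\delta_D(\tilde A)\le\delta_D(\tilde B)$, so $s_A\ge s_B$, substitution gives
\[
d_D^k(\tilde A,\tilde B)\ \le\ \frac12\log\frac{\big(2r_0-\delta_D(\tilde A)\big)\,\delta_D(\tilde B)}{\delta_D(\tilde A)\,\big(2r_0-\delta_D(\tilde B)\big)}\ =\ \frac12\log\Big(1+\frac{\delta_D(\tilde B)-\delta_D(\tilde A)}{\delta_D(\tilde A)}\cdot\frac{2r_0}{2r_0-\delta_D(\tilde B)}\Big),
\]
and since $\delta_D(\tilde B)<r_0$ the last factor is at most $2$. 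This is the asserted inequality, with $\tilde C=2$.

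There is no real obstacle here; the two points that deserve attention are that (i) the uniform radius $r_0$ must be extracted from the compactness and $C^2$ regularity of $\pa D$ — no use is made of pseudoconvexity or of the Levi corank one structure — and (ii) one cannot afford any multiplicative constant in front of the logarithm. A cruder estimate, such as integrating Cho's infinitesimal upper bound $K_D(z,X)\lesssim\vert X_n\vert/\delta_D(z)$ along the normal segment, would only produce $\tfrac{C}{2}\log\big(\delta_D(\tilde B)/\delta_D(\tilde A)\big)$, which fails to be of the required form once the ratio $\delta_D(\tilde B)/\delta_D(\tilde A)$ is large; it is precisely the use of the tangent ball, whose section by the complex normal line is a genuine half-plane, that yields the sharp coefficient $\tfrac12$. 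As the text indicates, this is in essence Herbort's argument from \cite{Her}.
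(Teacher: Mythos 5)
Your argument is correct, and it is in substance exactly the proof the paper delegates to \cite{Her}: place an interior tangent ball (equivalently, an analytic disc in the complex normal line) at $\zeta=\tilde A^*=\tilde B^*$, use the distance-decreasing property of the inclusion, and carry out the Poincar\'e-distance computation, which is what produces the sharp coefficient $\tfrac12$ and the constant $\tilde C=2$. Nothing further is needed.
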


\noindent We now proceed to use this to get an upper bound on the Kobayashi distance between the points $A$ and $B$, involving the canonical automorphisms and distances of $A,B$ to the boundary and the distance between them expressed in the pseudo-distance $d'(B,A)$ as expressed in Theorem \ref{thm1}.
So suppose that $A,B \in  D \cap U$; if $\vert A - B \vert \geq R_0$ then the claim follows from proposition 2.5 of \cite{FR}. So we have only to deal with the case when $A,B \in  D \cap U$ with $\vert A - B \vert <R_0$ in which case $d(A,B) \approx d'(A,B)$. Now we shall split-up again into two cases depending upon whether $A,B$ are near or far when their distance is measured by the pseudo-distance $d'$; to quantify the definition of nearness here we first choose constants $C_1>0, L>0, \eta>0$ with the following properties:\\

\noindent (a):  $d(x,y) \leq C_1 \big( d(x,z_1) + d(z_1,z_2) + d(z_2,y) \big)$ holds for all $x,y,z_1,z_2 \in U$ and \\

\noindent (b):  $d(x,y) \geq d'(x,y)/L$ whenever $d'(x,y)$ is finite.\\

\noindent If we choose an appropriate neighbourhood $U_0 \subset U$ of $\partial D$ we can achieve that, if $A,B \in U_0$ and $\eta \ll 1$, then the points $A - \eta d'(A,B) \nu_{A^*}$ and  $B - \eta d'(B,A) \nu_{B^*}$, still lie in $U$. \\

\noindent Define $M= 3LC_1/\eta$ and consider the following two cases, as mentioned above: \\

\noindent (i): The points $A,B$ satisfy $d'(A,B) \leq M \;\text{max} \{ \delta_D(A), \delta_D(B) \}$. In this case we begin with the upper bound on the infinitesimal Kobayashi metric obtained in \cite{TT}, which also gives the upper bound in the form expressed in \cite{Cho1}, namely:
\begin{equation} \label{eqn7.2}
K_D(z,X) \leq C \Big( \frac{\vert \langle L_1(z),X \rangle \vert }{\tau\big(z,\delta_D(z) \big)} + \sum\limits_{\alpha=2}^{n-1} \frac{\vert \langle L_j(z),X \rangle\vert}{\sqrt{\delta_D(z)}} + \frac{\vert X_n \vert }{\delta_D(z)}  \Big).
\end{equation}
Let us assume $\delta_D(A) \geq \delta_D(B)$. Let $B'$ be the point in $D$ such that $\delta_D(B') = \delta_D(A)$ and $(B')^*=B^*$. We shall connect $A$ and $B$ by a certain curve $c: [0,1] \to D$ which we shall specify in a moment before which we would like to upper bound the Kobayashi distance between $A$ and $B$ in terms of the Kobayashi length of $c$ and the Kobayashi distance between $B$ and $B'$ which lie on the same normal to $\partial D$ at $B^*$, allowing the use of Lemma \ref{lem7.1} to estimate this distance, so that we have
\begin{align}
d_D^k(A,B) &\leq d_D^k(A,B') + d_D^k(B',B) \nonumber \\
&\leq L_D^k(c) + d_D^k(B,B') \nonumber \\
&\leq L_D^{Kob}(c) + C' \log \Big( 1 + \tilde{C}\frac{\vert \delta_D(\tilde{B'}) - \delta_D(\tilde{B})\vert}{\text{min} \{ \delta_D(\tilde{B'}), \delta_D(\tilde{B}) \} } \Big) \nonumber \\
&= L_D^{Kob}(c) + C' \log \Big( 1 + \tilde{C} \frac{\vert \delta_D(B') - \delta_D(B) \vert }{\delta_D(B)} \Big) \nonumber \\
&\leq L_D^{Kob}(c) + C' \log \Big( 1 + \tilde{C} \frac{ d'(B,A)  }{\delta_D(B)} \Big).  \label{case1s} 
\end{align}
The last inequality follows  since $A\in Q_{2d'(A,B)}(B)$, so that  we have $\vert \delta_D(A) - \delta _D(B) \vert \leq d'(A,B)$.\\

\noindent We shall now define the path $c$, as follows. First, let $\gamma(t) = (\Phi^A)^{-1}\big( t \Phi^A(B) \big)$ denote the pull-back of the shortest path joining $\Phi^A(A)=0$ and the point $\Phi^A(B)$ in the pseudo-distance associated with the transformed domain $D_A=\Phi^A(D)$, which happens to be the straight line joining the origin and the point $\Phi^A(B)$. Then $\gamma(0)=A$, $\gamma(1)=B$ and since we are in case (i),
\[
\gamma \subset Q_{2d'(B,A)}(A) \subset Q_{2M \delta_D(A)} (A) \subset U.
\]
Let $\gamma^*(t) = \big( \gamma(t) \big)^*$ and $c(t) = \gamma^*(t) - \delta_D(A)\nu_{\gamma^*(t)}$. Then $\delta_D(c(t))=\delta_D(A)$ for all $t$ and $B'=c(1)$ satisfies
\[
B'= \gamma^*(1) - \delta_D(A) \nu_{\gamma^*(1)} = B^* - \delta_D(A) \nu_{B^*}
\]
proving $(B')^*=B^*$. \\

\noindent Next, to extend the chain of inequalities in (\ref{case1s}) and reach the required upper bound, we need to estimate the length of $c$ in the Kobayashi metric. Write $N(x) = \nu \big( \pi_{\partial D}(x) \big)$ and $\gamma^*(t) = \pi_{\partial D}(\gamma(t))$. Then,
\[
\dot{\gamma}^*(t) = D\big(\pi_{\partial D} \big) (\gamma(t)) \cdot \dot{\gamma}(t).
\]
Therefore,
\begin{align*}
\dot{c}(t) &=  D\big(\pi_{\partial D} \big)_{\vert_{\gamma(t)}} (\dot{\gamma}(t)) - \delta_D(A) \cdot \frac{d}{dt} \nu \big( \pi_{\partial D}(\gamma(t)) \big)\\
&= \Big( D(\pi _{\partial D}) (\gamma(t)) - \delta_D(A) D(N)(\gamma(t)) \Big) \dot{\gamma}(t).
\end{align*}
This implies 
\begin{equation}\label{cgamcompar}
\vert \dot{c}(t)_j \vert \lesssim \vert \dot{\gamma}(t)_j \vert 
\end{equation}
for all $1 \leq j \leq n$. Now in order to get an upper bound on $L_D^k(c)$, we shall make use of the estimate on the infinitesimal Kobayashi metric in the form expressed in \cite{TT} to upper bound $K_D\big(c(t), \dot{c}(t) \big)$, the Kobayashi-length of the tangent vector $\dot{c}(t)$ and integrate it. We intend to use (\ref{cgamcompar}) for this purpose and this entails estimating $K_D\big( \gamma (t), \dot{\gamma}(t) \big)$ and thereby subsequently, to getting a control on the size of the tangent to the curve $\gamma$ along each of the co-ordinate directions namely, $\vert \dot{\gamma}(t)_j \vert$ for each $1 \leq j \leq n$. Indeed, observe first that
\begin{equation}
K_D\big(c(t), \dot{c}(t) \big) \lesssim \Bigg( \frac{\vert \langle L_1\big( c(t) \big), \dot{c}(t) \rangle \vert }{\tau\big(c(t), \delta(c(t)) \big)} 
+ \sum\limits_{\alpha=2}^{n-1}  \frac{\vert\langle L_\alpha\big( c(t) \big), \dot{c}(t) \rangle \vert}{\sqrt{\delta(c(t))}} + \frac{\vert \dot{c}(t)_n \vert }{\delta(c(t))} \Bigg).
\end{equation}
Now notice for example that
\begin{align*}
\Big \vert \langle L_1\big( c(t) \big), \dot{c}(t) \rangle \Big\vert &=  \Big\vert \dot{c}(t)_1 - \big(\frac{\partial r/\partial z_1}{\partial r/\partial z_n}\big)\big( c(t)\big) \; \dot{c}(t)_n \Big\vert \\
& \lesssim \vert \dot{\gamma}(t)_1 \vert + \vert \dot{\gamma}(t)_n \vert
\end{align*}
and subsequently,
\begin{align*}
\frac{\vert \langle L_1\big( c(t) \big), \dot{c}(t) \rangle \vert }{\tau\big(c(t), \delta(c(t)) \big)} &\lesssim \frac{\vert \dot{\gamma}(t)_1 \vert + \vert \dot{\gamma}(t)_n \vert}{\tau\big(c(t), \delta(c(t)) \big)} \\
& \lesssim \frac{\vert \dot{\gamma}(t)_1 \vert }{\tau\big(A, \delta(A) \big)} + \frac{ \vert \dot{\gamma}(t)_n \vert}{\tau\big(A, \delta(A) \big)} 
\end{align*}
since $\delta \big( c(t) \big) \equiv \delta (A)$ and $\tau \big( c(t), \delta(A) \big) \approx \tau\big(A, \delta(A) \big)$ which follows from the uniform comparability of these distinguished radii at different points within a distorted polydisc and the fact that the path $c(t)$ remains at a fixed small distance from the boundary of $D$, small enough for the polydiscs to be defined and validate the application of the comparability of the radii by covering the path by finitely many such polydiscs. Next recalling that $1/ \tau\big(A, \delta(A)\big) \lesssim 1/\delta(A)$, we obtain
\begin{equation}
\frac{\vert \langle L_1\big( c(t) \big), \dot{c}(t) \rangle \vert }{\tau\big(c(t), \delta(c(t)) \big)} \lesssim \frac{\vert \dot{\gamma}(t)_1 \vert  }{\tau\big(A, \delta(A) \big)} + \frac{\vert  \dot{\gamma}(t)_n \vert}{\delta(A)}
\end{equation}
Similar calculations give
\begin{equation}
\frac{\vert \langle L_\alpha\big( c(t) \big), \dot{c}(t) \rangle \vert }{\sqrt{\delta(c(t))}} \lesssim \frac{\vert \dot{\gamma}(t)_\alpha \vert  }{\sqrt{\delta(A)}} + \frac{\vert \dot{\gamma}(t)_n \vert}{\delta(A)}
\end{equation}
so that altogether we get
\begin{equation} \label{Kcupbd}
K_D\big(c(t), \dot{c}(t) \big) \lesssim \Big( \frac{\vert  \dot{\gamma}_1 (t) \vert }{\tau\big(A, \delta(A) \big)} 
+ \sum\limits_{\alpha=2}^{n-1} \frac{\vert  \dot{\gamma}_\alpha (t) \vert}{\sqrt{\delta(A)}} + \frac{\vert \dot{\gamma}(t)_n \vert }{\delta(A)} \Big)
\end{equation}
which subsequently entails the estimation of the components $\dot{\gamma}(t)$ as mentioned above, towards which we now proceed. First let $\Psi_A = (\Phi^A)^{-1}$ and $\zeta = \Phi^A(B)$ and note that 
\[
\dot{\gamma}(t) = D \Psi_{A_{\vert_{t \zeta}}} (\zeta).
\]
The expression for the inverse map $\Psi_A$ and its derivative have been put down in the appendix, section \ref{app}, using which 
we may write down $\dot{\gamma}(t)$ more explicitly in the form
\[
\dot{\gamma}(t) = \Big( \zeta_1, H_A(\tilde{\zeta}) + H_A\big( \zeta_1 \frac{\partial Q_2}{\partial z_1}(t \zeta_1) \big), b_n^A \big( \zeta_n + \sum\limits_{j=1}^{n-1}\zeta_j \frac{\partial \tilde{Q}_1}{\partial z_j }(t '\zeta) \big) \Big)
\]
where $H_A= G_A^{-1}$ and 
\[
\tilde{Q}_1({}'z) = (b_n^A)^{-1} \Big(\langle \tilde{b}^A, H_A \big( \tilde{z} + Q_2(z_1)\big)\rangle + b_1^A z_1 \Big) + Q_1 \Big(z_1, H_A  \big( \tilde{z} + Q_2(z_1)\big) \Big).
\]
where $Q_1$ and $Q_2$ are the same polynomials that occur in the expression for $\Phi^\zeta$ as in (\ref{E45}). We expand $\tilde{Q}_1$ a bit more explicitly, in the coordinates $z_1, \ldots,z_{n-1}$, to put it in the form
\begin{equation} \label{Qhatfrm}
\tilde{Q}_1({}'z) =\sum\limits_{k=2}^{2m} d_k z_1^k + \sum\limits_{\alpha=2}^{n-1}\sum\limits_{k=1}^{m}d_{\alpha,k} z_1^k \big(z_\alpha + P_2^\alpha(z_1) \big)
+ \sum
\limits_{\alpha=2}^{n-1}c_\alpha\big( z_\alpha + P_2^\alpha(z_1) \big)^2
\end{equation}
where we recall that 
\[
P_2^\alpha(z_1)= \sum\limits_{l=1}^{m}e^\alpha_l z_1^l.
\]
Now (\ref{Kcupbd}) extends to
\begin{align} 
K_D\big( \gamma (t), \dot{\gamma}(t) \big) &\lesssim \frac{\vert \dot{\gamma}(t)_n \vert }{\delta(\gamma(t))} + \sum\limits_{\alpha=2}^{n-1} \frac{\vert \dot{\gamma}(t)_\alpha \vert }{\sqrt{\delta(\gamma(t))}} + \frac{\vert \dot{\gamma}(t)_1 \vert}{ \tau \big( \delta(\gamma(t)) \big) }  \nonumber\\
&\lesssim \frac{  \vert b_n^A \vert \Big\vert \zeta_n + \sum\limits_{j=1}^{n-1}\zeta_j \frac{\partial \tilde{Q}_1}{\partial z_j}(t {}' \zeta) \Big\vert  }{ \delta(A)} + \Bigg \vert \frac{ H_A(\tilde{\zeta}) + H_A\big( \zeta_1 \big( \frac{\partial Q_2}{\partial z_1}(t \zeta_1) \big)\big)} { \sqrt{\delta(A)} }\Bigg \vert + \frac{\vert \zeta_1 \vert}{\tau\big( A, \delta(A) \big)}. \label{Chest}
\end{align}
Now we estimate the numerator in the first summand to which end, we first estimate
\[
\Big\vert \sum\limits_{j=1}^{n-1}\zeta_j \frac{\partial \tilde{Q}_1}{\partial z_j}(t \; {}'\zeta) \Big\vert.
\]
The quantity within the modulus is ofcourse a polynomial of the same form as $Q_1$ in $\zeta$ (i.e., upto multiplication by some factors which are either powers of $t$ (also recall here that $\vert t \vert \leq 1$) or some integers), since it is obtained by applying the differential operator
$ z_1 \partial/\partial z_1 + \ldots  + z_{n-1} \partial/\partial z_{n-1}$ which is of weight zero, to the polynomial $\tilde{Q}_1 ({}'z)$ and evaluated at $t \; '\zeta$. To put down one instance of the calculation explicitly, we first use the form of $\tilde{Q}_1$ as in (\ref{Qhatfrm}) and write down
\begin{align*}
\sum\limits_{j=1}^{n-1} \zeta_j \frac{\partial \tilde{Q}_1}{\partial z_j}(t {}' \zeta) & = \sum\limits_{k=2}^{2m} k d_k t^{k-1} \zeta_1^k + \sum\limits_{\alpha=2}^{n-1}\sum\limits_{k=1}^{m}\sum\limits_{l=1}^{m}(l+k)d_{\alpha,k}e^{\alpha}_l t^{l+k-1}\zeta_1^{l+k}\\
& \;\; + \sum\limits_{\alpha=2}^{n-1}2 c_\alpha^2 t\zeta_\alpha^2 + \sum\limits_{\alpha=2}^{n-1} \sum\limits_{l=1}^{m} c_\alpha e^{\alpha}_l l t^{l-1} \zeta_1^l \zeta_\alpha +\sum\limits_{\alpha=2}^{n-1} \sum\limits_{l=1}^{m} c_\alpha e^{\alpha}_l  t \zeta_1^l  \\
&\;\; + 2 \sum\limits_{\alpha=2}^{n-1} \big( \sum\limits_{l=1}^{m} e^{\alpha}_l t^l \zeta_1^l \big) \big( \sum\limits_{l=1}^{m} e^{\alpha}_l t^{l-1} l \zeta_1^l \big)
\end{align*}
Now using the estimates on the various coefficients from Lemma (3.4) of \cite{TT} -- for instance again, the coefficient $(l+k)d_{\alpha,k} e_{\alpha ,k} t^{l+k-1}$ occurring in the second summand in the above, which we shall denote by $\mathbf{c}_{\alpha,l,k,t}$, can estimated as
\begin{align*}
\mathbf{c}_{\alpha,l,k,t}  &\lesssim (l+k) \delta(A) \tau_1\big( A,\delta(A) \big)^{-k} \tau_\alpha \big( A,\delta(A) \big)^{-1} \; \delta(A)  \tau_1\big( A,\delta(A) \big)^{-l} \tau_\alpha \big( A,\delta(A) \big)^{-1} t^{l+k-1} \\
& \lesssim (\delta(A))^2 \tau_1\big( A,\delta(A) \big)^{-(k+l)} \tau_\alpha \big( A,\delta(A) \big)^{-2} \\
&\lesssim \delta(A) \tau_1\big( A,\delta(A) \big)^{-(k+l)}
\end{align*}
since $\tau_\alpha \big( A,\delta(A) \big) = \sqrt{\delta(A)}$ and $t<1$. Consequently, the monomial $\mathbf{c}_{\alpha,l,k,t} \zeta_1^{l+k} \lesssim \delta(A)$. Subsequent similar computations result finally in 
\[
 \Big\vert \sum\limits_{j=1}^{n-1}\zeta_j \frac{\partial \hat{Q}_1}{\partial z_j}(t {}' \zeta) \Big\vert \lesssim \delta(A)
\]
which means that the first term in (\ref{Chest}) above is bounded above by
\[
\frac{\vert \Phi^A(B)_n \vert}{\delta_D(A)} + \text{some constant}
\]
Next, to say a few words about the second term at (\ref{Chest}), we first observe that it is bounded above by
\[
\big \vert H_A \big \vert \Big( \frac{\vert \tilde{\zeta}\vert }{ \sqrt{\delta(A)} } + \frac{ \big \vert  \zeta_1 \big( \frac{\partial Q_2}{\partial z_1}(t \zeta_1) \big)\big \vert } { \sqrt{\delta(A)} } \Big)
\]
and then note that 
\begin{align*}
\Big \vert \zeta_1 \frac{\partial Q_2  }{\partial z_1}(t \zeta_1) \Big \vert  &= \Big \vert  \sum\limits_{k=1}^{m} k b^{\alpha}_k t^{k-1} \zeta_1^k \Big\vert  \\
&\leq \big \vert  \sum\limits_{k=1}^{m} k b^{\alpha}_k \zeta_1^k \big\vert \text{ since } t<1\\
& \lesssim \sum\limits_{k=1}^{m} \delta(A) \tau_1\big(A,\delta(A)\big)^{-k} \tau_\alpha^{-1} \tau_1\big(A,\delta(A) \big)^{k}, \text{ (by Lemma  3.4 of \cite{TT})} \\
&\lesssim \sqrt{\delta(A)} \; .
\end{align*}

\noindent Thus the upshot is that the second summand in (\ref{Chest}) is bounded above up to a constant by
\[
\sum\limits_{\alpha=2}^{n-1} \frac{\vert \Phi^A(B)_\alpha \vert}{\sqrt{\delta(A)}} + \text{ some constant}
\]
and in all (\ref{Chest}) transforms to the more concrete upper bound
\begin{align} 
K_D\big( \gamma (t), \dot{\gamma}(t) \big) &\lesssim \frac{d'(A,B)}{\delta(A)} + \frac{\vert \Phi^A(B)_1 \vert }{\tau \big(A,\delta(A) \big)}
+ \sum\limits_{\alpha=2}^{n-1} \frac{\vert \Phi^A(B)_\alpha \vert}{\sqrt{\delta(A)}} + L 
\end{align}
for some positive constant $L$. Finally (\ref{case1s}) becomes
\begin{align} 
d_D^k(A,B) &\lesssim \frac{\vert \Phi^A(B)_n \vert}{\delta(A)} + \frac{\vert \Phi^A(B)_1 \vert }{\tau \big(A,\delta(A) \big)}
                         + \sum\limits_{\alpha=2}^{n-1} \frac{\vert \Phi^A(B)_\alpha \vert}{\sqrt{\delta(A)}} + L_* \nonumber \\
&\lesssim \log \Bigg( 1 + \frac{\vert \Phi^A(B)_n \vert}{\delta(A)} + \frac{\vert \Phi^A(B)_1 \vert }{\tau \big(A,\delta(A) \big)}
+ \sum\limits_{\alpha=2}^{n-1} \frac{\vert \Phi^A(B)_\alpha \vert}{\sqrt{\delta(A)}} \Bigg) + L^* \label{uppbd}
\end{align}
since the quantity 
\[
\frac{\vert \Phi^A(B)_n \vert}{\delta(A)} + \frac{\vert \Phi^A(B)_1 \vert }{\tau \big(A,\delta(A) \big)}
+ \sum\limits_{\alpha=2}^{n-1} \frac{\vert \Phi^A(B)_\alpha \vert}{\sqrt{\delta(A)}}
\]
is uniformly bounded above as we are in case (i) i.e., $B \in Q_{2 M \delta(A)}(A)$ and that when written out explicitly implies the just-asserted uniform-boundedness; we then get (\ref{uppbd}) by simply using the fact that the function $(\log(1+x))/x$ is bounded below by a positive constant when $x$ varies over a compact interval of positive reals. Since $\vert \Phi^A(B)_n \vert \leq  d'(B,A)$ and $d'(B,A) \lesssim d(B,A)$ by part (iii) of Lemma \ref{dprop} we may also write the inequality (\ref{uppbd}) as 
\begin{equation} \label{kobdistuppbd}
d_D^k(A,B) \lesssim \log \Bigg( 1 + \frac{d(B,A)}{\delta(A)} + \frac{\vert \Phi^A(B)_1 \vert }{\tau \big(A,\delta(A) \big)} + \sum\limits_{\alpha=2}^{n-1} \frac{\vert \Phi^A(B)_\alpha \vert}{\sqrt{\delta(A)}} \Bigg) + L^*.
\end{equation}

\noindent The case (ii) can be reduced essentially to the first, as in section $7$ of \cite{Her} following the line of arguments therein and the above estimates (\ref{uppbd}) and (\ref{kobdistuppbd}) may be obtained in that case as well.

%%%%%%%%%%%%%%%%%%%%%%%%%%%%%%%%%%%%%%%%%%%%%%%%%%%%%%%%%%%%%%%%%%%%%%%%

\section{Fridman's invariant function on Levi corank one domains} \label{FriL}

\noindent The purpose of this section is to prove Theorem \ref{thm2}. But before that, we gather some
interesting properties of Fridman's invariant function $ h_D( \cdot, \mathbb{B}^n ) $ that were proved in 
\cite{Fr}.

\begin{prop}\label{E43}
Let $ \Omega $ be a Kobayashi hyperbolic manifold of complex dimension $ n $. Then

\begin{itemize}

\item if there is a $ p^0 \in \Omega $ such that $ h_{\Omega} ( p^0, \mathbb{B}^n ) = 0 $,
then $ h_{\Omega} ( p^0, \mathbb{B}^n ) \equiv 0 $ and $ \Omega $ is biholomorphically 
equivalent to $ \mathbb{B}^n $.

\item $ p \mapsto h_{\Omega} (p, \mathbb{B}^n )  $ is continuous on $ \Omega $.

\end{itemize}

\end{prop}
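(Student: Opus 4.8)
The plan is to treat the continuity statement first, since it is softer, and then the rigidity statement, whose surjectivity step is the real work.

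\smallskip

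\emph{Continuity.} For $p \in \Omega$ write $R(p) = \sup\{\, r > 0 : \text{there is a biholomorphic imbedding } f : \mathbb{B}^n \to \Omega \text{ with } B_\Omega(p,r) \subset f(\mathbb{B}^n)\,\} \in (0,+\infty]$, so that $h_\Omega(p,\mathbb{B}^n) = 1/R(p)$ under the convention $1/(+\infty) = 0$. The triangle inequality for $d^k_\Omega$ gives $B_\Omega(q, s) \subset B_\Omega(p, s + d^k_\Omega(p,q))$ for every $s>0$; hence if $B_\Omega(p,r) \subset f(\mathbb{B}^n)$ and $d^k_\Omega(p,q)<r$, then $B_\Omega(q, r - d^k_\Omega(p,q)) \subset f(\mathbb{B}^n)$, i.e.\ $r - d^k_\Omega(p,q)$ is admissible at $q$. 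Letting $r \nearrow R(p)$ yields $R(q) \ge R(p) - d^k_\Omega(p,q)$, and by symmetry $|R(p) - R(q)| \le d^k_\Omega(p,q)$ (read correctly when one side is infinite). Thus $R$ is $1$-Lipschitz for the Kobayashi distance; since $\Omega$ is hyperbolic, $d^k_\Omega$ induces the manifold topology, so $R$, and hence $h_\Omega(\cdot,\mathbb{B}^n)=1/R$, is continuous on $\Omega$.

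\smallskip

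\emph{Rigidity: construction of the limit map.} Suppose $h_\Omega(p^0, \mathbb{B}^n) = 0$, i.e.\ $R(p^0) = +\infty$. First I would pick $r_j \nearrow \infty$ and biholomorphic imbeddings $f_j : \mathbb{B}^n \to \Omega$ with $B_\Omega(p^0, r_j) \subset f_j(\mathbb{B}^n) =: \Omega_j$. Since $p^0 \in \Omega_j$, after post-composing $f_j$ with an automorphism of $\mathbb{B}^n$ we may assume $f_j(0) = p^0$; put $g_j := f_j^{-1} : \Omega_j \to \mathbb{B}^n$, so $g_j(p^0) = 0$. As $\Omega$ is connected every point lies within finite Kobayashi distance of $p^0$, so the balls $B_\Omega(p^0, r_j)$ exhaust $\Omega$; since $|g_j|<1$ on $\Omega_j$, Montel's theorem together with a diagonal argument over a compact exhaustion of $\Omega$ produces a subsequence along which $g_j \to g$ locally uniformly, with $g : \Omega \to \overline{\mathbb{B}^n}$ holomorphic and $g(p^0) = 0$. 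The maximum principle applied to $\langle g(\cdot), \omega \rangle$ with $\omega \in \partial\mathbb{B}^n$ rules out boundary values, so $g(\Omega) \subset \mathbb{B}^n$.

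\smallskip

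\emph{Rigidity: $g$ is a biholomorphism.} Next I would show $g$ is distance preserving: for fixed $p,q \in \Omega$ and $j$ large, $p,q \in \Omega_j$ and, since $g_j$ is biholomorphic and $\Omega_j \subset \Omega$,
\[
d^k_{\mathbb{B}^n}\big(g_j(p), g_j(q)\big) = d^k_{\Omega_j}(p,q) \ge d^k_\Omega(p,q);
\]
passing to the limit ($g_j(p), g_j(q)$ stay in $\mathbb{B}^n$, where $d^k_{\mathbb{B}^n}$ is continuous) gives $d^k_{\mathbb{B}^n}(g(p), g(q)) \ge d^k_\Omega(p,q)$, while the reverse inequality is the distance-decreasing property of $g$. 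Hence $d^k_{\mathbb{B}^n}(g(p),g(q)) = d^k_\Omega(p,q)$, so $g$ is injective (hyperbolicity), and being an injective holomorphic map between equidimensional manifolds it is biholomorphic onto the open set $V := g(\Omega) \subseteq \mathbb{B}^n$. The remaining — and main — point is that $V = \mathbb{B}^n$. Here I would prove that the imbeddings $\{f_j\}$ again form a normal family: for each $w \in \mathbb{B}^n$,
\[
d^k_\Omega\big(p^0, f_j(w)\big) \le d^k_{\Omega_j}\big(p^0, f_j(w)\big) = d^k_{\mathbb{B}^n}(0, w),
\]
so the values $f_j(w)$ lie in a single closed Kobayashi ball about $p^0$, independent of $j$. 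Extracting a locally uniformly convergent subsequence $f_j \to f : \mathbb{B}^n \to \Omega$ and letting $j \to \infty$ in $f_j \circ g_j = \mathrm{id}_{\Omega_j}$ and $g_j \circ f_j = \mathrm{id}_{\mathbb{B}^n}$ (on compacta, where the compositions are eventually defined and converge uniformly) gives $f \circ g = \mathrm{id}_\Omega$ and $g \circ f = \mathrm{id}_{\mathbb{B}^n}$, so $g$ is a biholomorphism $\Omega \to \mathbb{B}^n$. The delicate ingredient is the non-escape of $\{f_j\}$ to the boundary, i.e.\ the compactness of the closed Kobayashi balls of $\Omega$; this holds for the (complete hyperbolic) domains occurring in this paper, and in general is handled as in \cite{Fr}.

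\smallskip

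Finally, once $\Omega \cong \mathbb{B}^n$ the conclusion $h_\Omega(\cdot, \mathbb{B}^n) \equiv 0$ is immediate: $h_{\mathbb{B}^n}(\cdot,\mathbb{B}^n) \equiv 0$ since an automorphism of $\mathbb{B}^n$ serves as the imbedding for every ball $B_{\mathbb{B}^n}(p,r)$, and $h$ is a biholomorphic invariant. I expect the surjectivity of $g$ — precisely, the normality of the inverse maps $f_j$ — to be the only genuinely delicate step; the rest is Montel's theorem together with bookkeeping using the distance-decreasing property of holomorphic maps.
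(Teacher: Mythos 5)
The paper does not actually prove this proposition --- it is quoted from Fridman's paper \cite{Fr} --- so there is no internal proof to compare against; I will therefore assess your argument on its own terms. Your continuity proof is correct and clean: the observation that $R(p)=1/h_\Omega(p,\mathbb{B}^n)$ is $1$-Lipschitz for $d^k_\Omega$ (because $B_\Omega(q,r-d^k_\Omega(p,q))\subset B_\Omega(p,r)$) settles both bullets' interaction at once, since it also shows that $R\equiv+\infty$ as soon as $R(p^0)=+\infty$. The construction of $g=\lim g_j$, the exclusion of boundary values by the maximum principle, the distance-preservation identity $d^k_{\mathbb{B}^n}(g(p),g(q))=d^k_\Omega(p,q)$, and the resulting injectivity are all correct.

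The one genuine soft spot is exactly the one you flag: surjectivity via normality of $\{f_j\}$ requires the closed Kobayashi balls $\overline{B_\Omega(p^0,r)}$ to be compact, i.e.\ completeness (or tautness) of $\Omega$, which is not among the hypotheses --- the proposition is asserted for an arbitrary hyperbolic manifold. For the uses made of it in this paper ($D_\infty$, $D_{2,\infty}$, bounded domains) completeness holds and your argument closes. But the gap can be removed without extra hypotheses by never taking limits of the $f_j$ themselves: set $h_j:=g\circ f_j:\mathbb{B}^n\to\mathbb{B}^n$, which is normal by Montel with $h_j(0)=0$; since $h_j\circ g_j=g$ on $\Omega_j$ and $g_j\to g$ locally uniformly, any limit $h$ satisfies $h=\mathrm{id}$ on the open set $g(\Omega)$, hence $h=\mathrm{id}_{\mathbb{B}^n}$ by the identity theorem; then for any $w\in\mathbb{B}^n$ the uniform closeness of $g\circ f_j$ to the identity on a small sphere about $w$ forces, by a Rouch\'e/degree argument (or Hurwitz), that $w\in g(f_j(\mathbb{B}^n))\subset g(\Omega)$. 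This is essentially Fridman's route and is the only substantive way your write-up differs from a hypothesis-free proof; everything else is as it should be.
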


\noindent To put things in perspective, we state the following result on the boundary 
behaviour of Fridman's invariant for strongly pseudoconvex domains.

\begin{thm}\label{E42}
Let $ D \subset \mathbb{B}^n $ be a bounded domain, $ p^0 \in \partial D $ and let $ \{ p^j \}
\subset D $ be a sequence that converges to $ p^0 $. If $ D $ is $ C^2$-smooth strongly pseudoconvex 
equipped with either the Kobayashi or the Carath\'{e}odory metric, then
$ h_D( p^j, \mathbb{B}^n ) \rightarrow 0 $ as $ j \rightarrow \infty $.
\end{thm}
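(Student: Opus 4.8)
The plan is to reduce the statement to a comparison between the Kobayashi (or Carath\'eodory) ball $B_D(p^j, R)$ of a \emph{fixed} radius $R > 0$ and the Euclidean geometry near the strongly pseudoconvex boundary point $p^0$, and then feed this into the definition of $h_D(\cdot, \mathbb B^n)$. Recall from the definition that $h_D(p^j, \mathbb B^n) = \inf\{1/r : \text{there is a biholomorphic imbedding } f \colon \mathbb B^n \to D \text{ with } B_D(p^j, r) \subset f(\mathbb B^n)\}$, so to show $h_D(p^j, \mathbb B^n) \to 0$ it suffices to produce, for each large $R$, a biholomorphic imbedding $f_j \colon \mathbb B^n \to D$ with $B_D(p^j, R) \subset f_j(\mathbb B^n)$ for all $j$ sufficiently large; then $h_D(p^j, \mathbb B^n) \le 1/R$ eventually, and letting $R \to \infty$ gives the claim.

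First I would set up the scaling apparatus of Pinchuk at the strongly pseudoconvex point $p^0$: after an affine change of coordinates we may assume $p^0 = 0$, the boundary is strongly pseudoconvex near $0$, and the inner normal points in a fixed direction; composing with the Pinchuk dilations $T_j$ (anisotropic dilations by $\delta_D(p^j)^{-1/2}$ in the complex tangential directions and $\delta_D(p^j)^{-1}$ in the normal direction, together with a polynomial automorphism normalizing the second-order data) one gets domains $D^j = T_j(D)$ that converge in the local Hausdorff sense to the Siegel domain $D_\infty = \{\,2\,\mathrm{Re}\, z_n + |z_1|^2 + \cdots + |z_{n-1}|^2 < 0\,\}$, which is biholomorphic to $\mathbb B^n$, and the base points $T_j(p^j)$ converge to a fixed interior point $q_\infty \in D_\infty$. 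The key input here is the stability of the Kobayashi (resp. Carath\'eodory) distance under this scaling near a strongly pseudoconvex point: $d^k_{D^j}$ converges locally uniformly to $d^k_{D_\infty}$ (this is the strongly pseudoconvex analogue of the stability results the paper develops for Levi corank one domains; it is classical and follows from the localization of the Kobayashi metric near strongly pseudoconvex boundary points together with the Hausdorff convergence of the $D^j$). Consequently the Kobayashi ball $B_{D^j}(T_j(p^j), R)$ converges to $B_{D_\infty}(q_\infty, R)$, which is a relatively compact subset of $D_\infty$; in particular, for any $R$, there is a biholomorphic imbedding $g \colon \mathbb B^n \to D_\infty$ (indeed a biholomorphism $D_\infty \cong \mathbb B^n$) with $B_{D_\infty}(q_\infty, R) \Subset g(\mathbb B^n) = D_\infty$, and a slightly larger ball $B_{D_\infty}(q_\infty, R+1)$ is still relatively compact in $D_\infty$.

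Then I would transplant this back: since $B_{D^j}(T_j(p^j), R) \to B_{D_\infty}(q_\infty, R)$ in the Hausdorff sense and the limit is relatively compact in $D_\infty$, for $j$ large we have $B_{D^j}(T_j(p^j), R) \subset B_{D_\infty}(q_\infty, R+1) \Subset D_\infty$. Pulling back by $T_j$ (which is a biholomorphism onto its image and an isometry for the Kobayashi metric between $D$ and $D^j$ on the relevant sets), $B_D(p^j, R) = T_j^{-1}(B_{D^j}(T_j(p^j), R))$ is contained in $T_j^{-1}$ of a fixed relatively compact piece of $D_\infty$. Composing $T_j^{-1}$ with a fixed biholomorphism $D_\infty \cong \mathbb B^n$ and restricting, one obtains a biholomorphic imbedding $f_j \colon \mathbb B^n \to D$ with $B_D(p^j, R) \subset f_j(\mathbb B^n)$ for all large $j$; hence $h_D(p^j, \mathbb B^n) \le 1/R$ eventually. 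Letting $R \to \infty$ yields $h_D(p^j, \mathbb B^n) \to 0$. The argument for the Carath\'eodory metric is identical, using the corresponding stability of $d^{Cara}_D$ under Pinchuk scaling near a strongly pseudoconvex point and the coincidence of the Kobayashi and Carath\'eodory distances on $\mathbb B^n$.

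The main obstacle is the stability/convergence step: one must verify carefully that the Kobayashi (resp. Carath\'eodory) distance functions $d^k_{D^j}$ converge locally uniformly to $d^k_{D_\infty}$, which requires both an interior lower bound (from the Hausdorff convergence $D^j \to D_\infty$, since the metrics are monotone in the domain and $D_\infty$ is Kobayashi hyperbolic) and, more delicately, a uniform upper bound near the base point that does not degenerate as $j \to \infty$ — this is exactly where the localization of the Kobayashi metric at a strongly pseudoconvex boundary point and the uniform good geometry of $\partial D^j$ (controlled second fundamental form after normalization) enter. Once this uniform two-sided control is in hand, the Hausdorff convergence of the balls and the relative compactness of the limit ball in $D_\infty$ are essentially formal.
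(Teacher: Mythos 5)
Your proposal is correct and follows exactly the route the paper itself takes: Theorem \ref{E42} is not proved in this paper (it is quoted from \cite{MV}), but the scaling-plus-stability argument you describe is precisely the method of \cite{MV} and of the paper's own proof of the Levi corank one generalization (Theorem \ref{thm2}), the strongly pseudoconvex case being the one where the limit domain $D_\infty$ is biholomorphic to $\mathbb{B}^n$ and hence $h_{D_\infty}\equiv 0$. The only step to make fully explicit is the last one: $T_j^{-1}\circ g$ is not defined on all of $\mathbb{B}^n$ since $D_\infty\not\subset D^j$, so one should first choose $\delta>0$ with $B_{D_\infty}(q_\infty,R+1)\subset g\big(B(0,1-\delta)\big)\Subset D_\infty$, note that this set lies in $D^j$ for large $j$ by Hausdorff convergence, and take $f_j=T_j^{-1}\circ g\circ\big((1-\delta)\,\cdot\big)$ --- exactly the device used in the first half of the proof of Theorem \ref{thm2}.
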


\noindent The reader is referred to \cite{MV} for a proof. It should be noted that, for $ D $ a Levi 
corank one domain, as in Theorem \ref{thm2}, the limit $ h_{D_{\infty}} \big( ('0, -1), \mathbb{B}^n \big) $
can be strictly positive, unlike the strongly pseudoconvex case, and, in general, depends on the nature
of approach $ p^j \rightarrow p^0 \in \partial D $. Recall that, here, and in the sequel, for any $ z \in \mathbb{C}^n $, 
$ z = ('z, z_n ) $ and $ 'z $ will denote $ (z_1, \ldots, z_{n-1} ) $.

\medskip

\noindent Before going further, let us briefly recall the scaling technique (cf. \cite{TT}) for a smoothly 
bounded pseudoconvex domain $ D \subset \mathbb{C}^n $ of finite type when the Levi form 
of $ \partial D $ has rank at least $ (n-2) $ at $ p^0 \in \partial D $. Assume that 
$D $ is given by a smooth defining function $r$ and that $ p^0 $ is the origin. Consider a 
sequence $ p^j \in D $ that converges to the origin and denote by $ \zeta^j $, the point 
on $ \partial D $ chosen so that $ \zeta^j = p^j + ('0, \epsilon_j) $ for some 
$ \epsilon_j > 0 $. Also, $ \epsilon_j 
\approx \delta_D(p^j) $. 

\medskip

\noindent Let $ \Phi^{\zeta^j} $ be the polynomial automorphisms of 
$ \mathbb{C}^n $ corresponding to $ \zeta^j \in \partial D $ as described in (\ref{E45}). 
It can be checked from the explicit form of $ \Phi^{\zeta^j} $
that $ \Phi^{\zeta^j}(\zeta^j) = ('0, 0) $ and 
\[
 \Phi^{\zeta^j} (p^j) = 
\big('0, - \epsilon_j/ d_0(\zeta^j) \big) ,
\]
where $ d_0(\zeta^j ) = \Big( \partial r/\partial \overline{z}_n (\zeta^j) \Big)^{-1} \rightarrow 1 $ as $ j \rightarrow \infty $. Define a 
dilation of coordinates by
\[
\Delta^{\epsilon_j}_{\zeta^j} (z_1, z_2, \ldots, z_n ) = \Big( \frac{z_1}{\tau(\zeta^j, \epsilon_j)}, 
\frac{z_2}{\epsilon_j^{1/2}}, \ldots, \frac{z_{n-1}}{\epsilon_j^{1/2}}, \frac{z_n}{\epsilon_j} \Big),
\]
where $ \tau(\zeta^j, \epsilon_j) $ are as defined in (\ref{E46}). Note that $ \Delta^{\epsilon_j}_{\zeta^j} 
\circ \Phi^{\zeta^j} (p^j) = \big('0, - 1/ d_0(\zeta^j) \big) $. For brevity, we write $ \big('0, - 
1/ d_0(\zeta^j) \big) = z^j $ and $ ('0, -1) = z^0 $. It was shown in \cite{TT}
that the scaled domains $ D^j = \Delta^{\epsilon_j}_{\zeta^j} \circ \Phi^{\zeta^j} (D ) $ converge
in the Hausdorff sense to
\[
D_{\infty} = \big\{ z \in \mbb C^n : 2 \Re z_n + P_{2m}(z_1, \ov z_1) + \abs{z_2}^2 + \ldots + \abs{z_{n-1}}^2 < 0  \big\}
\]
where $P_{2m}(z_1, \ov z_1)$ is a subharmonic polynomial of degree at most $2m$ ($m \ge 1$) without harmonic terms, $2m$ being the $1$-type of $\pa D$ at $p^0$. Observe that $ D_{\infty} $ is complete hyperbolic (each point on $ \partial D_{\infty} $, including the point at
infinity, is a local holomorphic peak point -- cf. Lemma 1 of \cite{BP}) and hence $ D_{\infty} $ is taut.

\medskip

\noindent It is natural to investigate the stability of the Kobayashi metric at the infinitesimal level first. The following 
lemma can be proved using the same ideas as in Lemma 5.2 of \cite{MV}. The only requirement is to establish the normality of 
a scaled family of holomorphic mappings which follows from Theorem 3.11 of \cite{TT}.

\begin{lem} \label{E27} For $ (z, v) \in D_{\infty} \times \mathbb{C}^n, \lim_{j \rightarrow \infty} K_{D^j} (z, v) = K_{D_{\infty}} (z, v) $. Moreover, the convergence is uniform on compact sets of $ D_{\infty} \times \mathbb{C}^n $.
\end{lem}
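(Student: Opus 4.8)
The plan is to prove Lemma \ref{E27} by the standard two-sided comparison that underlies stability results of this kind: establish that $K_{D_\infty} \le \liminf_j K_{D^j}$ via tautness of the limit domain, and that $\limsup_j K_{D^j} \le K_{D_\infty}$ via a normal families argument for the scaled extremal disks, after which the pointwise limit together with a Dini-type argument yields uniform convergence on compacta.

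First I would set up notation: fix a compact set $\mathcal K \subset D_\infty \times \mathbb C^n$ and recall that the scaled domains $D^j = \Delta^{\epsilon_j}_{\zeta^j} \circ \Phi^{\zeta^j}(D)$ converge in the Hausdorff sense to $D_\infty$, which is taut (being complete hyperbolic, each boundary point including $\infty$ being a local holomorphic peak point by Lemma 1 of \cite{BP}). For the lower bound on $\liminf_j K_{D^j}$, I would argue by contradiction: if $K_{D^j}(z^j, v^j) \to \ell < K_{D_\infty}(z,v)$ along a subsequence with $(z^j,v^j) \to (z,v) \in \mathcal K$, pick near-extremal disks $f_j \in \mathcal O(\Delta_{r_j}, D^j)$ with $f_j(0) = z^j$, $f_j'(0) = v^j$ and $1/r_j \to \ell$; since the $D^j$ eventually lie in a fixed bounded region intersected with a neighbourhood exhausting $D_\infty$, and using Hausdorff convergence to produce a fixed slightly larger taut domain containing all $D^j$ for large $j$, extract a locally uniform limit $f$; then $f \in \mathcal O(\Delta_{1/\ell}, D_\infty)$ with $f(0)=z$, $f'(0)=v$, contradicting $K_{D_\infty}(z,v) > \ell$. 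For the upper bound, given $\varepsilon > 0$ choose $g \in \mathcal O(\Delta_r, D_\infty)$ with $g(0)=z$, $g'(0)=v$ and $1/r < K_{D_\infty}(z,v) + \varepsilon$; since $g(\overline{\Delta_{r'}})$ is a compact subset of $D_\infty$ for $r' < r$ and $D^j \to D_\infty$ in the Hausdorff sense, $g(\Delta_{r'}) \subset D^j$ for all large $j$, so $K_{D^j}(z,v) \le 1/r' \to 1/r$; letting $r' \uparrow r$ and $\varepsilon \downarrow 0$ gives $\limsup_j K_{D^j}(z,v) \le K_{D_\infty}(z,v)$.

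The normality of the scaled family — the only input the lemma's statement flags as needed — is precisely where Theorem 3.11 of \cite{TT} enters: it guarantees that a sequence of holomorphic disks into the scaled domains $D^j$ with uniformly controlled data is a normal family, so that the limit disk used in the lower-bound argument actually exists and maps into $D_\infty$ rather than degenerating to a constant or escaping to the boundary. I would invoke it exactly as in the proof of Lemma 5.2 of \cite{MV}, where the same mechanism is carried out for the pseudoconvex finite-type and convex cases; the Levi corank one case differs only in that the relevant scaling $\Delta^{\epsilon_j}_{\zeta^j} \circ \Phi^{\zeta^j}$ and the convergence $D^j \to D_\infty$ are the ones recalled above from \cite{TT}, and tautness of $D_\infty$ replaces the role of completeness used there.

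Finally, for the uniform convergence on compact subsets of $D_\infty \times \mathbb C^n$: the functions $(z,v) \mapsto K_{D^j}(z,v)$ are upper semicontinuous and, on the taut domain $D_\infty$ and the nearby taut domains containing $D^j$, one has local Lipschitz-type bounds making the family $\{K_{D^j}\}$ locally equicontinuous (one may also simply bound $K_{D^j}$ above and below by $K$ of fixed slightly larger and smaller taut domains near any given compact set). Combined with pointwise convergence to the continuous function $K_{D_\infty}$, an Arzel\`a--Ascoli / Dini argument upgrades pointwise to uniform convergence on compacta. The main obstacle I anticipate is not any single estimate but the bookkeeping needed to realize all the $D^j$ (for $j$ large) inside one fixed taut domain slightly larger than $D_\infty$ so that the normal families argument applies uniformly — this is where Hausdorff convergence of $D^j$ to the complete hyperbolic, hence taut, model $D_\infty$ together with Theorem 3.11 of \cite{TT} must be combined carefully, exactly as in \cite{MV}.
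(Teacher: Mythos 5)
Your proposal is correct and follows essentially the same route as the paper, which simply defers to the two-sided comparison argument of Lemma 5.2 of \cite{MV} and notes that the only new ingredient in the Levi corank one setting is the normality of the scaled family of disks, supplied by Theorem 3.11 of \cite{TT} — exactly the point where you invoke it. Your fleshed-out version (tautness of $D_{\infty}$ for the lower bound with moving base points, Hausdorff convergence for the upper bound, and the passage from pointwise to locally uniform convergence) matches the intended argument.
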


\noindent\textit{Proof of Theorem \ref{thm2}:} There are two cases to be examined. After passing to a subsequence, if needed,

\begin{enumerate}

\item[(i)] $ \lim_{j \rightarrow \infty} h_D( p^j, \mathbb{B}^n ) = 0 $, or

\item[(ii)] $ \lim_{j \rightarrow \infty} h_D( p^j, \mathbb{B}^n ) > c $ for some positive constant $c$.

\end{enumerate}

\noindent In the first case, arguments similar to the ones employed in Theorem 5.1(i) of \cite{MV} together with Lemma \ref{E27} show that the limit domain $ D_{\infty} $ is biholomorphic to $ \mathbb{B}^n $. On the other hand, this will not be true in the second case. 

\medskip

\noindent To analyse case (ii), it will be useful to consider the stability of the Kobayashi balls on the scaled domains $ D^j $ around $ z^j \in D^j $ with a fixed radius $ R > 0 $. The proof of this is accomplished in two steps. In the the first part, we show that the sets $ B_{D^j} (z^j, R) $ do not accumulate at the point at infinity in $ \partial D_{\infty} $. The proof of this statement relies on Theorem \ref{thm1} and unravelling the definition of the `normalizing maps' $ \Phi^{\zeta^j} $ and the dilations $ \Delta^{\epsilon_j}_{\zeta^j} $. The second part is to show that the sets $ B_{D^j} (z^j, R) $ do not cluster at any finite boundary point of $\partial D_\infty$.

\begin{lem} \label{E28} For each $ R > 0 $ fixed, $ B_{D^j} (z^j, R) $ is uniformly compactly contained in $ D_{\infty} $ for all $j$ large.
\end{lem}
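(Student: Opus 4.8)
The plan is to prove Lemma \ref{E28} in the two steps announced in the preceding paragraph: first that the balls $B_{D^j}(z^j,R)$ stay away from the point at infinity of $\partial D_\infty$, and second that they stay away from every finite boundary point. Both parts amount to producing a uniform lower bound for $d^k_{D^j}(z^j,\cdot)$ as the argument approaches $\partial D_\infty$, and the key is that Theorem \ref{thm1} gives such a bound on the \emph{original} domain $D$, which can then be transported to each scaled domain $D^j$ because the maps $\Delta^{\epsilon_j}_{\zeta^j}\circ\Phi^{\zeta^j}$ are biholomorphisms and the Kobayashi distance is invariant.

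For the point at infinity, the plan is as follows. Suppose for contradiction that there are points $q^j\in B_{D^j}(z^j,R)$ with $|q^j|\to\infty$. Pull these back to $D$ via $p^j_* := (\Delta^{\epsilon_j}_{\zeta^j}\circ\Phi^{\zeta^j})^{-1}(q^j)$, so that $d^k_D(p^j, p^j_*)\le R$ by invariance. Since $p^j\to p^0\in\partial D$ and $\delta_D(p^j)\approx\epsilon_j$, I would apply the lower bound of Theorem \ref{thm1}, in the explicit form \eqref{dclowbd}, to the pair $(A,B)=(p^j,p^j_*)$. The quantities $\Phi^{p^j}(p^j_*)_k/\tau_k$ entering $\eta(p^j,p^j_*)$ are, up to the uniformly bounded factors $d_0(\zeta^j)$ and the comparison between $\Phi^{p^j}$ and $\Phi^{\zeta^j}$, precisely the coordinates of $q^j$ in the model (since $\Delta^{\epsilon_j}_{\zeta^j}$ divides the $k$th coordinate of $\Phi^{\zeta^j}$ by exactly $\tau_k(\zeta^j,\epsilon_j)$). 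Hence $|q^j|\to\infty$ forces $\eta(p^j,p^j_*)\to\infty$, so $d^k_D(p^j,p^j_*)\to\infty$, contradicting the bound $\le R$. This shows $\bigcup_j B_{D^j}(z^j,R)$ is bounded in $\mathbb{C}^n$.

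For the finite boundary points, the plan is to exploit that $D_\infty$ is complete hyperbolic (noted in the excerpt: every finite boundary point, and the point at infinity, is a local holomorphic peak point, so $D_\infty$ is taut and complete hyperbolic). Combined with Lemma \ref{E27} — in fact with its integrated consequence, the convergence of the Kobayashi \emph{distances} $d^k_{D^j}\to d^k_{D_\infty}$ uniformly on compacta, which follows from uniform convergence of the infinitesimal metrics on compact subsets of $D_\infty\times\mathbb{C}^n$ together with the fact that $z^j\to z^0$ lies well inside $D_\infty$ — one gets that for $\zeta\in D_\infty$ close to a finite boundary point, $d^k_{D^j}(z^j,\zeta)$ is bounded below by $\tfrac12 d^k_{D_\infty}(z^0,\zeta)-1$ for large $j$, and the right side tends to $+\infty$ by completeness. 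More carefully, since by the first step the balls lie in a fixed large ball $B(0,N)$, it suffices to bound $d^k_{D^j}(z^j,\cdot)$ from below on $B(0,N)\cap D^j$ near $\partial D_\infty$; here I would use a local peak-function/barrier argument at each finite boundary point of $D_\infty$, which by the Hausdorff convergence $D^j\to D_\infty$ persists uniformly on the $D^j$ for $j$ large (this is the standard localization used in \cite{MV}), yielding the uniform lower bound. Putting the two steps together: $B_{D^j}(z^j,R)$ is contained in a fixed compact subset of $D_\infty$ for all $j$ large.

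The main obstacle is the \emph{uniformity in $j$} of the lower bounds near finite boundary points: Lemma \ref{E27} is an infinitesimal statement on $D_\infty$, and passing to a distance estimate valid simultaneously on all the moving domains $D^j$ near their (moving) boundaries requires either a uniform family of local plurisubharmonic barriers — available because the defining functions of $D^j$ converge in $C^2_{\mathrm{loc}}$ to that of $D_\infty$ and $D_\infty$ is of finite type with the corank-one structure, so the bumping constructions of Section \ref{second} apply with constants independent of $j$ — or a direct appeal to the stability estimates of \cite{TT}; the point at infinity is genuinely different and is exactly where Theorem \ref{thm1}, rather than any soft argument, is needed, since the scalings $\Delta^{\epsilon_j}_{\zeta^j}$ stretch distances by unbounded anisotropic factors and only the sharp form of $\eta$ controls this.
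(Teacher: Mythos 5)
Your first step is essentially the paper's argument. The paper phrases it slightly differently: from $C_*\log\big(1+(d(p^j,q)/\delta_D(p^j))^2\big)\le d^k_D(p^j,q)\le R$ it deduces $d(p^j,q)\lesssim \delta_D(p^j)$, then uses the definition of the pseudodistance $d$ and the engulfing property to trap $B_D(p^j,R)$ inside $G^j_1\cup G^j_2$ (a Euclidean box of size $\approx\delta_D(p^j)$ union a distorted polydisc $Q(p^j,C\delta_j)$), and finally pushes these forward under $\Delta^{\epsilon_j}_{\zeta^j}\circ\Phi^{\zeta^j}$ using $\tau(p^j,C\delta_j)\lesssim\tau(\zeta^j,\epsilon_j)$ and the uniform bounds on $D\Phi^{\zeta^j}\circ(\Phi^{p^j})^{-1}$. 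Your identification of $\Phi^{p^j}(p^j_*)_k/\tau_k$ with the model coordinates of $q^j$ up to bounded factors is exactly these comparability facts, so this part is fine (you should also dispose of the sub-case where $p^j_*$ leaves the tubular neighbourhood $U$, where \eqref{dclowbd} is not available but the cruder estimate $d^k_D(p^j,p^j_*)\gtrsim\log(\delta_D(\partial U)/\delta_D(p^j))\to\infty$ already gives the contradiction).

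The gap is in your primary route for the finite boundary points. You propose to deduce the lower bound from ``the integrated consequence of Lemma \ref{E27}, the convergence $d^k_{D^j}\to d^k_{D_\infty}$ uniformly on compacta, together with completeness of $D_\infty$.'' This fails for two reasons. First, uniform convergence of the infinitesimal metrics on compact subsets of $D_\infty\times\mathbb{C}^n$ does \emph{not} by itself yield convergence of the integrated distances: near-minimizing paths in $D^j$ may leave any fixed compact set, and the standard device to prevent this is the Kim--Ma comparison (Lemma \ref{E29}), which requires precisely the compact containment of Kobayashi balls that Lemma \ref{E28} is supposed to establish. Indeed, in the paper Proposition \ref{E30} is derived \emph{after} and \emph{from} Lemma \ref{E28}, so invoking it here is circular. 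Second, even granting $d^k_{D^j}\to d^k_{D_\infty}$ on compacta, the points $q^j\to q^0\in\partial D_\infty$ leave every compact subset of $D_\infty$, so no estimate ``for $\zeta$ in a compact set'' applies to them; what is needed is a lower bound for $d^k_{D^j}(z^j,q^j)$ that blows up as $\delta_{D^j}(q^j)\to 0$, uniformly in $j$. The correct mechanism --- which you do mention as a fallback --- is the one the paper uses: the stability, uniform in $j$, of the Thai--Thu infinitesimal estimate \eqref{E57} on the scaled domains $D^j$ near $q^0$ (Theorem 2.3 of \cite{BV}), which upon integrating the normal component of any connecting path gives $d^k_{D^j}(z^j,q^j)\gtrsim\frac12\log\big(1/\delta_{D^j}(q^j)\big)+C\to+\infty$, contradicting the bound by $R$. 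If you rewrite the second step around that estimate (or around a family of plurisubharmonic barriers at $q^0$ constructed uniformly in $j$), the proof is complete; as written, the main route is circular.
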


\begin{proof} Since the scaling maps $ \Delta^{\epsilon_j}_{\zeta^j} \circ \Phi^{\zeta^j} $ are biholomorphisms and therefore Kobayashi isometries, it immediately follows that
\begin{equation*} \label{same}
B_{D^j}(z^j, R) = \Delta^{\epsilon_j}_{\zeta^j} 
\circ \Phi^{\zeta^j} \big( B_D(p^j,R) \big).
\end{equation*}
\noindent We assert that $B_{D^j}(z^j,R)$ cannot accumulate at the point of infinity in $\partial D_\infty$. To establish this,
assume that $ q \in B_D(p^j, R) $ and consider the lower bound on the Kobayashi distance given by Proposition \ref{lowbd}: 
\[
C_*\log \left( 1 + \Big (\frac{ d(p^j,q)}{\delta_D(p^j)} \Big)^2 \right) \leq d_D^{k}(p^j,q) \leq R
\]
which yields that 
\[
d(p^j,q) \leq ({\exp}^{R/C_*}-1)^{1/2} \delta_D(p^j) < {\exp}^{R/C_*} \delta_D(p^j) 
\]
where $ C_* $ is a positive constant (uniform in $j$). Then the definition of the pseudodistance $ d $ 
quickly leads to the following two possibilities:
\begin{itemize}
\item either $\vert p^j - q  \vert_{l^\infty} < {\exp}^{R/C_*} \delta_D(p^j)$ or 
\item for each $j$, there exists $\delta_j \in \big( 0 , {\exp}^{R/C_*} \delta_D(p^j) \big)$ such that $p^j \in Q(q, \delta_j)$.
\end{itemize}

\noindent Now, Proposition 3.5 of \cite{TT} tells us that if $ p^j \in Q(q, \delta_j) $ then $ q \in Q(p^j, C \delta_j)$
for some uniform constant $ C> 0 $. Hence, the second statement above can be rephrased in the following manner -- for every $j$ there exists $ \delta_j \in \big( 0, {\exp}^{R/C_*} \delta_D(p^j) \big) $ such that $ q \in Q(p^j, C \delta_j)$
or equivalently that
\[
q \in \big( \Phi^{p^j} \big)^{-1} \Big( \Delta(0, \tau(p^j, C\delta_j) ) \times \Delta(  0, \sqrt{C \delta_j} ) \times \ldots \times \Delta(0, \sqrt{C \delta_j}) ) \times \Delta(0, C \delta_j) \Big)
\]
In other words, each Kobayashi ball $ B_D(p^j,R)$ is contained in the union $ G^j_1 \cup G^j_2 $ where 
\begin{eqnarray*} \label{union}
G^j_1 &= &\big\{ z \in \mathbb{C}^n : \vert z - p^j \vert_{l^{\infty}} <  {\exp}^{R/C_*} \delta_D(p^j) \big\} \mbox{ and} \\ 
G^j_2 &= & \big( \Phi^{p^j} \big)^{-1} \Big( \Delta(0, \tau(p^j, C \delta_j)) \times \Delta(0, \sqrt{C \delta_j}) \times \ldots \times \Delta(0, \sqrt{C \delta _j}) \times \Delta(0, C \delta_j) \Big).
\end{eqnarray*}
The idea is to verify that the sets $ \Delta^{\epsilon_j}_{\zeta^j} \circ \Phi^{\zeta^j} (G^j_1) $ and $ 
\Delta^{\epsilon_j}_{\zeta^j} \circ \Phi^{\zeta^j} (G^j_2) $ are uniformly bounded. For this, consider
\begin{align*}
\Phi^{\zeta^j}(G^j_1) &= \Phi^{\zeta^j} \Big( \big \{ z \in \mathbb{C}^n : \vert z - p^j \vert_{l^\infty} < {\exp}^{R/C_*} \delta_D(p^j) \big \} \Big) \\
&= \big \{ w \in \mathbb{C}^n : \vert \big( \Phi^{\zeta^j} \big)^{-1} (w) - p^j \vert_{l^\infty} < {\exp}^{R/C_*} \delta_D(p^j) \big \}.
\end{align*}
Now, write 
\[
w - \Phi^{\zeta^j}(p^j) = \Phi^{\zeta^j} \Big( \big( \Phi^{\zeta^j} \big)^{-1} (w) \Big) - \Phi^{\zeta^j}(p^j)
\]
and note that the derivatives $\{ D \Phi^{\zeta^j} \}$ are uniformly bounded in the operator norm by $L$, say. Therefore, for $ w \in \Phi^{\zeta^j}(G^j_1) $, we have that
\[
\big\vert w - \Phi^{\zeta^j}(p^j) \big\vert_{l^\infty} \leq  L  \big\vert \big( \Phi^{\zeta^j} \big)^{-1}(w) - p^j \big\vert_{l^{\infty}} < L {\exp}^{R/C_*} \delta_D(p^j),
\]
and consequently that,
\[
\Phi^{\zeta^j} (G^j_1) \subset \big\{ w \in \mathbb{C}^n : \big\vert w - \Phi^{\zeta^j}(p^j) \big\vert_{l^\infty} < L {\exp}^{R/C_*} \delta_D(p^j) \big\}.
\]
Since $\Phi^{\zeta^j}(p^j) = \big( '0, - \epsilon_j/d_0(\zeta^j) \big) $, the above inclusion can be rewritten as
\begin{multline*}
\Phi^{\zeta^j}(G^j_1) \subset \left\{ w : \vert w_k \vert < L {\exp}^{R/C_*} \delta_D(p^j) \mbox{ for }
1 \leq k \leq  n-1, \left\vert w_n + \frac{\epsilon_j}{d_0(\zeta^j)} \right\vert < L {\exp}^{R/C_*} \delta_D(p^j) \right\}.
\end{multline*}
Hence
\begin{eqnarray} \label{E53}
\Delta_{\zeta^j}^{\epsilon_j} \circ \Phi^{\zeta^j} (G^j_1) \subset \left\{ w : \vert w_1 \vert < 
\frac{L{\exp}^{R/C_*} \delta_D (p^j) }{ \tau(\zeta^j, \epsilon_j) }, \vert w_\alpha \vert < \frac{L{\exp}^{R/C_*} \delta_D (p^j) }{{\epsilon_j}^{1/2}}  \text{ for } 2 \leq \alpha \leq n-1, \right.  \nonumber \\ 
\left. \left \vert w_n + \frac{1}{ d_0(\zeta^j) } \right\vert  < 
\frac{L{\exp}^{R/C_*} \delta_D (p^j) }{ \epsilon_j } \right\}. 
\end{eqnarray}
If $ w = (w_1, \ldots, w_n) $ belongs to the set described by (\ref{E53}) above, then
\begin{alignat}{3} \label{E55} 
\left\{ \begin{array}{lrl}
\vert w_1 \vert  <  \frac{L{\exp}^{R/C_*} \delta_D (p^j) }{ \tau(\zeta^j, \epsilon_j) } \lesssim  
\frac{L{\exp}^{R/C_*} \epsilon_j }{ \tau(\zeta^j, \epsilon_j) }, \\
\medskip
\vert w_\alpha \vert  < \frac{L{\exp}^{R/C_*} \delta_D (p^j) }{{\epsilon_j}^{1/2}} \lesssim  L{\exp}^{R/C_*}  {\epsilon_j}^{1/2} \text{ for } 2 \leq \alpha \leq n-1, \\
\vert w_n \vert  < \frac{L{\exp}^{R/C_*} \delta_D (p^j) }{ \epsilon_j } + \frac{1}{ d_0(\zeta^j) }
\lesssim L{\exp}^{R/C_*}  + 1
\end{array}
\right.
\end{alignat} 
since $ \epsilon_j \approx \delta_D(p^j) $ and $ d_0 (\zeta^j) \approx 1 $. Furthermore, to examine the sets $ \Delta^{\epsilon_j}_{\zeta^j} \circ \Phi^{\zeta^j} (G^j_2) $, first note that  
these sets are the images of the unit polydisc in $ \mathbb{C}^n $ under the maps $ 
\Delta_{\zeta^j}^{\epsilon_j} \circ \Phi^{\zeta^j} \circ \big( \Phi^{p^j} \big)^{-1} \circ \big( \Delta_{p^j}^{C \delta_j} \big)^{-1} $. Let $ K $ be a positive constant such that $\vert \det D \big( \Phi^{\zeta^j} \circ \big( \Phi^{p^j} \big)^{-1} \big) \vert <K$ for all $ j $ large. It follows that each set $ \Delta^{\epsilon_j}_{\zeta^j} \circ \Phi^{\zeta^j} (G^j_2) $ is contained in a polydisc centered at $ \big('0, -1/d_0(\zeta^j) \big) $, given by
\begin{multline} \label{E54}
\Delta \left( 0, K^2 \frac{ \tau(p^j, C \delta_j)} {\tau(\zeta^j, \epsilon_j) } \right) 
\times \Delta \left( 0, K^2 \left(\frac{C \delta_j}{ \epsilon_j } \right)^{1/2} \right) \times \ldots \\
\ldots \times \Delta \left( 0, K^2 \left(\frac{C \delta_j}{ \epsilon_j } \right)^{1/2} \right) \times \Delta 
\left( -\frac{1}{d_0(\zeta^j)} , K^2 \frac{ C \delta_j}{\epsilon_j} \right). 
\end{multline}
Observe that, if $ w$ belongs to the polydisc as defined above, then
\begin{alignat}{3} \label{E56}
\left\{ \begin{array}{lrl}
%\begin{eqnarray*}
|w_1|  <  K^2 \frac{ \tau(p^j, C \delta_j)} {\tau(\zeta^j, \epsilon_j) }, \\
|w_{\alpha} |  <  K^2 \left(\frac{C \delta_j}{ \epsilon_j } \right)^{1/2} \lesssim K^2 \left( \frac{ \delta_D(p^j)} { \epsilon_j } \right)^{1/2} \lesssim K^2 \text{ for } 2 \leq \alpha \leq n-1, \\
\vert w_n \vert  <  K^2 \frac{ C \delta_j }{\epsilon_j} + \frac{1}{d_0(\zeta^j)} \lesssim K^2 
\frac{\delta_D(p^j) }{\epsilon_j} + 1 \lesssim K^2 + 1.
%\end{eqnarray*}
\end{array}
\right.
\end{alignat} 
It follows from \cite{Cho2} that $ \epsilon_j^{1/2} \lesssim \tau(\zeta^j, \epsilon_j) $ and $ 
\tau(p^j, C \delta_j) \lesssim \tau(p^j,\epsilon_j) \approx \tau(\zeta^j, \epsilon_j) $.
As a consequence, we see that if $ w $ belongs to either of the sets (\ref{E55}) or (\ref{E56}),
then $ |w| $ is uniformly bounded. Hence, by virtue of the inclusions (\ref{E53}) and (\ref{E54}), 
it is immediate that the sets $ \Delta^{\epsilon_j}_{\zeta^j} \circ \Phi^{\zeta^j} (G^j_1) $ and $ \Delta^{\epsilon_j}_{\zeta^j} \circ \Phi^{\zeta^j} (G^j_2) $ are uniformly bounded. This in turn 
implies that the sets $ B_{D^j}(z^j,R) $ cannot cluster at the point at infinity in $\partial D_\infty$.\\

\noindent It remains to show that the sets $B_{D^j}(z^j,R)$ do not cluster at any finite point of $\partial D_\infty$. Suppose that there is a sequence of points $q^j \in B_{D^j}(z^j,R)$ such that $q^j \to q^0 \in \partial D_\infty$
where $ q^0 $ is any finite boundary point. Now, Theorem 2.3 of \cite{BV} assures us that the estimates for the 
infinitesimal Kobayashi metric due to Thai-Thu remain stable for the family of the scaled domains $ D^j $, i.e., there is a neighbourhood $U$ of $q^0$ in $ \mathbb{C}^n $ such that 
\begin{alignat}{3} \label{E57}
K_{D^j}(z,v) \gtrsim \frac{ \vert \big( D^j \Phi^z(z)(v) \big)_1 \vert} { \tau \big(z, \delta_{D^j}(z) \big) } 
+ \sum\limits_{\alpha=2}^{n-1} \frac{ \vert \big( D {}^j \Phi^z(z)(v) \big)_\alpha \vert} { ( \delta_{D^j}
(z))^{1/2} } + \frac{ \vert \big( D^j \Phi^z(z)(v) \big)_n \vert } { \delta_{D^j}(z) } 
\end{alignat}
uniformly for all $ j $ large, $ z \in U \cap D_{\infty} $ and tangent vector $ v \in \mathbb{C}^n $. 
Here, the notation ${}^j \Phi^z(\cdot)$ is the special boundary chart (as described by (\ref{E45})) corresponding to $ z $,
when $z$ is viewed as a point in the scaled domain $D^j$. Evidently, the last component of $ D  \Phi^\zeta(z)(v)$ is given
by
\[
\big(D \Phi^\zeta(z)(v)\big)_n =  \langle\nu(\zeta), v \rangle - \sum_{j=1}^n \frac{\partial Q_1}{\partial z_j} ({}'z - {}'\zeta) v_j, 
\]
so that 
\[
\big( D {}^j \Phi^z (z)(v) \big)_n = \langle \nu(z), v \rangle.
\]
Now, consider $\tilde{U}$, a neighbourhood of $z^0$ disjoint from $U$ and which is compactly contained in $D_\infty$. 
Then $z^j \in \tilde{U}$ for all $j$ large. Let $\gamma^j$ be any piecewise $C^1$-path connecting $q^j=\gamma^j(0)$ and $z^j=\gamma^j(1)$. 
As we follow the trace of $\gamma^j$ starting from $z^j$, there is a last point $\alpha^j$ on the 
curve with $\alpha^j \in \partial U \cap D^j$. Let $\gamma^j(t_j) = \alpha^j$ and denote by $\sigma^j$, the sub-curve of 
$\gamma^j$ with endpoints $q^j$ and $\alpha^j$. Note that $\sigma^j$ is contained in an $\epsilon$-neighbourhood of $\partial D^j$ 
for some fixed uniform $\epsilon >0$ and for all $j $ large. It follows from (\ref{E57}) that
\begin{multline*}
\int\limits_0^1 K_{D^j} \big( \gamma^j(t), \dot{\gamma}^j(t) \big) dt  \geq \int_{t_j}^1 K_{D^j}
\big( \sigma^j(t), \dot{\sigma}^j(t) \big) dt
\gtrsim \int\limits_{t_j}^1 \frac{ \left\vert \left( D^j \Phi^{ \left( \sigma^j(t) \right) } \big(\sigma^j(t) \big) 
\big( \dot{\sigma}^j(t) \big) \right)_n \right\vert } { \delta_{D^j} \big( \sigma^j(t) \big) } dt \\
 = \int\limits_{t_j}^1 \frac{ \left\langle \nu \left( \sigma^j(t) \right), \dot{\sigma}^j(t)  \right\rangle}
{ \delta_{D^j} \big( \sigma^j(t) \big) } dt.
\end{multline*}
The fact that the last integrand is positive follows from the considerations as in the arguments following (\ref{gamlb}). To be more precise, we focus on a stretch of time
where the curve $\sigma^j$ has a non-zero component along the normal to $ \partial D^j $ at $ \pi_j \big( \sigma^j (t) \big) $ (as explained in Section \ref{pfofthm1}).
It can be checked that this stretch of time can be taken to be a non-zero constant, uniform in $ j$, since the domains $ D^j $ converge to $ D_{\infty} $. 
Furthermore, it can be checked that the last integrand is at least
\begin{alignat*}{3}
 \frac{ \Re \left\langle \nu \left( \sigma^j(t) \right), \dot{\sigma}^j(t)  \right\rangle}
{ 2 \; \delta_{D^j} \big( \sigma^j(t) \big) } =
\frac{1} { 2 \; \delta_{D^j} \big( \sigma^j(t) \big) } \; \frac{d}{dt} \delta_{D^j} \big( \sigma^j(t) \big) = \frac{1}{2} \frac{d}{dt} \log  \delta_{D^j} \big( \sigma^j(t) \big) 
\end{alignat*}
so that
\begin{alignat*}{3}
\int\limits_0^1 K_{D^j} \big( \gamma^j(t), \dot{\gamma}^j(t) \big) dt \gtrsim
 \frac{1}{2} \log \frac{1}{\delta_{D^j}(q^j)} + C
\end{alignat*}
for a uniform positive constant $C $. Finally, taking the infimum over all such $\gamma^j$, we see that
\[
d^k_{D^j} (q^j, z^j) \gtrsim \frac{1}{2} \log \frac{1}{\delta_{D^j}(q^j)} + C.
\]
Note that the left hand side here is bounded above by $R$ while the right hand sides becomes unbounded as 
$ q^j \rightarrow q^0 \in \partial D_{\infty} $. This contradiction completes the proof of the lemma.
\end{proof}

\noindent Once we are able to control the behaviour of Kobayashi balls $ B_{D^j} (z^j, R) $ as $ j 
\rightarrow \infty $, we intend to use the following comparison estimate due to K.T. Kim and 
D. Ma (\cite{KM}, \cite{KK}) to conclude the stability of 
the integrated Kobayashi distance under scaling. 

\begin{lem} \label{E29} Let $ D$ be a Kobayashi hyperbolic
domain in $ \mathbf{C}^n$ with a subdomain $ D' \subset D$. Let $
p,q \in D'$, $ d^k_{D}(p,q) = a$ and $ b > a$. If $ D'$ satisfies
the condition $ B_{D} (q,b) \subset D'$, then the following two
inequalities hold:
\begin{eqnarray*}
d_{D'}^k (p,q) \leq \frac{1}{\tanh (b-a) } d_{D}^k(p,q), \\
K_{D'} (p, v) \leq \frac{1}{\tanh (b-a) } K_{D}(p,v).
\end{eqnarray*}
\end{lem}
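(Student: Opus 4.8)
\noindent\emph{Proof idea.} The plan is the standard localization argument for the Kobayashi metric: a sufficiently short analytic disc or curve issuing from a point near $q$ is forced to stay inside $B_D(q,b)\subset D'$, so the intrinsic objects of $D'$ and of $D$ are comparable there. First I would record the following infinitesimal estimate, slightly more general than the second displayed inequality: for every $z\in D$ with $d_D^k(q,z)<b$ and every $v\in\mathbb{C}^n$,
\[
K_{D'}(z,v)\ \le\ \frac{1}{\tanh\!\big(b-d_D^k(q,z)\big)}\,K_D(z,v).
\]
To prove this, fix $\rho\in\big(0,\tanh(b-d_D^k(q,z))\big)$ and let $f\colon\Delta_r\to D$ be any analytic disc with $f(0)=z$ and $f'(0)=v$. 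For $\zeta\in\Delta_{\rho r}$, the distance-decreasing property of $f$ together with the explicit value of the Poincar\'e distance on a disc gives
\[
d_D^k\big(z,f(\zeta)\big)\ \le\ d^k_{\Delta_r}(0,\zeta)\ \le\ d^k_{\Delta}(0,\rho)\ =\ \tanh^{-1}\!\rho\ <\ b-d_D^k(q,z),
\]
hence $d_D^k\big(q,f(\zeta)\big)<b$, i.e. $f(\zeta)\in B_D(q,b)\subset D'$. Thus $f|_{\Delta_{\rho r}}$ is an analytic disc into $D'$ through $z$ with derivative $v$ at the origin, whence $K_{D'}(z,v)\le(\rho r)^{-1}$; letting $\rho\uparrow\tanh(b-d_D^k(q,z))$ and then $r^{-1}\downarrow K_D(z,v)$ yields the claim. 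Specializing to $z=p$, where $d_D^k(q,p)=a$ by hypothesis, gives precisely the second inequality of the lemma.

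\noindent For the distance estimate I would integrate this along a near-minimizing curve. Given $\varepsilon>0$ with $a+\varepsilon<b$, pick a piecewise $C^1$ curve $\gamma\colon[0,1]\to D$ from $p=\gamma(0)$ to $q=\gamma(1)$ with $\ell:=L_D^k(\gamma)\le a+\varepsilon$, and set $\phi(t)=L_D^k\big(\gamma|_{[t,1]}\big)=\int_t^1 K_D\big(\gamma(s),\dot\gamma(s)\big)\,ds$, a non-increasing function with $\phi(1)=0$, $\phi(0)=\ell$ and $\phi'(t)=-K_D\big(\gamma(t),\dot\gamma(t)\big)$. Since $d_D^k\big(q,\gamma(t)\big)\le\phi(t)\le\ell<b$, each $\gamma(t)$ lies in $B_D(q,b)\subset D'$, so $\gamma$ is admissible for $d_{D'}^k(p,q)$; moreover $\tanh\big(b-d_D^k(q,\gamma(t))\big)\ge\tanh\big(b-\phi(t)\big)$ by monotonicity of $\tanh$. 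Combining this with the infinitesimal estimate and the substitution $s=\phi(t)$,
\[
d_{D'}^k(p,q)\ \le\ L_{D'}^k(\gamma)\ \le\ \int_0^1\frac{-\phi'(t)}{\tanh\big(b-\phi(t)\big)}\,dt\ =\ \int_0^{\ell}\frac{ds}{\tanh(b-s)}\ =\ \log\frac{\sinh b}{\sinh(b-\ell)}.
\]
Letting $\varepsilon\downarrow0$ gives $d_{D'}^k(p,q)\le\log\big(\sinh b/\sinh(b-a)\big)$, and a one-line calculus comparison finishes the job: the function $g(a):=a/\tanh(b-a)-\log\big(\sinh b/\sinh(b-a)\big)$ satisfies $g(0)=0$ and $g'(a)=a/\sinh^2(b-a)\ge0$, so $\log\big(\sinh b/\sinh(b-a)\big)\le a/\tanh(b-a)=\tfrac{1}{\tanh(b-a)}\,d_D^k(p,q)$, which is the first inequality.

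\noindent The only matters requiring care are routine: the length and reparametrization calculus for the Kobayashi metric despite its mere upper semicontinuity (standard, and used without comment elsewhere in this paper), the absolute continuity of $\phi$ justifying the substitution, and the elementary monotone comparison at the end. I do not expect a genuine obstacle here — this is a soft, general-nonsense localization lemma, and the substantive inputs (the lower bound on $d^k_D$ from Theorem \ref{thm1} and the stability statements) are needed only where this lemma is \emph{applied}, not in its proof.
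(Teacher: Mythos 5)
Your proof is correct. Note that the paper itself offers no proof of this lemma: it is quoted verbatim from Kim--Ma (\cite{KM}; see also \cite{KK}), and the argument there is exactly the localization you give — shrink an extremal disc for $K_D$ by the factor $\rho<\tanh\big(b-d^k_D(q,z)\big)$ so that its image stays in $B_D(q,b)\subset D'$, then integrate the resulting infinitesimal comparison along a near-minimizing curve. Your normalization checks out against the paper's definition of $K_D$ (so that $d^k_\Delta(0,\rho)=\tanh^{-1}\rho$), and the only wrinkle worth flagging is that the $\sinh$ computation and the calculus comparison at the end are avoidable: since $\phi(t)\le\ell\le a+\epsilon$ along the whole curve, one may bound $\tanh\big(b-\phi(t)\big)\ge\tanh(b-a-\epsilon)$ uniformly and get $L^k_{D'}(\gamma)\le L^k_D(\gamma)/\tanh(b-a-\epsilon)\le(a+\epsilon)/\tanh(b-a-\epsilon)$ directly, which yields the stated inequality as $\epsilon\downarrow 0$; your route gives the sharper bound $\log\big(\sinh b/\sinh(b-a)\big)$ as a bonus.
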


\noindent This statement compares the Kobayashi distance on the subdomain $ D' $ against its 
ambient domain $ D $. Recall that the estimate $ d_{D}^k \leq d_{D'}^k $ is always true. 

\medskip

\noindent The proofs of Lemmas 5.7, 5.8 and 5.9 of \cite{MV} go through verbatim in our setting, thereby, yielding the following two propositions -- which are stated here without proof.

\begin{prop} \label{E30} $ \displaystyle\lim_{j \rightarrow \infty} d^k_{D^j} ( z^j, \cdot) =  d^k_{D_{\infty}} ( z^0, \cdot) $ and $ \displaystyle\lim_{j \rightarrow \infty} d^k_{D^j} ( z^0, \cdot) =  d^k_{D_{\infty}} ( z^0, \cdot) $. Moreover, the convergence is uniform on compact sets of $ D_{\infty} $.
\end{prop}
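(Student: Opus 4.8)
\noindent The plan is to establish the first identity in Proposition~\ref{E30} (the second then follows painlessly), and afterwards upgrade pointwise convergence to locally uniform convergence. Recall that $D^j \to D_\infty$ in the Hausdorff sense, so every compact subset of $D_\infty$ is contained in $D^j$ for all $j$ large; in particular $z^0 \in D_\infty$, $z^j \to z^0$, and any fixed $w \in D_\infty$ all lie in $D^j$ for $j$ large, and $d^k_{D^j}(z^0,z^j) \le d^k_{B(z^0,r)}(z^0,z^j) \to 0$ for a fixed small ball $B(z^0,r) \Subset D_\infty$. Hence the triangle inequality reduces the second identity to the first, and it suffices to prove $d^k_{D^j}(z^j,w) \to d^k_{D_\infty}(z^0,w)$ for each fixed $w$, which we do by separately bounding $\limsup$ and $\liminf$.

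\noindent For the upper estimate $\limsup_j d^k_{D^j}(z^j,w) \le d^k_{D_\infty}(z^0,w)$, fix $\epsilon>0$ and a piecewise $C^1$ curve $\gamma\colon[0,1]\to D_\infty$ from $z^0$ to $w$ with $L^k_{D_\infty}(\gamma) < d^k_{D_\infty}(z^0,w)+\epsilon$. Its trace is compact in $D_\infty$, hence contained in $D^j$ for $j$ large; prepending the segment from $z^j$ to $z^0$, which for $j$ large lies in $B(z^0,r)$ and has $K_{D^j}$-length at most $d^k_{B(z^0,r)}(z^j,z^0)\to0$, yields a curve in $D^j$ from $z^j$ to $w$. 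Since $K_{D^j}\to K_{D_\infty}$ uniformly on compact subsets of $D_\infty\times\mathbb{C}^n$ by Lemma~\ref{E27}, its $K_{D^j}$-length converges to $L^k_{D_\infty}(\gamma)$, so $\limsup_j d^k_{D^j}(z^j,w)\le d^k_{D_\infty}(z^0,w)+\epsilon$; let $\epsilon\to0$. (Alternatively one may use $d^k_{D^j}(z^j,w)\le d^k_W(z^j,w)\to d^k_W(z^0,w)$ for a fixed $W\Subset D_\infty$ with $W\Subset D^j$ eventually, and then let $W$ exhaust $D_\infty$ using Lemma~\ref{E29} inside $D_\infty$.)

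\noindent The substantive direction is $\liminf_j d^k_{D^j}(z^j,w)\ge d^k_{D_\infty}(z^0,w)$, and here Lemma~\ref{E28}, and through it the lower bound of Theorem~\ref{thm1}, is essential. Write $a_j=d^k_{D^j}(z^j,w)$ and $a=d^k_{D_\infty}(z^0,w)$; by the previous paragraph $a_j<a+1$ for $j$ large. Fix $R>2a+2$. By Lemma~\ref{E28} there is a compact $K_R\Subset D_\infty$ with $B_{D^j}(z^j,R)\subset K_R$ for $j$ large, and then $B_{D^j}(w,R-a-1)\subset B_{D^j}(z^j,R)\subset K_R$; choose a relatively compact subdomain $W=W_R$ of $D_\infty$ with $K_R\subset W$, so $W\Subset D^j$ for $j$ large. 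Applying Lemma~\ref{E29} with $D=D^j$, $D'=W$, $p=z^j$, $q=w$, $b=R-a-1>a_j$ gives $d^k_W(z^j,w)\le\big(\tanh(R-a-1-a_j)\big)^{-1} d^k_{D^j}(z^j,w)$. Letting $j\to\infty$ along a subsequence realizing $\liminf_j a_j$, and using continuity of $d^k_W$ together with $d^k_W(z^0,w)\ge d^k_{D_\infty}(z^0,w)=a$, we obtain $\liminf_j a_j\ge\tanh(R-2a-1)\,a$. Since $R$ may be taken arbitrarily large (each choice fixing a $W_R$ independent of $j$), this forces $\liminf_j a_j\ge a$, completing the proof of pointwise convergence. (Equivalently, a near-geodesic $\gamma^j$ in $D^j$ from $z^j$ to $w$ stays in $K_R$ by Lemma~\ref{E28}, so Lemma~\ref{E27} bounds its $K_{D_\infty}$-length by $(1+o(1))a_j$.)

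\noindent Finally, by Lemma~\ref{E27} the metrics $K_{D^j}$ are, for $j$ large, uniformly comparable to the Euclidean metric on a fixed compact neighbourhood of any given compact $K'\Subset D_\infty$; hence the functions $w\mapsto d^k_{D^j}(z^j,w)$ are equi-Lipschitz in the Euclidean metric on $K'$ for $j$ large, and being pointwise convergent they converge uniformly on $K'$ by the Arzel\`a--Ascoli theorem. The one genuinely delicate point---and the reason the global distance estimate of Theorem~\ref{thm1} is invoked---is that in the Levi corank one setting a minimizing curve in $D^j$ could a priori escape to $\partial D_\infty$, \emph{including to the point at infinity}; Lemma~\ref{E28} rules this out uniformly in $j$, and it is precisely this step that does not reduce to the bounded strongly pseudoconvex case.
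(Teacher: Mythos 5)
Your proof is correct and takes essentially the approach the paper intends: the paper states Proposition \ref{E30} without proof, remarking that the arguments of Lemmas 5.7--5.9 of \cite{MV} go through verbatim, and those arguments are precisely the combination you use --- Lemma \ref{E27} and almost-minimizing curves for the upper estimate, and Lemma \ref{E28} together with the Kim--Ma comparison Lemma \ref{E29} for the lower estimate, with equicontinuity upgrading to local uniformity. No gaps; only the harmless arithmetic slack in the $\tanh$ argument, which disappears as $R \to \infty$.
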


\begin{prop} \label{E31} Fix $ R > 0$, then the sequence of domains $ B_{D^j} (z^j, R) $  converges 
in the Hausdorff sense to $ B_{D_{\infty}}(z^0, R) $. Moreover, for any $ \epsilon > 0 $ 

\begin{itemize}

\item $ B_{D_{\infty}}(z^0, R) \subset B_{D^j} (z^j, R + \epsilon),$ and

\item $ B_{D^j} (z^j, R - \epsilon)  \subset B_{D_{\infty}}(z^0, R)$

\end{itemize}
for all $j$ large.
\end{prop}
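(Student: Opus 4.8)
The plan is to obtain both one-sided $\epsilon$-inclusions as soft consequences of two facts already in hand: the uniform convergence $d^k_{D^j}(z^j,\cdot)\to d^k_{D_\infty}(z^0,\cdot)$ on compact subsets of $D_\infty$ provided by Proposition \ref{E30}, and the uniform relative compactness of the balls $B_{D^j}(z^j,R)$ inside $D_\infty$ provided by Lemma \ref{E28}. The Hausdorff convergence statement will then follow by sandwiching $B_{D^j}(z^j,R)$ between two fixed $D_\infty$-balls whose ``gap'' closes as $\epsilon\to 0$.

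First I would prove $B_{D_\infty}(z^0,R)\subset B_{D^j}(z^j,R+\epsilon)$ for $j$ large. Recalling that $D_\infty$ is complete hyperbolic, the closed ball $K_0=\{w\in D_\infty:\ d^k_{D_\infty}(z^0,w)\le R\}$ is a compact subset of $D_\infty$, so in particular $K_0\subset D^j$ for all $j$ large. By Proposition \ref{E30} there is a $j_0$ such that $\bigl|d^k_{D^j}(z^j,w)-d^k_{D_\infty}(z^0,w)\bigr|<\epsilon$ for every $w\in K_0$ and every $j\ge j_0$. Hence any $w\in B_{D_\infty}(z^0,R)\subset K_0$ satisfies $d^k_{D^j}(z^j,w)<R+\epsilon$, which is the first inclusion.

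Next I would prove $B_{D^j}(z^j,R-\epsilon)\subset B_{D_\infty}(z^0,R)$ for $j$ large. By Lemma \ref{E28} there is a fixed compact $K\subset D_\infty$ with $B_{D^j}(z^j,R)\subset K$ for all $j$ large; this is precisely the step where Theorem \ref{thm1} is essential, through the lower estimate $d^k_D(p,q)\gtrsim\log\bigl(1+(d(p,q)/\delta_D(p))^2\bigr)$ of Proposition \ref{lowbd}, which keeps the scaled balls away from the point at infinity of $\partial D_\infty$, the analogous control near finite boundary points coming from the stability of the Thai--Thu infinitesimal lower bound (\cite{TT},\ \cite{BV}). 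Applying Proposition \ref{E30} on $K$, for $j$ large we again have $\bigl|d^k_{D^j}(z^j,w)-d^k_{D_\infty}(z^0,w)\bigr|<\epsilon$ for all $w\in K$; so if $w\in B_{D^j}(z^j,R-\epsilon)$ then $w\in K$ and $d^k_{D_\infty}(z^0,w)<(R-\epsilon)+\epsilon=R$, i.e. $w\in B_{D_\infty}(z^0,R)$.

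Finally, feeding $R\mp\epsilon$ into these two inclusions yields, for $j$ large, $B_{D_\infty}(z^0,R-\epsilon)\subset B_{D^j}(z^j,R)\subset\{w:\ d^k_{D_\infty}(z^0,w)<R+\epsilon\}$, where the right-hand containment uses the uniform estimate on $K$ one last time. Since $d^k_{D_\infty}(z^0,\cdot)$ is continuous and, by completeness, proper, the region $\{R-\epsilon\le d^k_{D_\infty}(z^0,\cdot)\le R+\epsilon\}$ is contained in a fixed compact set on which $d^k_{D_\infty}(z^0,\cdot)$ is uniformly continuous, so it shrinks in the Euclidean Hausdorff sense to $\{d^k_{D_\infty}(z^0,\cdot)=R\}$ as $\epsilon\to 0$; this gives Hausdorff convergence of $B_{D^j}(z^j,R)$ to $B_{D_\infty}(z^0,R)$, and the two displayed $\epsilon$-inclusions are exactly the bulleted assertions. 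The only genuine obstacle is Lemma \ref{E28} — locating where the Kobayashi balls of the scaled domains accumulate — while the rest is formal; one should also keep in mind that Proposition \ref{E30} itself leans on the Kim--Ma comparison (Lemma \ref{E29}) in order to compare the non-nested domains $D^j$ and $D_\infty$.
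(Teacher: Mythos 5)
Your argument is correct and is essentially the intended one: the paper states Proposition \ref{E31} without proof, deferring to Lemmas 5.7--5.9 of \cite{MV}, and the derivation there is exactly your sandwiching of $B_{D^j}(z^j,R)$ between $D_\infty$-balls via the uniform convergence of Proposition \ref{E30} on the compact set supplied by Lemma \ref{E28} (together with the compactness of closed Kobayashi balls in the complete hyperbolic domain $D_\infty$ for the first inclusion). The only point worth making explicit in your last paragraph is that the Hausdorff statement uses that $d^k_{D_\infty}(z^0,\cdot)$ is an inner (length) distance, so that $B_{D_\infty}(z^0,R)$ lies in any prescribed Euclidean neighbourhood of $B_{D_\infty}(z^0,R-\epsilon)$ once $\epsilon$ is small; this holds since $d^k$ is defined here as the integrated form of the infinitesimal metric.
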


\noindent\textit{Proof of Theorem \ref{thm2}(ii):} By the biholomorphic invariance of the 
function $h$, it follows that $ h_D(p^j, \mathbb{B}^n ) = h_{D^j}(z^j, \mathbb{B}^n ) $ 
and therefore, it suffices to show that $ h_{D^j}(z^j, \mathbb{B}^n ) \rightarrow 
h_{D_{\infty}} (z^0, \mathbb{B}^n ) $. To verify this, let $ 1/R $ be a positive 
number that almost realizes $ h_{D_{\infty}} (z^0, \mathbb{B}^n ) $, i.e., 
$ 1/R < h_{D_{\infty}} (z^0, \mathbb{B}^n )  +  \epsilon $ for some $ \epsilon > 0 $ fixed.
Then there exists a biholomorphic imbedding $ F: \mathbb{B}^n \rightarrow D_{\infty} $ satisfying 
$ F(0) = z^0 $ and $ B_{D_{\infty}}(z^0, R) \subset F(\mathbb{B}^n) $. Pick $ \delta > 0 $ such
that $ B_{D_{\infty}}(z^0, R - \epsilon ) \subset F \left( B(0, 1 - \delta) \right) $. Since 
$ F\left( B(0, 1 - \delta) \right) $ is relatively compact in $ D_{\infty} $ and $ D^j \rightarrow D_{\infty} $, 
it follows that $ F\left( B(0, 1 - \delta) \right) $ is compactly contained in $ D^j $ for all large $j$.
Now, by Proposition \ref{E31}, we see that 
\[
B_{D^j} (z^j, R - 2 \epsilon)  \subset B_{D_{\infty}}(z^0, R - \epsilon), 
\]
and consequently that
\[
B_{D^j} (z^j, R - 2 \epsilon)  \subset F \left( B(0, 1 - \delta) \right) \subset D^j,
\]
which, in turn, implies that
\[
h_{D^j}(z^j, \mathbb{B}^n ) \leq \frac{1}{R - 2 \epsilon}
\]
for all $ j $ large. Therefore, by the choice of $ R $, it follows that 
\[
 \displaystyle\limsup_{j \rightarrow \infty} 
h_{D^j}(z^j, \mathbb{B}^n ) \leq h_{D_{\infty}} (z^0, \mathbb{B}^n ) .
\]
\noindent Conversely, consider a sequence of biholomorphic imbeddings $ F^j: \mathbb{B}^n \rightarrow D^j $ and positive numbers $ R^j $ with the property that $ F^j (0) = z^j$, $ B_{D^j} (z^j, R_j)  \subset F^j( \mathbb{B}^n) $ and $ 1/ R_j \leq h_{D^j}(z^j, \mathbb{B}^n ) + \epsilon $. Recall the scalings $ \Delta^{\epsilon_j}_{\zeta^j} \circ \Phi^{\zeta^j} $ associated with the sequence $ p^j $ and consider the mappings 
\[
\theta^j:= \left( \Delta^{\epsilon_j}_{\zeta^j} \circ \Phi^{\zeta^j} \right)^{-1} \circ F^j: \mathbb{B}^n \rightarrow D,
\]
and note that $ \theta^j(0) = p^j \rightarrow p^0 \in \partial D $. We claim that $ F^j $ admits a convergent subsequence.
Indeed, applying Theorem 3.11 from \cite{TT} to $ \theta^j $ assures us that the family $ \Delta^{\epsilon_j}_{\zeta^j} \circ \Phi^{\zeta^j} \circ \theta^j = F^j $ is normal and the uniform limit $ F $ is a holomorphic mapping from $ \mathbb{B}^n $ into $ D_{\infty} $. Note that by construction $ F(0) = \lim_{j \rightarrow \infty} F^j(0) = z^0 $. 
Further, by the first part of the proof and choice of the numbers $ R_j $, it follows that 
\[
1/{R_j} \leq h_{D^j}(z^j, \mathbb{B}^n ) + \epsilon \leq h_{D_{\infty}} (z^0, \mathbb{B}^n ) < + \infty 
\]
for all $ j $ large. On the other hand, we already know that, the largest possible radii admissible in the definition of Fridman's invariant function $ h_D(p^j, \mathbb{B}^n ) $ (which equals $h_{D^j}(z^j, \mathbb{B}^n )$) is at most $ 1/c $. Hence, we may assume that the sequence $ R_j $ converges to $ R_0 > 0 $. 

\medskip

\noindent The goal now is to show that $ F: \mathbb{B}^n \rightarrow D_{\infty} $ is an imbedding and that $ F( \mathbb{B}^n ) $ contains the Kobayashi ball on $ D_{\infty} $ around $ z^0 \in D_{\infty} $ with radius $ R_0 - 2 \epsilon $. To this end, it is clear that
\begin{alignat*}{3}
B_{D^j} (z^j, R_0 - \epsilon ) \subset B_{D^j} (z^j, R_j) \subset F^j( \mathbb{B}^n). 
\end{alignat*}
Moreover, by Proposition \ref{E31}, we see that
\begin{alignat*}{3}
B_{D_{\infty}} (z^0, R_0 - 2 \epsilon ) \subset B_{D^j} (z^j, R_0 - \epsilon) 
\end{alignat*}
and consequently, that $ B_{D_{\infty}} (z^0, R_0 - 2 \epsilon ) \subset F^j( \mathbb{B}^n) $ for all large $j $. It follows that $ B_{D_{\infty}} (z^0, R_0 - 2 \epsilon ) \subset F( \mathbb{B}^n) $ and in particular, that $ F $ is non-constant.

\medskip

\noindent To establish the injectivity of $ F $, consider any point $ a \in \mathbb{B}^n $. Each mapping $ F^j ( \cdot) - F^j (a) $ never vanishes in $ \mathbb{B}^n \setminus \{a\} $ because of the injectivity of $ F^j $ in $ \mathbb{B}^n $. Applying Hurwitz's theorem to the sequence $ F^j (\cdot) - F^j (a) \in \mathcal{O} \left( \mathbb{B}^n \setminus \{ a \}, \mathbb{C}^n \right) $, we have that $ F(z) \neq F(a) $ for all $ z \in \mathbb{B}^n \setminus \{a\} $. Since $ a $ is any arbitrary point of $ \mathbb{B}^n $, this exactly means that $ F $ is injective. 

\medskip

\noindent To conclude, observe that the above analysis shows that $ R_0 - 2 \epsilon $ is a candidate for the
infimum that defines $ h_{D_{\infty}} (z^0, \mathbb{B}^n )$, and hence $ h_{D_{\infty}} (z^0, \mathbb{B}^n ) \leq 1/ \left(R_0 - 2 \epsilon \right) $. This last observation, in turn, implies that 
\[
 h_{D_{\infty}} (z^0, \mathbb{B}^n ) \leq \displaystyle\liminf_{j \rightarrow \infty} h_{D^j}(z^j, \mathbb{B}^n ) 
\]
as desired. \qed

\section{A quantitative description of Kobayashi balls in terms of Euclidean parameters - Proof of Theorem \ref{thm3}}

\noindent It can be checked that 
\begin{alignat}{3} \label{E47}
Q \left(p, C_1(p, R) \delta_D(p) \right) \subset B_D(p, R) \subset Q \left(p, C_2(p, R) \delta_D(p) \right)
\end{alignat}
holds true for each $ p \in D $ and for some positive constants $ C_1(p, R), C_2(p, R) $ (which
depend on the point $ p $, the radius $ R $ and the domain $ D $). The main purpose of the Theorem \ref{thm3} is 
to show that these constants can be chosen independent of the point $p$. The proof is based on the fact that 
the integrated Kobayashi distance is stable under scaling (cf. Propositions \ref{E30} and \ref{E31}). 

\medskip

\noindent\textit{Proof of Theorem \ref{thm3}:} For $ p \in D $, let us first prove that 
$ Q \left(p, C_1 \delta_D(p) \right) \subset B_D(p, R) $ for some uniform constant $ C_1 $. Suppose that 
this is not the case. Then there are points $ p^j \in D $, $ p^0 \in \partial D $, $ p^j \rightarrow p^0 $ and 
a sequence of positive numbers $ C_j \rightarrow 0 $ with the property that -- for each $j$, the `polydisc' 
$ Q \left( p^j, C_j \delta_{D}(p^j) \right) $ is not entirely contained in the Kobayashi ball $ B_D(p^j, R) $.

\medskip

\noindent Applying a biholomorphic change of coordinates, if needed, we may assume that $ p^0 $ is the origin and 
the domain $ D $ near the origin is defined by 
\begin{alignat*}{3}
\Big \{ z \in \mathbb{C}^n : 2 \Re z_n + \sum_{l=2}^{2m} P_l(z_1) + |z_2|^2 + \cdots + |z_{n-1}|^2 
 + \sum_{\al = 2}^{n - 1} \sum_{ \substack{j + k \le m\\ j, k > 0}} \Re \Big( \big(b_{jk}^{\al} w_1^j \ov w_1^k \big) w_{\al} \Big) \Big. \\
 \Big.  + \text{ terms of higher weight}  \Big \}                                          
\end{alignat*}
as in (\ref{nrmlfrm}). Denote by $ \zeta^j $, the point on $ \partial D $ closest to $ p^j $ chosen such that 
$ \zeta^j = p^j + ('0, \epsilon_j) $. Then $ \epsilon_j \approx \delta_D(p^j) $ by construction. Furthermore, 
pick points $ q^j \in Q \left( p^j, C_j \delta_{D}(p^j) \right)$ that lie on the boundary of the 
Kobayashi ball $ B_D(p^j, R) $. The idea is to scale $ D $ with respect to the sequence $ p^j \rightarrow p^0$, 
and analyse the sets $  Q \left( p^j, C_j \delta_{D}(p^j) \right)  $ under the scalings $ \Delta^{\epsilon_j}_{\zeta^j} \circ \Phi^{\zeta^j} $. Recall from the proof of Lemma \ref{E28} that the images 
$ \Delta^{\epsilon_j}_{\zeta^j} \circ \Phi^{\zeta^j} \left( Q \left( p^j, C_j \delta_{D}(p^j) \right) 
\right) $ are contained in polydiscs centered at $ \left('0, -1/d_0(\zeta^j) \right) $, given by
\begin{alignat}{3} \label{E40}
\Delta \left( 0, K^2 \frac{\tau \left(p^j,C_j \delta_{D}(p^j)  \right) }{ \tau(\zeta^j, \epsilon_j) } 
\right) \times
\Delta \left(0, K^2  \left(\frac{C_j \delta_{D}(p^j)}{ \epsilon_j} \right)^{1/2} \right) 
\times \cdots \\
 \cdots \times 
\; \Delta \left(0, K^2  \left(\frac{C_j \delta_{D}(p^j)}{ \epsilon_j} \right)^{1/2} \right) 
\times 
\Delta \left( \frac{-1}{d_0(\zeta^j)}, \frac{K^2 C_j \delta_{D}(p^j) }{ \epsilon_j} \right), \nonumber
\end{alignat} 
where $ K > 0 $ is independent of $ j $. Among other things, S. Cho in \cite{Cho} proved that
$ \tau(\zeta^j, \epsilon_j) \approx \tau(p^j, \epsilon_j) $. But we know that $ \delta_D(p^j)
\approx \epsilon_j $ so that $ \tau \left(p^j,C_j \delta_{D}(p^j)  \right) \lesssim C_j 
\tau(\zeta^j, \epsilon_j) $. Also, $ d_0(\zeta^j) \approx 1 $ and $ C_j \rightarrow 0 $. 
These estimates show that the sets described in (\ref{E40}) are uniformly bounded, and consequently that,
$ \Delta^{\epsilon_j}_{\zeta^j} \circ \Phi^{\zeta^j} \left( Q \left( p^j, C_j \delta_{D}(p^j) \right) \right) 
$ are also uniformly bounded. In particular, the sequence $ \Delta^{\epsilon_j}_{\zeta^j} 
\circ \Phi^{\zeta^j}( q^j) $ is bounded and since $ C_j \rightarrow 0 $, we have 
$ \Delta^{\epsilon_j}_{\zeta^j} \circ \Phi^{\zeta^j}( q^j) \rightarrow ('0, -1)$.

\medskip

\noindent Observe that $ d^k_D(p^j, q^j) = R $ by construction. Since the scalings 
$ \Delta^{\epsilon_j}_{\zeta^j} \circ \Phi^{\zeta^j}: D \rightarrow D^j $ are isometries
in the Kobayashi metric on $ D $ and $ D^j $, it follows that
\begin{alignat*}{3}
d^k_{D^j} \left( \Delta^{\epsilon_j}_{\zeta^j} \circ \Phi^{\zeta^j}(p^j), 
\Delta^{\epsilon_j}_{\zeta^j} \circ \Phi^{\zeta^j}(q^j) \right) = 
d^k_D(p^j, q^j) = R
\end{alignat*}
or, equivalently that
\begin{alignat*}{3}
 d^k_{D^j} \left( ('0,-1/ d_0(\zeta^j)), \Delta^{\epsilon_j}_{\zeta^j} \circ \Phi^{\zeta^j}(q^j) \right) = R.
 \end{alignat*}
But we know that $ \Delta^{\epsilon_j}_{\zeta^j} \circ \Phi^{\zeta^j}(q^j) \rightarrow 
('0, -1) $. Hence, it follows from Proposition \ref{E30} that 
$ d^k_{D_{\infty}} \left( ('0, -1), ('0, -1) \right) = R $ which is not possible. This 
contradiction validates that there is a constant $ C_1 $ (uniform in $p$) such that
$ Q \big(p, C_1 \delta_D(p) \big) $ is contained in the Kobayashi ball $ B_D(p, R) $.

\medskip

\noindent To verify the inclusion $ B_D(p, R) \subset Q(p, C_2 \; \delta_D(p)) $ for some uniform 
constant $ C_2 $, we suppose, on the contrary, that this does not hold true. Evidently, 
in view of (\ref{E47}), 
there are points $ p^j \in D $, $ p^0 \in \partial D $, $ p^j \rightarrow p^0 $ and 
a sequence of positive numbers $ C_j \rightarrow + \infty $ such that -- for each $j$, 
$ Q \left( p^j, C_j \delta_{D}(p^j) \right) $ does not contain the Kobayashi ball $ B_D(p^j, R) $.
As before, pick points $ \zeta^j \in \partial D $ closest to $ p^j $ and define 
$ \epsilon_j, D^j $, $\Phi^{\zeta^j}, \Delta^{\epsilon_j}_{\zeta^j} $ and $ D_{\infty} $ 
analogously. Furthermore, choose $ q^j $ in the complement of the closure of 
$ Q \left( p^j, C_j \delta_D(p^j) \right) $ such that $ q^j \in B_D(p^j, R) $, so that, 
as before, 
\begin{alignat*}{3}
d^k_{D^j} \left( \Delta^{\epsilon_j}_{\zeta^j} \circ \Phi^{\zeta^j}(p^j), 
\Delta^{\epsilon_j}_{\zeta^j} \circ \Phi^{\zeta^j}(q^j) \right) = d_D(p^j, q^j) < R,
\end{alignat*}
and consequently, that
\begin{alignat*}{3}
 d^k_{D^j} \left( ('0,-1/ d_0(\zeta^j)), \Delta^{\epsilon_j}_{\zeta^j} 
 \circ \Phi^{\zeta^j}(q^j) \right) < R, 
 \end{alignat*}
which implies that
\begin{alignat*}{3}
\Delta^{\epsilon_j}_{\zeta^j} \circ \Phi^{\zeta^j}(q^j) \in B_{D^j}(z^j, R) \subset 
B_{D_{\infty}} (z^0, R + \epsilon)
\end{alignat*}
for all $j$ large. Here the last inclusion follows from Proposition \ref{E31} and 
as before, $ ('0, -1) $ and $ ('0, -1/ d_0(\zeta^j) ) $ are written as $ z^0 $ and $ z^j $ 
respectively for brevity. The above observation can be restated as 
\begin{alignat}{3} \label{E41}
 d^k_{D_{\infty}} \left(\Delta^{\epsilon_j}_{\zeta^j} \circ \Phi^{\zeta^j}(q^j), z^0 \right) < R. 
\end{alignat}
However, we claim that 
\[
 \abs{ \Delta^{\epsilon_j}_{\zeta^j} \circ \Phi^{\zeta^j}(q^j) - z^0 } 
\rightarrow + \infty ,
\]
which violates (\ref{E41}). Therefore, the theorem is 
completely proven once the claim is established. To prove the claim, recall that $ \Phi^{\zeta^j} (p^j) = 
\big('0, -\epsilon_j/d_0(\zeta^j) \big) $ and $ d_0(\zeta^j ) \approx 1 $. Therefore, 
we see that $ p^j \in Q (\zeta^j, \epsilon_j) $. Next, Proposition 3.5 of 
\cite{TT} quickly leads to the following statement: $ Q (\zeta^j, \epsilon_j) \subset Q (p^j, C \epsilon_j) $ for 
some uniform positive constant $ C $. Moreover, $ \delta_D(p^j) \approx \epsilon_j $ so that $ 
Q (\zeta^j, C C_j \delta_D(p^j)) \subset Q \left( p^j, C_j \delta_D(p^j) \right) $, where the
constant $ C $ is independent of $j$. Hence, $ q^j $ lies in the complement of $ 
Q (\zeta^j, C C_j \delta_D(p^j)) $, by construction, and therefore, the first component
\[
\abs{ \Big( \Delta^{\epsilon_j}_{\zeta^j} \circ \Phi^{\zeta^j}(q^j) \Big)_1 } \geq 
\frac{ \tau \big( \zeta^j, C C_j \delta_D(p^j)  \big)}{\tau(\zeta^j, \epsilon_j)} \gtrsim C_j \rightarrow + \infty.
\]
As a consequence, $ \abs{ \Delta^{\epsilon_j}_{\zeta^j} \circ \Phi^{\zeta^j}(q^j) - ('0, -1) } 
\rightarrow + \infty $, and hence the claim. \qed

\section{Proof of Theorem \ref{thm4}}

\noindent Suppose there exists a biholomorphism $ f $ from 
$ D_1 $ onto $ D_2 $ with the property that $ q^0 $ belongs to the cluster set of $f$ at $ p^0 $.  
To begin with, we assert that $ f $ extends as a continuous mapping to $ p^0 $. This
requires the fact that $ p^0 $ and $ q^0 $ are both \textit{plurisubharmonic barrier points}
(cf. \cite{Su}, \cite{CPS}). For the strongly pseudoconvex case, this is well known due to Fornaess 
and Sibony (\cite{FS}), and for smooth pseudoconvex finite type point $ q^0 $, the above statement
was proved in \cite{Cho}, \cite{Su}.

\medskip

\noindent Assume that both $ p^0 = 0 $ and $ q^0 = 0 $ and let $ q^j $ be a sequence of 
points in $ D_2 $ converging to $ q^0 $ along the inner normal to the origin, i.e., 
$ q^j = ('0, - \delta_j) $, each $ \delta_j > 0 $ and $ \delta_j \searrow 0 $. Since 
$ f : D_1 \rightarrow D_2 $ is a biholomorphism and $ 0 \in cl_f(0) $, there exists a 
sequence $ p^j \in D_1 $ with $ p^j \rightarrow 0 $ such that $ f(p^j) = q^j $. Now, 
scale $ D_1 $ with respect to $ \{ p^j \} $ and $ D_2 $ with respect to $ \{q^j \} $.

\medskip

\noindent To scale $ D_1 $, recall that by \cite{Pi}, for each $ \xi $ near $ p^0 \in \partial D_1 $, 
there is a unique automorphism $ h^{\xi} $ of $ \mathbb{C}^n $  with $ h^{\xi}(\xi) = 0 $
such that the domain $ h^{\xi} ( D_1 ) $ is given by
\[ 
\big \{ z \in \mathbf{C}^n: 2 \Re \big( z_2 + K^{\xi}(z)\big) + H^{\xi}(z) +
\alpha^{\xi}(z) < 0 \big \}
\] 
where 
$ K^{\xi}(z) = \displaystyle\sum_{i,j=1} ^n a_{ij} (\xi) z_i z_j, H^{\xi}(z)
= \displaystyle\sum_{i,j=1} ^n b_{ij} (\xi) z_i \bar{z_j}$ and $
\alpha^{\xi}(z)= o (|z|^2) $ with $ K^{\xi}(z_1,0) \equiv 0 $
and $ H^{\xi}(z_1,0) \equiv |z_1|^2 $. The automorphisms $ h^{\xi} $ converge to the identity uniformly
on compact subsets of $ \mathbb{C}^n $ as $ \xi \rightarrow p^0 $. For $ \xi = 
(\xi_1, \xi_2, \ldots, \xi_n) \in D_1 $ as above, consider the point 
$ \tilde{\xi} = ( \xi_1, \xi_2, \ldots, \xi_{n-1}, \xi_n + \epsilon) $ where $ \epsilon > 0 $ 
is chosen to ensure that $ \tilde{\xi} \in \partial D_1 $. Then the actual 
form of $ h^{\xi} $ shows that $ h^{ \tilde{\xi}} (\xi) = ('0, - \epsilon) $.

\medskip

\noindent In order to apply Pinchuk's scalings to the sequence $ p^j \rightarrow 
p^0 \in \partial D_1 $, choose $ \xi^j \in \partial D_1 $ such that if 
$ p^j = ('p^j, p^j_n) $, then $ \xi^j = 
('p^j, p^j_n + \epsilon_j) \in \partial D_1 $ for some $ \epsilon_j > 0 $. Then 
$ \epsilon_j \approx \delta_{D_1}(p^j) $ by construction. Now, define the dilations
\[
T^j ( z_1, z_2, \ldots, z_n ) = \left( { {\epsilon_j}^{-\frac{1}{2}}{z_1} } , \ldots,
 { \epsilon_j}^{-\frac{1}{2}} {z_{n-1}}, { \epsilon_j}^{-1} {z_n
 } \right)
\]
and the dilated domains $ D_1^j = T^j \circ h^{\xi^j} ( D_1 ) $. It was shown 
in \cite{Pi} that $ D_1^j $ converge to 
\[
 D_{1, \infty} = \left \{ z \in \mathbb{C}^n : 2 \Re z_n + | z_1 |^2 + 
 | z_2 |^2 + \cdots + |z_{n-1} |^2 < 0
\right \}
\]
which is the unbounded realization of the unit ball in $ \mathbb{C}^n $. 

\medskip

\noindent For clarity and completeness, we briefly describe the scalings for the domain 
$ D_2 $, which are simpler this time as the sequence $ q^j $ approaches $ q^0 $ normally. 
To start with, consider $ \Delta^j : \mathbb{C}^n \rightarrow \mathbb{C}^n $, a sequence
of dilations, defined by
\[
\Delta^j( w_1, w_2, \ldots, w_{n-1}, w_n ) = \left( \delta_j^{- \frac{1}{2m}} w_1, \delta_j^{-\frac{1}{2}} w_2, 
\cdots, \delta_j^{-\frac{1}{2}} w_{n-1}, \delta_j^{-1} w_n \right).
\]
Note that $ \Delta^j ('0, - \delta_j) = ('0, -1) $ for all $j$ and the domains $ D_1^j = \Delta^j (D_1) $ 
converge in the Hausdorff sense to
\[
D_{2, \infty} = \big\{ w \in \mbb C^n : 2 \Re w_n  + Q_{2m}(w_1, \overline{w}_1) + \abs{w_2}^2 + \cdots + 
\abs{w_{n-1}}^2 < 0 \big\},
\]
where $ Q_{2m} $ is the homogeneous polynomial of degree $ 2m $ that coincides with the polynomial
of same degree in the homogeneous Taylor expansion of the defining function for $ \partial D_2 $ 
near the origin.

\medskip

\noindent By the biholomorphic invariance of the function $h$, it follows that $ h_{D_1}
( p^j, \mathbb{B}^n ) = h_{D_2} (q^j, \mathbb{B}^n ) $. Then Theorem \ref{thm2} assures us 
that the right hand side above converges to $ h_{D_{2, \infty}} \big( ('0,-1), \mathbb{B}^n \big) $. 
Furthermore, since $ \partial D_1$ is strongly pseudoconvex near $ p^0 $, 
by Theorem \ref{E42}, we see that the left hand side above $ h_{D_1} ( p^j, \mathbb{B}^n ) \rightarrow 0 $
as $ j \rightarrow \infty $. It follows that $ h_{D_{2, \infty}} \big( ('0,-1), \mathbb{B}^n \big) = 0 $. 
As a result, $ D_{2, \infty} $ is biholomorphic to $ \mathbb{B}^n $ by virtue of Proposition \ref{E43}.
Therefore, the problem has been quickly reduced to investigation for the special case of model domains,
namely, $ D_{2,\infty} $ and $ \mathbb{B}^n $, which are algebraic.

\medskip

\noindent By composing with a suitable Cayley transform, if necessary, we may assume that there is a 
biholomorphism $ F $ from $ D_{1, \infty}$ (which is biholomorphic to $ \mathbb{B}^n $) onto 
$ D_{2, \infty} $ with the additional property that the cluster set of $ F^{-1} $ at some point
$ ('0, \iota a) \in \partial D_{2, \infty} $ (for $a \in \mathbb{R} $) contains a finite 
point of $ \partial D_{1, \infty} $. Then Theorem 2.1 of \cite{CP} tells us that $ F^{-1} $ extends 
holomorphically past the boundary to a neighbourhood  of $ ('0, \iota a) $. It turns out that
$ F^{-1} $ extends biholomorphically across some point $ ('0, \iota a^0 ) \in \partial D_{2, \infty} $.
To prove this claim, it suffices to show that the Jacobian of $ F^{-1} $ does not vanish
identically on the complex plane 
\[
L = \big \{ ('0, \iota a) : a \in \mathbb{R} \big \} \subset \partial D_{2, \infty}.
\]
If the claim were false, then the Jacobian of $ F^{-1} $ vanishes on the entire $ w_n $-axis, 
which intersects the domain $ D_{2, \infty} $. However, $ F^{-1} $ is injective on $ D_{2, \infty} $,
and consequently, has nowhere vanishing Jacobian determinant on $ D_{2, \infty} $. This contradiction 
proves the claim.

\medskip

\noindent Furthermore, it is evident that the translations in the imaginary $ w_n $-direction 
leave $ D_{2, \infty} $ invariant. Therefore, we may assume that $ ('0, \iota a^0) $ is the origin and 
that $ F $ preserves the origin. Now recall that the Levi form is preserved under local biholomorphisms 
around a boundary point, thereby yielding the strong pseudoconvexity of $ \partial D_{2, \infty} $.
In particular, $ Q_{2m}(w_1, \overline{w}_1) = |w_1|^2 $ which gives the strong pseudoconvexity 
of $ q^0 \in \partial D_2 $. This contradicts the assumption that the Levi form of $ \partial D_2 $
has rank exactly $ n - 2 $ at $ q^0 $. Hence the result. \qed

\section{Continuous extension of isometries -- Proof of Theorem \ref{thm5}}

\noindent The proof of Theorem \ref{thm5} will be accomplished in several steps. The first step is to analyse the behaviour of the Kobayashi 
metric on a smoothly bounded pseudoconvex Levi corank one domain.

\begin{prop} \label{E20}
Let $D $ be a bounded domain in $ \mathbb{C}^n $. Assume that $ \pa D $ is smooth pseudoconvex and of 
finite type near a point $ p^0  \in \partial D $. Suppose further that the Levi form of $\pa D $ has rank at least $n-2$ near $p^0$. 
Then for any $ \epsilon > 0 $, there exist positive numbers $ r_2 < r_1 < \epsilon $ and $ C $ (where $ r_1, r_2 $ and $ C $ depend on $ A $) such that
\begin{alignat*}{3}
d_D^k( A,B) \geq \frac{1}{2} \log \frac{1}{\delta_D(B)} - C, \qquad A \in D \setminus B(p^0, r_1), B \in B(p^0, r_2) \cap D.
\end{alignat*}
\end{prop}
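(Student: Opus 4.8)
The strategy is a standard localization-of-the-Kobayashi-distance argument, combined with the lower bound on the infinitesimal Kobayashi metric in the normal direction that is available for Levi corank one points (indeed, more generally for smooth pseudoconvex finite type points where the Levi form has rank at least $n-2$), as already exploited in the proof of Theorem \ref{thm1} in the discussion following \eqref{gamlb}. Fix $A \in D$; choose $r_1 < \epsilon$ small enough that $A \notin B(p^0, r_1)$ and that the boundary of $D$ near $B(p^0, r_1)$ admits the normal form and the associated estimates from \cite{Cho1} and \cite{TT}. Then choose $r_2 < r_1$ with $\overline{B(p^0, r_2)} \cap D$ contained well inside $B(p^0, r_1)$; say $r_2 < r_1/2$. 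The point is to run the curve argument from Theorem \ref{thm1}: given $B \in B(p^0, r_2) \cap D$ and any piecewise $C^1$ path $\gamma : [0,1] \to D$ with $\gamma(0) = B$, $\gamma(1) = A$, there is a last exit time $t_0$ with $\gamma(t_0) \in \partial B(p^0, r_1) \cap D$ and $\gamma([0,t_0]) \subset B(p^0, r_1) \cap D$, and it suffices to bound the Kobayashi length of $\gamma|_{[0,t_0]}$ from below by $\tfrac12 \log(1/\delta_D(B)) - C$.

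\textbf{Key steps.} First I would invoke the infinitesimal lower bound of Cho--Thai--Thu valid near $p^0$: for $z$ in a neighbourhood of $p^0$ and $X \in \mathbb{C}^n$,
\[
K_D(z,X) \gtrsim \frac{|X_n|}{\delta_D(z)}
\]
in suitable local coordinates in which $\nu(p^0)$ points along the $z_n$-axis. Second, exactly as in the argument following \eqref{gamlb} (specifically \eqref{X_neqn} and the subsequent cone argument), I would shrink $r_1$ if necessary so that, because the path $\gamma|_{[0,t_0]}$ travels from $\delta_D(B)$-distance from $\partial D$ out to the fixed distance $\mathrm{dist}(\partial B(p^0,r_1), \partial D)$, there is a subinterval of $[0,t_0]$ of positive length on which the normal component $\langle \dot\gamma(t), \nu(\pi(\gamma(t)))\rangle$ dominates, so that
\[
|\dot\gamma_n(t)| \geq \tfrac12 \bigl|\langle \dot\gamma(t), \nu(\pi(\gamma(t)))\rangle\bigr| \gtrsim \tfrac12 \,\Re\langle \dot\gamma(t), \nu(\pi(\gamma(t)))\rangle = \tfrac12\,\frac{d}{dt}\delta_D(\gamma(t)).
\]
Third, combining these, $K_D(\gamma(t),\dot\gamma(t)) \gtrsim \tfrac12\,\frac{d}{dt}\log \delta_D(\gamma(t))$ on that subinterval, and integrating from $\delta_D(B)$ to the fixed value $\delta_D(\partial B(p^0,r_1) \cap D) \approx$ const gives
\[
L^k_D(\gamma) \geq L^k_D(\gamma|_{[0,t_0]}) \geq \tfrac12 \log\frac{\mathrm{dist}(\partial B(p^0,r_1),\partial D)}{\delta_D(B)} = \tfrac12\log\frac{1}{\delta_D(B)} - C
\]
for a constant $C$ depending only on $A$ (through $r_1$) and $D$. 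Taking the infimum over all such $\gamma$ yields $d^k_D(A,B) \geq \tfrac12 \log(1/\delta_D(B)) - C$, which is the claim. One should note the $\tfrac12$ is sharp and comes precisely from the $1/\delta_D(z)$ rate of blow-up of $K_D$ in the normal direction.

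\textbf{Main obstacle.} The only genuinely delicate point — and the one I would write out with care — is guaranteeing the existence of a subinterval of positive length on which the path has a nontrivial normal component and on which \eqref{X_neqn}-type control holds. This is a continuity argument (the normal field $\nu(\pi(z))$ varies continuously, $\pi(z)$ is well defined for $z$ near $p^0$, and the curve must make net progress in $\delta_D$), and it is exactly the argument carried out in detail in Section \ref{pfofthm1}; here one must additionally check that shrinking $r_1$ (equivalently shrinking the neighbourhood where the cone estimate $R(z,X) < 1/2$ holds) does not disturb the relation $A \notin B(p^0, r_1)$, which is harmless since $A$ is fixed first and $r_1$ is chosen afterwards. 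Everything else — the infinitesimal lower bound and the logarithmic integration — is routine given the machinery already established. I would present the argument by explicitly reducing to the Theorem \ref{thm1} computation rather than repeating it in full.
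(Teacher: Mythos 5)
Your proposal is correct and follows essentially the same route as the paper's own proof: localize to a neighbourhood of $p^0$ chosen to avoid $A$ where the Cho--Thai--Thu infinitesimal lower bound holds, split an arbitrary path at its exit from that neighbourhood, and on the inner subcurve integrate the normal-direction estimate $K_D(z,X) \gtrsim \abs{X_n}/\delta_D(z) \gtrsim \tfrac{1}{2}\tfrac{d}{dt}\log\delta_D$, exactly as in Lemma \ref{E28} and the cone argument following (\ref{gamlb}). The only differences are cosmetic (the orientation of the curve, and your writing out explicitly the continuity/cone step that the paper handles by reference to Section \ref{pfofthm1}).
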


\begin{proof} By Theorem 3.10 of \cite{TT}, there exists a neighbourhood $ U $ of $ p^0 $ in $ \mathbb{C}^n $ such that
\begin{alignat}{3} \label{E19}
K_D(z, v) \approx \frac{(D \Phi^z(z) v)_1}{\tau \left( z, \delta_D(z) \right)} + \frac{(D \Phi^z(z) v)_2}{ \left
( \delta_D(z) \right)^{1/2} } + \cdots + \frac{(D \Phi^z(z) v)_{n-1}}{ \left( \delta_D(z) \right)^{1/2}} + \frac{(D \Phi^z(z) v)_n}{\delta_D(z)}
\end{alignat}
for all $ z \in U \cap D $ and $ v $ a tangent vector at $ z $. The neighbourhood $ U $ is so chosen to avoid the point $A $.
Let $ \gamma $ be a piecewise $ C^1$-smooth 
curve in $ D $ joining $ A $ and $ B $, i.e., $ \gamma (0) = A $ and $ \gamma (0) = B $. As we travel along
 $ \gamma $ starting from $ A$, there is a last point  $ \alpha $ on the trace $ \gamma $ with $ \alpha 
\in \partial U \cap D $. Let $ \gamma (t_0) = \alpha $ and denote by $ \sigma $, the subcurve of $ 
\gamma $ with end-points $ \alpha $ and $ B $. Observe that the trace $ \sigma $ is contained in an $ 
\delta $-neighbourhood of $ \partial D $ for some fixed $ \delta > 0 $. Here we choose $\delta > 0$ in such a way that the $\delta$-neighbourhood of $\partial D$ does not 
contain the point $A$. Evidently,
\begin{alignat*}{3}
\int_0^1 K_D \left( \gamma(t), \dot{\gamma}(t) \right) dt \geq \int_{t_0}^1 K_D \left( \sigma(t), \dot{\sigma}(t) \right) dt.
\end{alignat*}
From this point, proceeding exactly as in the proof of Lemma \ref{E28}, it follows using (\ref{E19}) that
\begin{alignat*}{3}
\int_0^1 K_D \left( \gamma(t), \dot{\gamma}(t) \right) dt \gtrsim \frac{1}{2} \log \frac{1}{\delta_D(B)} - C
\end{alignat*}
for some positive constant $ C $. Now, taking the infimum over all such paths $ \gamma $ yields
\begin{alignat*}{3}
d_D^k( A,B) \geq \frac{1}{2} \log \frac{1}{\delta_D(B)} - C
\end{alignat*}
as desired.
\end{proof}

\noindent Finally, we recall a localization result (Lemma 4.3 of \cite{M}) for the Kobayashi balls. 
The proof relies on the localization property of the Kobayashi metric near holomorphic peak points (see \cite{G}, \cite{Ro}).

\begin{lem} \label{E22}
Let $ D \subset \mathbb{C}^n $ be a bounded domain and $ p^0 \in \partial D $  be a holomorphic peak point. 
Then for any fixed $ R > 0 $ and every neighbourhood $ U $ of $ p^0 $, there exists a smaller neighbourhood $ V \subset U $ of $ p^0 $ with $ V $ relatively compact in $ U $ such that for all $ z \in V \cap D $ we have
\[
B_D(z, c R ) \subset B_{U \cap D}(z, R ) \subset B_D(z, R )
\]
where $ c $ is a positive constant independent of $ z \in V \cap D $.
\end{lem}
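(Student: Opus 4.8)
The plan is to establish the two inclusions separately: the right-hand one is purely formal, and the left-hand one rests on the localization of the Kobayashi metric at a peak point. The inclusion $B_{U\cap D}(z,R)\subset B_D(z,R)$ needs no hypothesis on $p^0$: since $U\cap D\subset D$, the inclusion map is distance decreasing for the Kobayashi distance, so $d^k_D\le d^k_{U\cap D}$ on $(U\cap D)\times(U\cap D)$; hence if $d^k_{U\cap D}(z,w)<R$ then $w\in U\cap D\subset D$ and $d^k_D(z,w)<R$, i.e. $w\in B_D(z,R)$.

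For the inclusion $B_D(z,cR)\subset B_{U\cap D}(z,R)$ the key input --- and the step I expect to be the real obstacle --- is the Graham--Royden localization of the \emph{infinitesimal} Kobayashi metric at a holomorphic peak point (see \cite{G}, \cite{Ro}): since $p^0$ is a holomorphic peak point, for the given neighbourhood $U$ there are a neighbourhood $V_1$ of $p^0$ with $V_1\Subset U$ and a constant $c_0\in(0,1]$ such that
\[
K_D(\zeta,v)\ \ge\ c_0\,K_{U\cap D}(\zeta,v)\qquad\text{for all }\ \zeta\in V_1\cap D,\ v\in\mathbb{C}^n .
\]
Combined with the trivial reverse inequality $K_D\le K_{U\cap D}$, this says the two infinitesimal metrics are comparable near $p^0$. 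Its proof uses a holomorphic peak function at $p^0$ together with a normal families argument applied to the analytic discs defining $K_{U\cap D}$, and this is precisely where the peak-point hypothesis is consumed; everything below is a routine integration and a peak-function estimate.

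Next I would shrink the centre-neighbourhood so that Kobayashi balls of radius $c_0R$ about points near $p^0$ remain inside $V_1$. Let $g\in\mathcal{O}(D)\cap C(\overline D)$ be a peak function at $p^0$, so $g(p^0)=1$ and $|g|<1$ on $\overline D\setminus\{p^0\}$, and put $1-\varepsilon_1:=\sup\{\,|g(w)|:w\in\overline D\setminus V_1\,\}<1$. Applying the distance-decreasing property to $g\colon D\to\Delta$, for $z\in D$ near $p^0$ and any $w\in D\setminus V_1$ one gets $d^k_D(z,w)\ge d^k_\Delta\!\big(g(z),g(w)\big)$, and the right-hand side tends to $+\infty$ uniformly in such $w$ as $z\to p^0$, because $g(z)\to1\in\partial\Delta$ while $g(w)$ stays in the compact set $\{\,|\lambda|\le 1-\varepsilon_1\,\}$. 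Hence there is a neighbourhood $V\Subset V_1$ of $p^0$ such that $B_D(z,c_0R)\subset V_1\cap D$ for every $z\in V\cap D$; in particular $\overline V\subset V_1\subset U$, so $V$ is relatively compact in $U$.

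Finally I would integrate the infinitesimal comparison along near-geodesics. Fix $z\in V\cap D$ and $w\in B_D(z,\tfrac{c_0}{2}R)$, and choose a piecewise-$C^1$ curve $\gamma$ in $D$ from $z$ to $w$ with $L^k_D(\gamma)<c_0R$ (possible since $d^k_D(z,w)<\tfrac{c_0}{2}R$). Every point of $\gamma$ is joined to $z$ by a subarc of $d^k_D$-length $<c_0R$, so $\gamma\subset B_D(z,c_0R)\subset V_1\cap D$; the comparison therefore applies all along $\gamma$, and
\[
d^k_{U\cap D}(z,w)\ \le\ L^k_{U\cap D}(\gamma)\ =\ \int K_{U\cap D}(\gamma,\dot\gamma)\ \le\ \frac1{c_0}\int K_D(\gamma,\dot\gamma)\ =\ \frac1{c_0}\,L^k_D(\gamma)\ <\ R .
\]
Thus $w\in B_{U\cap D}(z,R)$, which yields $B_D(z,cR)\subset B_{U\cap D}(z,R)$ with $c:=c_0/2$, uniformly in $z\in V\cap D$. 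Together with the first paragraph this gives the asserted chain of inclusions.
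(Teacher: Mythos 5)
Your proof is correct and is precisely the argument the paper alludes to: the paper gives no proof of its own, merely citing Lemma 4.3 of \cite{M} and noting that it ``relies on the localization property of the Kobayashi metric near holomorphic peak points (see \cite{G}, \cite{Ro})'', which is exactly the infinitesimal comparison you invoke and then integrate. The two supporting steps you supply --- the peak-function estimate forcing $B_D(z,c_0R)\subset V_1\cap D$ so that near-geodesics stay where the localization applies, and the trivial distance-decreasing inclusion --- are the standard way to pass from the infinitesimal statement to the ball inclusions, so there is nothing to flag.
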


\noindent\textit{Proof of Theorem \ref{thm5}:} The proof involves two steps. 
The first step is to show that $ f $ extends to $ D_1 \cup \{ p^0 \} $ as a 
continuous mapping. Once this claim is established, the second step is to 
verify that $ f $ is continuous on a neighbourhood of $ p^0 $ in $ \overline{D}_1 $.

\medskip

\noindent To prove the first claim, assume, on the contrary, that $ f $ does 
not extend continuously to $ p^0 $. It follows that there is a sequence of points $
s^j $ in $ D_1 $ with $ s^j \rightarrow p^0 \in \partial D_1 $ such
that the sequence $ f(s^j) $ does not converge to $ q^0 \in \partial D_2 $. 
Moreover, by hypothesis, there exists a sequence $ p^j \in D_1 $ with $ p^j 
\rightarrow p^0 $ and $ f(p^j) \rightarrow q^0 $. Consider polygonal paths $ \gamma ^j $ in $ D_1 $
joining $ p^j $ and $ s^j $ defined as follows: for each $j$,
choose $ p^{j0}, s^{j0} $ on $ \partial D_1 $ closest to $ p^j $ and
$ s^j $ respectively. Set $ p^{j'} = p^j - |p^{j}- s^{j} |
\nu(p^{j0})$ and $ s^{j'}= s^j - |p^j - s^j| \nu(s^{j0})$ where $ \nu(z)
$ denotes the outward unit normal to $ \partial D_1 $ at $ z \in
\partial D_1 $. Define $ \gamma^j = \gamma_1^j \cup \gamma_2^j \cup \gamma_3^j$ as the union of three segments, where

\begin{itemize}

\item $ \gamma_1^j $ is the straight line path joining $p^{j}$ and
$p^{j'}$ along the inner normal to $ \partial D_1 $ at the point
$p^{j0}$,

\item $ \gamma_2^j $ is the straight line joining
$p^{j'}$ and $ s^{j'}$, and 

\item $ \gamma_3^j $ is the straight line segment joining $ s^{j'} $ 
and $ s^{j} $ along the inward normal to $ \partial D_1 $ at the point $ s^{j0} $.

\end{itemize}
Evidently, the composition $ f \circ \gamma ^j $ yields a continuous path in 
$ D_2 $ joining $ f(p^j) $ and $ f(s^j) $. Fix $ \epsilon > 0 $ and for each $ j $, 
pick points $ u^j \in \partial {B}( q^0, \epsilon) \cap U_2 $
on the trace$(f \circ \gamma ^j)$. Further, let $ t^j \in D_1 $ be such that $
f(t^j) = u^j $. Observe that the points $ t^j $ lie on $\mbox{trace}(\gamma^j) $ and hence
$ t^j \rightarrow p^0 $ by construction. It is straightforward to check that the sequence 
$ f(t^j) = u^j $ converges to a point $ u^0 \in U_2 \cap \partial D_2 $, where $ u^0 $ is
necessarily different from $ q^0 $. Since strong pseudoconvexity is an open condition, we may assume
that a tiny neighbourhood $ \Gamma' \subset \partial D_2 $ of 
$ q^0 $ is strongly pseudoconvex. In particular, 
the same holds true for $ u^0 $ (this can be arranged by choosing $ \epsilon $ small so 
that $ u^0 \in \Gamma' $). Now, using the upper estimate (\cite{FR}) on the Kobayashi 
distance between two points converging to the same boundary point, we see that 
\begin{alignat}{3} \label{E23} 
d^k_{D_1}(p^j,t^j)  \leq  \frac{1}{2} \log \left( \delta_{D_1}(p^j)  + |p^j - t^j| \right) 
 + \frac{1}{2} \log \left( \delta_{D_1}(t^j) + |p^j - t^j| \right) \\ \nonumber
+ \frac{1}{2} \log { \frac{1}{\delta_{D_1}(p^j)}} + \frac{1}{2} \log { \frac{1}{\delta_{D_1}(t^j)} } +
 C_1
\end{alignat} 
for all $j$ large and uniform positive constant $ C_1 $. On the other hand, Corollary 2.4 of \cite{FR} implies that
\begin{equation} \label{E24}
d^k_{D_2} \big( f(p^j), f(t^j) \big) \geq \frac{1}{2} \log \frac{1}{ \delta_{D_2} 
\left( f(p^j) \right) } + \frac{1}{2} \log \frac{1}{ \delta_{D_2} 
\left( f(t^j) \right) } - C_2
\end{equation}
for all $j$ large and uniform positive constant $ C_2 $. But we know that $
d^k_{D_1}(p^j,t^j) = d^k_{D_2} \big( f(p^j), f(t^j) \big) $ for all $j$.

\medskip

\noindent \textbf{Assertion:} $ \delta_{D_2} \left( f(p^j) \right)
\leq C_3 \delta_{D_2} ( p^j ) $ and $ \delta_{D_2} \left( f(t^j) \right) \leq 
C_3 \delta_{D_2}( t^j) $ for some uniform positive
constant $ C_3 $.

\medskip

\noindent If the assertion were true, then, comparing the
inequalities (\ref{E23}) and (\ref{E24}) together with the above assertion 
gives that for all $j$ large
\[
- ( C_1 + C_2 + \log C_3 ) \leq  \frac{1}{2} \log \left( \delta_{D_1}(p^j) 
+ |p^j - t^j| \right) + \frac{1}{2} \log \left( \delta_{D_1}(t^j) +
|p^j - t^j| \right)
\]
which is impossible. This contradiction proves the claim.

\medskip

\noindent To establish the assertion, fix $ A
\in D_1 $. Now, by Proposition \ref{E20}, it follows that
\begin{equation} \label{E25}
d^k_{D_1} (p^j, A) \geq  \frac{1}{2} \log \frac{1} {\delta_{D_1}(p^j)} - C_4
\end{equation}
for some uniform positive constant $ C_4 $. On the other hand, the following upper estimate
for the Kobayashi distance is well known: 
\begin{equation} \label{E26}
d^k_{D_2} \left( f(p^j), f(A) \right) \leq \frac{1}{2} \log \frac{1} {\delta_{D_2} 
\left( f(p^j) \right)} + C_5
\end{equation}
for all $j$ large and a uniform constant $ C_5 > 0 $. Use the fact that
$  d^k_{D_1} ( p^j, A ) = d^k_{D_2} \left( f(p^j), f(A) \right) $, and 
compare the inequalities (\ref{E25}) and (\ref{E26}) to conclude that 
the assertion made above holds true. Hence, the first claim is completely proven. 

\medskip

\noindent Now, let $ a^j $ be a sequence of points in $ U_1 \cap D_1 $  
with $ a^j \rightarrow a^0 \in U_1 \cap \partial D_1 $. The goal now
is to show that $f$ extends continuously to the point $ a^0 $. To see this, pick 
$ a' \in U_1 \cap D_1 $ such that $ f(a') \in U_2 \cap D_2 $. This 
can be achieved using the continuity of the mapping $f$. There are 
two cases to be considered. After passing to a subsequence, if needed, 

\begin{itemize}

\item [(i)] $ f(a^j ) \rightarrow b^0 \in U_2 \cap \partial D_2 $,

\item [(ii)] $ f(a^j ) \rightarrow b^1 \in U_2 \cap D_2 $ as $ j 
\rightarrow \infty $.

\end{itemize}

\noindent In case (ii), observe that the quantity $ d^k_{U_2 \cap D_2}
\big( f(a^j), f(a') \big) $ is uniformly bounded (say by $ R$) because of the
completeness of $ U_2 \cap D_2 $. Therefore, for all $j$ large
\[
d^k_{D_2} \big( f(a^j), f(a') \big) \leq d^k_{U_2 \cap D_2} \big( 
f(a^j), f(a') \big) < R.
\]
Using the fact $ d^k_{D_1} ( a^j, a') = d^k_{ D_2} \big( 
f(a^j), f(a') \big) $, we see that $ a' \in B_{D_1} (a^j, R) $. 
Applying Lemma \ref{E22} gives that $ a' \in B_{U_1 \cap D_1} ( a^j, 
R/c) $ for some uniform constant $c$. This exactly means that 
\[
d^k_{U_1 \cap D_1} (a^j, a') < R/c.
\]
This is however a contradiction as $ d^k_{U_1 \cap D_1} (a^j, a') $
remains unbounded because of the completeness of $ U_1 \cap D_1 $.
Hence, the sequence $ f(a^j ) \rightarrow b^0 \in U_2 \cap \partial 
D_2 $ and consequently $ b^0 $ belongs to the cluster set of $ a^0 $
under $f$. As before, we may assume that $ \partial D_2 $ is strongly pseudoconvex at
 $ b^0 $. From this point, proceeding exactly as in the first part 
of the proof yields that $f$ extends continuously to the point $ a^0 $. 
Since $ a^0 \in U_1 \cap \partial D_1 $ was arbitrary, Theorem
\ref{thm5} is completely proven. \qed

\section{Kobayashi metric of a complex ellipsoid in $\mathbb{C}^n $ -- Proof of Theorem \ref{thm6}}

\noindent Let $ D $ be a domain in $ \mathbb{C}^n $, $z \in D $ and $ v \in \mathbb{C}^n $ a 
tangent vector at the point $z$. Recall that a mapping $ \phi \in \mathcal{O}(\Delta, D) $ 
is said to be a complex geodesic for $ (z, v) $ if $ \phi(0) = z $ and $ K_D (z,v) \phi '(0) = v $. A complex ellipsoid is a domain of the form 
\begin{equation*}
E (2m_1, \ldots, 2m_n) = \big\{ z \in \mathbb{C}^n : |z_1|^{2m_1} + \cdots + |z_n|^{2m_n} < 1 \big\}
\end{equation*}
where $ m_j > 0 $ for each $ j = 1, \ldots, n $. It is well known that complex ellipsoids are  convex if and only if $ m_j \geq 1/2 $ for $ j=1, \ldots, n $. Moreover, they are taut domains (i.e., $ \mathcal{O} ( \Delta, E(2m_1, \ldots, 2m_n) ) $ is a normal family) and hence, there always exist complex geodesics through a given point $ z \in E(2m_1, \ldots, 2m_n) $ and any tangent vector at the point $z$. The primary goal of this section is to describe the Kobayashi metric on the complex ellipsoids for the case $ m_1 \geq 1/2 $ and $ m_2 = \cdots = m_n = 1 $ -- notice that these are exactly the domains $ E_{2m} $ introduced in (1.1).  To understand the Kobayashi metric on $ E_{2m} $, we use the characterisation of all complex geodesics $ \phi : \Delta \rightarrow E (2m_1, \ldots, 2m_n) $, each $ m_j \geq 1/2 $, due to Jarnicki, Pflug and Zeinstra (\cite{JPZ}). Observe that it suffices to consider only those complex geodesics $ \phi = ( \phi_1, \ldots, \phi_n) : \Delta \rightarrow \mathbb{C}^n $  for which 
\begin{equation} \label{E1}
\phi_j \; \mbox{is not identically zero for any} \; j = 1, \ldots, n. 
\end{equation}
After a suitable permutation of variables, we may assume that for some $ 0 \leq s \leq n $, 
\begin{alignat}{2} \label{E2}
\left\{ \begin{array}{lrl}
0 \notin \phi_j(\Delta) \; \mbox{for} \; j = 1, \ldots, s \; \mbox{and} \\
0 \in \phi_j(\Delta) \; \mbox{for} \; j = s + 1, \ldots, n.
\end{array}
\right.
\end{alignat} 
The main result of (\cite{JPZ}) that is needed is:

\begin{thm} \label{E3}
A non-constant mapping $ \phi= (\phi_1, \ldots, \phi_n) : \Delta \rightarrow \mathbb{C}^n $ with (\ref{E1}) and 
(\ref{E2}) is a complex geodesic in $ E (2m_1, \ldots, 2m_n) $ if and only if $ \phi $ is of the form 
\begin{alignat*}{4}
\phi_j(\lambda) = 
\left\{ \begin{array}{lrl}
a_j \left( \frac{1 - \bar{\alpha}_j \lambda}{ 1 - \bar{\alpha}_0 \lambda }
\right)^{1/m_j} & \mbox{for} & j=1, \ldots, s, 
\\
a_j \left( \frac{\lambda - \alpha_j}{1 - \bar{\alpha}_j \lambda} \right)  \left(\frac{1 - \bar{\alpha}_j \lambda}{1 - \bar{\alpha}_0 \lambda }\right)^{1/m_j}
& \mbox{for} & j= s+1, \ldots, n,
\end{array}
\right.
\end{alignat*}
where
\begin{alignat*}{3}
                             a_j & \in \mathbb{C} \setminus \{0\} \; \mbox{for} \; j = 1, \ldots, n, \\
      \alpha_1, \ldots, \alpha_s & \in \bar{\Delta}, \alpha_0, \alpha_{s + 1}, \ldots, \alpha_n \in \Delta, \\
                        \alpha_0 & = \sum_{j=1}^n |a_j|^{2 m_j} \alpha_j, \\
                1 + |\alpha_0|^2 & = \sum_{j=1}^n |a_j|^{2 m_j} ( 1 + |\alpha_j|^2 ), \\
            \mbox{the case} \; s & = 0, \alpha_0 = \alpha_1 = \cdots = \alpha_n \; \mbox{is excluded and}, \\
  \mbox{the branches of } & \mbox{powers are such that } 1^{1/m_j} = 1, j = 1, \ldots, n.
\end{alignat*}
\end{thm}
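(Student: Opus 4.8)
The statement is the theorem of Jarnicki, Pflug and Zeinstra, which is being quoted here as an external input, so the plan is to recall how its proof runs rather than to reprove it from scratch; for the full details one would refer to \cite{JPZ}. The engine is Lempert's theory of complex geodesics in convex domains. Since $m_j\ge 1/2$ for all $j$, the ellipsoid $E:=E(2m_1,\dots,2m_n)$ is bounded and convex, hence taut, so complex geodesics exist for every point and every direction, and they admit the classical dual description: a non-constant $\phi\in\mathcal{O}(\Delta,E)$ is a complex geodesic if and only if it extends continuously to $\bar\Delta$ with $\phi(\partial\Delta)\subset\partial E$ and there is a map $h=(h_1,\dots,h_n)\in\mathcal{O}(\Delta,\mathbb{C}^n)\cap\mathcal{C}(\bar\Delta,\mathbb{C}^n)$, nowhere zero on $\partial\Delta$, which on $\partial\Delta$ equals a positive multiple of $\bar\zeta$ times the conormal $\big(\partial r/\partial z_1(\phi(\zeta)),\dots,\partial r/\partial z_n(\phi(\zeta))\big)$ to $\partial E$ at $\phi(\zeta)$; here $r(z)=\sum_j|z_j|^{2m_j}-1$, so that $\partial r/\partial z_j(z)=m_j|z_j|^{2(m_j-1)}\bar z_j$. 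The whole task is then to identify, among all $\phi$ obeying the normalizations (\ref{E1}) and (\ref{E2}), those admitting such an $h$, and to write the pair $(\phi,h)$ down explicitly.

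For the sufficiency direction I would substitute the stated ansatz for $\phi_j$ and verify three things. First, $\phi(\partial\Delta)\subset\partial E$: on $|\zeta|=1$ one has, using $|1-\bar\beta\zeta|^2=\bar\zeta(1-\bar\beta\zeta)(\zeta-\beta)$ and the fact that the extra factor $(\zeta-\alpha_j)/(1-\bar\alpha_j\zeta)$ occurring when $j>s$ is unimodular there, the identity
\[
|\phi_j(\zeta)|^{2m_j}=|a_j|^{2m_j}\,\frac{|1-\bar\alpha_j\zeta|^2}{|1-\bar\alpha_0\zeta|^2},
\]
so that $\sum_j|\phi_j(\zeta)|^{2m_j}=1$ becomes, after clearing the common denominator $|1-\bar\alpha_0\zeta|^2$ and comparing the constant term and the coefficient of $\zeta$ of the resulting affine function of $\zeta$ on $\partial\Delta$, precisely the two normalization conditions $\sum_j|a_j|^{2m_j}\alpha_j=\alpha_0$ and $\sum_j|a_j|^{2m_j}(1+|\alpha_j|^2)=1+|\alpha_0|^2$. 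Second, $\phi(\Delta)\subset E$, which follows from the maximum principle applied to $\sum_j|\phi_j|^{2m_j}$ together with $\phi_j\not\equiv 0$. Third, one exhibits $h$: with a suitable positive $\rho$ on $\partial\Delta$ (a power of $|1-\bar\alpha_0\zeta|$ corrected by Blaschke factors) the components $h_j(\zeta)=\bar\zeta\,\rho(\zeta)\,\partial r/\partial z_j(\phi(\zeta))$ simplify, since $\partial r/\partial z_j(\phi(\zeta))=m_j|\phi_j(\zeta)|^{2m_j}/\phi_j(\zeta)$ shows that the fractional power $\big((1-\bar\alpha_j\zeta)/(1-\bar\alpha_0\zeta)\big)^{1/m_j}$ concealed in $1/\phi_j$ cancels exactly the one in $\phi_j$, leaving a rational function that is holomorphic on $\Delta$, continuous on $\bar\Delta$ and zero-free on $\partial\Delta$. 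The dual criterion then certifies that $\phi$ is a complex geodesic; the branch conventions $1^{1/m_j}=1$ are what keep the powers single-valued and give $\phi_j(0)=a_j$.

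The necessity direction is where I expect the real work to be. One starts from an arbitrary complex geodesic $\phi$ normalized by (\ref{E1}) and (\ref{E2}), produces its dual map $h$ from Lempert's theorem (which also delivers the boundary regularity of $\phi$ and $h$ needed below), and observes that an appropriate holomorphic combination of $\zeta$, $h_j$ and $\phi_j$ has nonnegative real boundary values on $\partial\Delta$, hence, being bounded and holomorphic, factors as a nonnegative constant times inner factors. A zero count — each $\phi_j$ can vanish at most at one point of $\Delta$, and not at all when $j\le s$ by (\ref{E2}) — then forces $\phi_j$ to be an $m_j$-th root of a Möbius transformation of the shape $(1-\bar\alpha_j\zeta)/(1-\bar\alpha_0\zeta)$, times a Blaschke factor when $j>s$; matching these roots to a common denominator pins down $\alpha_0$, and $\phi(\partial\Delta)\subset\partial E$ together with the genuine holomorphy of $h$ recovers the two constraints tying the $a_j$'s, $\alpha_j$'s and $\alpha_0$ together. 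The delicate points, all settled in \cite{JPZ}, are the boundary regularity in the non-smooth range $m_j\notin\mathbb{Z}$, the careful tracking of branches of the fractional powers, and the exclusion of the degenerate configuration $s=0$, $\alpha_0=\alpha_1=\cdots=\alpha_n$, which would render $\phi$ constant. For the purposes of the present paper one simply invokes the resulting explicit formula.
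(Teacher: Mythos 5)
The paper offers no proof of this statement: it is quoted verbatim as the main result of \cite{JPZ} and used as a black box, so your decision to cite it and only sketch the argument is exactly in line with the paper. Your sketch of the Jarnicki--Pflug--Zeinstra proof is accurate in its essentials --- the Lempert-type dual characterization via a boundary map $h$ proportional to $\bar\zeta$ times the conormal, the computation $|\phi_j(\zeta)|^{2m_j}=|a_j|^{2m_j}|1-\bar{\alpha}_j\zeta|^2/|1-\bar{\alpha}_0\zeta|^2$ on $\partial\Delta$ whose expansion yields precisely the two normalization identities, and the cancellation of the fractional powers in $h_j=\bar\zeta\rho\,\partial r/\partial z_j(\phi)$ --- so nothing further is required here.
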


\noindent\textit{Proof of Theorem \ref{thm6}:} We proceed by induction on the index $n$. First, note that the case $ n = 2 $ is Theorem 2 of \cite{BFKKMP}. Next, assume that the result holds for all integers between $ 3 $ and $ n-1 $. To prove the inductive step, fix $ ( p, 0, \ldots, 0 ) \in E_{2m} $ and $ (v_1, \ldots, v_n )  \in \mathbb{C}^n $. The main objective is to find an effective formula for $ \tau = K_{E_{2m}} \left( ( p, 0, \ldots, 0 ), (v_1, \ldots, v_n ) \right) $. It is well known (see, for example, Proposition 2.2.1 in \cite{JP}) that if $ p = 0 $, then
\[
\tau = K_{E_{2m}} \left( (0, 0, \ldots, 0), (v_1, \ldots, v_n) \right) = q_{E_{2m}} (v_1, \ldots, v_n ) 
\]
where $ q_{E_{2m}} $ denotes the \textit{Minkowski functional} of $ E_{2m} $. Furthermore, $ 1/ q_{E_{2m}} (v_1, \ldots, v_n ) $ is the only positive solution of the equation
\[
\frac{|v_1|^{2m} }{ \left( q_{E_{2m}} (v_1, \ldots, v_n ) \right)^{2m} } + \frac{|v_2|^{2} }{ \left( q_{E_{2m}} (v_1, \ldots, v_n ) \right)^{2} } + \cdots + \frac{|v_n|^{2} }{ \left( q_{E_{2m}} (v_1, \ldots, v_n ) \right)^{2} } = 1.
\]
For $ 0 < p < 1 $, if $ \hat{v} = ( v_2, \ldots, v_n ) = \hat{0} $, then
\[
\tau = K_{E_{2m}} \left( ( p, 0, \ldots, 0 ), (v_1, \ldots, 0 ) \right) = K_{\Delta} (p, v_1) = \frac{|v_1|}{1- p^2}.
\]
Hence, we may assume that $ \hat{v} \neq \hat{0} $ in the sequel.

\medskip

\noindent Let $ \phi : \Delta \rightarrow E_{2m} $ be a complex geodesic with $ \phi(0) = (p, \hat{0}) $ and $
\tau \phi'(0) = (v_1, \ldots, v_n ) $. Evidently, $ \phi_1 $ is not identically zero and $ \phi_2, \ldots, \phi_{n} $ cannot be identically zero simultaneously. Suppose that $ \phi_4 = \cdots = \phi_n \equiv 0 $, then the mapping $ \tilde{\phi} = ( \phi_1, \phi_2, \phi_3) : \Delta \rightarrow E(2m, 2, 2) \subset \mathbb{C}^3 $ is a complex geodesic through the point $ (p, 0, 0) $ in $ E(2m,2, 2) $ and consequently,
\[
\tau = K_{E_{2m}} \left( ( p, 0, \ldots, 0 ), (v_1, \ldots, v_n ) \right) = K_{E(2m, 2, 2) } \left( (p,0,0), (v_1,v_2, v_3) \right).
\]
The right hand side above is known explicitly by induction hypothesis, thereby, yielding an explicit formula for $ \tau $.
The above analysis shows that we may assume that $ \phi_j $ is not identically zero for any $j = 1, \ldots, n $. This assumption does not restrict the generality, since mappings with zero-components are exactly \textit{lower dimensional} complex geodesics, as observed above.
Then, $ \phi $ satisfies (\ref{E1}) and (\ref{E2}) (with $ s=0 $ or $ s = 1 $). 

\medskip

\noindent Consider the case $ s = 1 $ first. Applying Theorem \ref{E3} gives that
\begin{alignat*}{3}
\phi_1(\lambda) & = a_1 \left( \frac{1 - \bar{\alpha}_1 \lambda}{ 1 - \bar{\alpha}_0 \lambda }
\right)^{1/m}, \\
\phi_j(\lambda) & = a_j \left( \frac{\lambda - \alpha_j}{1 - \bar{\alpha}_j \lambda} \right)  \left(\frac{1 - \bar{\alpha}_j \lambda}{1 - \bar{\alpha}_0 \lambda }\right)
 \mbox{ for }  j= 2 , \ldots, n,
\end{alignat*}
where $a_j, \alpha_j $ are as stated in Theorem \ref{E3}. It follows that
\begin{alignat*}{3} 
\phi(0) & = ( a_1, - a_2 \alpha_2, \ldots, - a_n \alpha_n ) \; \mbox{and } \nonumber \\
%\left\{ \begin{array}{lll}
\phi_1'(0) & = a_1 ( \bar{\alpha}_0 - \bar{\alpha}_1 )/m, \\ 
\phi_j'(0) & = a_j \left( (1 - | \alpha_j |^2 ) - \alpha_j ( \bar{\alpha}_0 - \bar{\alpha}_j ) \right)  \mbox{ for }  j= 2 , \ldots, n. \nonumber
%\end{array}
%\right.
\end{alignat*}
But we know that $ \phi(0) = (p, \hat{0}) $ and $
\tau \phi'(0) = (v_1, \ldots, v_n ) $. Therefore, 
\begin{eqnarray*}
a_1 = p, \; \alpha_2 = \cdots = \alpha_n = 0, \; \mbox{and} \\
\tau p ( \bar{\alpha}_0 - \bar{\alpha}_1) /m = v_1, \tau a_2 = v_2, \ldots, \tau a_n = v_n.
\end{eqnarray*}
These conditions, in turn, imply that 
\begin{alignat*}{3}
\alpha_0 & = p^{2m} \alpha_1, \mbox{ and} \\
1 + | \alpha_0|^2 & = p^{2m} \left( 1 + | \alpha_1|^2 \right) + |a_2|^2 + \cdots + |a_n|^2,
\end{alignat*}
and consequently,
\begin{alignat*}{3}
\tau  p \bar{\alpha}_1 (p^{2m} - 1 ) /m & = v_1, \mbox{ and} \\
1 + p^{4m} |\alpha_1|^2  & = p^{2m} \left( 1 + | \alpha_1|^2 \right) + |v_2|^2 / \tau ^2 + \cdots + |v_n |^2 / \tau^2,
\end{alignat*}
or equivalently that
\begin{eqnarray} \label{E10}
\tau = \left( \frac{m^2 p^{2m-2} |v_1|^2}{(1-p^{2m})^2}
+ \frac{|v_2|^2}{1-p^{2m}} + \cdots + \frac{|v_n|^2}{1-p^{2m}} \right)^{1/2} \mbox{ for } \; \frac{m^2 |v_1|^2}{|v_2|^2 + \cdots + |v_n|^2} \leq p^2.
\end{eqnarray}

%\medskip
\noindent For the second case, when $ s = 0 $, let $ u $ and $ t $ be parameters as defined by equations (\ref{E4}) and (\ref{E5}) respectively. Observe that $ u > p $ and the parameter $t$ is a solution of 
\begin{equation} \label{E12}
(m-1)^2 p^2 t^2 - \left( u^2 + 2m(m-1) p^2 \right) t + m^2 p^2 = 0,
\end{equation}
and hence, satisfies $ 0 < t < 1 $. As before, Theorem \ref{E3} applied once again gives 
\begin{alignat*}{3}
\phi_1(\lambda) & = a_1 \left( \frac{\lambda - \alpha_1}{1 - \bar{\alpha}_1 \lambda} \right) \left( \frac{1 - \bar{\alpha}_1 \lambda}{ 1 - \bar{\alpha}_0 \lambda }
\right)^{1/m}, \\
\phi_j(\lambda) & = a_j \left( \frac{\lambda - \alpha_j}{1 - \bar{\alpha}_j \lambda} \right)  \left(\frac{1 - \bar{\alpha}_j \lambda}{1 - \bar{\alpha}_0 \lambda }\right)
 \mbox{ for }  j= 2 , \ldots, n,
\end{alignat*}
where $ a_j, \alpha_j $ satisfy the conditions listed in Theorem \ref{E3}. It is immediate that
\begin{alignat*}{3} 
\phi(0) & = ( - a_1 \alpha_1, \ldots, - a_n \alpha_n ) \; \mbox{and } \nonumber \\
%\left\{ \begin{array}{lll}
\phi_1'(0) & = a_1 \left( (1 - | \alpha_1 |^2 ) - \frac{\alpha_1}{m} ( \bar{\alpha}_0 - \bar{\alpha}_1 ) \right) , \\ 
\phi_j'(0) & = a_j \left( (1 - | \alpha_j |^2 ) - \alpha_j ( \bar{\alpha}_0 - \bar{\alpha}_j ) \right)  \mbox{ for }  j= 2 , \ldots, n. \nonumber
\end{alignat*}
It follows that
\begin{eqnarray*}
- a_1 \alpha_1 = p, \; \alpha_2 = \cdots = \alpha_n = 0, \; \mbox{and} \\
\tau a_1 \left( 1 - |\alpha_1|^2  - \frac{\alpha_1}{m} ( \bar{\alpha}_0 - \bar{\alpha}_1) \right) = v_1, \tau a_2 = v_2, \ldots, \tau a_n = v_n.
\end{eqnarray*}
As a consequence,
\begin{alignat*}{3}
\alpha_0 & = |a_1|^{2m} \alpha_1, \mbox{ and} \\
1 + | \alpha_0 |^2 & = |a_1|^{2m} \left( 1 + | \alpha_1|^2 \right) + |a_2|^2 + \cdots + |a_n|^2,
\end{alignat*}
so that
\begin{alignat}{3} \label{E7}
\frac{ |v_1| } { \tau }  = \frac{ p} { |\alpha_1| } \left( 1 - |\alpha_1|^2 - \frac{ |\alpha_1|^2 }{m} \Big( \frac{p^{2m}} { {|\alpha_1|}^{2m} } - 1 \Big) \right), \mbox{ and} \\ 
1 + \frac{p^{4m}}{ |\alpha_1|^{4m-2} } - \frac{p^{2m}}{ |\alpha_1|^{2m} }  \left( 1 + |\alpha_1|^2 \right)  = \frac{ |v_2 |^2 }{ {\tau}^2 } + \cdots + \frac{ |v_n|^2 }{ {\tau}^2 }. \label{E8}
\end{alignat}
Writing $ |\alpha_1| = \alpha $, the goal now is to solve the equations (\ref{E7}) and (\ref{E8}) for $ \alpha $. To achieve this, first eliminate $ \tau $ from the above two equations, so that
\begin{equation*}
\frac { \left( (m-1) {\alpha}^{2m} - m {\alpha}^{2m -2} + p^{2m}   \right) ^2 } { {\alpha}^{4m-2} - p^{2m} {\alpha}^{2m} - p^{2m} {\alpha}^{2m-2} + p^{4m} } =  \frac{ m^2 | v_1 |^2 }{ p^2  \left( |v_2|^2 + \cdots + |v_n|^2 \right) }.
\end{equation*}
But the right hand side above is exactly the quotient $ u^2 / {p^2 } $ and hence, 
\begin{eqnarray*}
p^2 \left( (m-1) {\alpha}^{2m} - m {\alpha}^{2m -2} + p^{2m}   \right) ^2  = \\
u^2 \left( {\alpha}^{4m-2} - p^{2m} {\alpha}^{2m} - p^{2m} {\alpha}^{2m-2} + p^{4m} \right).
\end{eqnarray*}
The above equality can be rewritten as (refer Example 8.4.7 of \cite{JP})
\begin{eqnarray} \label{E6}
 {\alpha}^{2m} - t {\alpha}^{2m-2} - (1-t) p^{2m} = 0, \mbox{ or} \\
 (m-1)^2 \frac{ p^2 } {u^2} {\alpha}^{2m} - \frac{ m ^2 p^2 }{ t u^2 } { \alpha}^{2m -2} + \frac{ \left( u^2 - p^2 \right) }{ (1-t)u^2 } p^{2m} = 0. \nonumber
\end{eqnarray}
Further observe that the equation (\ref{E6}) is equivalent to 
\begin{equation} \label{E9}
 \alpha = q_{E(2m,2)} \left( p (1-t)^{\frac{1}{2m}}, t^{\frac{1}{2}} \right),
\end{equation}
where $ q_{E(2m,2)} $ denotes the Minkowski functional of $ E(2m,2) = \left \{ z \in \mathbb{C}^2: |z_1|^{2m} + |z_2|^2 < 1    \right\} $, and therefore satisfies
\begin{equation*} 
\frac{|v_1|^{2m} }{ \left( q_{E(2m,2)} (v_1, v_2) \right)^{2m} } + \frac{|v_2|^{2} }{ \left( q_{E(2m,2)} (v_1, v_2) \right)^{2} } = 1.
\end{equation*}
It follows from the formulation (\ref{E9}) that the equation (\ref{E6}) has a unique solution $ \alpha $ in the open interval $ (0, 1) $. Once we know $ \alpha $ rather explicitly, it is easy to compute $ \tau $ -- Indeed, substituting (\ref{E6}) into the expression (\ref{E7}) yields
\begin{equation*}
\tau = \frac{m {\alpha}^{2m-1} |v_1|}{p \left( m(1-t) + t \right) \left( p^{2m} - {\alpha}^{2m-2} \right)}
\end{equation*}
which, in turn, equals
\begin{equation} \label{E11}
\tau = \frac{m {\alpha} (1-t) |v_1|}{p ( 1 - {\alpha}^2 ) \left( m(1-t) + t \right) }  \mbox{ whenever } u > p.
\end{equation}
The equations (\ref{E10}) and (\ref{E11}) together give a comprehensive formula for the infinitesimal 
Kobayashi metric of the ellipsoid $ E_{2m} $. 

\medskip

\noindent We assume that $ m > 1/2 $ for the rest of this section. To complete the proof, it remains to establish smoothness of $ K_{E_{2m}} $. To this end, we first show that away from the zero section of the tangent bundle of the domain $ E_{2m} $, both expressions (\ref{E10}) and (\ref{E11}) are $ C^1$-smooth in each of the variables $ p, v_1, \ldots, v_n $. While it is straightforward to infer smoothness from (\ref{E10}), to verify the claim for (\ref{E11}), the following observation will be needed: The \textit{Kobayashi indicatrix} of the complex ellipsoid $ E(2m,2) $ at the origin, i.e., $ \{ (v_1, v_2) \in \mathbb{C}^2 : K_{E(2m,2)} \left( (0,0), (v_1, v_2) \right) = 1 \} $ is given by the equation $ |v_1|^{2m} + |v_2|^2 = 1 $. The indicatrix is evidently $ C^1$ since $ m > 1/2$. As a consequence, the Kobayashi metric $ K_{E(2m,2)} \left( (0,0), (v_1, v_2) \right) $ must be a $ C^1$-function of the variables $ v_1 $ and $ v_2 $. Equivalently, the Minkowski functional $ q_{E(2m,2)} $ of the domain $ E(
2m,2) $ (which equals $ K_{E(2m,2)} \left( (0,0), (v_1, v_2) \right) $)
is $ C^1$. It follows from (\ref{E9}) that $ \alpha $ is $ C^1$-smooth with respect to the parameter $t$, which in turn, varies smoothly as a function of $ p, v_1, \ldots, v_n $. This proves the claim. 

\medskip

\noindent Recall that every point of $ E_{2m} $ is in the orbit of the point $ (p, \hat{0}) $ for $ 0 \leq p < 1 $. Moreover, since the action of the automorphism group of $ E_{2m} $ is real analytic, to conclude that $ K_{E_{2m}} $ is $ C^1$, it suffices to show that the Kobayashi metric is $C^1$-smooth at the point $ v = (v_1, \hat{v}) \neq (0, \hat{0}) $ in the tangent space $ T_{(p, \hat{0})} E_{2m} = \mathbb{C}^n $ with $ u = p $. We will show that for each $ j = 1, \ldots, n $,
\begin{alignat}{3} 
\lim_{p \geq u \rightarrow p } \frac{\partial K_{E_{2m} }} { \partial |v_j| } \left( (p, \hat{0}), (v_1, \hat{v}) \right)
& =  \lim_{ p < u \rightarrow p } \frac{\partial K_{E_{2m} }} { \partial |v_j| } \left( (p, \hat{0}), (v_1, \hat{v}) \right), \mbox{ and} \label{E49} \\
\label{E16}
\lim_{p \rightarrow u^{+}} \frac{\partial K_{E_{2m} }} { \partial p } \left( (p, \hat{0}), (v_1, \hat{v}) \right)
& = \lim_{p \rightarrow u^{-}} \frac{\partial K_{E_{2m}} }{ \partial p} \left( (p, \hat{0}), (v_1, \hat{v}) \right).
\end{alignat}
When $ u \leq p $, we have
\begin{equation*}
K_{E_{2m}} \left( (p, \hat{0}), (v_1, \hat{v}) \right) = \left( \frac{m^2 p^{2m-2} |v_1|^2}{(1-p^{2m})^2}
+ \frac{|v_2|^2}{1-p^{2m}} + \cdots + \frac{|v_n|^2}{1-p^{2m}} \right)^{1/2}
\end{equation*}
from which it follows that
\begin{alignat}{3} \label{E51}
\lim_{p \geq u \rightarrow p } \frac{\partial K_{E_{2m} }} { \partial |v_1| } \left( (p, \hat{0}), (v_1, \hat{v}) \right)
& = \frac{m p^{2m-1}} { 1 - p^{2m} }, \mbox{ and} \\
\label{E52}
\lim_{p \geq u \rightarrow p } \frac{\partial K_{E_{2m} }} { \partial |v_2| } \left( (p, \hat{0}), (v_1, \hat{v}) \right)
& = \left( \frac{ m^2 |v_1|^2 - p^2 |v_3|^2 - \cdots - p^2 |v_n|^2 }{ m^2 |v_1|^2}  \right)^{1/2}.
\end{alignat}
The computation in the second case, $ 0 < p < u $, will involve several steps. To begin with, differentiating (\ref{E12}) with respect to $ |v_1| $ gives
\begin{alignat*}{3}
\frac{\partial t}{ \partial |v_1| } = - \frac{2 u^2 t}{2(m-1) p^2  \left( m(1-t) + t \right) + u^2},
\end{alignat*}
which implies that
\begin{alignat}{3} \label{E13}
\lim_{p < u \rightarrow p} \frac{\partial t}{ \partial |v_1| } = \frac{2m}{(1 - 2m) p} \left( |v_2|^2 + \cdots + |v_n|^2 \right)^{-1/2}.
\end{alignat}
Differentiating (\ref{E6}) with respect to $ |v_1| $ quickly leads to
\begin{alignat}{3} \label{E48}
2 \left( m \alpha^{2m-1} - (m-1) t \alpha^{2m-3} \right)\frac{\partial \alpha}{ \partial |v_1| } = \left( \alpha^{2m-2} - p^{2m} \right) \frac{\partial t}{ \partial |v_1| }.
\end{alignat}
Taking the limit as $ u $ tends to $ p $ from above in (\ref{E48}), and using (\ref{E13}), we get
\begin{alignat}{3} \label{E14}
\lim_{p < u \rightarrow p} \frac{\partial \alpha}{ \partial |v_1| } = \frac{m \left(1 - p^{2m} \right)}{(1 - 2m) p} \left( |v_2|^2 + \cdots + |v_n|^2 \right)^{-1/2}.
\end{alignat}
Moreover, it is straightforward to check that
\begin{alignat}{3} \label{E15}
\lim_{p < u \rightarrow p} t = 1 \; \mbox{ and } \; \lim_{p < u \rightarrow p} \alpha = 1.
\end{alignat}
Also, observe that the equations (\ref{E6}) and (\ref{E12}) are equivalent to 
\begin{alignat*}{3}
\frac{1 - t}{1- {\alpha}^2} = \frac{ {\alpha}^{2m-2} }{ {\alpha}^{2m-2} - p^{2m}} \mbox{ and } 
u^2  = \frac{p^2 \left( m(1-t) + t \right)^2}{t}
\end{alignat*}
respectively, so that (\ref{E11}) can be rewritten as
\begin{alignat*}{3}
K_{E_{2m}} \left( (p, \hat{0}), (v_1, \hat{v}) \right)  = \frac{{\alpha}^{2m-1}}{{\alpha}^{2m-2} - p^{2m}} \left( \frac{|v_2|^2}{t} + \cdots + \frac{|v_n|^2}{t} \right)^{1/2},
\end{alignat*}
which upon differentiation turns out to be
\begin{multline*}
\frac{\partial K_{E_{2m} }} { \partial |v_1| } \left( (p, \hat{0}), (v_1, \hat{v}) \right)
= \left( \frac{(2m-1) \alpha^{2m-2}}{ \left( \alpha^{2m-2} - p^{2m} \right) } \frac{\partial \alpha}{ \partial |v_1|}
- \frac{\alpha^{2m-1}}{ 2 t^{3/2} \left( \alpha^{2m-2} - p^{2m} \right)  } \frac{\partial t}{ \partial |v_1| } \right.
\\
\left.
- \; \frac{ 2(m-1) \alpha^{4m-4} }{ t^{1/2} ( \alpha^{2m-2} - p^{2m})^2  } \frac{\partial \alpha}{ \partial |v_1|}
\right).
\end{multline*}
So that
\begin{alignat}{3} \label{E50}
\lim_{p < u \rightarrow p } \frac{\partial K_{E_{2m} }} { \partial |v_1| } \left( (p, \hat{0}), (v_1, \hat{v}) \right)
= \frac{m p^{2m-1}}{1 - p^{2m}}
\end{alignat}
owing to (\ref{E13}), (\ref{E14}) and (\ref{E15}). The expressions (\ref{E51}) and (\ref{E50}) together verify that 
(\ref{E49}) holds for $ j = 1 $. Furthermore, a similar computation yields that
\begin{alignat*}{3}
\lim_{p < u \rightarrow p} \frac{\partial t}{ \partial |v_2| } & = \frac{2 p}{ (2m -1) m^2 |v_1|^2 } \left(  m^2 |v_1|^2 - p^2 |v_3|^2 - \cdots - p^2 |v_n|^2   \right)^{1/2}, \mbox{ and} \\
\lim_{p < u \rightarrow p} \frac{\partial \alpha}{ \partial |v_2| } & = \frac{ p \left( 1 - p^{2m} \right) }{ (2m -1) m^2 |v_1|^2 } \left(  m^2 |v_1|^2 - p^2 |v_3|^2 - \cdots - p^2 |v_n|^2   \right)^{1/2}.
\end{alignat*}
Consequently, we find that, in agreement with the first case (cf. (\ref{E52}))
\begin{alignat*}{3}
\lim_{p < u \rightarrow p } \frac{\partial K_{E_{2m} }} { \partial |v_2| } \left( (p, \hat{0}), (v_1, \hat{v}) \right)
= \left( \frac{ m^2 |v_1|^2 - p^2 |v_3|^2 - \cdots - p^2 |v_n|^2 }{ m^2 |v_1|^2}  \right)^{1/2}.
\end{alignat*}
Finally, an argument similar to the one used above shows that
\begin{alignat*}{3}
\lim_{p \geq u \rightarrow p } \frac{\partial K_{E_{2m} }} { \partial |v_j| } \left( (p, \hat{0}), (v_1, \hat{v}) \right)
=  \lim_{ p < u \rightarrow p } \frac{\partial K_{E_{2m} }} { \partial |v_j| } \left( (p, \hat{0}), (v_1, \hat{v}) \right)
\end{alignat*}
for each $ j = 3, \ldots, n $.

\medskip

\noindent In the third case, $ 0 = p < u $, the Kobayashi indicatrix of $ E_{2m} $ at the origin, defined as
\[
\{ v \in \mathbb{C}^n : K_{E_{2m}} \left( (0, \ldots, 0), (v_1, \ldots, v_n) \right) = 1 \}, 
\]
is given by the equation $ |v_1|^{2m} + |v_2|^2 + \cdots + |v_n|^2 = 1 $. Since $ m > 1/2 $, the indicatrix is $ C^1 $ which, in turn, implies that the Kobayashi metric $ K_{E_{2m}} \left( (0, \ldots, 0), (v_1, \ldots, v_n) \right) $ is also $ C^1$. 

\medskip

\noindent To conclude that $ K_{E_{2m}} $ is $ C^1 $, it remains to verify that (\ref{E16}) holds. Evaluating the left hand side first, note that from (\ref{E10}), we obtain
\begin{alignat*}{3}
\lim_{p \rightarrow u^{+}} K_{E_{2m}} \left( (p, \hat{0}), (v_1, \hat{v}) \right) = \frac{m|v_1|}{u \left( 1 - |u|^{2m} \right)}.
\end{alignat*}
On differentiating (\ref{E10}) with respect to $p$, we have that
\begin{multline*}
2 K_{E_{2m}} \left( (p, \hat{0}), (v_1, \hat{v}) \right) \frac{\partial K_{E_{2m} }}
{ \partial p } \left( (p, \hat{0}), (v_1, \hat{v}) \right) = \frac{ 2m p^{2m-1} }
{ (1- p^{2m})^2 } \left( |v_2|^2 + \cdots + |v_n|^2 \right) \\
+ \frac{2 m^2 (m-1) p^{2m-3} }{ \left( 1 - p^{2m} \right)^2 } |v_1|^2 +  
\frac{4 m^3 p^{4m-3} }{ \left( 1 - p^{2m} \right)^3 } |v_1|^2. 
\end{multline*}
Letting $ p \rightarrow u^{+} $, and using the above observation, we get
\begin{alignat*}{3}
\lim_{p \rightarrow u^{+}} \frac{\partial K_{E_{2m} }} { \partial p } \left( (p, \hat{0}), (v_1, \hat{v}) \right) = 
\frac{m u^{2m-2} \left( u^{2m} + 2m - 1 \right)} { \left( 1 - u^{2m} \right)^2 } |v_1|.
\end{alignat*}
Working with the right hand side of (\ref{E16}), observe that
\begin{alignat*}{3}
\lim_{p \rightarrow u^{-}} t = 1 \mbox{ and } \lim_{p \rightarrow u^{-}} \alpha = 1.
\end{alignat*}
It follows from the definition of the parameter $ t $ that
\begin{alignat*}{3}
\frac{\partial t }{ \partial p} = \frac{2 p \left( m(1-t) + t \right)^2}{ 2 p^2 (m-1) + u^2}, 
\end{alignat*}
which implies that
\begin{alignat}{3} \label{E17}
\lim_{p \rightarrow u^{-}} \frac{\partial t }{ \partial p} = \frac{2}{(2m -1)u}.
\end{alignat}
To compute $ \partial \alpha/ \partial p $, we differentiate both sides of (\ref{E6}). After simplification, we get
\begin{alignat*}{3}
\left( 2m {\alpha}^{2m-1} - 2t(m-1) {\alpha}^{2m-3} \right) \frac{\partial \alpha}{ \partial p } = \left( {\alpha}^{2m-2} - p^{2m} \right) \frac{\partial t}{ \partial p } + 2m (1-t) p^{2m-1},
\end{alignat*}
so that
\begin{alignat}{3} \label{E18}
\lim_{p \rightarrow u^{-}} \frac{\partial \alpha}{ \partial p} = \frac{ 1 - u^{2m}}{(2m -1)u}.
\end{alignat}
Using (\ref{E11}) to write the derivative of $ K_{E_{2m}} $ with respect to $ p$, it follows, by virtue of (\ref{E17}) and (\ref{E18}), that
\begin{alignat*}{3}
\lim_{p \rightarrow u^{-}} \frac{\partial K_{E_{2m} }} { \partial p } \left( (p, \hat{0}), (v_1, \hat{v}) \right) = 
\frac{m u^{2m-2} \left( u^{2m} + 2m - 1 \right)} { \left( 1 - u^{2m} \right)^2 } |v_1|,
\end{alignat*}
as required. This finishes the proof of the theorem.
\qed

\section{Proof of Theorem \ref{thm7}}

\noindent Suppose that there is a Kobayashi isometry $ f $ from $ D_1 $ onto 
$ D_2 $ with $ q^0 \in cl_f(p^0) $, the cluster set of $ p^0 $. Firstly, from 
the explicit form of the defining function for $ U_1 \cap \partial D_1 $, 
it is clear that $ \partial D_1 $ near $ p^0 $ is smooth pseudoconvex and 
of finite type with Levi rank exactly $ (n-2) $.  

\medskip

\noindent Assume that both $ p^0 = 0 $ and $ q^0 = 0 $ and choose a sequence $ 
p^j = ('0, -\delta_j) \in U_1 \cap D_1 $ on the inner normal approaching the
origin. By Theorem \ref{thm5}, it readily follows that $ f $ extends continuously up
to $ p^0 $. As a consequence, $ q^j = f(p^j) $ converges to $ q^0 $ which is a 
strongly pseudoconvex point in $ \partial D_2 $. The idea is to apply the scaling 
technique to $ (D_1, D_2, f) $ as in the proof of Theorem \ref{thm4}. To scale 
$ D_1 $, we only consider dilations 
\[
\Delta^j( z_1, z_2, \ldots, z_{n-1}, z_n ) = \left( \delta_j^{- \frac{1}{2m}} z_1, 
\delta_j^{-\frac{1}{2}} z_2, \cdots, \delta_j^{-\frac{1}{2}} z_{n-1}, \delta_j^{-1} z_n \right).
\]
Note that $ \Delta^j ('0, - \delta_j) = ('0, -1) $ for all $j$ and the domains 
$ D_1^j = \Delta^j (D_1) $ converge in the Hausdorff sense to
\[
D_{1, \infty} = \big\{ z \in \mbb C^n :  2 \Re z_n + \abs{z_1}^{2m} + \abs{z_2}^2 + \cdots + 
\abs{z_{n-1}}^2  < 0 \big\},
\]
which is biholomorphic to $ E_{2m} $. 

\medskip

\noindent While for $ D_2 $, we use the composition $ T^j \circ h^{\xi^j} $ 
as in Section 5 - we include a brief exposition here for completeness: Consider 
points $ \xi^j \in \partial D_2 $ defined by $ \xi^j = q^j + ('0, \epsilon_j) $, 
for some $ \epsilon_j > 0 $. As before, $ h^{\xi^j} $ are the `centering maps' 
(cf. \cite{Pi}) corresponding to $ \xi^j \in \partial D_2 $, and $ T^j $ 
are the dilations 
\[
T^j ( w_1, w_2, \ldots, w_n ) = \left( { {\epsilon_j}^{-\frac{1}{2}}{w_1} } , \ldots,
 { \epsilon_j}^{-\frac{1}{2}} {w_{n-1}}, { \epsilon_j}^{-1} {w_n
 } \right).
\]
It was shown in \cite{Pi} that the dilated domains $ D_2^j = T^j \circ h^{\xi^j} ( D_2 ) $ 
converge to 
\[
 D_{2, \infty} = \left \{ w \in \mathbb{C}^n : 2 \Re w_n + | w_1 |^2 + 
 | w_2 |^2 + \cdots + | w_{n-1} |^2 < 0
\right \}
\]
which is the unbounded representation of the unit ball in $ \mathbb{C}^n $. 
Among other things, the following claim was verified in \cite{SV1}: For $ w \in D_{2, \infty} $, 
\begin{alignat}{3} \label{E39}
 d^k_{D_2^j} \left( w, \cdot \right) \rightarrow d^k_{D_{2,\infty}} \left( w, \cdot \right) 
\end{alignat} 
uniformly on compact sets of $ D_{2,\infty}$. As a consequence, the sequence of Kobayashi metric balls 
$ B_{D_2^j} \left( \cdot, R \right) \rightarrow B_{D_{2,\infty}} \left( \cdot, R \right) $,
and, for large $j$,
\begin{alignat}{3} \label{E32}
B_{D_{2,\infty}} \left( \cdot, R \right) \subset B_{D_2^j} \left( \cdot, R +
\epsilon \right),  \mbox{ and} \nonumber \\ 
B_{D_2^j} \left( \cdot, R - \epsilon \right)  \subset
B_{D_{2,\infty}} \left( \cdot, R \right).
\end{alignat}

\noindent The scaled maps $ f^j = T^j \circ h^{\xi^j} \circ f \circ (\Delta^j)^{-1} : D_1^j
\rightarrow D_2^j $ are isometries in the Kobayashi metric on $ D_1^j $ and $ D_2^j $ and 
note that $ f^j( '0,-1) = ('0,-1) $ for all $j$. Exhaust $ D_{1, \infty} $ by an increasing union 
$ \{ K_{\nu} \} $ of relatively compact domains, each containing $ ('0, -1) $. Fix a pair 
$ K_1 $ compactly contained in $ K_2 $ say, and let $ \omega(K_1) $ be a neighbourhood of $ K_1 $ 
such that $ \omega(K_1) \subset K_2 $. Since the domains $ D_1^j $ converge to $ D_{1,\infty} $, it 
follows that $ \omega(K_1) \subset K_2$ is relatively compact in $ D_1^j $ for all $ j $ large. 
Now, to establish that $ f^j $ admits a convergent subsequence, it will suffice to show that 
$ f^j $ restricted to $ \omega(K_1) $ is uniformly bounded and equicontinuous. For each $ z \in K_2 $,
note that for all $j$ large,
\[
d^k_{D^j_2 } \left( f^j(z), ('0,-1) \right) = d^k_{D^j_1} \left(z,('0,-1)
\right) \leq d^k_{D_{1,\infty}} \left(z, ('0,-1) \right) + \epsilon 
\]
where the last inequality follows from Proposition \ref{E30}. Observe that the right hand side above 
is bounded above by a uniform positive constant, say $ \tilde{R} > 0 $. Therefore, by
Proposition \ref{E31}, it follows that
\begin{alignat}{3} \label{E33}
f^j(K_2) \in B_{D^j_2} \left( ('0,-1), \tilde{R} \right) \subset B_{D_{2, \infty}}
\left( ('0,-1), \tilde{R} + \epsilon \right),
\end{alignat}
which exactly means that $ \{ f^j (K_2) \} $ is uniformly bounded. 

\medskip

\noindent The following observation will be needed to deduce the equicontinuity of $ f^j $ 
restricted to $ \omega(K_1) $. For each $ z \in \omega(K_1) $, there is a 
small ball $ B(z, r) $ around $ z $ with radius $ r > 0 $, which is compactly contained 
in $ \omega(K_1) $. For $ R' \gg 2 \tilde{R} $, we intend to apply Lemma \ref{E29} 
to the domain $ D_2^j $ with the Kobayashi ball $ B_{D_2^j} \left(('0, -1), R' \right) $ 
as the subdomain $ D'$. Let $ \tilde{z} \in B(z, r) $, then, for $ f^j(\tilde{z}), f^j(z) $ 
as $ p $ and $ q $ respectively, and $ R'/2 $ as $ b $, it can be checked that all the conditions 
of Lemma \ref{E29} are satisfied. So that
\begin{alignat}{3} \label{E36}
d^k_{B_{D_2^j} \left( ('0, -1), R' \right)} \left(  f^j(z), f^j(\tilde{z})  \right) \leq
\frac{d^k_{D_2^j} \left(  f^j(z), f^j(\tilde{z})  \right)} { \tanh \left( \frac{R'}{2} -
d^k_{D_2^j} \left(  f^j(z), f^j(\tilde{z})  \right) \right)} \leq \frac{d^k_{D_2^j} 
\left(  f^j(z), f^j(\tilde{z})  \right)} { \tanh \left( \frac{R'}{2} - 2 \tilde{R} \right)}
\end{alignat}
where the last inequality above is a simple consequence of triangle inequality and (\ref{E33}).

\medskip

\noindent Furthermore, it turns out that for any small neighbourhood $ W $ of 
$ q^0 \in \partial D_2 $ and for all large $ j $, $ B_{D_{2, \infty}} \left( 
('0, -1), R' + \epsilon \right) \subset T^j \circ h^{\xi^j} ( W \cap D_2 ) $. 
On the other hand, let $ R > 1 $ be such that 
\begin{alignat*}{3}
h^{\xi^j} ( W \cap D_2 ) \subset \{ w \in \mathbb{C}^n : |w_1|^2 + \cdots + |w_{n-1}|^2 + |w_n + R|^2 < R^2 \} \\
\subset \{ w \in \mathbb{C}^n : 2 R (\Re w_n) + |w_1|^2 + \cdots + |w_{n-1}|^2  < 0 \} = D_0 \approx \mathbb{B}^n.
\end{alignat*}
Observe that $ T ^j $ leaves the domain $ D_0 $ invariant for each $j$, therefore, we may conclude that
\[
B_{D_{2, \infty}} \left( ('0, -1), R' + \epsilon \right) \subset T^j \circ h^{\xi^j} ( W \cap D_2 ) 
\subset D_0 \approx \mathbb{B}^n.
\]
Now, using the explicit form of the Kobayashi metric on $ \mathbb{B}^n $, it follows that
\begin{alignat}{3} \label{E34}
|f^j(z) - f^j( \tilde{z}) | \lesssim d^k_{D_0} \left( f^j(z), f^j(\tilde{z}) \right) \leq d^k_{B_{D_{2, \infty}}
\left( ('0,-1), R' + \epsilon \right)} \left( f^j(z), f^j(\tilde{z}) \right). 
\end{alignat}
While from (\ref{E32}), we see that
\begin{alignat}{3} \label{E35}
d^k_{B_{D_{2, \infty}} \left( ('0,-1), R' + \epsilon \right)} \left( f^j(z), f^j(\tilde{z}) \right)  
\leq d^k_{B_{D_2^j} \left( ('0,-1), R' \right)} \left( f^j(z), f^j(\tilde{z}) \right). 
\end{alignat}
Finally, we deduce, using the expression for the Kobayashi metric (which equals the 
Poincar\'{e} metric) on $ B(z, r) $, that 
\begin{alignat}{3} \label{E37}
d^k_{D^j_2 } \left( f^j(z), f^j(\tilde{z}) \right) = d^k_{D^j_1}(z,\tilde{z}) \leq d^k_{B(
z,r)} (z,\tilde{z}) \lesssim  | z - \tilde{z} |.
\end{alignat}
so that
\begin{alignat*}{3}
|f^j(z) - f^j( \tilde{z}) | \lesssim | z - \tilde{z} |
\end{alignat*}
by virtue of (\ref{E34}), (\ref{E35}), (\ref{E36}) and (\ref{E37}). Hence, there is a 
well-defined continuous limit of some subsequence of $ f^j $. Denote this limit by 
$ \tilde{f}: D_{1, \infty} \rightarrow \overline{D}_{2, \infty} $. The proof now 
divides into two parts. The first part is to show that $ \tilde{f} $ is an isometry 
between $ D_{1, \infty} $ and $ D_{2, \infty} $ and the second step is to show that 
$ \tilde{f} $ is either holomorphic or anti-holomorphic.

\medskip

\noindent \textbf{I. $ \tilde{f} $ is an isometry}

\medskip

\noindent If $ \tilde{f} $ were 
known to be holomorphic, then the maximum principle would imply that $ \tilde{f}: 
D_{1, \infty} \rightarrow D_{2, \infty} $. However, $ \tilde{f} $ is known to be just 
continuous. To overcome this difficulty, consider $ \Omega_1 \subset D_{1,\infty} $, the 
set of all points $ z \in D_{1, \infty} $ such that $ \tilde{f}(z) \in D_{2, \infty} $.
Note that $ \tilde{f} \left( ('0, -1) \right) = ('0, -1) \in D_{2, \infty} $ and 
hence $ \Omega_1 $ is non-empty. Also, since $ \tilde{f} $ is continuous, it follows that
$ \Omega_1 $ is open in $ D_{1, \infty} $.

\medskip

\noindent \textbf{Assertion:} $ d^k_{D_{1, \infty}} (p, q) = d^k_{D_{2, \infty}} \left( \tilde{f}(p), 
\tilde{f}(q) \right) $ for all $ p, q \in \Omega_1 $.

\medskip

\noindent Grant this for now. Then, for $ s \in \partial \Omega_1 \cap D_{1, \infty} $, 
let $ s^j \in \Omega_1 $ such that $ s^j \rightarrow s $. Assuming that the assertion 
holds true, we have that
\begin{alignat}{3} \label{E38}
d^k_{D_{1, \infty}} \left( s^j, ('0, -1) \right) = d^k_{D_{2, \infty}} \left( \tilde{f}(s^j), ('0, -1) \right)
\end{alignat}
for all $ j $. Since $ s \in \partial \Omega_1 \cap D_{1, \infty} $, $ \tilde{f}(s^j) $ converges
to a point of $ \partial D_{2, \infty} $. Furthermore, $ D_{2, \infty} $ is complete in the 
Kobayashi metric, and hence, the right hand side in (\ref{E38}) is unbounded. However, the left hand side 
remains bounded again because of completeness of $ D_{1,\infty} $. This contradiction shows that 
$ \Omega_1 = D_{1, \infty} $. In other words, $ \tilde{f}: D_{1, \infty} \rightarrow D_{2, \infty} $
and $ d^k_{D_{1, \infty}} (p, q) = d^k_{D_{2, \infty}} \big( \tilde{f}(p), 
\tilde{f}(q) \big) $ for all $ p, q \in D_{1, \infty} $. 

\medskip

\noindent To verify the assertion, recall that $ d^k_{D_1^j} (p, q) = d^k_{D_2^j} \big( f^j(p), 
f^j(q) \big) $ for all $ j $. The statement $ d^k_{D_1^j} (p, q) \rightarrow d^k_{D_{1,\infty}} (p, q) $ 
follows from the proof of Proposition \ref{E30}. Hence, it remains to show that the right hand side above 
converges to $ d^k_{D_{2,\infty}} \left( \tilde{f}(p), \tilde{f}(q) \right) $. To achieve this, note that
\[
\big | d^k_{D^j_2} \big(f^j(p),f^j(q) \big) - d^k_{D^j_2} \big (
\tilde{f}(p), \tilde{f}(q) \big) \big| \leq   d^k_{D^j_2} \big(
f^j(p), \tilde{f}(p) \big) + d^k_{D^j_2} \big( \tilde{f}(q), f^j(q)
\big)
\]
by the triangle inequality. Since $ f^j(p) \rightarrow
\tilde{f}(p) $ and $ D^j_2 \rightarrow D_{2, \infty} $, 
it follows that there is a small ball $ B \big(\tilde{f}(p), r \big)
$ around $ \tilde{f}(p) $ which contains $ f^j(p) $ and
which is contained in $ D^j_2 $ for all large $j$, where 
$ r > 0 $ is independent of $j$. Thus
\[
d^k_{D^j_2} \big( f^j(p), \tilde{f}(p) \big) \lesssim \big| f^j(p) -
\tilde{f}(p) \big |.
\]
Similarly, it can be checked that $ d^k_{D^j_2} \big( \tilde{f}(q),f^j(q) \big) $ 
is arbitrarily small. Furthermore, it follows from (\ref{E39}) that
$ d^k_{D^j_2} \big ( \tilde{f}(p), \tilde{f}(q) \big) \rightarrow  d^k_{D_{2, \infty}} \big(
\tilde{f}(p), \tilde{f}(q) \big) $. Hence the assertion.

\medskip

\noindent Next, we claim that $ \tilde{f} $ is surjective. To see this, consider
any point $ t^0 \in \partial \big( \tilde{f}( D_{1, \infty} ) \big) \cap D_{2,\infty} $ and let 
$ t^j \in \tilde{f}(D_{1, \infty})$ satisfying $ t^j \rightarrow t^0 $. Pick
$ s^j \in D_{1, \infty} $ such that $ \tilde{f}(s^j)= t^j$. Then 
\begin{equation*}
d^k_{D_{1, \infty}} \big( ('0,-1), s^j \big) = d^k_{D_{2, \infty}} \big( \tilde{f}( ('0, -1)),
\tilde{f}(s^j) \big)
\end{equation*}
There are two cases to be considered depending on whether $ s^j \rightarrow 
s \in \partial D_{1, \infty} $ or $ s^j \rightarrow  s^0 \in D_{1, \infty} $ as $ j
\rightarrow \infty $. In case $ s^j \rightarrow s $, observe that the right hand 
side above remains bounded because of the completeness of $ D_{2, \infty}$. But 
left hand side is unbounded since $ D_{1, \infty}$ is complete in the Kobayashi metric. 
This contradiction shows that $ s^j \rightarrow  s^0 \in D_{1, \infty} $, which, in turn,
implies that $ \tilde{f}(s^0) = t^0 $. Now, consider the isometries $ (f^j)^{-1} : 
D_2^j \rightarrow D_1^j $. Exactly the same arguments as above show that some 
subsequence of $ (f^j)^{-1} $ converges uniformly on compact sets of 
$ D_{2, \infty} $ to $ \tilde{g}: D_{2, \infty} \rightarrow D_{1, \infty} $. It follows that
$ \tilde{f} \circ \tilde{g} \equiv id_{D_{2,\infty}} $. In
particular, $ \tilde{f}$ is surjective and $ \tilde{f} $ is a isometry
between $ D_{1, \infty} $ and $ D_{2, \infty} $ in the Kobayashi
metric. 
\medskip

\noindent \textbf{II. $ \tilde{f} $ is (anti)-holomorphic}

\medskip

\noindent The proof of the fact that $ \tilde{f} $ is a biholomorphic mapping follows 
exactly as in \cite{SV2}. To outline the key ingredients, 
the first step is to show that $ \tilde{f} $ is differentiable everywhere, which implies
that the Kobayashi metric $ K_{D_{1, \infty}} $ is Riemannian. By Theorem \ref{thm6}, 
we know that $K_{E_{2m}} $ or equivalently that $ K_{D_{1, \infty}} $ is $ C^1$-smooth. 
Recall that $ D_{1, \infty} \approx E_{2m} $ and $ D_{2, \infty} \approx \mathbb{B}^n $. 
So that $ \tilde{f} $ after composing with appropriate Cayley transforms, leads to a 
continuous isometry $ \tilde{F} $ between two $ C^1 $-smooth Riemannian manifolds 
$ \big( E_{2m}, K_{_{2m}} \big) $ and $ \big( \mathbb{B}^n, K_{\mathbb{B}^n } \big) $. 
Applying the Myers-Steenrod theorem repeatedly to $ \tilde{F} $, yields 
the desired result.

\medskip

\noindent Once we know that $ \tilde{F} : E_{2m} \rightarrow \mathbb{B}^n $
is holomorphic, which, additionally, may be assumed to preserve the origin, it 
follows that $ 2m=2$. Indeed, $ \tilde{F} $ is a biholomorphism between two
circular domains, $ E_{2m} $ and $ \mathbb{B}^n $ such that $ \tilde{F}(0) = 0 $
and is, hence, linear. In particular $ 2m=2 $. Said differently, there are holomorphic
coordinates at $ p^0 $ in which a tiny neighbourhood around $ p^0 $ can be written as 
\[
\big \{ z \in \mathbb{C}^n : 2 \Re z_n + |z_1|^2 + |z_2|^2 + \cdots + |z_{n-1}|^2 + 
\text{higher order terms} < 0 \big\},
\]
which violates the assumption that the Levi rank at $ p^0 \in \partial D_1 $ is exactly 
$ n-2 $. Alternatively, one can use Theorem \ref{thm4} or results from \cite{BV} 
to arrive at a contradiction. Hence the theorem.\qed

\section{Appendix} \label{app}
\noindent The group of all polynomial automorphisms of $\mathbb{C}^n$ is usually denoted by $GA_n(\mathbb{C})$ and two special subgroups of $GA_n(\mathbb{C})$ are the affine subgroup 
\[
Af_n(\mathbb{C}) = \{ F \in GA_n(\mathbb{C}) \; : \; {\rm deg}(F) \leq 1 \}
\]
and the triangular subgroup 
\[
BA_n(\mathbb{C}) = \{ F \in GA_n(\mathbb{C}) \; : \; F_j =a_jz_j + H_j \text{ where } a_j \in \mathbb{C}^* \text{ and } H_j  \in \mathbb{C}[z_1, \ldots, z_{j-1}] \}
\]
whose members are also called {\it elementary automorphisms}.
\medskip\\
\noindent The Jung-van der Kulk theorem says that every polynomial automorphism in dimension $n=2$ can be obtained as a finite composition of affine and elementary automorphisms; using this 
fact it can be derived (cf. \cite{FrMi}) that the degree of the inverse $\Phi^{-1}$ of a polynomial automorphism of $\mathbb{C}^2$ is the same as the degree of $\Phi$. However, these facts are known to be false in higher dimensions.\\

\noindent The weight of a polynomial automorphism -- or more generally a polynomial endomorphism of $\mathbb{C}^n$ -- is by definition the maximum of the weights of its components.\\

\noindent  Let us assign a weight of $1/2m$ to the variable $z_1$ where $m \in \mathbb{N}$, $1/2$ to the variables $z_\alpha$ for all $2 \leq \alpha \leq n-1$ and $1$ to the variable $z_n$ as in the introduction and consider the collection $\mathcal{E}_L = \mathcal{E}_{(1/2m,1/2,1)}$, of all weight preserving polynomial automorphisms of the form 
\[
F(z) = \big( a_1 z_1 + b_1, A({}''z) + P_2(z_1), a_n z_n + b_n + P_n('z) \big)
\]
where $''z=(z_2, \ldots, z_{n-1})$, $ 'z=(z_1, \ldots, z_{n-1})$, $A$ is an invertible affine transform on $\mathbb{C}^{n-2}$, $P_2$ is a vector 
valued polynomial all of whose components are polynomials of weight at-most $1/2$ and $P_n$ is a polynomial of weight at-most $1$ while
 $b_1,b_n \in \mathbb{C}$ and $a_1 , b_n \in \mathbb{C}^* = \mathbb{C} \setminus \{0\}$. In general, it is not true that the degree (resp. weight) 
of the inverse $\Phi^{-1}$ of a polynomial automorphism $\Phi$, is same as the degree (resp. weight) of $\Phi$. However, since $\mathcal{E}_L$ consists 
of `elementary like' polynomial automorphisms that preserve the weight of each component, we have that the weights of the components of the 
inverse of $F$ is same as those of $F$, for each $F \in \mathcal{E}_L $. Indeed, note that the inverse of $F$ is given by
\[
 F^{-1}(z) = \Big( a_1^{-1}(z_1 - b_1), A^{-1} \big( {}''z - P_2 \big( a_1^{-1}(z_1-b_1) \big) \big), a_n^{-1} \big( z_n-b_n - P'_n({}'z) \big) \Big)  
\]
where $P'_n({}'z) =  P_n \Big( a_1^{-1}(z_1-b_1), A^{-1} \big({} 'z - P_2(a_1^{-1}(z_1-b_1)) \big) \Big)$. It is easy to see that 
$F^{-1}$ lies in $\mathcal{E}_{L}$. Next note that if we pick another $G \in \mathcal{E}_L$ given by 
\[
G(z) = \big( c_1  z_1 + d_1, B({}''z) + Q_2(z_1), c_n z_n +d_n + Q_n('z) \big)
\]
say, then
\begin{multline*}
G(F(z)) = \Big( c_1 a_1 z_1 + c_1 b_1 + d_1,  BA (''z) + BP_2(z_1) + Q_2(a_1z_1 + b_1),\\
                                                                 c_n a_n z_n + c_n b_n + d_n + Q_n \big( c_1z_1+d_1, A(''z) + P_2(z_1) \big) \Big) 
\end{multline*}
which is again in $\mathcal{E}_{(1/2m,1/2, 1)}$ completing the verification that $\mathcal{E}_L$ is a group. In fact noting that $\mathcal{E}_L$ is in {\it bijection} with a product of finitely many copies of $\mathbb{C}^M \setminus \{0\}$, $\mathbb{C}^N$ for some $M,N \in \mathbb{N}$ and $GL_{n-2}(\mathbb{C})$ we see that $\mathcal{E}_L$ is a {\it non-singular} affine algebraic variety; next noting that the composition of maps in $\mathcal{E}_L$ when viewed as an operation on the various coefficients here, is a polynomial operation on these coefficients and likewise for taking inverses as well, we conclude that $\mathcal{E}$ is a complex algebraic group.\\

\noindent Recall the special reduction procedure for the Taylor expansion of any given smooth defining function for a 
piece of Levi corank one hypersurface. Let $\Sigma$ be a smooth pseudoconvex real-hypersurface in $\mathbb{C}^n$ of finite type 
with the property that the Levi-rank is at-least $n-2$ at each of its points. We assume that the origin lies in $\Sigma$ and that the D' Angelo $1$-type of the points of $\Sigma$ is bounded above by some integer $2m$. Let $r$ be a smooth defining function for $\Sigma$ with $\partial r /\partial z_n(z) \ne 0$ for all $z$ in a small neighbourhood $U$ in $\mathbb{C}^n$ of $\Sigma$ such that the vector fields
\begin{equation*} 
L_n=\partial /\partial z_n,\ L_j=\partial / \partial z_j + b_j(z,\bar{z})\partial / \partial z_n,
\end{equation*}
where $b_j = \big( \partial r/ \partial z_n \big)^{-1} \partial r / \partial z_j $, form a basis of $\mathbb{C}T^{(1,0)}(U)$ and satisfy $L_j r\equiv 0$ for $1 \leq j\leq n-1$ and 
for each $ z \in U $, all eigenvalues of $\partial\bar{\partial} r(z) (L_i,\bar{L}_j)_{2\leq i,j \leq n-1}$
are positive.

\medskip

\noindent The main objective of this reduction procedure is to obtain a certain normal form near $ \zeta \in U $ in which there are no harmonic
 monomials of weight less than one, when weights are taken with respect to the inverses of the multitype at $\zeta \in U$. Recall that Levi corank one hypersurfaces are $h$-extendible i.e., their Catlin multitype and D' Angelo multitype agree at every point.

\medskip

\noindent We digress a little here, to recall and introduce the term `weakly spherical' used in the introduction. First, we recall the notion of weak sphericity of Barletta -- Bedford from \cite{BaBe}: a smooth pseudoconvex hypersurface $M \subset \mathbb{C}^2$ of finite type $2m$ at $p \in M$ can after a change of coordinates centered at $p=0$, be defined by a function of the form
\[
2 \Re z_2 + P_{2m}(z_1, \bar{z}_1) + \Im z_2 \sum\limits_{l=1}^{k} Q_l(z_1) + \sigma_{2m+1}(z_1) + \sigma_2(\Im z_2) + (\Im z_2) \sigma_{m+1}(z_1)
\]
Here $P_{2m}(z_1, \bar{z}_1)$ is a non-zero homogeneous subharmonic polynomial of degree $2m$ without harmonic terms, the $Q_l$'s are homogeneous polynomials of degree $l$ and the $\sigma_j$'s vaish to order $j$ in $z_1$ or $\Im z_2$. To put it succintly, the lowest weight component in the weighted homogeneous expansion of the defining function with respect to the weight $(1,1/2m)$ given by the type, is of weight one and of the form $2 \Re z_2 + P_{2m}(z_1, \bar{z}_1)$. Now according to \cite{BaBe}, $M$ is weakly spherical at $p$ if $P_{2m}(z_1, \bar{z}_1) = \vert z_1 \vert^{2m} = \vert z_1^m \vert^2$ -- note that a `squared norm of a polynomial' of weight $1/2$ in the single variable $z_1$, is necessarily of the form $c \vert z_1 \vert^{2m}$ for some positive constant $c$. We may extend this notion to higher dimensions for $h$-extendible hypersurfaces as follows: call a smooth pseudoconvex hypersurface $M \subset \mathbb{C}^n$ which is $h$-extendible at a point $p \in M$, to be {\it weakly spherical} at $p$ 
if there is a change of coordinates that maps $p$ to the origin, in which the lowest weight component of the weighted homogeneous expansion of the defining function of $M$ about $p \in M$ which we may assume to be the origin, performed with respect to the weights given by the inverse of the multitype $(m_n,\ldots,m_1)$ of $M$ at $p=0$, is of the form
\[
2 \Re z_n + \vert P_1({}'z) \vert^2 + \ldots + \vert P_{n-1}({}'z) \vert^2
\]
which when expanded is of the form
\[
2 \Re z_n + c_1 \vert z_1 \vert^{m_n} + c_2 \vert z_2 \vert^{m_{n-1}} + \ldots + c_{n-1} \vert z_{n-1} \vert^{m_2} + {\rm mixed \; terms}
\]
where the phrase `mixed terms' denotes a sum of weight $1$ monomials annihilated by at-least one of the natural quotient maps 
$\mathbb{C}[ {}'z, {}' \overline{z}] \to \mathbb{C}[ {}'z, {}' \overline{z}] /(z_j \overline{z}_k)$ for $1 \leq j,k \leq n-1$, $j \neq k$.\\

\medskip

\noindent Now, the mixed terms involving the $z_\alpha$'s for $2 \leq \alpha \leq n-1$, must be of the form $z_\alpha \overline{z}_\beta$ where $2 \leq \beta \leq n-1$ with $\beta \neq \alpha$, since ${\rm wt}(z_\alpha)=1/2$. An application of the spectral theorem, removes the occurrence of such terms with $\alpha \neq \beta$. The remaining mixed terms must be of the form $z_\alpha \overline{z}_1^j$ where $ 2 \leq \alpha \leq n-1$ and $1 \leq j \leq m$ and constitutes the polynomial
\[
 \sum\limits_{\alpha=2}^{n-1} \sum \limits_{j=1}^{m} \Re (b_j^\alpha \overline{z}_1^j z_\alpha),
 \]
say. Then an application of the change of variables given by
\begin{align*}
w_1 &=z_1, \; w_n=z_n\\
w_\alpha &= z_\alpha - P(z_1) \; \text{ for } 2 \leq \alpha \leq n-1
\end{align*}
where $P(z_1) = \sum\limits_{j=1}^{m}b_j^\alpha z_1^j $, removes the occurrence of such terms as well and the transformed defining function when expanded about $p=0$ reads
\[
2 \Re z_n +  \vert z_1 \vert^{2m} + \vert z_2 \vert^2 + \ldots + \vert z_{n-1} \vert^2 + R(z, {}\overline{z})
\]
with the error function $R(z, {}\overline{z}) \to 0$ faster than atleast one of the monomials of weight $1$. The domain $D_1$ in the hypothesis of theorem \ref{thm7} has its defining function about a boundary point $p$  in the above form; we shall say that $p$ is {\it strictly weakly spherical} if the integer $m >1$. \\

\noindent Getting back from this digression about weakly spherical Levi corank one hypersurfaces, to the afore-mentioned reduction procedure for arbitrary Levi corank one hypersurfaces, we recall that it can be split up into five simpler steps -- for $\zeta\in U$, the map $\Phi^\zeta = \phi_5 \circ \phi_4 \circ \phi_3 \circ \phi_2 \circ \phi_1 $ where each $\phi_j$ is described below.\medskip \\
The first step is to normalize the linear part of the Taylor series as in (\ref{nrmlfrm0}). Recall that this was done via the affine map $\phi_1$ given by
\begin{align*}
\phi_1(z_1,\ldots,z_n) &=  \Big( z_1 - \zeta_1, \ldots, z_{n-1}-\zeta_{n-1},\big( z_n - \zeta_n - \sum\limits_{j=1}^{n-1} b_j^\zeta(z_j - \zeta_j) \big) (b_n^\zeta)^{-1} \Big)  \\
& = \Big( z_1 - \zeta_1, \ldots, z_{n-1}-\zeta_{n-1}, \langle\nu(\zeta),z-\zeta \rangle  \Big)  
 \end{align*}
where the coefficients $b_n^\zeta = \big(\partial r/\partial z_n(\zeta) \big)^{-1}$ and  $b_j^\zeta$ are clearly smooth functions of $\zeta$ on $U$. Therefore, $\phi_1$ translates $\zeta$ to the origin and 
\[
r(\phi_1^{-1}(z))=r(\zeta)+ 2 \Re z_n+ \text{terms of higher order}.
\]
where the constant term disappears when $\zeta\in \Sigma$.

\medskip

\noindent Now, since the Levi form restricted to the subspace 
\[ 
L_*=\textrm{span}_{\mathbb{C}^n} \langle L_2,\ldots ,L_{n-1} \rangle
\] 
of $T^{(1,0)}_{\zeta}(\partial \Omega)$ is positive definite, we may diagonalize it via a unitary transform $\phi_2$ and a dilation $\phi_3$ will then ensure that the Hermitian -- quadratic part involving only $z_2,z_3,\ldots,z_{n-2}$ in the Taylor expansion of $r$ is $\vert z_2 \vert^2 + \vert z_3 \vert^2 + \ldots + \vert z_{n-2}\vert^2$. The entries of the matrix that represents the composite of the last two linear transformations are smooth functions of $\zeta$ and in the new coordinates still denoted by $z_1,\ldots z_n$, the defining function is in the form
\begin{multline}\label{taylexp}
r(z)=r(\zeta) + 2 \Re z_n +  \sum\limits_{\alpha=2}^{n-1}\sum\limits_{j=1}^{m} 2\Re\big(( a_j^\alpha z_1^j + b_j^\alpha \bar{z}_1^j )z_\alpha \big)+ 2 \Re\sum\limits_{\alpha=2}^{n-1}c_\alpha z_\alpha^2 \\ 
+ \sum\limits_{2\leq j+k \leq 2m}a_{j,k}z_1^j \bar{z}_1^k + \sum\limits_{\alpha=2}^{n-1}\vert z_\alpha \vert^2 
+  \sum\limits_{\alpha=2}^{n-1}\sum_{\substack{j+k \leq m \\j,k>0}} 2 \Re\big(b_{j,k}^\alpha z_1^j \bar{z}_1^k z_\alpha \big)\\
+ O(\vert z_n \vert \vert z \vert+ \vert z_* \vert^2 \vert z \vert + \vert z_* \vert \vert z_1 \vert^{m+1}
+ \vert z_1 \vert^{2m+1} )
\end{multline}
This still does not reduce the quadratic component of the Taylor series as far as we can; more can be done: the pluriharmonic terms of weights upto $1$ here i.e., $z_\alpha ^2$ as also $z_1^k, \bar{z}_1^k, $, $z_1^k z_\alpha, \bar{z}_1^k \bar{z}_\alpha$ can all be removed by absorbing them into the normal variable $z_n$
by the following standard change of coordinates $\phi_4$ given by
\begin{equation*}
\begin{split}
z_j&=t_j \; \; (1\leq j\leq n-1), \\
z_n&=t_n- \hat{Q}_1(t_1,\ldots,t_{n-1})
\end{split}
\end{equation*}
where 
\[
\hat{Q}_1(t_1, \ldots , t_{n-1}) =\sum\limits_{k=2}^{2m}a_{k0} t_1^k - \sum\limits_{\alpha=2}^{n-1} \sum\limits_{k=1}^{m} a_k^{\alpha} t_\alpha t_1^k-\sum\limits_{\alpha=2}^{n-1} c_{\alpha}t_\alpha^2
\]
with coefficients that are smooth functions of $\zeta$.

\medskip

\noindent The final step removes all other harmonic monomials of weight upto one remaining in (\ref{taylexp}), rewritten in the $t$-coordinates, 
which are of the form $\bar{t}_1^j t_\alpha$ by applying the transform $\phi_5$ given by
\begin{align*}
t_1&=w_1,\ t_n=w_n,\\
t_\alpha &=w_\alpha - Q_2^\alpha(w_1) \; \; (2\leq\alpha \leq n-1)
\end{align*}
where
\[
\shoveleft Q_2^\alpha(w_1)=\sum\limits_{k=1}^{m} \ov{b_k^\alpha} w_1^k 
\]
with coefficients smooth in $\zeta$, as before (since all these coefficients are simply the derivatives of some order of the smooth defining function $r$ evaluated at $\zeta$). 

\medskip

\noindent We then have that the composite $\Phi^\zeta$ of these various simplifying maps is as given in (\ref{E45}) and the normal form for the Taylor expansion as given in (\ref{nrmlfrm}). Further, $\Phi^\zeta$ belongs to the group $\mathcal{E}_L$ for each fixed $\zeta \in U$. In particular, the collection $\{ Q_\zeta, \Phi^\zeta \}$, where $\zeta$ varies over points in the tubular neighbourhood $U$ of $\Sigma$ and $Q_\zeta$ is the biholomorphically distorted polydisc $Q(\zeta, \epsilon(\zeta))$ -- which is also the ball of radius $\epsilon(\zeta)$ about $\zeta$ in the pseudo-distance $d$ defined at (\ref{pseudistdefn}) -- forms an atlas of special charts, giving $U$ the structure of a $\mathcal{E}_L$-manifold i.e., the associated transition maps lie in the complex Lie group $\mathcal{E}$, the group of weight preserving, elementary-like, weight one polynomial automorphisms described above.

\medskip

\noindent We need to compute the inverse $\Psi_\zeta = (\Phi^\zeta)^{-1}$ in the last subsection of Section \ref{pfofthm1}. Let 
\begin{equation} \label{wdefn}
(w_1, \ldots, w_n) = \Phi^{\z}(z) = \Big(z_1 - \z_1, G_{\z}(\ti z - \ti \z) - Q_2(z_1 - \z_1), \langle \nu(\zeta), z- \zeta \rangle - Q_1({}'z - {}' \zeta) \Big)
\end{equation}
where $Q_2: \mathbb{C} \to \mathbb{C}^{n-2}$ is a polynomial map whose components are the polynomials $Q_2^\alpha$ as above. Now, we find out the components of $\Psi_\zeta$, the first component of which is  
\begin{equation}\label{1stcomp}
z_1=w_1 + \zeta_1
\end{equation}
Next, $\tilde{w} = G_\zeta(\tilde{z} - \tilde{\zeta}) - Q_2(z_1 - \zeta_1)$ i.e., $G_\zeta (\tilde{z} - \tilde{\zeta}) = \tilde{w} - Q_2(w_1)$ so that
\begin{equation} \label{midcomp}
\tilde{z} = H_\zeta\big( \tilde{w} + Q_2(w_1) \big) + \tilde{\zeta}
\end{equation}
and finally
\begin{align*}
w_n &= \langle \nu(\zeta), z- \zeta \rangle - Q_1({}'z - {}' \zeta) \\
&= \partial r/ \partial z_n(\zeta) (z_n - \zeta_n) + \sum\limits_{j=1}^{n-1} \partial r/\partial z_j(\zeta) (z_j - \zeta_j) - Q_1({}'z - {}' \zeta)
\end{align*}
Now note that $\tilde{z} - \tilde{\zeta} = H_\zeta \big(\tilde{w} + Q_2(w_1) \big)$. So $Q_1(z_1 -\zeta_1, \tilde{z} - \tilde{\zeta}) = Q_1 \big( w_1, H_\zeta(\tilde{w} + Q_2(w_1)) \big)$ and subsequently,
\begin{align*}
w_n &= \partial r/\partial z_n(\zeta) \Big( z_n - \zeta_n -\sum\limits_{\alpha=2}^{n-2} b_j^\zeta (z_j - \zeta_j) - b_1^\zeta(z_1 - \zeta_1) \Big) - Q_1 \big( w_1, H_\zeta(\tilde{w} + Q_2(w_1)) \big) \\
&= (b_n^\zeta)^{-1} \Big( z_n - \zeta_n - \langle \tilde{b}^{\zeta}, \tilde{z} - \tilde{\zeta} \rangle - b_1^\zeta w_1 \Big) - Q_1 \big( w_1, H_\zeta(\tilde{w} + Q_2(w_1)) \big) \\
\end{align*}
and subsequently,
\begin{align*}
b_n^\zeta w_n &= \Big( z_n - \zeta_n - \langle \tilde{b}^{\zeta}, \tilde{z} - \tilde{\zeta} \rangle - b_1^\zeta w_1 \Big) - b_n^\zeta Q_1 \big(w_1, H_\zeta(\tilde{w} + Q_2(w_1)) \big) 
\end{align*}
giving finally that the last component of $z=\Psi_\zeta(w)$ is of the form
\begin{equation} \label{nthcomp}
z_n = b_n^\zeta w_n + \zeta_n + \langle \tilde{b}^{\zeta}, \tilde{z} - \tilde{\zeta} \rangle + b_1^\zeta w_1 + b_n^\zeta Q_1 \big(w_1, H_\zeta(\tilde{w} + Q_2(w_1)) \big).
\end{equation}
which we shall also write more shortly as 
\[
z_n = b_n^\zeta w_n + b_n^\zeta \tilde{Q}_1({}'w) + \zeta_n
\]
where for some slight convenience in the section where it is used, we take $\tilde{Q}_1$ to be of the form
\[
\tilde{Q}_1({}'w) = (b_n^\zeta)^{-1} \Big(\langle \tilde{b}^\zeta, H_\zeta \big( \tilde{w} + Q_2(w_1)\big)\rangle + b_1^\zeta w_1 \Big) + Q_1 \Big(w_1, H_\zeta  \big( \tilde{w} + Q_2(w_1)\big) \Big).
\]
with $Q_1$ and $Q_2$ are the same very polynomials occurring in the expression for $\Phi^\zeta$ as in (\ref{wdefn}). Now, altogether, equations (\ref{1stcomp}), (\ref{midcomp}) and (\ref{nthcomp}) give the expressions for the various components that constitute the mapping $\Psi_\zeta(w)$. 
A straightforward computation shows that the derivative of the map $\Phi^\zeta(z)$ in standard co-ordinates, is represented by the matrix 
\begin{eqnarray}
D \Phi^\zeta(z) = 
\begin{pmatrix}
1                                                           & 0        & \cdots    &  0     & 0 \\
-\frac{\partial Q_2^2}{\partial z_1}(z_1 - \zeta_1)         &{}        & {}        & {}     & 0 \\
\vdots                                                      &{}        & G_\zeta   & {}     & \vdots \\
-\frac{\partial Q_2^{n-1}}{\partial z_1}(z_1 - \zeta_1)     &{}        & {}        & {}     & 0 \\
\frac{\partial r}{\partial z_1}(\zeta) - \frac{\partial Q_1}{\partial z_1}({}'z - {}' \zeta)        &\cdots  &{}       & \frac{\partial r}{\partial z_{n-1}}(\zeta) - \frac{\partial Q_1}{\partial z_1}({}'z - {}' \zeta)  & \frac{\partial r}{\partial z_n}(\zeta)
\end{pmatrix}
\end{eqnarray}
so that in particular, the mapping occurring in the definition of the $M$-metric namely, 
\[
D \Phi^z(z)(X) = \Big( X_1, G_z(\tilde{X}), \sum \limits_{j=1}^{n} \partial r/\partial z_j (z) X_j \Big) = \Big( X_1, G_z(\tilde{X}), \langle \nu(z),X \rangle  \Big)
\]
which ofcourse is linear for each fixed $z$ but also is weighted homogeneous in the variables $z_1$ through $z_{n-1}$, with respect to the weights that we have assigned to the variables $z_1, \ldots, z_n$, as well. The above expression is needed in section $\ref{FriL}$. We notice in passing that $D \Phi^\zeta$ for each fixed $\zeta \in U$ belongs to the linear Lie group $\mathcal{E}L_n = \mathcal{E}_L \cap GL_n(\mathbb{C})$ and gives the tangent bundle of the $\mathcal{E}_L$-manifold mentioned above, the structure of a fibre bundle with structure group $\mathcal{E}L_n$.\\

\noindent Next, we record the derivative of the inverse map $\Psi_\zeta$ needed in the last sub-section of section \ref{pfofthm1}. 
\begin{eqnarray}
\; \; \;\; D\Psi_\zeta(w) = 
\begin{pmatrix}
1                                                                                & 0      & \cdots  &  0     & 0 \\
\langle (H_\zeta)_{R_2}, \frac{\partial Q_2}{\partial w_1}(w) \rangle          &{}      & {}      & {}     & 0 \\
\vdots                                                                           &{}      & H_\zeta & {}     & \vdots \\
\langle (H_\zeta)_{R_{n-1}}, \frac{\partial Q_2}{\partial w_1}(w) \rangle  &{}      & {}      & {}     & 0 \\
b_n^\zeta \partial \tilde{Q}_1/\partial w_1({}'w)                                  &b_n^\zeta \partial \tilde{Q}_1/\partial w_2({}'w)  &\ldots &b_n^\zeta\partial \tilde{Q}_1/\partial w_{n-1}({}'w) & b_n^\zeta
\end{pmatrix}
\end{eqnarray}
where we index the rows and columns of the square matrix $H_\zeta=G_\zeta^{-1}$ of order $n-2$ by the integers $2, \ldots,n-1$ and denote its rows herein by $(H_\zeta)_{R_\alpha}$.

%It can then be easily calculated that the vector $\dot{\gamma}(t) = D \Psi_{A_{\vert_{t \zeta}}}(\zeta)$ 

\end{document}